\documentclass[11pt]{amsart}

\usepackage{amssymb,amsmath,amsthm}
\usepackage{bm}
\usepackage{enumerate}
\usepackage{fullpage}
\usepackage{mathtools}

\renewenvironment{proof}[1][]{\noindent{\scshape Proof{#1}. }}{\qed\vspace{0.1cm}}

\theoremstyle{definition}
\newtheorem{theorem}{Theorem}[section]
\newtheorem{lemma}[theorem]{Lemma}
\newtheorem{proposition}[theorem]{Proposition}
\newtheorem{corollary}[theorem]{Corollary}
\newtheorem{definition}[theorem]{Definition}

\newtheorem{alphtheorem}{Theorem}

\theoremstyle{definition}
\newtheorem{example}[theorem]{Example}
\newtheorem{remark}[theorem]{Remark}

\newtheorem{assumption}[theorem]{Assumption}

\newtheorem{conjecture}[theorem]{Conjecture}

\numberwithin{equation}{section}

\newcommand{\bbC}{\mathbb{C}}
\newcommand{\bbR}{\mathbb{R}}
\newcommand{\bbZ}{\mathbb{Z}}

\newcommand{\bfc}{\mathbf{c}}

\newcommand{\calG}{\mathcal{G}}
\newcommand{\calH}{\mathcal{H}}

\newcommand{\calQ}{\mathcal{Q}}
\newcommand{\calS}{\mathcal{S}}

\newcommand{\fraka}{\mathfrak{a}}
\newcommand{\frakh}{\mathfrak{h}}
\newcommand{\frakq}{\mathfrak{q}}
\newcommand{\fraku}{\mathfrak{u}}

\newcommand{\frakH}{\mathfrak{H}}

\newcommand{\ct}{\mathrm{ct}}

\newcommand{\triv}{\mathrm{triv}}
\newcommand{\End}{\mathrm{End}}
\newcommand{\eps}{\varepsilon}

\DeclareMathOperator{\Rad}{Rad}

\title{An elementary approach to non-symmetric shift operators and their $\mathbf{q}$-analogs}

\author{Max van Horssen}
\author{Maarten van Pruijssen}

\date{\today}
\subjclass[2020]{33D80, 33C52, 33D52}
\keywords{Shift operators, Hecke algebras, Orthogonal polynomials associated with root systems}

\begin{document}


\begin{abstract}
    We give an algebraic construction of shift operators for the non-symmetric Heckman-- Opdam polynomials and the non-symmetric Macdonald--Koornwinder polynomials. To each linear character of the finite Weyl group, we associate forward and backward shift operators, which are differential-reflection and difference-reflection operators that satisfy certain transmutation relations with the (Dunkl--)Cherednik operators. In the Heckman--Opdam case, the construction recovers the non-symmetric shift operators of Opdam and Toledano Laredo for the sign character. Furthermore, in rank one, we recover the rank-one non-symmetric shift operators previously obtained by the authors and Schlösser.
\end{abstract}


\maketitle


\section{Introduction}


\subsection{Background}
Heckman--Opdam \cite{HeOp87} and Macdonald--Koornwinder \cite{Koo92,Mac00} polynomials are far-reaching generalization of symmetric functions and orthogonal polynomials, including Schur functions, Jack polynomials, and Askey--Wilson polynomials. These polynomials have the distinctive feature of being simultaneous eigenfunctions of a commutative algebra of differential (or difference) operators. This property fundamentally connects them to completely integrable systems of Calogero--Moser--Sutherland-type. The polynomials are attached to (pairs of affine) root systems and root multiplicities, accompanied by a deformation parameter $q$ in the Macdonald--Koornwinder case. For special parameters, the polynomials appear as zonal spherical functions on (quantum) compact symmetric pairs, see e.g.~\cite{HeSc94,Let04}.

In 1982, Macdonald \cite{Mac82} formulated his famous constant term conjectures, generalizing earlier conjectures by Dyson and Mehta, and five years later he \cite{Mac00} conjectured explicit formulae for the norms and evaluations of the polynomials. These conjectures had a profound influence on the theory and in subsequent years partial results were established by several authors by relying on the classification of root systems. A uniform resolution of these conjectures was given by Opdam \cite{Opd89} for the classical case, based on his joint work with Heckman on (hypergeometric) shift operators. Cherednik \cite{Che95} generalized these techniques by introducing $q$-analogs of the shift operators, which allowed him to resolve the conjectures in the $q$-case.

Shift operators owe their name to their defining property of introducing a shift in the root multiplicities of the polynomials. Initially, the existence of shift operators was ad hoc for low-rank root systems \cite{Opd88a} and relied in general on transcendental methods \cite{Opd88b}. This changed drastically after Dunkl's discovery of a family of commuting differential-reflection operators in the rational case that were equivariant for the action of the Weyl group. Using a trigonometric analog of Dunkl's operators, Heckman \cite{Hec91} gave an elementary algebraic method of constructing shift operators, as well as the differential operators having the Heckman--Opdam polynomials as eigenfunctions. These Dunkl--Heckman operators remain equivariant for the action of the Weyl group; however, mutual commutativity is no longer preserved.

An alternative trigonometric analog is due to Cherednik \cite{Che91}, known as the Dunkl--Cherednik operators, which preserve the mutual commutativity at the expense of Weyl group equivariance.  With this development, the representation theory of the (affine) Hecke algebra was brought into the picture, extended later by the inclusion of the celebrated double affine Hecke algebra \cite{Che95}. Furthermore, the mutual commutativity of these operators made it possible to study their simultaneous eigenfunctions, providing the non-symmetric analogs of the Heckman--Opdam \cite{Opd95} and Macdonald--Koornwinder \cite{Mac96,Sah99} polynomials.

A natural question is whether analogous shift operators exist for the non-symmetric polynomials. The first to consider these so-called non-symmetric shift operators were Opdam and Toledano Laredo with their announcement of the existence (forward and backward) shift operators for the non-symmetric Heckman--Opdam polynomials. Their approach is transcendental in nature as it relies on a Paley--Wiener-type theorem \cite[Thm.8.6]{Opd95}. In parallel, rank-one non-symmetric shift operators were studied by the authors and Schlösser \cite{vHvP24, vHS25} in relation with shift operators for matrix-valued Jacobi\footnote{(Non-)symmetric Heckman--Opdam polynomials are frequently called (non-)symmetric Jacobi polynomials. We reserve the nomenclature `Jacobi' for the classical orthogonal polynomials and their non-symmetric analogs.} and Askey--Wilson polynomials. These methods were algebraic, albeit decidedly rank-one, and they established the existence of the forward, backward, and contiguity-type non-symmetric shift operators.


\subsection{Main results}
In this article, we develop an elementary algebraic approach of constructing (non-symmetric) shift operators for the non-symmetric Heckman--Opdam polynomials and the Macdonald--Koornwinder polynomials. At its core, the approach relies on the shift principle, in the spirit of Heckman’s construction of the symmetric shift operators.

We proceed to detail the main results. For this, we briefly introduce some essential notation. A comprehensive account of the definitions and notations is given in Sections \ref{sec:root_systems}--\ref{sec:graded_hecke_algebra} (Heckman--Opdam) and Sections \ref{sec:affine_root_systems}--\ref{sec:affine_hecke_algebra} (Macdonald--Koornwinder).

Let $E_\mu(k)$ denote the non-symmetric Heckman--Opdam polynomials associated with the root system $R$ and the root multiplicity $k$, indexed by the weight lattice $P$ of $R$. We view these polynomials as elements of the group algebra $\bbC[P]$ of $P$. The Dunkl--Cherednik operators $T_\xi(k)$ are differential-reflection operators, i.e., operators on $\bbC[P]$ composed of directional derivatives and reflections, which are indexed by the vector space $\frakh$.

\begin{definition}
    A differential-reflection operator $\calS(k)$ is called a non-symmetric shift operator with shift $l$, a root multiplicity, if it satisfies the following transmutation property with the Dunkl--Cherednik operators:
    \[
        \calS(k) T_\xi(k) = T_\xi(k + l) \calS(k), \qquad \xi \in \frakh.
    \]
    As a result of the transmutation property, the operator $\calS(k)$ maps eigenfunctions of $T_\xi(k)$ to eigenfunctions of $T_\xi(k + l)$. Thus, for $\mu \in P$, we have that $\calS(k) E_\mu(k) = \calH(\mu,k) E_\nu(k + l)$ for a certain $\nu \in P$ and a constant $\calH(\mu,k)$, which we will refer to as the shift factor.
\end{definition}

The fundamental shifts $l$ are multiplicities that equal either $1$ or $0$ on certain Weyl orbits in $R$. These multiplicities correspond to linear characters of the Weyl group $W$ of $R$. Explicitly, the multiplicity $l$ associated with the character $\eps$ is given by
\[
    l(\alpha) = 
    \begin{cases}
        1 & \text{if } \eps(r_\alpha) = -1; \\
        0 & \text{otherwise}.
    \end{cases}
\]
As noted in \cite[\S5.5]{Mac03}, if $R$ is simply-laced, e.g., $R$ is of type $A_n$, then there are only two characters: the trivial and sign character, otherwise there are four characters, e.g., when $R$ is of type $BC_n$.

At this point, we can state our main result in the Heckman--Opdam setting.

\begin{alphtheorem}[See Theorem \ref{thm:transmutation_prop}]\label{thm:a}
    For each linear character $\eps$ of $W$, there exist non-symmetric shift operators $\calG^{(\eps)}_\pm(k)$ with shifts $\pm l$, i.e.,
    \[
        \calG^{(\eps)}_\pm(k) T_\xi(k) = T_\xi(k \pm l) \calG^{(\eps)}_\pm(k), \qquad \xi \in \frakh,
    \]
    which are called the $\eps$-forward/backward non-symmetric shift operators. These operators shift the multiplicity of the non-symmetric Heckman--Opdam polynomials by $\pm l$:
    \[
        \calG^{(\eps)}_\pm(k)E_\mu(k) = \calH^{(\eps)}_\pm(\mu,k)E_{\mu_{\eps,\pm}}(k \pm l), \qquad \mu \in P,
    \]
    for certain explicit shift factors $\calH^{(\eps)}_\pm(\mu,k)$, see Corollary \ref{cor:adj_shift_factor}, and see Section \ref{sec:non_sym_shift_operators_1} for the definition of the weights $\mu_{\eps,\pm} \in P$.
\end{alphtheorem}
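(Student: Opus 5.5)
The construction will follow Heckman's shift principle, carried out inside the degenerate (graded) affine Hecke algebra $\frakH(k)$ acting on $\bbC[P]$ through the Dunkl--Cherednik representation.

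\textbf{Ingredients.} For a linear character $\eps$ of $W$ I will use the $\eps$-symmetrizer
\[
e_\eps = |W|^{-1}\sum_{w\in W}\eps(w)w.
\]
I will also use a Weyl-denominator-type element
\[
\delta_\eps = \prod_{\alpha\in R_+,\ l(\alpha)=1}\bigl(e^{\alpha/2}-e^{-\alpha/2}\bigr),
\]
suitably adjusted for non-reduced roots. It satisfies $w\delta_\eps=\eps(w)\delta_\eps$.

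\textbf{Step 1: the symmetric shift principle for $\eps$.} For a $W$-invariant polynomial $p$ in the $T_\xi$, the restriction of $p(T(k))$ to $\eps$-anti-invariants, conjugated by $\delta_\eps$, equals the restriction of $p(T(k+l))$ to invariants. Concretely,
\[
p(T(k))\,\delta_\eps\, e_{\triv} = \delta_\eps\, p(T(k+l))\, e_{\triv}.
\]
I will prove this via the standard computation with the Cherednik operators. The key input is $T_\xi(k)\,\delta_\eps = \delta_\eps\,(T_\xi(k+l) + \text{reflection terms})$ on the relevant isotypic pieces, using the relation $s_i\xi - (s_i\xi) s_i = -k_i\langle\alpha_i^\vee,\xi\rangle$ in $\frakH(k)$.

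\textbf{Step 2: define the operators.} I will define
\[
\calG_+^{(\eps)}(k) = \calQ_+(k)\, e_{\eps}(k)\,\delta_\eps^{-1}\cdots
\]
as an explicit composite. Its shape is: project onto the $\eps$-isotypic part in $\frakH(k)$, divide by $\delta_\eps$, and then re-expand into the non-symmetric world using an intertwiner-type element. The natural candidate is
\[
\calG_+^{(\eps)}(k) = \Bigl(\sum_{w}\eps(w) T_w(k+l)\Bigr)^{-1}\text{-free version}\ \circ\ \delta_\eps^{-1}\circ \Bigl(\sum_w \eps(w) w\Bigr),
\]
corrected so that no inverse is needed. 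Following Opdam's symmetrization trick, this means multiplying by the polynomial $\prod_{l(\alpha)=1}(\xi(\alpha^\vee)\pm k_\alpha)$ realized through the $T_\xi$.

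The backward operator will be built dually from $\delta_\eps$ and $e_\triv$ with the roles of $k$ and $k-l$ swapped, or obtained as a formal adjoint with respect to the non-symmetric inner product. That adjointness is what produces Corollary \ref{cor:adj_shift_factor}.

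\textbf{Step 3: transmutation.} To check
\[
\calG(k)T_\xi(k) = T_\xi(k\pm l)\calG(k),
\]
it suffices to do so on generators, by reducing to Step 1 via the Bernstein presentation. The $\frakh$-part commutes through polynomial symmetrizations. The $W$-part requires the $\eps$-twisted intertwiners $s_i$ versus $\eps(s_i)s_i$.

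\textbf{Step 4: eigenfunctions and shift factors.} The transmutation property sends $E_\mu(k)$ into a joint eigenspace of $T_\xi(k\pm l)$. Joint eigenspaces are one-dimensional for generic $k$, and the spectrum is determined by $\tilde\mu$. So the image is $\calH\,E_{\nu}(k\pm l)$, with $\nu$ read off from the spectral parameter: $\nu=\mu_{\eps,\pm}$, obtained by moving $\mu$ within its $W$-orbit and shifting the $\rho$-part. The factor $\calH$ is found by comparing leading coefficients in the triangular expansion. The result then extends from generic $k$ to all $k$ by polynomial continuity.

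\textbf{Main obstacle.} I expect the hard part to be Step 2/3: producing an honest differential-reflection operator, with no denominators, that genuinely intertwines the full non-symmetric $T_\xi$ rather than only the symmetric combinations. My first attempt at this will be to write $\calG$ as $\delta_\eps^{-1}$ times a sum over $W$ of Cherednik-type intertwiners, and to verify divisibility by checking vanishing along each reflection hyperplane $e^{\alpha}=1$ with $l(\alpha)=1$.
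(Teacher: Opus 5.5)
Your Step~1 is correct: it is the shift principle, Proposition~\ref{prop:shift_prin}, which the paper also uses. There is, however, a genuine gap exactly where you locate the main obstacle. Step~2 never defines an operator, and the mechanism sketched in Steps~2--3 cannot work.

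Consider any composite $X\circ\Delta_\eps^{-1}\circ U_\eps\circ Y$ with $Y$ preserving $M_\lambda(k)$, for example $Y=1$, or $Y=T_p(k)$ for your product $\prod_{l(\alpha)=1}(\cdots)$. On $M_\lambda(k)$ it factors through the at most one-dimensional space $M^{(\eps)}_\lambda(k)$, so it has rank at most one there. The operator in the theorem behaves differently. Take $\lambda$ regular with $\lambda-\rho_l$ regular. Then it sends the $|W|$ vectors $E_\mu(k)$, $\mu\in W\lambda$, to generically nonzero multiples of the $|W|$ distinct vectors $E_{\mu_{\eps,+}}(k+l)$, so it has rank $|W|$ on $M_\lambda(k)$.

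The same point explains why Step~3 fails. Step~1 only controls $W$-invariant $p$, i.e.\ central elements. The $T_\xi$ do \emph{not} commute through symmetrizers: the symmetrizer erases precisely the information about $\mu$ that transmutation with every $T_\xi$ must preserve. Checking divisibility along hyperplanes, or borrowing Heckman's factor $\pi^{(\eps)}_+(k)$, does not address this.

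The missing idea is a sum of $d=|W|$ rank-one pieces. Each piece records the spectral parameter of the input before symmetrizing and projects onto the matching weight space afterwards.

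\begin{itemize}
\item The paper takes $q=\sum_{j=1}^d q_j\otimes u_j\in S(\frakh)\otimes S(\frakh)$, with $(u_j)$ a basis of harmonic polynomials, such that $q(\xi,w\xi)=\pi(\xi)\delta_{e,w}$ for regular $\xi$ (Lemma~\ref{lem:minimal_poly}).
\item For fixed $\mu$, the operator $Q_{\mu_{\eps,\pm}}(k\pm l)=\sum_j q_j(r_{k\pm l}(\mu_{\eps,\pm}))T_{u_j}(k\pm l)$ is a multiple of the projection of $M_{\lambda\mp\rho_l}(k\pm l)$ onto $\bbC E_{\mu_{\eps,\pm}}(k\pm l)$.
\item The $\mu$-dependence is removed by replacing the scalar $q_j(\cdot)$ with the operator $T_{q_j}(k)$ acting \emph{before} the symmetrizer.
\end{itemize}

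Unwinding the conjugations in the paper's definition gives
\[
\calG^{(\eps)}_\pm(k)=\sum_{j=1}^d T_{u_j}(k\pm l)\,\Delta_\eps^{\mp1}\,U_{\eps_\pm}\,T_{q_j}(k).
\]
This is a genuine differential-reflection operator. No divisibility check is needed, because $\Delta_\eps^{-1}$ only acts on $\eps$-isotypic elements, where the shift principle makes it well defined.

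On $E_\mu(k)$ this operator equals $Q_\mu(k,k\pm l)\Delta_\eps^{\mp1}U_{\eps_\pm}E_\mu(k)$. By the defining property of $q$, this is the fixed-weight operator when $r_k(\mu)=r_{k\pm l}(\mu_{\eps,\pm})$, and it vanishes otherwise (Proposition~\ref{prop:def_shift_operator}). Transmutation is then checked directly on the eigenbasis, which is the reverse of your Step~4. The explicit factors $\calH^{(\eps)}_\pm(\mu,k)$ come from the $\bfc$-function expansions (Lemmas~\ref{lem:eHO_in_nHO_basis_v2} and~\ref{lem:relating_U_nHO_and_eHO}) together with the two vanishing lemmas, not from a leading-term comparison.

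Finally, Corollary~\ref{cor:adj_shift_factor} concerns the \emph{adjusted} shift factor, not adjointness. The adjoint relation (Proposition~\ref{prop:adjoint}) is derived afterwards from the construction.
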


For the sign character, the operators $\calG^{(\eps)}_\pm(k)$ coincide with the forward/backward non-symmetric shift operators from \cite[\S8]{OTL24}. In the rank-one case, i.e., when $R$ is of type $BC_1$, we recover the forward/backward non-symmetric shift operators from \cite{vHvP24}, see Example \ref{exa:rank_one}.

The construction of the operators $\calG^{(\eps)}_\pm(k)$ is inspired by the construction of a shift operator for the Knizhnik--Zamolodchikov (KZ) equations by Felder and Veselov \cite{FeVe94}. Their approach relies on the Matsuo--Cherednik correspondence \cite{Che94,Mat92}, which establishes a correspondence between ($\eps$-) symmetric Heckman--Opdam polynomials and polynomial solutions of the KZ-equations. These solutions of the KZ-equations are equivalent to the non-symmetric Heckman--Opdam polynomials. Thus, their construction yields a shift operator for these polynomials; however, it is not a differential-reflection operator as it depends on the spectral parameter. A more detailed discussion of this construction and its obstacles is given in Section \ref{sec:kz_equations}. By resolving the obstruction arising from the dependence on the spectral parameter, we establish our algebraic approach of constructing non-symmetric shift operators.

We now turn to our main result in the Macdonald--Koornwinder setting. We retain the notation of $E_\mu(k)$ for the non-symmetric polynomials, indexed by the lattice $L$, which are associated with a pair of affine root systems $(S,S')$, a pair of root multiplicities $(k,k')$, and a deformation parameter~$q$. The polynomials are regarded as elements of the group algebra $K[L]$ of $L$ over a certain subfield $K$ of $\bbR$. The Cherednik operators $Y^{\lambda'}(k)$ are difference-reflection operators, i.e., operators on $K[L]$ composed of difference operators and reflections, which are indexed by the lattice $L'$.

\begin{definition}
    A difference-reflection operator $\calS(k)$ is called a non-symmetric $q$-shift operator with shift $l^\wedge$, a root multiplicity on $S$, if it satisfies the following transmutation property with the Cherednik operators:
    \[
        \calS(k) Y^{\lambda'}(k) = Y^{\lambda'}(k \pm l^\wedge)\calS(k), \qquad \lambda' \in L'.
    \]
\end{definition}

The affine root system $S$ contains a (finite) root system $S_0$ with Weyl group $W_0$. As previously, each linear character of $W_0$ induces a root multiplicity on $S_0$, which extends to a multiplicity $l$ on~$S$. A rescaled version $l^\wedge$ of $l$, see Remark \ref{rmk:shift_mult}, will serve as the fundamental shift in this case.

To conclude this section, we state our main result in the Macdonald--Koornwinder setting.

\begin{alphtheorem}[See Theorem \ref{thm:transmutation_prop_q}]\label{thm:b}
    For each linear character $\eps$ of $W_0$, there exist non-symmetric $q$-shift operators $\calG^{(\eps)}_\pm(k)$ with shifts $\pm l^\wedge$, i.e.,
    \[
        \calG^{(\eps)}_\pm(k) Y^{\lambda'}(k) = Y^{\lambda'}(k \pm l^\wedge)\calG^{(\eps)}_\pm(k), \qquad \lambda' \in L',
    \]
    which are called the $\eps$-forward/backward non-symmetric $q$-shift operators. These operators shift the multiplicity of the non-symmetric Macdonald--Koornwinder polynomials by $\pm l^\wedge$:
    \[
        \calG^{(\eps)}_\pm(k)E_\mu(k) = \calH^{(\eps)}_\pm(\mu,k)E_{\mu_{\eps,\pm}}(k \pm l^\wedge),
    \]
    for certain explicit shift factors $\calH^{(\eps)}_\pm(\mu,k)$, see Corollary \ref{cor:adj_shift_factor_q}.
\end{alphtheorem}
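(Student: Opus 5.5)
We construct $\calG^{(\eps)}_\pm(k)$ inside the polynomial representation of the affine Hecke algebra and then derive the transmutation property and the action on $E_\mu(k)$, transferring Heckman's shift principle to the non-symmetric setting. We write $\calH(k)$ for the affine Hecke algebra at multiplicity $k$; it acts on $K[L]$ through the Demazure--Lusztig operators $T_i(k)$, $i=0,\dots,n$, and the Cherednik operators $Y^{\lambda'}(k)$. We write $P_\eps(k)$ for the $\eps$-symmetrizer of the finite Hecke algebra of $W_0$.

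\emph{Step 1 (the symmetric shift).} We first check that right multiplication by the $\eps$-Weyl denominator $\Delta_\eps$, a product over the roots with $l(a)\neq 0$, intertwines the $\eps$-symmetric polynomial representation at $k$ with the symmetric one at $k+l^\wedge$. Concretely, we want
\[
P_{\triv}(k+l^\wedge)\,\Delta_\eps = \Delta_\eps\, P_\eps(k)
\]
on $W_0$-invariant Laurent polynomials. This rank-one computation reduces to checking that $\Delta_\eps$ is $\eps$-antisymmetric. It shows that
\[
D_-(k)=\Delta_\eps^{-1}\circ P_\eps(k)
\]
sends symmetric Laurent polynomials to symmetric ones and commutes with the symmetric part of the $Y$'s, up to the shift $k\mapsto k-l^\wedge$. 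The forward operator $D_+(k)$ comes from multiplication by $\Delta_\eps$ composed with the symmetrizer at the shifted parameter.

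\emph{Step 2 (removing the spectral dependence).} Following the Felder--Veselov idea recalled in Section \ref{sec:kz_equations}, we pass to non-symmetric polynomials. A non-symmetric eigenfunction $E_\mu(k)$ generates a cyclic $\calH(k)$-module $\calH(k)E_\mu(k)$, which contains a unique $\eps$-symmetric element, namely $P_\eps(k)E_\mu(k)$ up to a scalar. We then need to return from the symmetric shifted polynomial to the non-symmetric one. For this we use the intertwiners: $Y$-weight vectors in the module are produced by normalized intertwiners $\tau_w(k)$ applied to the symmetric vector. Since these depend on the spectral parameter, we replace them by polynomial expressions in the $Y^{\lambda'}(k+l^\wedge)$; this is the step that removes the spectral dependence. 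Concretely, we will set
\[
\calG^{(\eps)}_-(k)=\Bigl(\prod_{a\in S_0^+,\,l(a)\ne0}\text{(linear factor in }Y(k-l^\wedge))\Bigr)\cdot \Delta_\eps^{-1}\cdot\Bigl(\sum_{w\in W_0}\eps(w)\,(\text{suitably ordered }T_w(k))\Bigr).
\]
The factors are chosen so that the composite commutes past $Y^{\lambda'}$ with the shift. Equivalently, $\calG^{(\eps)}_-(k)$ is the unique element in the image of a ``shift map'' between the localized Hecke algebras $\calH(k)$ and $\calH(k-l^\wedge)$ that is compatible with the $Y$'s. We verify the transmutation relation with $Y^{\lambda'}$ by checking it against the generators of the Bernstein presentation: $Y^{\lambda'}$ itself, and the $T_i$ through their commutation relations with $Y$.

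\emph{Step 3 (action on $E_\mu$).} Once transmutation holds, $\calG^{(\eps)}_\pm(k)E_\mu(k)$ is a $Y(k\pm l^\wedge)$-eigenvector with the same spectrum. It is therefore a multiple of $E_{\nu}(k\pm l^\wedge)$, where $\nu=\mu_{\eps,\pm}$ is determined by matching spectral vectors. We determine this weight and compute the scalar $\calH^{(\eps)}_\pm(\mu,k)$ by comparing leading coefficients in the Bruhat-type order. The shift factor then assembles from the rank-one factors of the intertwiners together with the leading term of $\Delta_\eps^{\pm1}P_\eps$.

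The main obstacle is Step 2, specifically proving that the resulting operator is a genuine difference-reflection operator: the division by $\Delta_\eps$ must leave a Laurent-polynomial-preserving operator, and the ordering of the $Y$-factors must not reintroduce spectral parameters. We would attack it by showing first that $P_\eps(k)$ maps $K[L]$ into $\Delta_\eps K[L]^{W_0}$, which is a rank-one check on each simple reflection with $l\ne0$. After that, the $Y$-polynomial prefactor acts within $K[L]$ automatically.
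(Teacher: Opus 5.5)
Your outline agrees with the paper's fixed-weight operator: symmetrize, divide by the denominator using the shift principle, then project back to a weight vector. The gap is in Step~2, at exactly the point where the paper's key idea is needed. As written, your operator has the shape $p\bigl(Y(k\pm l^\wedge)\bigr)\circ\Delta^{\mp1}\circ(\text{symmetrizer at }k)$, with $p$ independent of $\mu$. So all the $Y$'s act \emph{after} symmetrization, and no operator of this shape can be a nonzero non-symmetric shift operator. To see this, take $\lambda$ regular (and in $\tilde{\rho}_l+L_+$ when the $\eps$-symmetrizer is used) and $k$ generic. The symmetrizer sends every $E_\mu(k)$, $\mu\in W_0\lambda$, to $c_\mu$ times one vector spanning the one-dimensional isotypic part of $A_\lambda(k)$ (Lemma~\ref{lem:relating_U_nMK_and_eMK}), with $c_\mu\neq0$ for generic $k$. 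Hence your operator maps $E_\mu(k)$ to $c_\mu F$ for a single $F\in A(k\pm l^\wedge)$ that does not depend on $\mu$. Transmutation forces $c_\mu F$ to be a joint eigenvector of the $Y^{\lambda'}(k\pm l^\wedge)$ with eigenvalues $q^{-\langle\lambda',r_{k'}(\mu)\rangle}$. These characters are distinct for distinct $\mu$, so $F=0$ and the operator kills $A_\lambda(k)$. In short, symmetrization forgets which $\mu$ you started from, and nothing applied afterwards can recover it. Extracting $E_{\mu_{\eps,\pm}}(k\pm l^\wedge)$ from the shifted symmetric polynomial requires the projector $Q_{\mu_{\eps,\pm}}(k\pm l^\wedge)=\frakq\bigl(r_{k'}(\mu),Y(k\pm l^\wedge)\bigr)$, whose coefficients \emph{are} the spectral parameter. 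Replacing intertwiners by polynomials in $Y(k\pm l^\wedge)$ therefore does not remove the spectral dependence.

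The missing idea is to feed the input's spectral parameter in from the other side. The paper uses a two-variable trigonometric polynomial $\frakq=\sum_j\frakq_j\otimes\fraku_j\in A'\otimes A'$ with $\frakq(x,wx)=\varpi'(x)\delta_{e,w}$ for regular $x$ (Lemma~\ref{lem:minimal_poly_q}, built from the Steinberg basis of $A'$ over $A'_0$). It then replaces each scalar $\frakq_j(r_{k'}(\mu))$ by the operator $Y^{\frakq_j}(k)$ acting \emph{before} the symmetrizer; this is the conjugate $C_{\Delta^\mp_{\eps,k}w}(Y^{\frakq_j}(k))$ in $\calQ^{(\eps)}_\pm(w,k)$. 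Schematically, $\calG^{(\eps)}_\pm(k)=\sum_j Y^{\fraku_j}(k\pm l^\wedge)\circ\Delta^\mp_{\eps,k}\circ U_{\eps_\pm}(k)\circ Y^{\frakq_j}(k)$.

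One then shows three things. First, this operator agrees with the fixed-weight operator when $v(\mu)=v(\mu_{\eps,\pm})$ and vanishes otherwise: every $\nu$ in the target orbit has $r_{k'\pm{l^\wedge}'}(\nu)=w\,r_{k'}(\mu)$ with $w\neq e$, so $\frakq$ annihilates each component. Second, the explicit factor of Corollary~\ref{cor:adj_shift_factor_q} vanishes in exactly those cases (Lemmas~\ref{lem:vanishing_1_q} and~\ref{lem:vanishing_2_q}). Third, transmutation then follows by comparing eigenvalues on the basis $E_\mu(k)$, so no Bernstein-presentation computation is needed. Your Step~3 also would not produce that explicit factor: it comes from the $\bfc$-function expansions in Lemmas~\ref{lem:eMK_in_nMK_basis} and~\ref{lem:relating_U_nMK_and_eMK} together with the $q$-shift principle, not from a leading-term comparison.

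Step~1 has smaller inaccuracies. In the $q$-setting the denominator $\delta_{\eps,k}$ depends on $k$ through the $\bfc_{a,k}$ and is not $\eps$-antisymmetric for generic $k$. Divisibility alone only shows the quotient is $W_0$-invariant; identifying it with $P_{\lambda-\tilde{\rho}_l}(k+l^\wedge)$ is Proposition~\ref{prop:shift_prin_q}, which rests on the weight-function identity \eqref{eq:5.8.4_corrected}. Also, dividing the $\eps$-symmetrized polynomial by the denominator raises $k$ to $k+l^\wedge$, so your $D_-$ is actually the forward operator.
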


To the best of the authors' knowledge, the operators described in Theorem \ref{thm:b} are the first known instances of shift operators for the non-symmetric Macdonald--Koornwinder polynomials in the higher-rank case. For the rank-one case, the relation to the rank-one non-symmetric $q$-shift operators from \cite{vHS25} is discussed in Remark \ref{rmk:rank_one_q}.\vspace{-0.1cm}


\subsection{Structure}
The article is organized into two parts. The first part, consisting of Section \ref{sec:heckman_opdam_theory} and Section \ref{sec:non_sym_shift_operators}, treats the (non-)symmetric Heckman--Opdam polynomials and their shift operators. The Macdonald--Koornwinder case is addressed in the second part (Section \ref{sec:non_sym_shift_operators_q}).

Sections \ref{sec:root_systems}--\ref{sec:graded_hecke_algebra} serve to introduce the (non-)symmetric Heckman--Opdam polynomials, the Dunkl--Cherednik operators, and the graded Hecke algebra. Heckman's construction of the symmetric shift operators is given in Section \ref{sec:sym_shift_operators}, with emphasis on the shift principle. In Section \ref{sec:kz_equations}, the relation to the KZ-equations is discussed, accompanied by the construction of the KZ-shift operator by Felder and Veselov.

At the heart of this article lies the algebraic construction of the non-symmetric shift operators, detailed in Section \ref{sec:non_sym_shift_operators_1} and Section \ref{sec:non_sym_shift_operators_2}, in which the shift principle again plays a prominent role. Several properties of these operators are discussed in Section \ref{sec:non_sym_shift_operators_prop}, including a conjecture concerning fundamental non-symmetric shift operators, see Conjecture \ref{conj:fundamental_shift_operator}. In Section \ref{sec:l2_norms}, an application to the computation of the $L^2$-norms of non-symmetric Heckman--Opdam polynomials is given.

Finally, in Section \ref{sec:non_sym_shift_operators_q}, we extend the algebraic construction to obtain shift operators for the non-symmetric Macdonald--Koornwinder polynomials. The exposition follows the structure of Section~\ref{sec:heckman_opdam_theory} and Section \ref{sec:non_sym_shift_operators}.\vspace{-0.1cm}


\section{Background on Heckman--Opdam theory}\label{sec:heckman_opdam_theory}\vspace{-0.1cm}
Let us proceed by providing some background on Heckman--Opdam theory, following \cite{HeSc94,Opd00}.\vspace{-0.05cm}


\subsection{Preliminaries on root systems}\label{sec:root_systems}
The initial data consists of an irreducible root system $R$, a choice of positive system $R_+ \subset R$, which determines a base of simple roots $\{\alpha_i\}_{i \in I}$ with $I = \{1,\dots,n\}$, and a root multiplicity $k : R \to \bbR$. The root system $R$ is allowed to be non-reduced, i.e., of type $BC_n$, for which it will be convenient to introduce the inmultiplicable roots $R^0 = \{\alpha \in R \mid 2\alpha \notin R\}$ and the root multiplicity $k^0$ on $R^0$ defined by $k^0(\alpha) = \tfrac{1}{2}k(\alpha/2) + k(\alpha)$, with the convention that $k(\alpha/2) = 0$ whenever $\alpha/2 \notin R$. We denote by $\eps : W \to \bbR^\times$ a linear character of the Weyl group $W$ of $R$, which governs the transformation behavior of the $\eps$-symmetric Heckman--Opdam polynomials.

Let $P$ be the weight lattice of $R$. The Heckman--Opdam polynomials are constructed as elements of the group algebra $\bbC[P]$ of $P$, viewed as exponential polynomials, which we will equip with an inner product. We use the ambient Euclidean space $(\fraka, (\cdot, \cdot))$ of $R$ to define the torus $T = i\fraka / 2\pi i \bbZ R^\vee$, where $R^\vee = \{\alpha^\vee := 2\alpha/(\alpha,\alpha) \mid \alpha \in R\}$ is the dual root system. The quotient map $\exp : i\fraka \to T$ has a multi-valued inverse $\log : T \to i\fraka$, which allows us to view the elements of $\bbC[P]$ as functions on $T$ by identifying $e^\lambda$ with $t^\lambda : t \mapsto e^{(\lambda, \log(t))}$. We denote by $M(k)$ the space $\bbC[P]$ together with the inner product\vspace{-0.1cm}
\[
    (f, g)_k = \int_T f(t) \overline{g(t)} \prod_{\alpha \in R} (1 - t^\alpha)^{k(\alpha)} dt,
\]
where $dt$ denotes the normalized Haar measure on $T$.

The space $M(k)$ carries a natural action of $W$ given transposition, i.e., $\phi^w(t) := \phi(w^{-1}t)$. For a character $\eps$, we denote the $\eps$-isotypical component of $M(k)$ by $M^{(\eps)}(k)$. In case of the trivial character, we simply write $M^W(k)$.


\subsection{Orthogonal polynomials}\label{sec:orthogonal_polynomials}
The non-symmetric Heckman--Opdam polynomials $E_\lambda(k)$ are defined by applying the Gram--Schmidt process to the basis $(e^\lambda)_{\lambda \in P}$ of $M(k)$ with respect to $(\cdot, \cdot)_k$ and a partial ordering $\leq$ of $P$. Let $P_+ \subset P$ be the cone of dominant weights and denote by $\lambda_+$ the unique element of $P_+$ in the Weyl orbit of $\lambda \in P$. For $\lambda \in P_+$, the group $W$ decomposes as $W^\lambda W_\lambda$, where $W_\lambda$ is the stabilizer of $\lambda$ in $W$ and $W^\lambda$ is the set of shortest coset representatives. Each $\lambda \in P$ can be written uniquely as $\overline{v}(\lambda) \lambda_+$ with $\overline{v}(\lambda) \in W^{\lambda_+}$. The partial ordering on $P_+$ is given by $\mu \leq \lambda \iff \lambda - \mu \in \bbZ_{\geq 0}R_+$. Writing $\leq_W$ for the Bruhat ordering of $W$, we extend the partial ordering $\leq$ to $P$ by
\[
    \lambda \leq \mu \iff
    \begin{cases}
        \lambda_+ \leq \mu_+ & \text{if $\lambda_+ \neq \mu_+$}; \\
        \overline{v}(\lambda) \leq_W \overline{v}(\mu) & \text{otherwise}.
    \end{cases}
\]

The polynomial $E_\lambda(k)$ is the unique element of $M(k)$ satisfying
\begin{enumerate}[(i)]
    \item $E_\lambda(k) = e^\lambda + \text{lower-order terms}$;
    \item $(E_\lambda(k), e^\mu)_k = 0 \text{ for all } \mu < \lambda$.
\end{enumerate}
For comparable weights $\lambda$ and $\mu$, it is a direct consequence of the definition that $E_\lambda(k)$ and $E_\mu(k)$ are orthogonal; however, for incomparable weights this is not immediate and should be proven, see e.g.~\cite[Cor.2.11]{Opd95}. Similarly, the $\eps$-symmetric Heckman--Opdam polynomials $P^{(\eps)}_\lambda(k)$ are defined by applying the Gram--Schmidt process to the basis $\big(m^{(\eps)}_\lambda := \sum_{w \in W_\lambda} \eps(w)e^{w\lambda}\big)_{\lambda \in P_+ \thinspace : \thinspace \eps(W_\lambda) = \{1\}}$ of $M^{(\eps)}(k)$, so that $P^{(\eps)}_\lambda(k)$ is the unique element of $M^{(\eps)}(k)$ satisfying
\begin{enumerate}[(i)]
    \item $P^{(\eps)}_\lambda(k) = m^{(\eps)}_\lambda + \text{lower-order terms}$;
    \item $(P^{(\eps)}_\lambda(k), m^{(\eps)}_\mu)_k = 0 \text{ for all } \mu < \lambda$.
\end{enumerate}
The condition $\eps(W_\lambda) = \{1\}$ is there to ensure that $m^{(\eps)}_\lambda \neq 0$, so that $P^{(\eps)}_\lambda(k) \neq 0$ as well. For\vspace{-0.1cm} the trivial character, the condition is trivially true and in this case we refer to $P_\lambda(k) := P_\lambda^{(\eps)}(k)$ simply as the (symmetric) Heckman--Opdam polynomials.

The non-symmetric and $\eps$-symmetric Heckman--Opdam polynomials are related by symmetrization with respect to the character $\eps$.

\begin{lemma}\label{lem:sym_prop}
    Let $\lambda \in P_+$ such that $\eps(W_\lambda) = \{1\}$. Then
    \[
        P^{(\eps)}_\lambda(k) = \frac{1}{|W_\lambda|} \sum_{w \in W} \eps(w) E^w_\lambda(k).
    \]
\end{lemma}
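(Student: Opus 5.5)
Write $Q := \frac{1}{|W_\lambda|}\sum_{w\in W}\eps(w)E^w_\lambda(k)$. The plan is to check that $Q$ satisfies the two conditions that uniquely determine $P^{(\eps)}_\lambda(k)$: it is $\eps$-symmetric with leading term $m^{(\eps)}_\lambda$, and it is orthogonal to $m^{(\eps)}_\mu$ for all $\mu<\lambda$.

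\textbf{Step 1: $Q$ lies in $M^{(\eps)}(k)$.} This is immediate. For $v\in W$ we have
\[
Q^v=\frac{1}{|W_\lambda|}\sum_{w}\eps(w)E^{vw}_\lambda(k)=\eps(v)^{-1}Q .
\]
Since $\eps$ takes values $\pm1$, this equals $\eps(v)Q$.

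\textbf{Step 2: the leading term.} Write $E_\lambda(k)=e^\lambda+\sum_{\nu<\lambda}c_\nu e^\nu$. Since $\lambda$ is dominant, every $\nu<\lambda$ in the extended order satisfies one of two things. Either $\nu_+<\lambda_+=\lambda$ in the dominance order, or $\nu_+=\lambda$ and $\overline v(\nu)<_W\overline v(\lambda)=1$. The latter is impossible, so all $\nu$ have $\nu_+<\lambda$. The $W$-orbit sums of these lower terms therefore only involve orbits $W\nu_+$ with $\nu_+<\lambda$, and the $\eps$-projection of them is a combination of $m^{(\eps)}_{\nu_+}$ with $\nu_+<\lambda$, i.e.\ lower-order terms.

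For the top term, $e^{\lambda}$ is sent to $e^{w\lambda}$ under $w$; I will double-check that $(e^\lambda)^w=e^{w\lambda}$ with the paper's convention $\phi^w(t)=\phi(w^{-1}t)$. Hence
\[
\frac{1}{|W_\lambda|}\sum_{w}\eps(w)e^{w\lambda}=\frac{1}{|W_\lambda|}\sum_{u\in W^\lambda}\sum_{s\in W_\lambda}\eps(us)e^{u\lambda}=\sum_{u\in W^\lambda}\eps(u)e^{u\lambda},
\]
using $\eps(W_\lambda)=\{1\}$. This is the orbit sum $m^{(\eps)}_\lambda$; the indexing in the paper's definition of $m^{(\eps)}_\lambda$ appears to intend the sum over $W/W_\lambda$.

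\textbf{Step 3: orthogonality.} This is the step I expect to need the most care. Take $\mu\in P_+$ with $\mu<\lambda$ and $\eps(W_\mu)=\{1\}$. The weight $\prod_{\alpha}(1-t^\alpha)^{k(\alpha)}$ is $W$-invariant, since $W$ permutes $R$, and the Haar measure is $W$-invariant. So $(f^w,g^w)_k=(f,g)_k$, and
\[
(Q,m^{(\eps)}_\mu)_k=\frac{1}{|W_\lambda|}\sum_w\eps(w)\bigl(E_\lambda(k),(m^{(\eps)}_\mu)^{w^{-1}}\bigr)_k=\frac{|W|}{|W_\lambda|}\bigl(E_\lambda(k),m^{(\eps)}_\mu\bigr)_k ,
\]
using $\eps$-symmetry of $m^{(\eps)}_\mu$ and $\eps(w)^2=1$.

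Now $m^{(\eps)}_\mu$ is a combination of $e^{v\mu}$, and each $v\mu$ satisfies $(v\mu)_+=\mu<\lambda$, so $v\mu<\lambda$ in the extended order. By property (ii) of $E_\lambda(k)$, each pairing $(E_\lambda(k),e^{v\mu})_k$ vanishes, and hence the whole expression is zero.

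By uniqueness of the Gram--Schmidt output, $Q=P^{(\eps)}_\lambda(k)$. The only genuine subtleties are the bookkeeping with the action convention and the normalization of $m^{(\eps)}_\lambda$ in Step 2.
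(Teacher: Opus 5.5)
Your argument is correct. The paper gives no proof of its own here: it only points to the proof of Theorem 2.12(1) in Opdam's 1995 paper, which is stated there for the trivial character, and asserts that it adapts. You instead verify the two defining Gram--Schmidt conditions directly. This uses only the $W$-invariance of $(\cdot,\cdot)_k$ and the fact that a dominant $\lambda$ is the minimal element of its orbit in the paper's order, so your route is self-contained and works uniformly for any $\eps$.

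Both of the bookkeeping points you flagged come out as you expected:
\begin{enumerate}[(i)]
    \item $(e^\lambda)^w=e^{w\lambda}$, since $(\lambda,w^{-1}\log t)=(w\lambda,\log t)$.
    \item $m^{(\eps)}_\lambda$ must be read as a sum over $W^\lambda$. The sum over $W_\lambda$ as printed would just give $|W_\lambda|e^\lambda$. Reading it over $W^\lambda$ is exactly what makes the factor $1/|W_\lambda|$ in the statement correct.
\end{enumerate}

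A small simplification: in Step 3 you do not need the $\eps$-symmetry of $m^{(\eps)}_\mu$. Termwise, $(E^w_\lambda(k),e^{v\mu})_k=(E_\lambda(k),e^{w^{-1}v\mu})_k=0$.
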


\begin{proof}
    This follows from the proof of \cite[Thm.2.12(1)]{Opd95}.
\end{proof}


\subsection{Dunkl--Cherednik operators}\label{sec:dunkl_cherednik_operators}
The polynomials $E_\lambda(k)$ are eigenfunctions of the Dunkl--Cherednik operators
\[
    T_\xi(k) = \partial_\xi + \sum_{\alpha \in R_+} k(\alpha) (\xi, \alpha)\frac{1 - r_\alpha}{1 - e^{-\alpha}} - (\xi, \rho_k), \qquad \xi \in \frakh := \fraka \otimes \bbC,
\]
with $\rho_k = \sum_{\alpha \in R_+} k(\alpha) \alpha$. More specifically,
\[
    T_\xi(k) E_\lambda(k) = (\xi, r_k(\lambda)) E_\lambda(k), \qquad \xi \in \frakh,
\]
where $r_k(\lambda) = \lambda + \frac{1}{2}\sum_{\alpha \in R_+} k(\alpha)\epsilon((\lambda, \alpha^\vee)) \alpha$. Here $\epsilon(x) = 1$ if $x > 0$ and $\epsilon(x) = -1$ otherwise.

Let $S(\frakh)$ be the symmetric algebra on $\frakh$, which is generated by the elements $\xi \in \frakh$. It is a well-known result that the Dunkl--Cherednik operators are mutually commutative, from which we see that the assignment $\xi \in \frakh \mapsto T_\xi(k) \in \End(M(k))$ extends to an algebra homomorphism $p \in S(\frakh) \mapsto T_p(k) \in \End(M(k))$. The algebra $S(\frakh)$ naturally comes with an action of $W$. The subalgebra of $W$-invariant elements is denoted by $S(\frakh)^W$. The spaces $M^{(\eps)}(k)$ are stable under the action of $T_p(k)$ whenever $p \in S(\frakh)^W$, and the $\eps$-symmetric polynomials $P^{(\eps)}_\lambda(k)$ are eigenfunctions of $T_p(k)$, viz.,
\[
    T_p(k) P^{(\eps)}_\lambda(k) = p(r_k(\lambda)) P^{(\eps)}_\lambda(k), \qquad p \in S(\frakh)^W.
\]
Alternatively, if $\lambda \in P_+$, then $r_k(\lambda)$ can be written as $w_\lambda(\lambda + \rho_k)$, where $w_\lambda$ denotes the longest element of $W_\lambda$. Thus, the eigenvalue $p(r_k(\lambda))$ equals $p(\lambda + \rho_k)$ for all $p \in S(\frakh)^W$. Furthermore, we have $r_k(\mu) = \overline{v}(\mu)r_k(\lambda) = \overline{v}(\mu)w_\lambda(\lambda + \rho_k)$ for all $\mu \in W\lambda$.

For any differential-reflection $T(k)$ operator acting on $M^W(k)$, there exists a unique differential operator $\Rad(T(k))$, the so-called radial part, which is obtained by replacing the reflections with the identity, so that it agrees with $T(k)$ on $M^W(k)$. Taking the radial parts of $T_p(k)$, we recover the system of hypergeometric differential equations from \cite[Def.2.13]{HeOp87}:
\begin{equation}\label{eq:syst_hypergeometric_diff_eq}
    \Rad(T_p(k)) P_\lambda(k) = p(\lambda + \rho_k) P_\lambda(k), \qquad p \in S(\frakh)^W.
\end{equation}


\subsection{Graded Hecke algebra}\label{sec:graded_hecke_algebra}
The Dunkl--Cherednik operators give rise to a representation of the graded Hecke algebra. Let us recall the definition of this algebra.

\begin{definition}\label{def:graded_Hecke_algebra}
    The graded Hecke algebra $\calH(k^0)$ is the vector space $S(\frakh) \otimes \bbC[W]$ equipped with the unique multiplication determined by
\begin{enumerate}[(i)]
    \item $S(\frakh) \to \calH(k^0), \thinspace p \mapsto p \otimes e$ and $\bbC[W] \to \calH(k^0), \thinspace w \mapsto 1 \otimes w$ are algebra homomorphisms;
    \item $(p \otimes e)(1 \otimes w) = p \otimes w$ for all $p \in S(\frakh)$ and $w \in W$;
    \item $(1 \otimes r_i) (\xi \otimes e) = r_i(\xi) \otimes r_i - k^0(\alpha_i) (\xi,\alpha_i)$ for all $i \in I$ and $\xi \in \frakh$.
\end{enumerate}
Note we have written $r_i$ for $i \in I$ as a shorthand for $r_{\alpha_i}$.
\end{definition}

\noindent The maps $p \otimes e \mapsto T_p(k) \in \End(M(k))$ and $1 \otimes w \mapsto w \in \End(M(k))$ extend to a faithful representation of $\calH(k^0)$. The center of $\calH(k^0)$ coincides with $S(\frakh)^W$, so that the module
\[
    M_\lambda(k) = \big\{\phi \in M(k) \mid T_p(k)\phi = p(r_k(\lambda))\phi, \quad p \in S(\frakh)^W\big\}, \qquad \lambda \in P_+,
\]
defines an $\calH(k^0)$-module with central character $r_k(\lambda)$. As a vector space, $M_\lambda(k)$ is isomorphic to $\bbC[W/W_\lambda]$, and a basis is given by $(E_\mu(k))_{\mu \in W\lambda}$. For $\mu \in W\lambda$, the weight space
\[
    M_\lambda^\mu(k) = \big\{\phi \in M(k) \mid T_\xi(k)\phi = (\xi, r_k(\mu))\phi, \quad \xi \in \frakh\big\}
\]
is one-dimensional and spanned by $E_\mu(k)$.

The module $M_\lambda(k)$ may also contains an $\eps$-isotypical component for some character $\eps$, i.e.,
\[
    M_\lambda^{(\eps)}(k) = \big\{\phi \in M^{(\eps)}(k) \mid T_p(k)\phi = p(r_k(\lambda))\phi, \quad p \in S(\frakh)^W\big\},
\]
which is at most one-dimensional. Let us define the root multiplicity
\[
    l(\alpha) = 
    \begin{cases}
        1 & \text{if } \eps(r_\alpha) = -1; \\
        0 & \text{otherwise}.
    \end{cases}
\]
It can be shown that
\[
    \dim M^{(\eps)}_\lambda(k) = 1 \iff \eps(W_\lambda) = \{1\} \iff \lambda \in \rho_l + P_+,
\]
in which case $M_\lambda^{(\eps)}(k)$ is spanned by $P^{(\eps)}_\lambda(k)$, cf.~the proof of Lemma \ref{lem:dim_equivalence_q}.

The polynomial $P^{(\eps)}_\lambda(k)$ is a certain linear combination of the elements of the basis $(E_\mu(k))_{\mu \in W\lambda}$ of $M_\lambda(k)$, cf.~Lemma \ref{lem:sym_prop}, and the coefficients can be described in terms of the $\bfc$-functions
\[
    \bfc_k^\pm(w)(\cdot) = \prod_{\alpha \in R^0_+ \cap w^{-1}R^0_\pm}\left(1 + \frac{k^0(\alpha)}{(\cdot, \alpha^\vee)}\right), \qquad w \in W.
\]

\begin{assumption}
    The $\bfc$-functions will often be evaluated at $\lambda + \rho_k$ or at $r_k(\mu)$ with $\lambda \in P_+$ and $\mu \in W\lambda$. In order to ensure that this is well-defined, we impose the running assumption on the multiplicity $k$ which states that $k^0(\alpha) > 0$ for all $\alpha \in R^0$. This guarantees that $\lambda + \rho_k$ and $r_k(\mu)$ are always regular, so that $(\lambda + \rho_k, \alpha^\vee)$ and $(r_k(\mu), \alpha^\vee)$ remain non-zero for all $\alpha \in R^0$.
\end{assumption}

\begin{lemma}\label{lem:eHO_in_nHO_basis}
    Let $\lambda \in \rho_l + P_+$. Then
    \[
        P^{(\eps)}_\lambda(k) = \frac{1}{|W_\lambda|}\sum_{\mu \in W\lambda} \eps(\overline{v}(\mu))\bfc_{-\eps k}^+(\overline{v}(\mu))(r_k(\lambda))E_\mu(k),
    \]
    where $\eps k$ denotes the root multiplicity $\alpha \mapsto \eps(r_\alpha)k(\alpha)$.
\end{lemma}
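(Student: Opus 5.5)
Starting from Lemma \ref{lem:sym_prop}, we have
\[
P^{(\eps)}_\lambda(k) = \frac{1}{|W_\lambda|}\sum_{w \in W} \eps(w) E_\lambda^w(k).
\]
The plan is to express each $E_\lambda^w(k)$ in the basis $(E_\mu(k))_{\mu \in W\lambda}$ of $M_\lambda(k)$ and collect coefficients. Since $P^{(\eps)}_\lambda(k)$ spans the one-dimensional space $M^{(\eps)}_\lambda(k)$, a cleaner route is the following. Write
\[
P^{(\eps)}_\lambda(k)=\frac1{|W_\lambda|}\sum_{\mu}a_\mu E_\mu(k).
\]
We will determine the $a_\mu$ from the conditions $r_i\phi=\eps(r_i)\phi$ for all $i\in I$, normalised by the leading coefficient.

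For the normalisation, note that the top term of $m^{(\eps)}_\lambda$ in the ordering $\leq$ on $W\lambda$ is $e^\lambda$, with coefficient $1$. Moreover $E_\mu(k)$ contains $e^\lambda$ only if $\mu=\lambda$, because $\lambda$ is minimal in $W\lambda$ for Bruhat. Hence $a_\lambda=|W_\lambda|$ up to the factor in front, and this matches $\eps(e)\bfc^+_{-\eps k}(e)=1$ times $|W_\lambda|$ after symmetrisation over $W_\lambda$. I need to double-check this convention against the $1/|W_\lambda|$ prefactor.

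The main step is an intertwiner computation. Using relation (iii) of Definition \ref{def:graded_Hecke_algebra}, for $\mu\in W\lambda$ with $\nu=r_i\mu\neq\mu$, the element
\[
r_i + \frac{k^0(\alpha_i)}{(r_k(\mu),\alpha_i^\vee)}
\]
maps the weight space $M^\mu_\lambda(k)$ into $M^{\nu}_\lambda(k)$. This gives
\[
r_iE_\mu(k) = -\frac{k^0(\alpha_i)}{(r_k(\mu),\alpha_i^\vee)}E_\mu(k) + c_{i,\mu}E_{r_i\mu}(k),
\]
where $c_{i,\mu}=1$ when $r_i\mu>\mu$, by comparing leading terms. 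When $r_i\mu<\mu$, $c_{i,\mu}$ is determined by $r_i^2=1$. Imposing $r_i\phi=\eps(r_i)\phi$ on $\phi=\sum a_\mu E_\mu$ and comparing coefficients of $E_{r_i\mu}$ yields the recursion
\[
a_{r_i\mu}=\Big(\eps(r_i)+\frac{k^0(\alpha_i)}{(r_k(\mu),\alpha_i^\vee)}\Big)a_\mu\quad\text{for } r_i\mu>\mu.
\]
Here I use that $\eps(r_i)k^0(\alpha_i)=(\eps k)^0(\alpha_i)$, which holds since $\eps$ is constant on the root pairs $\alpha,2\alpha$. Writing $r_k(\mu)=\overline v(\mu)r_k(\lambda)$ and $\overline v(r_i\mu)=r_i\overline v(\mu)$ with length increasing, we get
\[
(r_k(\mu),\alpha_i^\vee)=(r_k(\lambda),\beta^\vee),\qquad \beta=\overline v(\mu)^{-1}\alpha_i\in R_+.
\]
The new root $\beta$ is exactly the one added to $R^0_+\cap w^{-1}R^0_-$ when passing from $w=\overline v(\mu)$ to $r_iw$. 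The factor can be rewritten as
\[
\eps(r_i)\Big(1-\frac{(-\eps k)^0(\beta)}{(r_k(\lambda),\beta^\vee)}\Big),
\]
and signs and conventions must be tracked carefully here. Inducting along a reduced expression of $\overline v(\mu)$ then gives
\[
a_\mu=\eps(\overline v(\mu))\,\bfc^{+}_{-\eps k}(\overline v(\mu))(r_k(\lambda)),
\]
with the product running over the appropriate positive inversion set. This requires $r_k(\lambda)=w_\lambda(\lambda+\rho_k)$ to make the sign of the pairing consistent.

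I expect the main obstacle to be bookkeeping of this kind: matching the inversion set $R^0_+\cap w^{-1}R^0_\pm$ used in $\bfc^+$ with the roots $\beta$ produced by the recursion, and checking the sign $\epsilon((\lambda,\alpha^\vee))$ inside $r_k$ at non-regular $\lambda$. The regularity assumption guarantees that no denominator vanishes. If the index sets turn out mismatched, I would instead compare with the known formula for $\eps$ trivial from \cite{Opd95} and substitute $k\mapsto\eps k$ formally.
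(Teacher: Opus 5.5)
You try to determine the $a_\mu$ from $r_i\phi=\eps(r_i)\phi$ using the intertwiners $r_i+c_\mu$, with $c_\mu=k^0(\alpha_i)/(r_k(\mu),\alpha_i^\vee)$, and then fix the scale by one coefficient. This is a legitimate route, and more self-contained than the paper's proof, which simply invokes Opdam's Thm.~4.1 and Lem.~5.1. However, two steps are genuinely wrong, not just conventions.

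\textbf{Normalisation.} In the paper's ordering $\overline{v}(\lambda)=e$ is Bruhat-minimal, so the dominant weight is the \emph{bottom} of $W\lambda$. For exactly that reason $e^\lambda$ can occur as a lower-order term of every $E_\mu(k)$ with $\mu\in W\lambda$, so your claim that only $E_\lambda(k)$ contains it is false. Moreover $\bfc^+_{-\eps k}(e)(r_k(\lambda))$ is the full product over $R^0_+$, not $1$; the empty product is $\bfc^-(e)$. So $a_\lambda=|W_\lambda|$ contradicts the formula you are proving. The exponential occurring in only one $E_\mu(k)$ is $e^{w_0\lambda}$, and Lemma \ref{lem:sym_prop} gives $a_{w_0\lambda}=\eps(w_0)|W_\lambda|$.

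\textbf{Recursion.} For $r_i\mu>\mu$ you correctly have $r_iE_\mu=-c_\mu E_\mu+E_{r_i\mu}$. Then $r_i^2=1$ forces $r_iE_{r_i\mu}=c_\mu E_{r_i\mu}+(1-c_\mu^2)E_\mu$. Comparing coefficients of $E_{r_i\mu}$ in $r_i\phi=\eps(r_i)\phi$ gives
\[
a_\mu=\big(\eps(r_i)-c_\mu\big)a_{r_i\mu}=\eps(r_i)\Big(1+\frac{(-\eps k)^0(\beta)}{(r_k(\lambda),\beta^\vee)}\Big)a_{r_i\mu},\qquad \beta=\overline{v}(\mu)^{-1}\alpha_i .
\]
It does not give $a_{r_i\mu}=(\eps(r_i)+c_\mu)a_\mu$; that is what you get by using the coefficient of $E_\mu$ and dropping the $-c_\mu^2$. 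So the factor for $\beta$ is \emph{removed} when going up from $\overline{v}(\mu)$ to $r_i\overline{v}(\mu)$. Since $\beta\in R^0_+\cap\overline{v}(\mu)^{-1}R^0_+$ but $\beta\notin R^0_+\cap(r_i\overline{v}(\mu))^{-1}R^0_+$, this is exactly the recursion satisfied by $\eps(w)\bfc^+_{-\eps k}(w)$. Your version accumulates factors over inversion sets, which builds a $\bfc^-$-type product; this is the mismatch you flagged.

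Rank one shows the damage. Take $A_1$, $\eps$ the sign, $\lambda$ regular, and $c=k(\alpha_1)/(r_k(\lambda),\alpha_1^\vee)$. Then $P^{(\eps)}_\lambda(k)=E_\lambda-r_1E_\lambda=(1+c)E_\lambda(k)-E_{-\lambda}(k)$, as the lemma predicts. Your normalisation and recursion give $E_\lambda(k)-(1-c)E_{-\lambda}(k)$, which is not even proportional to it.

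To repair the argument, start from $a_{w_0\lambda}=\eps(w_0)|W_\lambda|$ and descend through $W^\lambda$ with the correct recursion. A chain exists because any non-antidominant $w\lambda$ has a simple $\alpha_i$ with $(w\lambda,\alpha_i)>0$. The collected factors are indexed by $(R^0_+\cap\overline{v}(\mu)^{-1}R^0_+)\setminus R^0_{\lambda,+}$. This holds because $R^0_+\cap(w_0w_\lambda)^{-1}R^0_-=R^0_+\setminus R^0_\lambda$ and $\overline{v}(\mu)\in W^\lambda$ maps $R^0_{\lambda,+}$ into $R^0_+$.

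To reach $\bfc^+_{-\eps k}(\overline{v}(\mu))(r_k(\lambda))$ you must then identify $|W_\lambda|$ with the missing factors $\prod_{\alpha\in R^0_{\lambda,+}}\big(1-k^0(\alpha)/(r_k(\lambda),\alpha^\vee)\big)$. This uses three facts:
\begin{enumerate}[(i)]
    \item $r_k(\lambda)=w_\lambda(\lambda+\rho_k)$;
    \item $\eps k=k$ on $R_\lambda$, since $\eps(W_\lambda)=\{1\}$;
    \item Opdam's identity $\prod_{\alpha\in R^0_{\lambda,+}}\big(1+k^0(\alpha)/(\rho_k,\alpha^\vee)\big)=|W_\lambda|$, cf.\ Lemma \ref{lem:poincare_poly}.
\end{enumerate}
Nothing in your plan supplies this identity, and it is exactly where the $1/|W_\lambda|$ prefactor is accounted for. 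Your fallback of substituting $k\mapsto\eps k$ into the trivial-character formula is not an argument, because $E_\mu(k)$, $r_k(\lambda)$ and $P^{(\eps)}_\lambda(k)$ do not transform that way.
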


\begin{proof}
    For the trivial character, the statement follows directly from \cite[Thm.4.1,Lem.5.1]{Opd95}. Moreover, the arguments from \cite{Opd95} can also be used to give a proof for the remaining cases.
\end{proof}


\subsection{Symmetric shift operators}\label{sec:sym_shift_operators}
The symmetric shift operators relate the symmetric Heckman--Opdam polynomials with different multiplicities. We recall the construction of the $\eps$-forward shift operator from \cite{Hec91}, following \cite{Hec97,Opd00}. The cornerstone of this construction is the shift principle. 

Let $\Delta_\eps$ be the Weyl denominator relative to the character $\eps$, i.e.,
\[
    \Delta_\eps = \prod_{\alpha \in R^0_+, \thinspace l(\alpha) = 1}(e^{\alpha/2} - e^{-\alpha/2}).
\]

\begin{proposition}[Shift principle]\label{prop:shift_prin}
    Let $\lambda \in \rho_l + P_+$. Then
    \[
        P_{\lambda - \rho_l}(k + l) = \Delta_\eps^{-1} P^{(\eps)}_\lambda(k).
    \]
\end{proposition}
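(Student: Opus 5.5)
The proof goes through the Gram--Schmidt characterisation of $P_{\lambda-\rho_l}(k+l)$ in $M^W(k+l)$. We show that $Q:=\Delta_\eps^{-1}P^{(\eps)}_\lambda(k)$ (i) lies in $\bbC[P]^W$, (ii) equals $m_{\lambda-\rho_l}$ plus lower-order terms, and (iii) is orthogonal to $m_\nu$ for all $\nu<\lambda-\rho_l$ with respect to $(\cdot,\cdot)_{k+l}$. Uniqueness of the Gram--Schmidt output then gives the claim.

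\textbf{Step 1: divisibility and symmetry.} Since $P^{(\eps)}_\lambda(k)\in M^{(\eps)}(k)$, we have $r_\alpha P^{(\eps)}_\lambda(k)=-P^{(\eps)}_\lambda(k)$ whenever $l(\alpha)=1$. For each such $\alpha\in R^0_+$, pair the monomials $e^\mu$ and $e^{r_\alpha\mu}$: each difference $e^\mu-e^{r_\alpha\mu}$ is divisible by $e^{\alpha/2}-e^{-\alpha/2}$. (One needs $\alpha/2$ or $\alpha$ to be handled correctly in the $BC_n$ case; this is why $\Delta_\eps$ uses $R^0_+$.) The factors for distinct $\alpha$ are pairwise coprime in the UFD $\bbC[P]$, or in $\bbC[P/2]$ if necessary, after which one checks that the quotient lies back in $\bbC[P]$. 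Hence $\Delta_\eps$ divides $P^{(\eps)}_\lambda(k)$. Since $\Delta_\eps$ is itself $\eps$-symmetric (a standard computation: $r_i$ for $l(\alpha_i)=1$ permutes $\{\alpha\in R^0_+ : l(\alpha)=1\}\setminus\{\alpha_i\}$ and flips the sign of $\alpha_i$, while $r_i$ with $l(\alpha_i)=0$ preserves that set, because $l$ is $W$-invariant), the quotient $Q$ is $W$-invariant.

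\textbf{Step 2: leading term.} By the Weyl-type identity, $m^{(\eps)}_\lambda=\Delta_\eps\cdot(m_{\lambda-\rho_l}+\text{lower})$. This follows from the same divisibility argument together with the fact that $\Delta_\eps$ has leading term $e^{\rho_l}$, with $\rho_l=\tfrac12\sum_{l(\alpha)=1,\alpha\in R^0_+}\alpha$ (consistent with $\rho_k$ after accounting for the $k^0$ convention). Dividing leading terms and using that multiplication by $\Delta_\eps$ is compatible with the dominance order, we get $Q=m_{\lambda-\rho_l}+\text{lower}$. The same reasoning shows that $\Delta_\eps\, m_\nu$ has the form $m^{(\eps)}_{\nu+\rho_l}+\text{lower}$ in $M^{(\eps)}$ for $\nu\in P_+$.

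\textbf{Step 3: orthogonality.} Compute the weight: $\prod_\alpha(1-t^\alpha)^{(k+l)(\alpha)}=|\Delta_\eps|^2\prod_\alpha(1-t^\alpha)^{k(\alpha)}$ on $T$, up to a positive constant. This holds because $(1-t^\alpha)(1-t^{-\alpha})=|e^{\alpha/2}-e^{-\alpha/2}|^2$ on the compact torus, and the shift $l$ adds exponent $1$ exactly on the roots $\pm\alpha$ with $l(\alpha)=1$. Care is needed with non-reduced roots: $l$ is defined on $R$, and the product in $\Delta_\eps$ runs over $R^0_+$, which matches $k^0$ increasing by $l(\alpha)+\tfrac12 l(\alpha/2)$. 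This bookkeeping is the main obstacle, and I would check it directly in the $BC$ case. Given the weight identity, $(Q,m_\nu)_{k+l}=(P^{(\eps)}_\lambda(k),\Delta_\eps m_\nu)_k$. By Step 2, $\Delta_\eps m_\nu$ is a combination of $m^{(\eps)}_{\mu}$ with $\mu\le\nu+\rho_l<\lambda$, so the inner product vanishes by the defining property (ii) of $P^{(\eps)}_\lambda(k)$.

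Uniqueness of the polynomial characterised by (i) and (ii) then gives $Q=P_{\lambda-\rho_l}(k+l)$.
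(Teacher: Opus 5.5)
Your proposal is correct, and it is the standard proof of the shift principle. The paper itself gives no argument here; it cites Opdam's lecture notes for the sign character. The structure is:
\begin{itemize}
\item divisibility of $\eps$-symmetric polynomials by $\Delta_\eps$ gives a $W$-invariant quotient with leading term $m_{\lambda-\rho_l}$;
\item the weight identity $\prod_{\alpha\in R}(1-t^\alpha)^{(k+l)(\alpha)}=|\Delta_\eps(t)|^2\prod_{\alpha\in R}(1-t^\alpha)^{k(\alpha)}$ transports the Gram--Schmidt orthogonality from $(\cdot,\cdot)_k$ to $(\cdot,\cdot)_{k+l}$. This identity is exact, with no constant.
\end{itemize}
For reduced $R$ nothing is missing.

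The one step that does not close as written is the $BC_n$ bookkeeping you flagged. Since $r_{\epsilon_i}=r_{2\epsilon_i}$, reading the definition of $l$ literally on all of $R$ gives $l(\epsilon_i)=l(2\epsilon_i)=1$ whenever $\eps(r_{\epsilon_i})=-1$. This breaks the step in three ways:
\begin{itemize}
\item Passing from $k$ to $k+l$ multiplies the weight by $|1-t^{\epsilon_i}|^2|1-t^{2\epsilon_i}|^2$. The factor of $\Delta_\eps$ indexed by $2\epsilon_i\in R^0_+$ contributes only $|t^{\epsilon_i}-t^{-\epsilon_i}|^2=|1-t^{2\epsilon_i}|^2$.
\item Correspondingly, $k^0(2\epsilon_i)$ increases by $l(2\epsilon_i)+\tfrac12 l(\epsilon_i)=\tfrac32$, not by $1$.
\item $\rho_l=\tfrac12\sum_{\alpha\in R_+}l(\alpha)\alpha$ lies in $\tfrac32\sum_i\epsilon_i+P$, hence outside $P=\bigoplus_i\bbZ\epsilon_i$, so the statement would not even make sense.
\end{itemize}
So your assertion that the shift matches $k^0$ increasing by $l(\alpha)+\tfrac12 l(\alpha/2)$ is only true when $l(\alpha/2)=0$. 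The step requires $l$ to be supported on $R^0$, i.e.\ $l(\epsilon_i)=0$ and $l(2\epsilon_i)=1$. This is the convention the paper actually uses: in its rank-one example, $l$ corresponds to $(0,1)$.

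With that convention:
\begin{itemize}
\item the weight identity holds exactly;
\item $\rho_l=\tfrac12\sum_{\alpha\in R^0_+,\,l(\alpha)=1}\alpha$ satisfies $(\rho_l,\beta^\vee)=l(\beta)$ for every simple $\beta\in R^0_+$, so $\rho_l\in P$;
\item $\rho_l$ is the leading exponent of $\Delta_\eps$, and $\Delta_\eps\in\bbC[P]$, so no detour through $\bbC[P/2]$ is needed.
\end{itemize}
The rest of your argument then goes through unchanged.
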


\begin{proof}
    We refer the reader to \cite[Thm.5.8]{Opd00} for the case of the sign character, and the proof carries over mutatis mutandis for the remaining cases.
\end{proof}

Writing $\pi^{(\eps)}_+(k) = \prod_{\alpha \in R^0_+, \thinspace l(\alpha) = 1}\left(T_{\alpha^\vee}(k) + k^0(\alpha)\right)$, the $\eps$-forward shift operator $G^{(\eps)}_+(k)$ is defined to be the differential operator
\[
    G^{(\eps)}_+(k) = \Rad\big(\Delta_\eps^{-1}T_{\pi^{(\eps)}_+(k)}(k)\big).
\]
A backward variant can be defined similarly, see \cite[Def.5.9]{Opd00}.

The following theorem can be found in \cite[Thm.5.10]{Opd00}; however, there it is only stated for the sign character. We revisit its proof to illustrate that it applies to any character.

\begin{theorem}\label{thm:sym_shift_factor}
    The $\eps$-forward shift operator $G^{(\eps)}_+(k)$ shifts the multiplicity of the symmetric Heck-man--Opdam polynomials by $l$:
    \[
        G^{(\eps)}_+(k)P_\lambda(k) = h^{(\eps)}_+(\lambda, k) P_{\lambda - \rho_l}(k + l), \qquad \lambda \in P_+,
    \]
    with $h^{(\eps)}_+(\lambda, k) = \prod_{\alpha \in R^0_+, \thinspace l(\alpha) = 1}\big((\lambda + \rho_k, \alpha^\vee) - k^0(\alpha)\big)$.
\end{theorem}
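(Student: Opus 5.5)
The plan is to compute $G^{(\eps)}_+(k)P_\lambda(k)$ directly in the non-symmetric basis, and then convert the answer with Lemma \ref{lem:eHO_in_nHO_basis} and the shift principle (Proposition \ref{prop:shift_prin}).

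First, since $P_\lambda(k)\in M^W(k)$ and $G^{(\eps)}_+(k)$ is the radial part of $\Delta_\eps^{-1}T_{\pi^{(\eps)}_+(k)}(k)$, we have
\[
G^{(\eps)}_+(k)P_\lambda(k)=\Delta_\eps^{-1}T_{\pi^{(\eps)}_+(k)}(k)P_\lambda(k).
\]
Next expand $P_\lambda(k)$ by Lemma \ref{lem:eHO_in_nHO_basis} for the trivial character:
\[
P_\lambda(k)=\frac{1}{|W_\lambda|}\sum_{\mu\in W\lambda}\bfc^+_{-k}(\overline v(\mu))(r_k(\lambda))\,E_\mu(k).
\]
Each $E_\mu(k)$ is an eigenfunction of $T_{\pi^{(\eps)}_+(k)}(k)$ with eigenvalue
\[
\prod_{\alpha\in R^0_+,\,l(\alpha)=1}\big((r_k(\mu),\alpha^\vee)+k^0(\alpha)\big).
\]
The operator is not $W$-invariant, so $T_{\pi^{(\eps)}_+}P_\lambda$ is a genuinely new combination of the $E_\mu$. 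The goal is to show that it equals a multiple of $P^{(\eps)}_{\lambda'}(k)$ with $\lambda'=\lambda+\rho_l$. This is the natural candidate, because $\lambda+\rho_l\in\rho_l+P_+$ and, by Proposition \ref{prop:shift_prin}, $\Delta_\eps^{-1}P^{(\eps)}_{\lambda+\rho_l}(k)=P_\lambda(k+l)$.

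I expect the main obstacle at this point. The spaces $M_\lambda(k)$ and $M_{\lambda+\rho_l}(k)$ are different modules, so a naive coefficient comparison cannot work. I would therefore argue via the symmetric structure instead. Let $F=\Delta_\eps^{-1}T_{\pi^{(\eps)}_+(k)}(k)P_\lambda(k)$.

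\begin{enumerate}[(a)]
\item Show that $T_{\pi^{(\eps)}_+(k)}(k)P_\lambda(k)$ is $\eps$-symmetric. Use Definition \ref{def:graded_Hecke_algebra}(iii): for a simple reflection $r_i$ with $l(\alpha_i)=1$, $r_i$ permutes the factors of $\pi^{(\eps)}_+$ for roots other than $\alpha_i$. On the factor $T_{\alpha_i^\vee}+k^0(\alpha_i)$, the relation $r_i\xi=r_i(\xi)r_i-k^0(\alpha_i)(\xi,\alpha_i)$ with $\xi=\alpha_i^\vee$ converts it into $-(T_{\alpha_i^\vee}+k^0(\alpha_i))r_i$ modulo terms killed on symmetric vectors. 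The case $l(\alpha_i)=0$ is analogous, with the product simply permuted. This is the computation where care is needed, and I would check it first in rank one.
\item Conclude that $F$ is $W$-invariant. Then $F$ is an eigenfunction of the radial parts \eqref{eq:syst_hypergeometric_diff_eq} at multiplicity $k+l$ with eigenvalue $p(\lambda+\rho_k)=p((\lambda)+\rho_{k+l}-\rho_l)$. The natural route is the standard transmutation identity $\Rad(T_p(k+l))\,G^{(\eps)}_+(k)=G^{(\eps)}_+(k)\,\Rad(T_p(k))$, proven by the same $\eps$-symmetry argument, which would show that $F$ has the eigenvalue of $P_{\lambda}(k+l)$ up to the $\rho$ conventions.
\item Finish with a leading term argument. $F$ has leading term $e^{\lambda-\rho_l}$ times a constant, consistent with the theorem's indexing by $P_{\lambda-\rho_l}(k+l)$, since $\Delta_\eps^{-1}$ lowers by $\rho_l$. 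Because the eigenvalues separate dominant weights, $F$ is a multiple of $P_{\lambda-\rho_l}(k+l)$.
\end{enumerate}

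Finally, I would compute $h^{(\eps)}_+(\lambda,k)$ from the leading coefficient. The top term $e^\lambda$ of $P_\lambda(k)$ occurs only in $E_\lambda(k)$, with coefficient $1$ (the $\bfc$-function at $e$ is $1$). The eigenvalue of $E_\lambda(k)$ involves $r_k(\lambda)=w_\lambda(\lambda+\rho_k)$, and tracking the top monomial through the factor $\Delta_\eps^{-1}$ produces the product over $\alpha$ with $l(\alpha)=1$. The expected difficulty is a sign: the leading term of $\Delta_\eps^{-1}$ paired with the lowest-order part of $T_{\pi}$ should give $\prod\big((\lambda+\rho_k,\alpha^\vee)-k^0(\alpha)\big)$. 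Here I would follow \cite[Thm.5.10]{Opd00}, replacing the set $R^0_+$ by $\{\alpha\in R^0_+ : l(\alpha)=1\}$ throughout, and check that only $W$-stability of this subset is used.
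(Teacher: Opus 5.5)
Your step (a) is the right key computation and is the paper's first step: on $M^W(k)$ one has $r_iT_{\pi^{(\eps)}_+}(k)=\eps(r_i)T_{\pi^{(\eps)}_+}(k)r_i$. The gap is in what you do with it.

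\textbf{The target is mis-indexed.} $T_{\pi^{(\eps)}_+}(k)$ commutes with every $T_p(k)$, $p\in S(\frakh)^W$. Hence $T_{\pi^{(\eps)}_+}(k)P_\lambda(k)$ stays in $M_\lambda(k)$; you note yourself that it is a combination of the $E_\mu(k)$ with $\mu\in W\lambda$. It therefore cannot be a nonzero multiple of $P^{(\eps)}_{\lambda+\rho_l}(k)\in M_{\lambda+\rho_l}(k)$. Moreover, the shift principle would then produce $P_\lambda(k+l)$, which contradicts the statement. The ``obstacle'' you describe comes from this mis-indexing.

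\textbf{The missing step is short.} Step (a) together with membership in $M_\lambda(k)$ puts $T_{\pi^{(\eps)}_+}(k)P_\lambda(k)$ in $M^{(\eps)}_\lambda(k)$. This space is at most one-dimensional: it is spanned by $P^{(\eps)}_\lambda(k)$ if $\lambda\in\rho_l+P_+$ and is zero otherwise. So $T_{\pi^{(\eps)}_+}(k)P_\lambda(k)=cP^{(\eps)}_\lambda(k)$, and Proposition \ref{prop:shift_prin} gives $\Delta_\eps^{-1}P^{(\eps)}_\lambda(k)=P_{\lambda-\rho_l}(k+l)$ directly.

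\textbf{Steps (b)--(c) are unnecessary, and (b) is not justified as written.} The identity $\Rad(T_p(k+l))G^{(\eps)}_+(k)=G^{(\eps)}_+(k)\Rad(T_p(k))$ is Corollary \ref{cor:sym_transmutation_prop}, which the paper deduces \emph{from} this theorem. It does not follow from the computation in (a). Proving it independently needs the operator-level shift principle $\Delta_\eps^{-1}T_p(k)\psi=T_p(k+l)\Delta_\eps^{-1}\psi$ for $\psi\in M^{(\eps)}(k)$. Also, since $\lambda+\rho_k=(\lambda-\rho_l)+\rho_{k+l}$, the value $p(\lambda+\rho_k)$ is the eigenvalue of $P_{\lambda-\rho_l}(k+l)$, not of $P_\lambda(k+l)$.

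\textbf{The constant is computed at the wrong end of the orbit.} In the ordering of Section \ref{sec:orthogonal_polynomials}, $\overline{v}(\lambda)=e$ is Bruhat-minimal. So $e^\lambda$ is the \emph{lowest} monomial of $W\lambda$, and it generally occurs in $E_\mu(k)$ for $\mu\neq\lambda$ as well. The coefficient of $E_\lambda(k)$ in Lemma \ref{lem:eHO_in_nHO_basis} is $|W_\lambda|^{-1}\bfc^+_{-k}(e)(r_k(\lambda))$. This is a product over all of $R^0_+$ and is not $1$; it is $\bfc^-_k(e)$ that is the empty product.

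\textbf{How the paper computes the constant.} The top monomial is $e^{w_0\lambda}$, and the paper compares its coefficients in $T_{\pi^{(\eps)}_+}(k)P_\lambda(k)=cP^{(\eps)}_\lambda(k)$, before dividing by $\Delta_\eps$.
\begin{itemize}
\item By triangularity, and since $r_k(w_0\lambda)=w_0(\lambda+\rho_k)$, the left side has coefficient $\prod_{\alpha\in R^0_+,\,l(\alpha)=1}\big((w_0(\lambda+\rho_k),\alpha^\vee)+k^0(\alpha)\big)$.
\item Because $w_0$ maps $\{\alpha\in R^0_+ : l(\alpha)=1\}$ onto its negative, this equals $\eps(w_0)h^{(\eps)}_+(\lambda,k)$.
\item The coefficient of $e^{w_0\lambda}$ in $P^{(\eps)}_\lambda(k)$ is $\eps(w_0w_\lambda)=\eps(w_0)$.
\end{itemize}
Hence $c=h^{(\eps)}_+(\lambda,k)$. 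There are no monomials to track through $\Delta_\eps^{-1}$, and no sign is left undetermined.
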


\begin{proof}
    From Definition \ref{def:graded_Hecke_algebra}(iii), it can be shown that
    \[
        r_i T_{\pi^{(\eps)}_+}(k)\big|_{M^W(k)} = \eps(r_i)T_{\pi^{(\eps)}_+}(k)r_i\big|_{M^W(k)},
    \]
    cf.~the proof of \cite[Lem.5.8(a)]{Opd00}. It follows that $T_{\pi^{(\eps)}_+}(k)P_\lambda(k) = cP^{(\eps)}_\lambda(k)$ for some constant $c$.\vspace{-0.25cm} This shows that $G^{(\eps)}_+(k)P_\lambda(k) = 0$ when $\lambda - \rho_l \notin P_+$ since then $P^{(\eps)}_\lambda(k) = 0$. This agrees with the theorem because $P_{\lambda - \rho_l}(k + l) = 0$ in this case. Suppose that $\lambda \in \rho_l + P_+$. Using the triangularity of the Dunkl--Cherednik operators, we arrive at
    \[
        T_{\pi^{(\eps)}_+}(k)e^{w_0\lambda} = \eps(w_0)h^{(\eps)}_+(\lambda, k)e^{w_0\lambda} + \text{lower-order terms},
    \]
    so that $c = h^{(\eps)}_+(\lambda, k)$ after comparing the coefficients of $e^{w_0\lambda}$. Therefore,
    \[
        G^{(\eps)}_+(k)P_\lambda(k) = h^{(\eps)}_+(\lambda, k)\Delta_\eps^{-1}P^{(\eps)}_\lambda(k) = h^{(\eps)}_+(\lambda, k)P_{\lambda - \rho_l}(k + l),
    \]
    using shift principle (Proposition \ref{prop:shift_prin}) for the second equality.
\end{proof}

A distinctive feature of the symmetric shift operators is the transmutation property with the differential operators corresponding to the system of hypergeometric differential equations \eqref{eq:syst_hypergeometric_diff_eq}. The transmutation property of $G^{(\eps)}_+(k)$ is an immediate consequence of Theorem~\ref{thm:sym_shift_factor}.

\begin{corollary}\label{cor:sym_transmutation_prop}
    The operator $G^{(\eps)}_+(k)$ satisfies the transmutation property
    \[
        G^{(\eps)}_+(k) \Rad(T_p(k)) = \Rad(T_p(k + l)) G^{(\eps)}_+(k), \qquad p \in S(\frakh)^W,
    \]
    viewed as operators from $M(k)$ to $M(k \pm l)$.
\end{corollary}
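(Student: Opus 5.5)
The plan is to verify the transmutation identity on a basis of $M^W(k)$, namely the symmetric polynomials $P_\lambda(k)$ with $\lambda \in P_+$. Both sides are differential operators, so it suffices to check them on $M^W(k)$. Indeed, a differential operator with trigonometric-polynomial coefficients (or coefficients in the localization by $\Delta_\eps$) that vanishes on all of $M^W(k) = \bbC[P]^W$ must vanish identically. This is because $\bbC[P]^W$ contains enough functions: $\bbC[P]$ is a finite module over $\bbC[P]^W$, and the fraction field of $\bbC[P]$ is generated over that of $\bbC[P]^W$ by a separable extension. Hence an operator killing all $W$-invariants is zero as an operator on the $W$-invariants, and this is precisely the sense in which the radial parts and the corollary are meant (operators from $M^W(k)$ to $M^W(k+l)$). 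So I will interpret the statement as an identity of operators on $M^W(k)$ and check it on the basis $(P_\lambda(k))_{\lambda\in P_+}$.

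Fix $\lambda \in P_+$ and $p \in S(\frakh)^W$. By \eqref{eq:syst_hypergeometric_diff_eq}, $\Rad(T_p(k))P_\lambda(k) = p(\lambda+\rho_k)P_\lambda(k)$. Applying Theorem~\ref{thm:sym_shift_factor} then gives
\[
G^{(\eps)}_+(k)\Rad(T_p(k))P_\lambda(k) = p(\lambda+\rho_k)\,h^{(\eps)}_+(\lambda,k)\,P_{\lambda-\rho_l}(k+l).
\]
On the other side, Theorem~\ref{thm:sym_shift_factor} followed by \eqref{eq:syst_hypergeometric_diff_eq} at multiplicity $k+l$ gives
\[
\Rad(T_p(k+l))G^{(\eps)}_+(k)P_\lambda(k) = h^{(\eps)}_+(\lambda,k)\,p(\lambda-\rho_l+\rho_{k+l})\,P_{\lambda-\rho_l}(k+l).
\]
This second computation applies when $\lambda-\rho_l\in P_+$. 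Otherwise both sides vanish, because $P_{\lambda-\rho_l}(k+l)=0$ by the convention used in the proof of Theorem~\ref{thm:sym_shift_factor}.

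The remaining point is the identity $\rho_{k+l} - \rho_l = \rho_k$. This follows from the definition $\rho_k = \sum_{\alpha\in R_+}k(\alpha)\alpha$, which is linear in $k$. Hence $\lambda - \rho_l + \rho_{k+l} = \lambda + \rho_k$, and the eigenvalues agree. Note that $\rho_l$ as used in the shift principle must be the same linear expression evaluated at $l$, and I will check that the paper's conventions (with $\tfrac12$ factors) are consistent here. The two sides therefore coincide on every $P_\lambda(k)$, and so they coincide on $M^W(k)$.

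The only delicate step is the first one: justifying that agreement on $M^W(k)$ suffices for the identity "as operators". I expect the intended reading to be that both sides are regarded as acting on symmetric polynomials, in which case the basis argument closes the proof immediately. If a genuine identity of differential operators is required, I would invoke the standard fact that a differential operator with coefficients in the localized ring $\bbC[P][\Delta_\eps^{-1}]$ annihilating $\bbC[P]^W$ is zero. This holds because $W$-invariants separate generic points and their differentials span the cotangent space on the regular locus.
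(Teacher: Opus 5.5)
Your proposal is correct and follows the paper's proof. You check the identity on the basis $(P_\lambda(k))_{\lambda\in P_+}$ of $M^W(k)$ and compare the scalars $p(\lambda+\rho_k)\,h^{(\eps)}_+(\lambda,k)$ and $h^{(\eps)}_+(\lambda,k)\,p(\lambda-\rho_l+\rho_{k+l})$ coming from Theorem~\ref{thm:sym_shift_factor} and \eqref{eq:syst_hypergeometric_diff_eq}, which agree by linearity of $\rho$ in the multiplicity. The only difference is that the paper cites Heckman--Schlichtkrull (Prop.~1.3.7) for differential operators being determined by their action on $W$-invariants, whereas you justify this directly via invariants serving as local coordinates on the regular locus, which is valid.
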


\begin{proof}
    We recall from \cite[Prop.1.3.7]{HeSc94} that differential operators acting on $M(k)$ are uniquely determined by their action on the invariant subspace $M^W(k)$. Therefore, it suffices to verify that $G^{(\eps)}_+(k) \Rad(T_p(k))$ and $\Rad(T_p(k + l)) G^{(\eps)}_+(k)$ agree when applied to the basis $(P_\lambda(k))_{\lambda \in P_+}$ of $M^W(k)$. Applying both operators to $P_\lambda(k)$ yields multiples of $P_{\lambda - \rho_l}(k + l)$ with coefficients
    \[
        h^{(\eps)}_+(\lambda,k)p(-\lambda - \rho_{k'}), \quad h^{(\eps)}_+(\lambda,k)p(-(\lambda - \rho_l) - \rho_{k + l})
    \]
    coming from Theorem \ref{thm:sym_shift_factor}. The equality of these coefficients follows from noticing that $\lambda - \rho_l + \rho_{k + l} = \lambda + \rho_k$, which completes the proof of the corollary.
\end{proof}


\subsection{KZ-equations, Cherednik--Matsuo correspondence, and the KZ-shift operator}\label{sec:kz_equations}
The Cherednik--Matsuo correspondence describes a relationship between the solutions of the spectral problem of Dunkl--Cherednik operators and solutions of the KZ-equations. In particular, it states that $\eps$-symmetric Heckman--Opdam polynomials correspond to polynomial solutions of the KZ-equations. Matsuo \cite{Mat92} proved a correspondence between the solutions of the system of hypergeometric differential equations \eqref{eq:syst_hypergeometric_diff_eq} and the solutions of the KZ-equations
\[
    \text{KZ}_\lambda(k) = \big\{\Phi \in (\bbC[P] \otimes \bbC[W])^W \mid \nabla_\xi(\lambda, k)\Phi = 0, \quad \xi \in \frakh\big\}, \qquad \lambda \in P,
\]
where $\nabla(\lambda, k)$ is the KZ-connection. The covariant differentiation is given by
\[
    \begin{aligned}
        \nabla_\xi(\lambda, k)(\phi \otimes w) &= \partial_\xi \phi \otimes w \\
        &+ \frac{1}{2}\sum_{\alpha \in R_+} k(\alpha)(\xi, \alpha) \left(\frac{1 + e^{-\alpha}}{1 - e^{-\alpha}} \phi \otimes (1 - r_\alpha) w - \phi \otimes r_\alpha \eps_\alpha w\right) - (\xi, wr_k(\lambda)) \phi \otimes w,
    \end{aligned}
\]
with
\[
    r_\alpha(\phi \otimes w) = \phi \otimes (r_\alpha w), \qquad
    \eps_\alpha(\phi \otimes w) =
    \begin{cases}
        \phi \otimes w & \text{if } w^{-1}(\alpha) \in R_+; \\
        -(\phi \otimes w) & \text{otherwise}.
    \end{cases}
\]

A generalization of this correspondence was studied by Cherednik in \cite{Che94}. Felder and Veselov~\cite{FeVe94} used this correspondence to construct a shift operator for the KZ-equations. For regular $\lambda \in P_+$ and the sign character, they considered the maps
\[
    \begin{aligned}
        \text{m}_\lambda(k) &: \text{KZ}_\lambda(k) \to M^W_\lambda(k), \thinspace \sum_{w \in W} \phi_w \otimes w \mapsto \sum_{w \in W} \phi_w; \\
        \text{ch}_\lambda(k) &: \text{KZ}_\lambda(k) \to M_{\lambda - \rho_l}^W(k + l), \thinspace \sum_{w \in W} \phi_w \otimes w \mapsto \Delta_\eps^{-1}\sum_{w \in W} \eps(w)\phi_w,
    \end{aligned}
\]
which are isomorphisms due to the Cherednik--Matsuo correspondence. In \cite[\S3]{FeVe94}, the authors invert the isomorphism $\text{m}_\lambda(k)$ explicitly\footnote{Only the rational case is under consideration by the authors and they assert that it can be achieved in the trigonometric case as well.} and use this inverse to define the KZ-shift operator as the following composition:
\[
    \text{m}_{\lambda - \rho}(k + l)^{-1} \circ \text{ch}_\lambda(k) : \text{KZ}_\lambda(k) \to \text{KZ}_{\lambda - \rho}(k + l).
\]

For $\lambda \in P_+$ and $\mu \in W\lambda$, there is an isomorphism
\[
    M_\lambda^\mu(k) \to \text{KZ}_\mu(k), \thinspace \phi \mapsto \sum_{w \in W} \phi^w \otimes w,
\]
see \cite[\S3]{Opd95}. Thus, the pullback of the KZ-shift operator provides a candidate shift operator for the non-symmetric Heckman--Opdam polynomials for fixed regular weights $\mu = \lambda$. The limitation of this operator is that it depends on the spectral parameter, the weight $\lambda$. This dependence on the weight precludes the operator from being a differential-reflection operator satisfying a certain transmutation property with the Dunkl--Cherednik operators.


\section{Shift operators for non-symmetric Heckman--Opdam polynomials}\label{sec:non_sym_shift_operators}\vspace{-0.1cm}
We begin by providing a detailed account of the non-symmetric shift operators for fixed weights.\vspace{-0.05cm}


\subsection{Non-symmetric shift operators for fixed weights}\label{sec:non_sym_shift_operators_1}
Taking inspiration from the construction of the KZ-shift operator from Section \ref{sec:kz_equations}, we consider the following sequence of maps:
\[
    M_\lambda^\mu(k) \overset{\text{(i)}}{\longrightarrow} M^{(\eps_\pm)}_\lambda(k) \overset{\text{(ii)}}{\longrightarrow} M^W_{\lambda\mp\rho_l}(k \pm l) \overset{\text{(iii)}}{\longrightarrow} M_{\lambda\mp\rho_l}^{\mu_{\eps,\pm}}(k \pm l),
\]
with the notation $\eps_+ = \eps$, $\eps_- = \triv$, and $\mu_{\eps,\pm} = \overline{v}(\mu)w_\lambda(\lambda\mp\rho_l)$, where:
\begin{enumerate}[(i)]
    \item $U_{\eps_\pm} : M_\lambda^\mu(k) \to M_\lambda^{(\eps_\pm)}(k)$ is the $\eps_\pm$-symmetrizer defined by $U_{\eps_\pm}(\phi) = \sum_{w \in W}\eps_\pm(w)\phi^w$;
    \item $\Delta_\eps^\mp : M_\lambda^{(\eps_\pm)}(k) \to M_{\lambda\mp\rho_l}^W(k \pm l)$ given by multiplication with $\Delta_\eps^{\mp 1}$, which is how the shift principle (Proposition \ref{prop:shift_prin}) manifests itself in this approach;
    \item $Q_{\mu_{\eps,\pm}}(k \pm l) : M_{\lambda\mp\rho_l}(k \pm l) \to M_{\lambda\mp\rho_l}^{\mu_{\eps,\pm}}(k \pm l)$ is a certain scalar multiple of the projection from $M_{\lambda\mp\rho_l}(k \pm l)$ onto the subspace $M_{\lambda\mp\rho_l}^{\mu_{\eps,\pm}}(k \pm l)$.
\end{enumerate}
Taking the composition of these maps yields the $\eps$-forward/backward non-symmetric shift operator for fixed weights $\lambda \in P_+$ and $\mu \in W\lambda$:
\[
    \calG^{(\eps)}_\pm(\mu, k) = Q_{\mu_{\eps,\pm}}(k \pm l) \circ \Delta_\eps^\mp \circ U_{\eps_\pm} : M_\lambda^\mu(k) \to M_{\lambda \mp \rho_l}^{\mu_{\eps,\pm}}(k \pm l).
\]
Surprisingly, this fixed-weight non-symmetric shift operator provides us with an ansatz for a genuine non-symmetric shift operator that is no longer dependent on the weights $\lambda$ nor $\mu$. Generically, the non-symmetric shift operators agree with one another; however, this may fail to hold for degenerate cases of $\mu$.

We proceed to examine the operator $Q_{\mu_{\eps,\pm}}(k \pm l)$ in detail. The following lemma is essential for its definition.

\begin{lemma}\label{lem:minimal_poly}
    Writing $\pi = \prod_{\alpha \in R^0_+}(\cdot,\alpha^\vee)$, there exists a polynomial $q \in S(\frakh) \otimes S(\frakh)$ such that
    \begin{equation}\label{eq:minimal_poly_prop}
        q(\xi, w\xi) = \pi(\xi)\delta_{e,w}, \qquad w \in W,
    \end{equation}
    for all regular $\xi \in \frakh$.
\end{lemma}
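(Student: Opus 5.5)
One natural candidate is $q(\xi,\eta)=\prod_{\alpha\in R^0_+}(\xi+\eta,\alpha^\vee)/2^{|R^0_+|}$, but it is only an auxiliary example. For $w=e$ it gives $\pi(\xi)$. For $w\neq e$ there is a positive root $\alpha$ with $w^{-1}\alpha<0$, but $(\xi+w\xi,\alpha^\vee)$ need not vanish. So this factor does not vanish, and I will abandon this formula in favour of a general interpolation argument.

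The plan is to use the structure of $S(\frakh)$ as a module over $S(\frakh)^W$. By Chevalley's theorem, $S(\frakh)$ is free over $S(\frakh)^W$ of rank $|W|$. Let $h_1,\dots,h_N$ be a homogeneous basis, with $N=|W|$. By Steinberg's theorem, the matrix $A(\xi)=(h_j(w\xi))_{w\in W,\,j}$ has determinant $c\,\pi(\xi)^{m}$, where $c\neq 0$ and $m=|W|/2$; this is the standard discriminant computation. Hence $A(\xi)$ is invertible for regular $\xi$, and $A^{-1}=\operatorname{adj}(A)/\det A$.

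To gain a factor of $\pi(\xi)$, I will use the Lagrange-interpolation form instead. I look for $q(\xi,\eta)=\sum_j a_j(\xi)h_j(\eta)$ with $\sum_j a_j(\xi)h_j(w\xi)=\pi(\xi)\delta_{e,w}$. That is, $A(\xi)a(\xi)=\pi(\xi)e_{e}$, so $a(\xi)=\pi(\xi)A(\xi)^{-1}e_e$. I must show that this is polynomial.

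The key step is the observation that $e_e$ is the indicator of the identity coset. It is realised by the function $\eta\mapsto\prod_{\alpha\in R_+^0}(\eta-\ldots)$, which is itself the thing we are constructing, so this direction is circular. The cleaner route is the following. The coefficient vector $A(\xi)^{-1}e_e$ consists of the $e$-th column of $\operatorname{adj}A/\det A$. The Cramer minors are antisymmetric under the appropriate $W$-action, so each minor is divisible by $\pi^{m-1}$, giving $a_j$ polynomial. Proving this divisibility is the main obstacle.

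As a fallback I would use separation of points, which avoids Steinberg altogether. Over the field $F=\mathrm{Frac}(S(\frakh))$, the points $w\xi$ with $w\in W$ are distinct, because $\xi$ is generic. So there is an interpolation polynomial in $\eta$, with coefficients in $F$, taking value $1$ at $\xi$ and $0$ at $w\xi$ for $w\neq e$. A concrete choice is
\[
\prod_{w\neq e}\frac{(\eta-w\xi,\beta_w)}{(\xi-w\xi,\beta_w)},
\]
where each $\beta_w$ is chosen with $(\xi-w\xi,\beta_w)\neq0$.

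I pick $\beta_w$ so that the denominators are products of the factors $(\xi,\alpha^\vee)$. Write $w=r_\alpha w'$, or choose $\beta_w$ by hand. Multiplying by the denominators and then symmetrising over $W$ gives a polynomial $q_0(\xi,\eta)$ with $q_0(\xi,w\xi)=D(\xi)\delta_{e,w}$. Here $D(\xi)$ is a nonzero polynomial. Replacing $q_0(\xi,\eta)$ by $\sum_{u\in W}\eps(u)\,q_0(u\xi,u\eta)$ with the sign character keeps the $\delta$-property, and makes $D$ $W$-anti-invariant. The averaging must be chosen so that $D$ does not vanish.

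Any $W$-anti-invariant $D$ is divisible by $\pi$, so $D=\pi f$ with $f\in S(\frakh)^W$. The last step is to reduce to $f=1$, which requires $\deg D=\deg\pi$. I expect this degree control to be the genuine difficulty. I would attack it by induction on the length of $w$, using divided-difference operators in $\eta$ to kill the points $w\xi$ one simple reflection at a time, with each step costing exactly one factor $(\xi,\alpha_i^\vee)$.
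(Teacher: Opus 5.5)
\textbf{There is a genuine gap: the one nontrivial step is left open.} Your first route is the paper's construction. You take the matrix $A(\xi)=(h_j(w\xi))_{w,j}$ built from a homogeneous basis of $S(\frakh)$ over $S(\frakh)^W$ (the paper uses harmonic polynomials, which makes no difference) and set $a(\xi)=\pi(\xi)A(\xi)^{-1}e_e$. The only real content of the lemma is that this vector is polynomial in $\xi$, and that is exactly what you do not prove.

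The reason you sketch does not work, for two reasons:
\begin{enumerate}[(i)]
    \item Anti-invariance alone only guarantees one factor of $\pi$, not $\pi^{m-1}$.
    \item The relevant cofactors are not anti-invariant. Since $A(u\xi)_{w,j}=A(\xi)_{wu,j}$, the substitution $\xi\mapsto u\xi$ permutes the rows. It therefore sends the cofactor deleting row $e$ and column $j$ to $\pm$ the cofactor deleting row $u$ and column $j$.
\end{enumerate}
The fallback does not close the gap either. You would need the averaged $D$ to be nonzero and of degree exactly $\deg\pi$, and neither is established. The divided-difference induction is only announced.

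\textbf{The missing idea is a row-pairing argument.} This is what the paper takes from Varadarajan's exercise. Fix $\alpha\in R^0_+$.
\begin{enumerate}[(i)]
    \item The fixed-point-free involution $w\mapsto wr_\alpha$ splits the $N$ rows of $A(\xi)$ into $m=N/2$ pairs.
    \item Rows $w$ and $wr_\alpha$ coincide on the hyperplane $(\xi,\alpha)=0$, because $r_\alpha\xi=\xi$ there. So replacing row $wr_\alpha$ by its difference with row $w$ gives, for each pair, a row whose entries are divisible by the linear form $(\xi,\alpha^\vee)$.
    \item The entries of $A(\xi)^{-1}e_e$ are the cofactors obtained by deleting row $e$ and one column, divided by $\det A(\xi)$. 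Deleting row $e$ breaks only one pair, so the same row operations inside the minor show each such cofactor is divisible by $(\xi,\alpha^\vee)^{m-1}$.
    \item The linear forms $(\cdot,\alpha^\vee)$, $\alpha\in R^0_+$, are pairwise non-proportional, since $R^0$ is reduced. Hence each cofactor is divisible by $\pi^{m-1}$.
\end{enumerate}
Combined with $\det A=c\,\pi^m$, $c\neq 0$, this makes $a(\xi)$ polynomial. The same pairing argument plus a degree count also gives the determinant formula you attributed to Steinberg. With this step inserted your first approach is complete, and the fallback can be dropped.
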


\begin{proof}
    The polynomial $q$ is constructed in \cite[Lem.3.9(1)]{Opd95}, which is based on \cite[Ch.4:Ex.70]{Var84}. Since the polynomial $q$ is integral to what follows, we expound on its construction. Let $H(\frakh) \subset S(\frakh)$ be the space of harmonic polynomials on $\frakh$ and recall that it is isomorphic to $\bbC[W]$ as a vector space. With respect to the canonical basis $(\delta_w)_{w \in W}$ of $\bbC[W]$ and a fixed regular $\xi \in \frakh$, a linear isomorphism is given by
    \[
        U(\xi) : H(\frakh) \to \bbC[W], \thinspace h \mapsto \sum_{w \in W} h^w(\xi)\delta_w.
    \]
    Defining $q(\xi, \cdot) = \pi(\xi)U(\xi)^{-1}(\delta_e) \in H(\frakh)$, for regular $\xi \in \frakh$, satisfies the desired property \eqref{eq:minimal_poly_prop}; however, a priori, it is a rational function in $\xi$.
    
    To show that $q$ is in fact polynomial, we take a basis $u_1,\dots,u_d$ of $H(\frakh)$, with $d = |W|$, consisting of homogeneous polynomials, so that the linear isomorphism $U(\xi)$ is represented by the matrix $\hat{U}(\xi) = (u_j^w(\xi))_{1\leq j\leq d, \thinspace w \in W}$. As noted in \cite[Ch.4:Ex.70]{Var84}, the determinant of $U(\xi)$ is a non-zero multiple of $\pi(\xi)^{d/2}$ and the minors of $\hat{U}(\xi)$ are divisible by $\pi(\xi)^{d/2 - 1}$. Therefore, $\pi(\xi)\hat{U}(\xi)^{-1}$ has polynomial entries in $\xi$, which implies that $q$ is polynomial in $\xi$. Explicitly, we find that
    \begin{equation}\label{eq:minimal_poly_exp}
        q = \sum_{j = 1}^d q_j\otimes u_j,
    \end{equation}
    where the polynomials $q_1(\xi), \dots, q_d(\xi)$ are the entries of the first column of $\pi(\xi)\hat{U}(\xi)^{-1}$.
\end{proof}

For $\mu \in P$, we define the operator $Q_\mu(k) = q(r_k(\mu),T(k))$, where $q$ is the polynomial of Lemma~\ref{lem:minimal_poly}. Explicitly, in terms of the expansion \eqref{eq:minimal_poly_exp}, the operator is given by
\[
    Q_\mu(k) = \sum_{j = 1}^d q_j(r_k(\mu))T_{u_j}(k).
\]
The action of the operator $Q_\mu(k)$ is given by the following lemma, cf.~\cite[Lem.3.9(2)]{Opd95}.

\begin{lemma}\label{lem:proj_operator}
    Let $\lambda \in P_+$ and $\mu \in W\lambda$. The operator $Q_\mu(k)$ defines a map from $M_\lambda(k)$ to $M_\lambda^\mu(k)$. In particular, we have
    \[
        Q_\mu(k)E_\nu(k) = \delta_{\mu,\nu}\pi(r_k(\mu))E_\mu(k).
    \]
\end{lemma}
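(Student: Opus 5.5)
Since $(E_\nu(k))_{\nu \in W\lambda}$ is a basis of $M_\lambda(k)$, it is enough to compute $Q_\mu(k)E_\nu(k)$ for each $\nu \in W\lambda$. From this the whole statement follows: linearity then shows that $Q_\mu(k)$ maps $M_\lambda(k)$ into $\bbC E_\mu(k) = M_\lambda^\mu(k)$.

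The plan is to evaluate the expansion $Q_\mu(k) = \sum_{j=1}^d q_j(r_k(\mu))T_{u_j}(k)$ on the eigenbasis. Since $p \mapsto T_p(k)$ is an algebra homomorphism and $T_\xi(k)E_\nu(k) = (\xi, r_k(\nu))E_\nu(k)$ for $\xi \in \frakh$, we get $T_p(k)E_\nu(k) = p(r_k(\nu))E_\nu(k)$ for every $p \in S(\frakh)$. In particular this holds for the harmonic basis elements $u_j$. Hence
\[
    Q_\mu(k)E_\nu(k) = \sum_{j=1}^d q_j(r_k(\mu))u_j(r_k(\nu))E_\nu(k) = q(r_k(\mu), r_k(\nu))E_\nu(k).
\]

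Next, I will relate $r_k(\nu)$ to $r_k(\mu)$ by a Weyl group element. By Section \ref{sec:dunkl_cherednik_operators}, for $\nu \in W\lambda$ we have $r_k(\nu) = \overline{v}(\nu)w_\lambda(\lambda+\rho_k)$, and likewise $r_k(\mu) = \overline{v}(\mu)w_\lambda(\lambda+\rho_k)$. Put $\xi = r_k(\mu)$ and $w = \overline{v}(\nu)\overline{v}(\mu)^{-1}$; then $r_k(\nu) = w\xi$. By the running assumption $k^0 > 0$, $\xi$ is regular, so Lemma \ref{lem:minimal_poly} applies and gives $q(\xi, w\xi) = \pi(\xi)\delta_{e,w}$.

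It remains to check that $w = e$ exactly when $\nu = \mu$, that is, $\overline{v}(\nu) = \overline{v}(\mu) \iff \nu = \mu$. The direction "$\Leftarrow$" is trivial. For "$\Rightarrow$", both weights lie in $W\lambda$, so $\nu = \overline{v}(\nu)\lambda$ and $\mu = \overline{v}(\mu)\lambda$ with $\lambda = \nu_+ = \mu_+$, and equal representatives force $\nu = \mu$. This yields $Q_\mu(k)E_\nu(k) = \delta_{\mu,\nu}\pi(r_k(\mu))E_\mu(k)$.

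The only delicate point is the regularity of $\xi = r_k(\mu)$, which is exactly what the positivity assumption on $k^0$ guarantees. The rest is bookkeeping.
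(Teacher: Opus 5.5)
Your proposal is correct and follows the paper's proof. You evaluate $Q_\mu(k)$ on the eigenbasis to get $q(r_k(\mu),r_k(\nu))E_\nu(k)$, write $r_k(\nu)=\overline{v}(\nu)\overline{v}(\mu)^{-1}r_k(\mu)$, and apply Lemma \ref{lem:minimal_poly} using the regularity of $r_k(\mu)$. Beyond that, you only spell out details the paper leaves implicit: that $p\mapsto T_p(k)$ acts on $E_\nu(k)$ by $p(r_k(\nu))$, and that $\overline{v}(\nu)=\overline{v}(\mu)$ forces $\nu=\mu$.
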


\begin{proof}
    By our running assumption on $k$, we have that $r_k(\mu)$ is regular. Therefore,
    \[
        Q_\mu(k)E_\nu(k) = q(r_k(\mu),r_k(\nu))E_\nu(k) = \delta_{\mu,\nu}\pi(r_k(\mu))E_\mu(k),
    \]
    using that $r_k(\nu) = \overline{v}(\nu)\overline{v}(\mu)^{-1}r_k(\mu)$.
\end{proof}

\begin{example}\label{exa:minimal_poly}\leavevmode
    \begin{enumerate}[(i)]
        \item For the root system $R = \{\pm\epsilon_1,\pm2\epsilon_1\}$ of type $BC_1$, the polynomial $q$ is given by $\frac{1}{4}\big(\epsilon_1^\vee \otimes 1 + 1 \otimes \epsilon_1^\vee\big)$, so that $Q_\mu(k) = \frac{1}{4}\big((r_k(\mu),\epsilon_1^\vee) + T_{\epsilon_1^\vee}(k)\big)$.
        \item Let $R = \{\epsilon_i - \epsilon_j \mid i, j = 1,2,3, \thinspace i \neq j\}$ be the root system of type $A_2$. Writing $\alpha_1 = \epsilon_1 - \epsilon_2$ and $\alpha_2 = \epsilon_2 - \epsilon_3$ for the standard choice of simple roots, a homogeneous basis of $H(\frakh)$ is given by
        \[
            \begin{aligned}
                &u_1 = 1, \quad u_2 = \alpha_1, \quad u_3 = \alpha_2, \quad u_4 = 2\alpha_1(\alpha_1 + 2\alpha_2); \\
                &\quad u_5 = 2\alpha_2(2\alpha_1 + \alpha_2), \qquad u_6 = 3\alpha_1\alpha_2(\alpha_1 + \alpha_2).
            \end{aligned}
        \]
        In terms of this basis, the polynomial $q$ takes the form $\frac{1}{18}\sum_{j = 1}^6 u_{7 - j} \otimes u_j$.
    \end{enumerate}
\end{example}

\begin{remark}
    There is an alternative choice for the polynomial $q$ from Lemma \ref{lem:minimal_poly}, given by
    \[
        q_{\xi'} = \prod_{w \in W, \thinspace w \neq e}\big(\xi' \otimes 1 - 1 \otimes w^{-1}\xi'\big),
    \]
    which depends on an auxiliary vector $\xi' \in \frakh$ satisfying $q_{\xi'}(\xi,\xi) \neq 0$. The polynomial $q_{\xi'}$ was used in \cite[\S3]{FeVe94} and \cite[Rmk.7.4]{Opd00} because it satisfies a property similar to \eqref{eq:minimal_poly_prop}, i.e., $q_{\xi'}(\xi, w\xi) = q_{\xi'}(\xi,\xi)\delta_{e,w}$ for all $w \in W$. Note that $q_{\xi'}$ is easier to define; however, it comes with two disadvantages. First, it depends on a vector $\xi'$ that has to satisfy a regularity condition with respect to $\xi$. Second, the degree of $q_{\xi'}$ equals $|W|$, while $q$ is of a lower degree, i.e., $|R_0^+|$, see Section \ref{sec:non_sym_shift_operators_prop} for the significance of the degree of $q$.
\end{remark}

Next, we compute the action of the operator $\calG^{(\eps)}_\pm(\mu, k)$ on the polynomials $E_\mu(k)$, i.e., we deter\vspace{-0.1cm}-mine the constant $H^{(\eps)}_\pm(\mu, k)$ such that $\calG^{(\eps)}_\pm(\mu, k)E_\mu(k) = H^{(\eps)}_\pm(\mu, k) E_{\mu_{\eps,\pm}}(k \pm l)$. It will be favorable to rewrite the arguments of all $\bfc$-functions in terms of $\lambda + \rho_k$ instead of $r_k(\lambda)$, which will be part of the shift factor $H^{(\eps)}_\pm(\mu, k)$. The first instance of this is Lemma \ref{lem:eHO_in_nHO_basis}.

\begin{lemma}\label{lem:poincare_poly}
    For $\lambda \in P_+$, we have $\bfc_k^-(w_\lambda)(\lambda + \rho_k)= |W_\lambda|$.
\end{lemma}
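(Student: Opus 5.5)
The plan is to reduce the statement to the case $\lambda = 0$ for the parabolic subsystem $R_\lambda = \{\alpha \in R \mid (\lambda,\alpha) = 0\}$. Then I would prove the resulting identity $\prod_{\alpha \in R^0_+}\big(1 + k^0(\alpha)/(\rho_k,\alpha^\vee)\big) = |W|$ by a symmetrization argument for a $W$-invariant rational function.

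\emph{Step 1 (identify the index set).} Since $w_\lambda$ is the longest element of the parabolic subgroup $W_\lambda$, the set $R^0_+ \cap w_\lambda^{-1}R^0_-$ is exactly $R^0_{\lambda,+} := \{\alpha \in R^0_+ \mid (\lambda,\alpha) = 0\}$. For these roots $(\lambda + \rho_k,\alpha^\vee) = (\rho_k,\alpha^\vee)$, so
\[
  \bfc_k^-(w_\lambda)(\lambda+\rho_k) = \prod_{\alpha \in R^0_{\lambda,+}}\Big(1 + \frac{k^0(\alpha)}{(\rho_k,\alpha^\vee)}\Big).
\]

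\emph{Step 2 (reduce to the subsystem).} Split $\rho_k = \rho_k^\lambda + \rho_k'$, where $\rho_k^\lambda$ is the contribution of the roots in $R_\lambda$. The remainder $\rho_k'$ comes from $R_+ \setminus R_{\lambda,+}$, which is permuted by $W_\lambda$. Hence $\rho_k'$ is $W_\lambda$-invariant, and in particular $r_\alpha \rho_k' = \rho_k'$ for $\alpha \in R_\lambda$, so $(\rho_k',\alpha^\vee) = 0$. Therefore the product only involves $\rho_k^\lambda$ and the root system $R_\lambda$, whose Weyl group is $W_\lambda$. Because $R_\lambda$ may be reducible, and both sides are multiplicative over irreducible components, it suffices to prove $\prod_{\alpha \in R^0_+}\big(1 + k^0(\alpha)/(\rho_k,\alpha^\vee)\big) = |W|$ for an arbitrary (possibly reducible) root system.

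\emph{Step 3 (the identity for $\lambda=0$).} Consider the rational function
\[
  F(\xi) = \sum_{w\in W}\prod_{\alpha\in R^0_+}\Big(1 + \frac{k^0(\alpha)}{(w\xi,\alpha^\vee)}\Big).
\]
It is $W$-invariant. Its only possible poles lie along the walls $(\xi,\alpha^\vee)=0$. Pairing $w$ with $r_\alpha w$ should show that these poles cancel: by invariance the residue is anti-invariant under $r_\alpha$, but it must also be regular there. This is the standard argument used for Macdonald's Poincaré-series identity. So $F$ is a polynomial, and since it is bounded as $\xi\to\infty$, it is constant. Letting $\xi \to \infty$ in a regular direction gives $F \equiv |W|$.

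Now evaluate at $\xi = \rho_k$. For $w \neq e$ there is a simple root $\alpha_i$ with $w^{-1}\alpha_i \in -R_+$. The positive root $\beta = -w^{-1}\alpha_i$ then contributes the factor $1 + k^0(\beta)/(-(\rho_k,\alpha_i^\vee))$. Provided $(\rho_k,\alpha_i^\vee) = k^0(\alpha_i)$, this factor vanishes. Only the $w=e$ term survives, giving the claim.

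The main obstacle is the normalization bookkeeping in this last step. One has to check that, with the paper's conventions ($\rho_k$, $k^0$ for non-reduced $BC_n$, and the set $R^0$), one indeed has $(\rho_k,\alpha_i^\vee) = k^0(\alpha_i)$ for simple $\alpha_i$ and $k^0(\beta) = k^0(\alpha_i)$ for $W$-conjugate roots. If the convention is off by a factor or a sign, I would instead evaluate at $\pm\rho_k$, or at a rescaled $\rho_k$, chosen so that a factor vanishes. The pole cancellation in Step 3 is routine but should also be verified with $k^0$ being only $W$-invariant rather than constant.
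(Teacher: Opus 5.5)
Your proof is correct, and after Step 1 it takes a different route from the paper. Step 1 is the paper's own first step. The paper then identifies the index set with $R^0_{\lambda,+}$ and quotes $\prod_{\alpha\in R^0_{\lambda,+}}\bigl(1+k^0(\alpha)/(\rho_k,\alpha^\vee)\bigr)=|W_\lambda|$ from \cite[(4.4)]{Opd95}. You instead do two things:
\begin{itemize}
\item Your Step 2 reduces to the parabolic subsystem $R_\lambda$. This justifies replacing $\rho_k$ by $\rho_k^\lambda$ on $R_\lambda$, a point the paper leaves implicit.
\item You then prove the identity directly, via the rational degeneration of Macdonald's Poincar\'e-series identity, $\sum_{w\in W}\prod_{\alpha\in R^0_+}\bigl(1+k^0(\alpha)/(w\xi,\alpha^\vee)\bigr)=|W|$, evaluated at $\rho_k$.
\end{itemize}
What your route buys is a self-contained proof. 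It also shows exactly which inputs are used: regularity of $\rho_k$ (from $k^0>0$), $W$-invariance of $k^0$, and $(\rho_k,\alpha_i^\vee)=k^0(\alpha_i)$ for simple $\alpha_i\in R^0$. It is in fact the $q\to 1$ shadow of how the paper proves the $q$-analog, Lemma~\ref{lem:poincare_poly_q}: Macdonald's identity on the parabolic subsystem, evaluated at $\lambda+\rho_{k'}$, where every non-identity term vanishes. The citation route is simply shorter.

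Three small repairs are needed.

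\emph{Normalization.} The normalization you worried about works out if $\rho_k=\tfrac12\sum_{\alpha\in R_+}k(\alpha)\alpha$. This is the convention the rest of the paper actually uses, e.g.\ in $r_k(\lambda)=w_\lambda(\lambda+\rho_k)$ and in Lemma~\ref{lem:vanishing_1}, despite the missing $\tfrac12$ in its displayed definition. It also gives $(\rho_k,\beta^\vee)=\tfrac12 k(\epsilon_n)+k(2\epsilon_n)=k^0(\beta)$ for $\beta=2\epsilon_n$ in type $BC_n$.

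\emph{The evaluation step.} You have $w$ and $w^{-1}$ swapped. With your definition of $F$, choose a simple $\alpha_i\in R^0$ with $w\alpha_i\in R_-$ and take $\beta=-w\alpha_i\in R^0_+$. Then $(w\rho_k,\beta^\vee)=-(\rho_k,\alpha_i^\vee)=-k^0(\beta)$, so that factor vanishes. With $\beta=-w^{-1}\alpha_i$ as you wrote, you would instead get $-(\rho_k,w^{-2}\alpha_i^\vee)$.

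\emph{Pole cancellation.} This is cleanest stated as follows. Each term $G(w\xi)\pi(\xi)$ equals $(-1)^{\ell(w)}\prod_{\alpha\in R^0_+}\bigl((w\xi,\alpha^\vee)+k^0(\alpha)\bigr)$. Hence $F\pi$ is a $W$-anti-invariant polynomial of degree at most $|R^0_+|$, so it is divisible by $\pi$. Therefore $F$ is constant, equal to its degree-zero part $|W|$. This uses only $W$-invariance of $k^0$, not constancy.
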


\begin{proof}
    Writing $R^0_\lambda = \{\alpha \in R^0 \mid (\lambda,\alpha) = 0\}$ and $R^0_{\lambda,+} = R^0_\lambda \cap R^0_+$, we note that $R^0_+ \cap w_\lambda^{-1}R^0_- = R^0_{\lambda,+}$. It follows that\vspace{-0.1cm}
    \[
        \bfc_k^-(w_\lambda)(\lambda + \rho_k) = \prod_{\alpha \in R^0_{\lambda,+}}\left(1 + \frac{k^0(\alpha)}{(\rho_k, \alpha^\vee)}\right) = |W_\lambda|,
    \]
    where the second equality is precisely the identity from \cite[(4.4)]{Opd95}.
\end{proof}

\begin{lemma}\label{lem:eHO_in_nHO_basis_v2}
    Let $\lambda \in \rho_l + P_+$. Then
    \[
        P^{(\eps)}_\lambda(k) = \sum_{\mu \in W\lambda} \eps(\overline{v}(\mu))\bfc_{-\eps k}^+(\overline{v}(\mu)w_\lambda)(\lambda + \rho_k)E_\mu(k).
    \]
\end{lemma}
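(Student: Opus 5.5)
Starting from Lemma \ref{lem:eHO_in_nHO_basis}, the claim reduces to the identity
\[
    \frac{1}{|W_\lambda|}\,\bfc_{-\eps k}^+(\overline{v}(\mu))(r_k(\lambda)) = \bfc_{-\eps k}^+(\overline{v}(\mu)w_\lambda)(\lambda + \rho_k), \qquad \mu \in W\lambda,
\]
which I would prove coefficient by coefficient. The first step is to substitute $r_k(\lambda) = w_\lambda(\lambda+\rho_k)$, which was recorded in Section \ref{sec:dunkl_cherednik_operators}. Then each factor $(r_k(\lambda),\alpha^\vee)$ becomes $(\lambda+\rho_k, w_\lambda\alpha^\vee)$, with $w_\lambda = w_\lambda^{-1}$ since it is an involution. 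Hence
\[
    \bfc_{-\eps k}^+(v)(w_\lambda\xi) = \prod_{\beta \in w_\lambda(R^0_+\cap v^{-1}R^0_+)}\left(1 - \frac{\eps(r_\beta)k^0(\beta)}{(\xi,\beta^\vee)}\right),
\]
using the $W$-invariance of $k^0$ and of $\eps(r_\beta)$.

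The second step is a bookkeeping of root sets with $v=\overline{v}(\mu)\in W^\lambda$. Since $w_\lambda$ permutes $R^0_+\setminus R^0_{\lambda}$ and maps $R^0_{\lambda,+}$ to $R^0_{\lambda,-}$, and since $v$ is a minimal coset representative (so $v$ maps $R^0_{\lambda,+}$ into $R^0_+$), the set $w_\lambda(R^0_+\cap v^{-1}R^0_+)$ splits into two parts. The first is $(R^0_+\setminus R^0_\lambda)\cap (vw_\lambda)^{-1}R^0_+$. The second is $R^0_{\lambda,-}$. On the other hand,
\[
    R^0_+\cap(vw_\lambda)^{-1}R^0_+ = \big((R^0_+\setminus R^0_\lambda)\cap (vw_\lambda)^{-1}R^0_+\big),
\]
because $vw_\lambda$ sends $R^0_{\lambda,+}$ into $R^0_-$. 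So the common factors coincide, and the remaining factor on the left is the product over $\beta\in R^0_{\lambda,-}$, i.e.\ over $-\alpha$ with $\alpha\in R^0_{\lambda,+}$. For $\beta=-\alpha$ the factor is
\[
    1 + \frac{\eps(r_\alpha)k^0(\alpha)}{(\lambda+\rho_k,\alpha^\vee)}.
\]

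The third step evaluates this remaining factor. Since $\lambda\in\rho_l+P_+$, we have $\eps(W_\lambda)=\{1\}$, so $\eps(r_\alpha)=1$ for $\alpha\in R^0_\lambda$. Also $(\lambda,\alpha^\vee)=0$ for these roots. The leftover product is therefore
\[
    \prod_{\alpha\in R^0_{\lambda,+}}\left(1+\frac{k^0(\alpha)}{(\rho_k,\alpha^\vee)}\right) = \bfc_k^-(w_\lambda)(\lambda+\rho_k) = |W_\lambda|
\]
by Lemma \ref{lem:poincare_poly}. This exactly cancels the prefactor $1/|W_\lambda|$.

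The main obstacle is the root-set bookkeeping in the second step, specifically the minimal-length property of $\overline{v}(\mu)$ relative to $W_\lambda$ and the correct handling of the non-reduced case. For the latter I would work throughout with $R^0$ and check that $\eps(r_\beta)$ and $k^0$ are compatible there.
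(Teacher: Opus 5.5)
Your proposal is correct and follows the paper's proof. You reduce to Lemma \ref{lem:eHO_in_nHO_basis}, split $w_\lambda(R^0_+\cap\overline{v}(\mu)^{-1}R^0_+)$ as $(R^0_+\cap(\overline{v}(\mu)w_\lambda)^{-1}R^0_+)\sqcup R^0_{\lambda,-}$, use $\eps k\equiv k$ on $R^0_\lambda$, and cancel $|W_\lambda|$ via Lemma \ref{lem:poincare_poly}; your decomposition, with $R^0_+$ in both places, is the one actually needed for the $\bfc^+$-function, and the paper's displayed version with $R^0_-$ appears to be a misprint.
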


\begin{proof}
    Using the decomposition
    \[
        w_\lambda(R^0_+ \cap \overline{v}(\mu)^{-1}R^0_-) = (R^0_+ \cap (\overline{v}(\mu)w_\lambda)^{-1}R^0_-)  \sqcup R^0_{\lambda,-},
    \]
    see e.g.~\cite[p.98]{Opd95}, we find that
    \[
        \bfc_{-\eps k}^+(\overline{v}(\mu))(r_k(\lambda)) = \bfc_{-\eps k}^+(\overline{v}(\mu)w_\lambda)(\lambda + \rho_k)\bfc^-_k(w_\lambda)(\lambda + \rho_k).
    \]
    Here we are allowed to replace the multiplicity $-\eps k$ with $k$ since $R^0_{\lambda,-} = -(R^0_+ \cap w_\lambda^{-1}R^0_-)$ and $\eps k \equiv k$ on $R^0_\lambda$. The lemma now follows from Lemma \ref{lem:eHO_in_nHO_basis} and Lemma \ref{lem:poincare_poly}.
\end{proof}

From Lemma \ref{lem:proj_operator}, we see that the action of the operator $Q_\mu(k)$ on $P^{(\eps)}_\lambda(k)$ is captured precisely by the coefficients from Lemma \ref{lem:eHO_in_nHO_basis_v2}. This brings us to the following corollary.

\begin{corollary}\label{cor:proj_operator_eHO}
    Let $\lambda \in \rho_l + P_+$ and $\mu \in W\lambda$. Then
    \[
        Q_\mu(k)P^{(\eps)}_\lambda(k) = \eps(\overline{v}(\mu))\pi(r_k(\mu))\bfc_{-\eps k}^+(\overline{v}(\mu)w_\lambda)(\lambda + \rho_k)E_\mu(k).
    \]
\end{corollary}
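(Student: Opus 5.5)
The plan is to expand $P^{(\eps)}_\lambda(k)$ in the non-symmetric basis and then let $Q_\mu(k)$ act on it term by term.

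By Lemma \ref{lem:eHO_in_nHO_basis_v2}, since $\lambda \in \rho_l + P_+$, we have the expansion
\[
    P^{(\eps)}_\lambda(k) = \sum_{\nu \in W\lambda} \eps(\overline{v}(\nu))\bfc_{-\eps k}^+(\overline{v}(\nu)w_\lambda)(\lambda + \rho_k)E_\nu(k).
\]
This is a finite sum of elements of $M_\lambda(k)$, so $Q_\mu(k)$ can be applied to each summand by linearity. The coefficients are well defined thanks to the running assumption $k^0 > 0$, which keeps $\lambda+\rho_k$ regular.

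Next, Lemma \ref{lem:proj_operator} gives $Q_\mu(k)E_\nu(k) = \delta_{\mu,\nu}\pi(r_k(\mu))E_\mu(k)$ for every $\nu \in W\lambda$. The hypothesis $\mu\in W\lambda$ is exactly what that lemma requires. Hence every term with $\nu \neq \mu$ is annihilated.

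The only surviving term is $\nu = \mu$. It contributes the factor $\pi(r_k(\mu))$ multiplied by its original coefficient $\eps(\overline{v}(\mu))\bfc_{-\eps k}^+(\overline{v}(\mu)w_\lambda)(\lambda+\rho_k)$, which is precisely the stated formula.

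There is no real obstacle here; all the work lies in the two lemmas being combined. The one point to check is that Lemma \ref{lem:proj_operator} applies to each $E_\nu(k)$ in the sum, which holds because all such $\nu$ lie in the single orbit $W\lambda$.
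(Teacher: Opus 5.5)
Your proposal is correct and is exactly the paper's argument: expand $P^{(\eps)}_\lambda(k)$ in the basis $(E_\nu(k))_{\nu \in W\lambda}$ via Lemma \ref{lem:eHO_in_nHO_basis_v2}, then apply Lemma \ref{lem:proj_operator} to keep only the $\nu=\mu$ term. The only implicit ingredient is the regularity of $r_k(\mu)$ inside Lemma \ref{lem:proj_operator}, and the running assumption $k^0>0$ covers it just as it covers $\lambda+\rho_k$.
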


In case that the weight $\mu$ is dominant, i.e., $\mu = \lambda$, we are now able to calculate the action of $\calG^{(\eps)}_\pm(\lambda,k)$ on $E_\lambda(k)$ using that $P^{(\eps)}_\lambda(k) = |W_\lambda|^{-1}U_\eps E_\lambda(k)$. For general weights $\mu$, however, we first have to relate $U_\eps E_\mu(k)$ and $P^{(\eps)}_\lambda(k)$ if we intent to use the shift principle (Proposition \ref{prop:shift_prin}).

\begin{lemma}\label{lem:relating_U_nHO_and_eHO}
    Let $\lambda \in \rho_l + P_+$ and $\mu \in W\lambda$. Then
    \[
        U_\eps E_\mu(k) = \eps(\overline{v}(\mu))\bfc_{\eps k}^-(\overline{v}(\mu)w_\lambda)(\lambda + \rho_k)P^{(\eps)}_\lambda(k).
    \]
\end{lemma}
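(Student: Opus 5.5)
The plan is to first identify $U_\eps E_\mu(k)$ as a multiple of $P^{(\eps)}_\lambda(k)$ by a dimension argument, and then to compute the scalar by induction along a reduced expression of $\overline{v}(\mu)$, using the intertwiners of the graded Hecke algebra.

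\textbf{Step 1 (proportionality).} $E_\mu(k)\in M_\lambda(k)$, and $M_\lambda(k)$ is stable under $W$, since it is an $\calH(k^0)$-module. Hence $U_\eps E_\mu(k)$ lies in $M^{(\eps)}_\lambda(k)$. Because $\lambda\in\rho_l+P_+$, this space is one-dimensional and spanned by $P^{(\eps)}_\lambda(k)$. So $U_\eps E_\mu(k)=c(\mu)P^{(\eps)}_\lambda(k)$ for some scalar $c(\mu)$, and it remains to compute $c(\mu)$.

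\textbf{Step 2 (base case $\mu=\lambda$).} By Lemma \ref{lem:sym_prop}, $U_\eps E_\lambda(k)=|W_\lambda|P^{(\eps)}_\lambda(k)$. To match the statement, I need $\bfc^-_{\eps k}(w_\lambda)(\lambda+\rho_k)=|W_\lambda|$. The roots appearing in this $\bfc$-function are $R^0_{\lambda,+}$. On $R^0_\lambda$ we have $\eps k=k$, because $\eps(W_\lambda)=\{1\}$. So this is exactly Lemma \ref{lem:poincare_poly}.

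\textbf{Step 3 (induction step).} Let $\mu\in W\lambda$ and let $r_i$ be a simple reflection with $r_i\mu\neq\mu$ and $\overline{v}(r_i\mu)=r_i\overline{v}(\mu)$ of length one more. Definition \ref{def:graded_Hecke_algebra}(iii) gives the standard intertwiner relation (cf.~\cite[\S2]{Opd95}):
\[
E_{r_i\mu}(k)=\Big(r_i+\frac{k^0(\alpha_i)}{(r_k(\mu),\alpha_i^\vee)}\Big)E_\mu(k),
\]
possibly up to a normalisation and sign that I will check by comparing leading terms. Since $U_\eps\circ r_i=\eps(r_i)U_\eps$, applying $U_\eps$ yields
\[
c(r_i\mu)=\eps(r_i)\Big(1+\frac{\eps(r_i)k^0(\alpha_i)}{(r_k(\mu),\alpha_i^\vee)}\Big)c(\mu).
\]
Next I write $r_k(\mu)=\overline{v}(\mu)w_\lambda(\lambda+\rho_k)$, so that $(r_k(\mu),\alpha_i^\vee)=(\lambda+\rho_k,\beta^\vee)$ with $\beta=(\overline{v}(\mu)w_\lambda)^{-1}\alpha_i$. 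Here $\eps(r_i)k^0(\alpha_i)=(\eps k)^0(\beta)$ by $W$-invariance of $k$ and of $\alpha\mapsto\eps(r_\alpha)$. The new factor is thus the contribution of the single root $\beta$ by which $R^0_+\cap(r_i\overline{v}(\mu)w_\lambda)^{-1}R^0_-$ exceeds $R^0_+\cap(\overline{v}(\mu)w_\lambda)^{-1}R^0_-$, using the usual cocycle decomposition of inversion sets. The sign $\eps(r_i)$ accumulates to $\eps(\overline{v}(\mu))$. Iterating from $\mu=\lambda$ along a reduced word for $\overline{v}(\mu)$ then gives the claimed formula.

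\textbf{Main obstacle.} The main obstacle is Step 3's bookkeeping. I must get the exact normalisation of the intertwiner, including its sign convention, given that $r_k$ uses $\epsilon((\lambda,\alpha^\vee))$ with $\epsilon(0)=-1$. I must also check that $\beta$ is a positive root with the correct sign, so that the factor contributes to $\bfc^-$ rather than $\bfc^+$. For both, I would first test against the rank-one $BC_1$ case by direct computation, and against the already established dual formula in Lemma \ref{lem:eHO_in_nHO_basis_v2}.
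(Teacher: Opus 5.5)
Your proposal is correct and is essentially the paper's argument. The paper obtains $U_\eps E_\mu(k)=\eps(\overline{v}(\mu))\bfc^-_{\eps k}(\overline{v}(\mu))(r_k(\lambda))U_\eps E_\lambda(k)$ by adapting the intertwiner argument of \cite[Lem.4.5]{Sch23}, passes to $\overline{v}(\mu)w_\lambda$ and $\lambda+\rho_k$ via the inversion-set decomposition, and finishes with Lemma~\ref{lem:poincare_poly} and Lemma~\ref{lem:sym_prop}, whereas you fold that conversion into the induction by starting from $\bfc^-_{\eps k}(w_\lambda)(\lambda+\rho_k)=|W_\lambda|$; the points you flagged check out, since Definition~\ref{def:graded_Hecke_algebra}(iii) gives exactly $E_{r_i\mu}(k)=\big(r_i+k^0(\alpha_i)/(r_k(\mu),\alpha_i^\vee)\big)E_\mu(k)$ (leading coefficient $1$ because $r_i\mu>\mu$), and $\beta\in R^0_+$ because $\ell(r_i\overline{v}(\mu)w_\lambda)=\ell(r_i\overline{v}(\mu))+\ell(w_\lambda)=\ell(\overline{v}(\mu)w_\lambda)+1$.
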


\begin{proof}
    Adapting the proof of \cite[Lem.4.5]{Sch23} to our setting results in
    \[
        U_\eps E_\mu(k) = \eps(\overline{v}(\mu))\bfc_{\eps k}^-(\overline{v}(\mu))(r_k(\lambda))U_\eps E_\lambda(k).
    \]
    There is a second decomposition given in \cite[p.98]{Opd95}, i.e.,
    \[
        w_\lambda^{-1}(R^0_+ \cap \overline{v}(\mu)^{-1}R^0_-) \sqcup R^0_{\lambda,+} = R^0_+ \cap (\overline{v}(\mu)w_\lambda)^{-1}R^0_-,
    \]
    which we use to arrive at
    \[
        \bfc_{\eps k}^-(\overline{v}(\mu))(r_k(\lambda)) = \bfc_{\eps k}^-(\overline{v}(\mu)w_\lambda)(\lambda + \rho_k)\bfc^-_k(w_\lambda)(\lambda + \rho_k)^{-1},
    \]
    cf.~the proof of Lemma \ref{lem:eHO_in_nHO_basis_v2}. Substituting this identity into the expression for $U_\eps E_\mu(k)$ together with Lemma \ref{lem:poincare_poly} and $P^{(\eps)}_\lambda(k) = |W_\lambda|^{-1}U_\eps E_\lambda(k)$, see Lemma \ref{lem:sym_prop}, gives the lemma.
\end{proof}

In recasting the $\bfc$-functions, we relied on the assumption $\lambda \in \rho_l + P_+$. When this is not the case, i.e., $\lambda - \rho_l \notin P_+$, the action of $\calG^{(\eps)}_+(\mu,k)$ on $E_\mu(k)$ is trivially $0$ as $U_\eps E_\mu(k) = 0$. The next lemma guarantees that the $\bfc$-functions in the coefficients take this vanishing into account.

\begin{lemma}[Vanishing Lemma 1]\label{lem:vanishing_1}
    Let $\lambda \in P_+$ and $\mu \in W\lambda$. Assuming that $\lambda - \rho_l \notin P_+$, we have
    \[
        \bfc_{\eps k}^-(\overline{v}(\mu)w_\lambda)(\lambda + \rho_k) = 0.
    \]
\end{lemma}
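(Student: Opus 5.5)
The plan is to exhibit a single root $\alpha \in R^0_+ \cap (\overline{v}(\mu)w_\lambda)^{-1}R^0_-$ whose factor $1 + \frac{(\eps k)^0(\alpha)}{(\lambda+\rho_k,\alpha^\vee)}$ in the product defining $\bfc^-_{\eps k}(\overline{v}(\mu)w_\lambda)(\lambda+\rho_k)$ vanishes. The running assumption on $k$ guarantees that $\lambda+\rho_k$ is regular. Hence every factor of the product is a well-defined number, and one vanishing factor forces the whole product to vanish.

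\emph{Step 1: find a suitable simple root.} By the equivalence $\dim M^{(\eps)}_\lambda(k)=1 \iff \eps(W_\lambda)=\{1\} \iff \lambda\in\rho_l+P_+$ recalled in Section~\ref{sec:graded_hecke_algebra}, the hypothesis $\lambda-\rho_l\notin P_+$ gives $\eps(W_\lambda)\neq\{1\}$. Since $\lambda$ is dominant, $W_\lambda$ is the parabolic subgroup generated by the simple reflections $r_i$ with $(\lambda,\alpha_i)=0$. Because $\eps$ is a character, some such generator satisfies $\eps(r_i)=-1$. Let $\alpha$ be the inmultiplicable root in the direction of $\alpha_i$, i.e.\ $\alpha=2\alpha_i$ if $2\alpha_i\in R$ and $\alpha=\alpha_i$ otherwise. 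Then $\alpha\in R^0_{\lambda,+}$ and $\eps(r_\alpha)=-1$.

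\emph{Step 2: check that $\alpha$ lies in the index set.} The second decomposition from \cite[p.98]{Opd95}, already used in the proof of Lemma~\ref{lem:relating_U_nHO_and_eHO}, states
\[
    w_\lambda^{-1}(R^0_+ \cap \overline{v}(\mu)^{-1}R^0_-) \sqcup R^0_{\lambda,+} = R^0_+ \cap (\overline{v}(\mu)w_\lambda)^{-1}R^0_-.
\]
Hence $R^0_{\lambda,+}$ is contained in the index set of the product, and in particular so is $\alpha$.

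\emph{Step 3: evaluate the factor at $\alpha$.} Since $(\lambda,\alpha)=0$, we have $(\lambda+\rho_k,\alpha^\vee)=(\rho_k,\alpha^\vee)$. For a simple direction, $r_i$ permutes $R_+\setminus\{\alpha_i,2\alpha_i\}$. With the normalization of $\rho_k$ that underlies \cite[(4.4)]{Opd95} and Lemma~\ref{lem:poincare_poly}, this yields $(\rho_k,\alpha^\vee)=k^0(\alpha)$. On the other hand, $r_{\alpha/2}=r_\alpha$ and $\eps(r_\alpha)=-1$, so the multiplicity $\eps k$ equals $-k$ on both $\alpha$ and $\alpha/2$. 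Therefore $(\eps k)^0(\alpha)=-k^0(\alpha)$. The factor at $\alpha$ is then
\[
    1-\frac{k^0(\alpha)}{k^0(\alpha)}=0,
\]
which uses $k^0(\alpha)>0$, and this proves the lemma.

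The main obstacle is Step 3, namely pinning down $(\rho_k,\alpha^\vee)=k^0(\alpha)$ for the simple inmultiplicable root with the paper's conventions. This requires care with the factor $\tfrac12$ in $\rho_k$ and with the non-reduced $BC_n$ case, where $\alpha=2\alpha_i$ and $k^0$ involves $\tfrac12 k(\alpha/2)$. I would settle this by checking consistency with Lemma~\ref{lem:poincare_poly} in rank one, where $W_\lambda=\{e,r_\alpha\}$ forces $1+k^0(\alpha)/(\rho_k,\alpha^\vee)=2$.
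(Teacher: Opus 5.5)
Your proof is correct and follows the paper's argument. You pick a simple root in $R^0_{\lambda,+}$ with $\eps(r_i)=-1$, show it is an inversion of $\overline{v}(\mu)w_\lambda$, and observe that its factor $1 - k^0(\alpha)/(\rho_k,\alpha^\vee)$ vanishes. The small differences are that you obtain the inversion membership from the length-additive decomposition already used in Lemma~\ref{lem:relating_U_nHO_and_eHO}, rather than from the characterization $w\in W^\lambda \iff w(R^0_{\lambda,+})\subset R^0_+$, and that you handle the $BC_n$ case $\alpha=2\alpha_i$ and the factor $\tfrac12$ in $\rho_k$ more carefully than the paper does.
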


\begin{proof}
    There exists an $i \in I$ such that the root $\alpha_i$ is an element of $R^0_{\lambda,+}$ and $\eps(r_i) = -1$ by virtue of $\eps(W_\lambda) \neq \{1\}$. Let us recall a well-known characterization of $W^\lambda$:
    \begin{equation}\label{eq:shortest_coset_char}
        w \in W^\lambda \iff w(R^0_{\lambda,+}) \subset R^0_+.
    \end{equation}
    This characterization is immediate from the definition $W^\lambda = \{w \in W \mid \ell(wr_i) > \ell(w) \text{ for all } i \in I \linebreak\text{satisfying } \alpha_i \in R^0_{\lambda,+}\}$ and the fact that $\ell(wr_i) = \ell(w) \pm 1$ if and only if $w(\alpha_i) \in R^0_\pm$. Since $\overline{v}(\mu) \in W^\lambda$ and $w_\lambda(\alpha_i) \in R^0_{\lambda,-}$, we find that $(\overline{v}(\mu)w_\lambda)(\alpha_i) \in R^0_-$ by \eqref{eq:shortest_coset_char}, or equivalently, $\alpha_i \in R^0_+ \cap (\overline{v}(\mu)w_\lambda)^{-1}R^0_-$. It follows that $(\lambda + \rho_k, \alpha_i^\vee) + \eps(r_i) k^0(\alpha_i) = (\rho_k, \alpha_i^\vee) - k^0(\alpha_i) = 0$. This implies that $\bfc_{\eps k}^-(\overline{v}(\mu)w_\lambda)(\lambda + \rho_k) = 0$ since we have shown that one of its factors vanishes.
\end{proof}

Finally, we combine the preceding lemmata to calculate the shift factor $H^{(\eps)}_\pm(\mu, k)$.

\begin{proposition}\label{prop:fw_shift_factor}
    Let $\lambda \in P_+$ and $\mu \in W\lambda$. Then
    \[
        \calG^{(\eps)}_\pm(\mu, k) E_\mu(k) = H^{(\eps)}_\pm(\mu, k) E_{\mu_{\eps,\pm}}(k \pm l),
    \]
    with
    \[
        \begin{aligned}
            H^{(\eps)}_\pm(\mu, k) &= \eps_\pm(\overline{v}(\mu))\eps_\mp(\overline{v}(\mu_{\eps,\pm}))\pi(r_{k \pm l}(\mu_{\eps,\pm}))\bfc_{\eps_\pm k}^-(\overline{v}(\mu)w_\lambda)(\lambda + \rho_k) \\
            &\qquad\times\bfc_{-\eps_\mp(k \pm l)}^+(\overline{v}(\mu_{\eps,\pm})w_{\lambda\mp\rho_l})(\lambda + \rho_k).
        \end{aligned}
    \]
\end{proposition}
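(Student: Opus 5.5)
We follow $E_\mu(k)$ through the three maps defining $\calG^{(\eps)}_\pm(\mu,k)$ and collect the scalars produced at each step. We write out the forward case; the backward case is the same argument with $\eps$ and $\triv$ interchanged and the roles of $k$ and $k+l$ swapped. First suppose $\lambda\in\rho_l+P_+$.

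\textbf{Step (i): symmetrization.} Lemma \ref{lem:relating_U_nHO_and_eHO} gives
\[
U_\eps E_\mu(k)=\eps(\overline{v}(\mu))\,\bfc^-_{\eps k}(\overline{v}(\mu)w_\lambda)(\lambda+\rho_k)\,P^{(\eps)}_\lambda(k).
\]

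\textbf{Step (ii): division by $\Delta_\eps$.} By the shift principle (Proposition \ref{prop:shift_prin}), $\Delta_\eps^{-1}P^{(\eps)}_\lambda(k)=P_{\lambda-\rho_l}(k+l)$.

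\textbf{Step (iii): projection.} This is a trivial-character instance of Corollary \ref{cor:proj_operator_eHO}, applied with multiplicity $k+l$, dominant weight $\lambda-\rho_l$ and $\mu_{\eps,+}\in W(\lambda-\rho_l)$. It gives
\[
Q_{\mu_{\eps,+}}(k+l)P_{\lambda-\rho_l}(k+l)=\pi(r_{k+l}(\mu_{\eps,+}))\,\bfc^+_{-(k+l)}(\overline{v}(\mu_{\eps,+})w_{\lambda-\rho_l})(\lambda-\rho_l+\rho_{k+l})\,E_{\mu_{\eps,+}}(k+l).
\]
Since $\rho_{k+l}=\rho_k+\rho_l$, the argument $\lambda-\rho_l+\rho_{k+l}$ equals $\lambda+\rho_k$.

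Multiplying the three scalars gives the claimed $H^{(\eps)}_+$. Two bookkeeping checks remain.

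\emph{The sign $\eps_-(\overline{v}(\mu_{\eps,+}))$.} This is $1$ because $\eps_-=\triv$.

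\emph{The factor $\eps_\mp(k\pm l)$.} This reduces to $k+l$ for the trivial character.

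\textbf{Backward case.} The chain is $U_{\triv}$, then multiplication by $\Delta_\eps$, then $Q_{\mu_{\eps,-}}(k-l)$.
\begin{itemize}
\item Step (i) is Lemma \ref{lem:relating_U_nHO_and_eHO} with the trivial character. Its sign is $\eps_-(\overline{v}(\mu))=1$, and its coefficient is $\bfc^-_k(\overline{v}(\mu)w_\lambda)(\lambda+\rho_k)$.
\item Step (ii) uses the shift principle read backwards: $\Delta_\eps P_\lambda(k)=P^{(\eps)}_{\lambda+\rho_l}(k-l)$, applied at multiplicity $k-l$. Here $\lambda+\rho_l\in\rho_l+P_+$ holds automatically.
\item Step (iii) is Corollary \ref{cor:proj_operator_eHO} for $\eps$ at multiplicity $k-l$ and dominant weight $\lambda+\rho_l$. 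This produces the factor $\eps(\overline{v}(\mu_{\eps,-}))$ and $\bfc^+_{-\eps(k-l)}$ evaluated at $\lambda+\rho_l+\rho_{k-l}=\lambda+\rho_k$.
\end{itemize}

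\textbf{Main obstacle.} The hardest part is making the weights match.

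\emph{Position of $\mu_{\eps,\pm}$.} We must check that $\mu_{\eps,\pm}=\overline{v}(\mu)w_\lambda(\lambda\mp\rho_l)$ lies in the Weyl orbit of $\lambda\mp\rho_l$. We must also identify $\overline{v}(\mu_{\eps,\pm})w_{\lambda\mp\rho_l}$ correctly, because the stabilizers of $\lambda$ and $\lambda\mp\rho_l$ can differ.

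\emph{Regularity.} The running positivity assumption must hold for the shifted multiplicity. For $k+l$ it does. For $k-l$ one must either impose it or argue by polynomial continuation in $k$.

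\textbf{Degenerate case.} Suppose, in the forward direction, that $\lambda-\rho_l\notin P_+$. Then $U_\eps E_\mu(k)=0$, and Vanishing Lemma \ref{lem:vanishing_1} shows that the factor $\bfc^-_{\eps k}(\overline{v}(\mu)w_\lambda)(\lambda+\rho_k)$ vanishes. So both sides are zero, with the right-hand side interpreted by the formula even though $E_{\mu_{\eps,+}}$ may be ill-defined.
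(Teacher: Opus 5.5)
Your proposal is correct and follows the paper's proof. You trace $E_\mu(k)$ through Lemma \ref{lem:relating_U_nHO_and_eHO}, the shift principle, and Corollary \ref{cor:proj_operator_eHO}, the latter applied with data $(\lambda\mp\rho_l,\mu_{\eps,\pm},k\pm l,\eps_\mp)$ and using $\rho_{k\pm l}\mp\rho_l=\rho_k$. You then dispose of the degenerate forward case $\lambda-\rho_l\notin P_+$ via Vanishing Lemma \ref{lem:vanishing_1}, exactly as the paper does.

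The ``obstacle'' of identifying $\overline{v}(\mu_{\eps,\pm})w_{\lambda\mp\rho_l}$ does not actually arise here. The stated $H^{(\eps)}_\pm(\mu,k)$ is already written in the form that corollary outputs; that identification only becomes relevant later, in condition \eqref{eq:eigenvalue_cond}.
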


\begin{proof}
    First, we suppose that $\lambda \mp \rho_l \in P_+$. By the shift principle (Proposition \ref{prop:shift_prin}) and Lemma~\ref{lem:relating_U_nHO_and_eHO}, we have
    \[
        (\Delta^\mp_\eps\circ U_{\eps_\pm})E_\mu(k) = \eps_\pm(\overline{v}(\mu))\bfc^-_{\eps_\pm k}(\overline{v}(\mu)w_\lambda)(\lambda + \rho_k)P^{(\eps_\mp)}_{\lambda\mp\rho_l}(k\mp l).
    \]
    The proposition follows from this in conjunction with Corollary \ref{cor:proj_operator_eHO} applied to $Q_{\mu_{\eps,\pm}}(k \pm l)$, i.e.,
    \[
        Q_{\mu_{\eps,\pm}}(k \pm l) P^{(\eps_\mp)}_{\lambda\mp\rho_l}(k\mp l) = \eps_\mp(\overline{v}(\mu_{\eps,\pm}))\bfc_{-\eps_\mp(k \pm l)}^+(\overline{v}(\mu_{\eps,\pm})w_{\lambda\mp\rho_l})(\lambda + \rho_k)\pi(r_{k \pm l}(\mu_{\eps,\pm}))E_{\mu_{\eps,\pm}}(k \pm l).
    \]
    
    Second, the only thing that can occur is that $\lambda - \rho_l \notin P_+$. It follows that $U_\eps E_\mu(k) = 0$. Therefore, $\calG^{(\eps)}_+(\mu, k)E_\mu(k) = 0$, so that we have to prove that $H^{(\eps)}_+(\mu, k) = 0$. In Lemma \ref{lem:vanishing_1}, we have shown that one of its factors vanishes, which completes the proof of the proposition.
\end{proof}


\subsection{Genuine non-symmetric shift operators}\label{sec:non_sym_shift_operators_2}
We proceed by rewriting the fixed-weight non-symmetric shift operator $\calG^{(\eps)}_\pm(\mu, k)$ as follows:
\begin{equation}\label{eq:fw_shift_operator_rewritten}
    \calG^{(\eps)}_\pm(\mu, k) = \sum_{w \in W} \eps_\pm(w) Q_{\mu_{\eps,\pm}}(k \pm l)\Delta_\eps^\mp w,
\end{equation}
and observing that the $\mu$-dependence that we wish to eliminate resides entirely within the operator $Q_{\mu_{\eps,\pm}}(k \pm l)$. There is a certain unutilized freedom available in \eqref{eq:fw_shift_operator_rewritten} for the operator $Q_{\mu_{\eps,\pm}}(k \pm l)$ since we are allowed to vary it with $w \in W$. More specifically, we seek to replace the operator $Q_{\mu_{\eps,\pm}}(k \pm l)$ with operators $\calQ^{(\eps)}_\pm(w, k)$, with $w \in W$, satisfying
\begin{equation}\label{eq:proj_operator_replacement}
    \calQ^{(\eps)}_\pm(w, k)\Delta_\eps^\mp w\big|_{M_\lambda^\mu(k)} = Q_{\mu_{\eps,\pm}}(k \pm l)\Delta_\eps^\mp w\big|_{M_\lambda^\mu(k)}
\end{equation}
for all $\mu$ in a Zariski-dense subset of $P$ and $\lambda = \mu_+ \in P_+$.

\begin{lemma}\label{lem:proj_operator_replacement}
    For each $w \in W$, define the operator
    \[
        \calQ^{(\eps)}_\pm(w, k) = \sum_{j = 1}^d T_{u_j}(k \pm l)C_{\Delta_\eps^\mp w}(T_{q_j}(k))
    \]
    with the shorthand $C_{\Delta_\eps^\mp w}(T_{q_j}(k)) = (\Delta_\eps^\mp w) T_{q_j}(k) (\Delta_\eps^\mp w)^{-1}$. Let $\lambda \in P_+$ and $\mu \in W\lambda$. Then
    \[
        \calQ^{(\eps)}_\pm(w, k)\Delta_\eps^\mp w\big|_{M_\lambda^\mu(k)} = Q_\mu(k,k \pm l)\Delta_\eps^\mp w\big|_{M_\lambda^\mu(k)}.
    \]
    Here $Q_\mu(m,k) = \sum_{j = 1}^d q_j(r_m(\mu))T_{u_j}(k)$ extends the definition of $Q_\mu(k)$, i.e, $Q_\mu(k,k) = Q_\mu(k)$.
\end{lemma}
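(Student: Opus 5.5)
Since the two operators agree on a one-dimensional weight space, the plan is to compute both sides on the spanning vector $E_\mu(k)$ of $M_\lambda^\mu(k)$. No spectral information about $\Delta_\eps^\mp w E_\mu(k)$ is needed, because $\Delta_\eps^\mp w$ is inserted and then cancelled by the conjugation. We only use that $E_\mu(k)$ is a joint eigenfunction of the $T_\xi(k)$, hence of every $T_p(k)$, $p \in S(\frakh)$.

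First compute the left-hand side. By definition of the shorthand,
\[
    C_{\Delta_\eps^\mp w}(T_{q_j}(k))\,\Delta_\eps^\mp w = \Delta_\eps^\mp w\, T_{q_j}(k),
\]
where we use that $\Delta_\eps^\mp w$ is invertible on the relevant space. Multiplication by $\Delta_\eps^{\mp1}$ is invertible on the localization of $\bbC[P]$ at $\Delta_\eps$, and $w$ is invertible. Hence
\[
    \calQ^{(\eps)}_\pm(w,k)\Delta_\eps^\mp w\, E_\mu(k) = \sum_{j=1}^d T_{u_j}(k\pm l)\,\Delta_\eps^\mp w\, T_{q_j}(k)E_\mu(k).
\]
Next, $T_p(k)E_\mu(k) = p(r_k(\mu))E_\mu(k)$ for all $p \in S(\frakh)$, because $p \mapsto T_p(k)$ is an algebra homomorphism and $T_\xi(k)E_\mu(k) = (\xi, r_k(\mu))E_\mu(k)$. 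Therefore $T_{q_j}(k)E_\mu(k) = q_j(r_k(\mu))E_\mu(k)$. Since $q_j(r_k(\mu))$ is a scalar, it commutes past $\Delta_\eps^\mp w$ and past $T_{u_j}(k\pm l)$, giving
\[
    \sum_{j=1}^d q_j(r_k(\mu))\,T_{u_j}(k\pm l)\,\Delta_\eps^\mp w\,E_\mu(k) = Q_\mu(k,k\pm l)\Delta_\eps^\mp w\, E_\mu(k).
\]
Linearity then extends the identity from $E_\mu(k)$ to all of $M_\lambda^\mu(k) = \bbC E_\mu(k)$.

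The only point requiring care, and the main obstacle, is making sense of the conjugation. $\Delta_\eps^\mp w$ does not preserve $\bbC[P]$ in the backward case, and its inverse involves division by $\Delta_\eps$. So $C_{\Delta_\eps^\mp w}(T_{q_j}(k))$ should be read as an operator on $\bbC[P][\Delta_\eps^{-1}]$. On that space $T_{q_j}(k)$ acts by the same differential-reflection formula, since the $T_\xi(k)$ extend to the localization at the $W$-invariant-up-to-sign element $\Delta_\eps$.

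With this reading, the composition identity above is a formal consequence of associativity. The eigenvalue step takes place entirely in $M(k)$, before conjugation. We do not need to show here that $\calQ^{(\eps)}_\pm(w,k)$ is itself a differential-reflection operator with polynomial coefficients; the lemma only asserts equality of the two compositions on $M_\lambda^\mu(k)$.
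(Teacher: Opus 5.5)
Your proposal is correct and matches the paper's proof. The paper's one-line argument is the observation that $C_{\Delta_\eps^\mp w}(T_{q_j}(k))$ acts on $\Delta_\eps^\mp \phi^w$ by the scalar $q_j(r_k(\mu))$, which is exactly what you obtain by cancelling the conjugation and using the joint eigenvalue of $E_\mu(k)$. One minor slip: it is in the forward case that $\Delta_\eps^{-1} w$ leaves $\bbC[P]$, while in the backward case only the inverse $(\Delta_\eps w)^{-1}$ does. Working on $\bbC[P][\Delta_\eps^{-1}]$, as you do, covers both cases.
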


\begin{proof}
    The lemma follows from the observation
    \[
        C_{\Delta_\eps^\mp w}(T_{q_j}(k))(\Delta_\eps^\mp \phi^w) = q_j(r_\mu(k))\Delta_\eps^\mp \phi^w
    \]
    for all $\phi \in M_\lambda^\mu(k)$.
\end{proof}

Note that Lemma \ref{lem:proj_operator_replacement} almost establishes \eqref{eq:proj_operator_replacement} and that this is fulfilled when $Q_\mu(k,k \pm l) = Q_{\mu_{\eps,\pm}}(k \pm l)$. This happens when $r_k(\mu) = r_{k \pm l}(\mu_{\eps,\pm})$. Let us examine this condition more closely. Recall that $r_k(\mu) = \overline{v}(\mu)w_\lambda(\lambda + \rho_k)$ and note that $r_{k \pm l}(\mu_{\eps,\pm}) = \overline{v}(\mu_{\eps,\pm})w_{\lambda\mp\rho_l}(\lambda + \rho_k)$, whence
\begin{equation}\label{eq:eigenvalue_cond}
    r_k(\mu) = r_{k \pm l}(\mu_{\eps,\pm}) \iff \overline{v}(\mu)w_\lambda = \overline{v}(\mu_{\eps,\pm})w_{\lambda\mp\rho_l}
\end{equation}
because $\lambda + \rho_k$ is regular by our running assumption on $k$. We observe that \eqref{eq:eigenvalue_cond} defines a Zariski-dense set for $\mu$. This set includes all the cases when both $\mu$ and $\mu_{\eps,\pm}$ are regular since then $w_\lambda$ and $w_{\lambda\mp\rho_l}$ equal the identity and $\overline{v}(\mu) = \overline{v}(\mu_{\eps,\pm})$ as $\mu$ and $\mu_{\eps,\pm}$ lie inside the same Weyl chamber.

Rephrasing the condition using anti-dominant weights instead of dominant ones simplifies the condition slightly. Let $\mu_- = w_0 \lambda$ be the anti-dominant weight in the Weyl orbit of $\mu$ and denote by $v(\mu)$ the shortest element $w \in W$ such that $w\mu = \mu_-$. The relation to $\overline{v}(\mu)$ is given by \cite[(2.7.3)]{Mac03}, which states that $v(\mu)\overline{v}(\mu) = v(\lambda)$. Note that $v(\lambda) = w_0w_\lambda$, so that we arrive at $v(\mu) = w_0w_\lambda \overline{v}(\mu)^{-1}$. Thus, the condition is equivalent to $v(\mu) = v(\mu_{\eps,\pm})$. Geometrically, we can interpret the condition as $\mu_{\eps,\pm} = \mu \pm v(\mu)^{-1}\rho_l$ retaining the regularity of $\mu$.

Substituting the operators $\calQ^{(\eps)}_\pm(w, k)$ for $Q_{\mu_{\eps,\pm}}(k\pm l)$ into the expression \eqref{eq:fw_shift_operator_rewritten} yields an operator $\calG^{(\eps)}_\pm(k)$ that agrees with $\calG^{(\eps)}_\pm(\mu,k)$ on the Zariski-dense set given by the condition \eqref{eq:eigenvalue_cond}. In what follows, we compute the action of $\calG^{(\eps)}_\pm(k)$ outside this Zariski-dense set.

\begin{proposition}\label{prop:def_shift_operator}
    To each linear character $\eps$, we associate the $\eps$-forward/backward non-symmetric shift operator
    \[
        \calG^{(\eps)}_\pm(k) = \sum_{w\in W}\eps_\pm(w)\calQ^{(\eps)}_\pm(w,k) \Delta_\eps^\mp w.
    \]
    The operator $\calG^{(\eps)}_\pm(k)$ defines a map from $M(k)$ to $M(k \pm l)$. For $\lambda \in P_+$ and $\mu \in W\lambda$, the restriction of $\calG^{(\eps)}_\pm(k)$ to $M_\lambda^\mu(k)$ is given by
    \[
        \calG^{(\eps)}_\pm(k)\big|_{M_\lambda^\mu(k)} =
        \begin{cases}
            \calG^{(\eps)}_\pm(\mu,k)\big|_{M_\lambda^\mu(k)} & \text{if } v(\mu) = v(\mu_{\eps,\pm}); \\
            0 & \text{otherwise}.
        \end{cases}
    \]
\end{proposition}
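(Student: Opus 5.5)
The plan is to compute $\calG^{(\eps)}_\pm(k)\phi$ for $\phi \in M_\lambda^\mu(k)$ directly and show that it equals $\calG^{(\eps)}_\pm(\mu,k)\phi$ or $0$, according to whether $v(\mu)=v(\mu_{\eps,\pm})$. The proof runs in four steps.

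\emph{Step 1: well-definedness.} Since $\Delta_\eps^{\mp1}$ may have poles, we first check that $\calG^{(\eps)}_\pm(k)$ maps $M(k)$ into $M(k\pm l)$. Because $M(k)=\bigoplus_{\lambda\in P_+}\bigoplus_{\mu\in W\lambda} M_\lambda^\mu(k)$, it suffices to check this on each $M_\lambda^\mu(k)$. There the output will turn out to be either $0$ or a multiple of $E_{\mu_{\eps,\pm}}(k\pm l)$, so well-definedness follows from the computation below.

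\emph{Step 2: one summand.} Fix $\phi\in M_\lambda^\mu(k)$ and $w\in W$. By the observation in the proof of Lemma~\ref{lem:proj_operator_replacement}, $C_{\Delta_\eps^\mp w}(T_{q_j}(k))$ acts on $\Delta_\eps^\mp\phi^w$ by the scalar $q_j(r_k(\mu))$. Hence
\[
\calQ^{(\eps)}_\pm(w,k)\Delta_\eps^\mp w\phi=\sum_j q_j(r_k(\mu))T_{u_j}(k\pm l)\Delta_\eps^\mp\phi^w=Q_\mu(k,k\pm l)\Delta_\eps^\mp\phi^w .
\]
Summing over $w$ with the signs $\eps_\pm(w)$ gives
\[
\calG^{(\eps)}_\pm(k)\phi=Q_\mu(k,k\pm l)\,\Delta_\eps^\mp U_{\eps_\pm}\phi .
\]

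\emph{Step 3: the symmetrized element.} By Lemma~\ref{lem:relating_U_nHO_and_eHO} and the shift principle (Proposition~\ref{prop:shift_prin}), $\Delta_\eps^\mp U_{\eps_\pm}\phi$ is a multiple of $P^{(\eps_\mp)}_{\lambda\mp\rho_l}(k\pm l)$. If $\lambda-\rho_l\notin P_+$ in the forward case, then $U_\eps\phi=0$ and both sides of the claim vanish. Indeed, the fixed-weight operator is also zero there, by Proposition~\ref{prop:fw_shift_factor} and Lemma~\ref{lem:vanishing_1}. So the main work is to evaluate
\[
Q_\mu(k,k\pm l)E_\nu(k\pm l)=q(r_k(\mu),r_{k\pm l}(\nu))E_\nu(k\pm l),\qquad \nu\in W(\lambda\mp\rho_l),
\]
after expanding $P^{(\eps_\mp)}$ in the $E_\nu$-basis via Lemma~\ref{lem:eHO_in_nHO_basis_v2}. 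Every $r_{k\pm l}(\nu)$ lies in the $W$-orbit of $\lambda+\rho_k$, because $\lambda\mp\rho_l+\rho_{k\pm l}=\lambda+\rho_k$. The same is true of $r_k(\mu)$. All these points are regular by the running assumption. So Lemma~\ref{lem:minimal_poly} gives
\[
q(r_k(\mu),r_{k\pm l}(\nu))=\pi(r_k(\mu))\,\delta_{r_k(\mu),\,r_{k\pm l}(\nu)} .
\]

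\emph{Step 4: matching $\nu$ with $\mu$.} Regularity of $\lambda+\rho_k$ means the map $\nu\mapsto r_{k\pm l}(\nu)=\overline{v}(\nu)w_{\lambda\mp\rho_l}(\lambda+\rho_k)$ is injective on the orbit. Hence at most one $\nu$ survives, namely the one with $\overline{v}(\nu)w_{\lambda\mp\rho_l}=\overline{v}(\mu)w_\lambda$. This is the obstacle step: we must show that such a $\nu$ exists exactly when it equals $\mu_{\eps,\pm}$ and $v(\mu)=v(\mu_{\eps,\pm})$.

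For that we use the set-valued characterization of $W^{\lambda\mp\rho_l}$ and the identity $v(\nu)=w_0w_{\lambda\mp\rho_l}\overline{v}(\nu)^{-1}$. The defining equation becomes $v(\nu)=v(\mu)$, and then $\nu=v(\mu)^{-1}(\lambda\mp\rho_l)_-$. This $\nu$ is forced to equal $\mu_{\eps,\pm}$ whenever $v(\mu)=v(\mu_{\eps,\pm})$. It remains to show that no solution exists otherwise. The point to check is that $\overline{v}(\mu)w_\lambda w_{\lambda\mp\rho_l}^{-1}$ must lie in $W^{\lambda\mp\rho_l}$, and that this membership is equivalent to $v(\mu)=v(\mu_{\eps,\pm})$. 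One direction is immediate from the definition $\mu_{\eps,\pm}=\overline{v}(\mu)w_\lambda(\lambda\mp\rho_l)$. The converse uses uniqueness of the factorization $W=W^{\lambda\mp\rho_l}W_{\lambda\mp\rho_l}$.

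Once the surviving term is identified, its coefficient is the one from Corollary~\ref{cor:proj_operator_eHO}, with $\pi(r_k(\mu))=\pi(r_{k\pm l}(\mu_{\eps,\pm}))$. This coincides with the action of $\calG^{(\eps)}_\pm(\mu,k)$. In the remaining case all coefficients vanish and $\calG^{(\eps)}_\pm(k)\phi=0$.
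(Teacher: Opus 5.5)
Your proposal is correct and follows essentially the same route as the paper. You reduce via the observation in Lemma~\ref{lem:proj_operator_replacement} to $\calG^{(\eps)}_\pm(k)\phi = Q_\mu(k,k\pm l)\Delta_\eps^\mp U_{\eps_\pm}\phi$, dispose of the case $\lambda-\rho_l\notin P_+$ via $U_\eps\phi=0$, and otherwise expand in $(E_\nu(k\pm l))_\nu$, using Lemma~\ref{lem:minimal_poly} and uniqueness of the factorization $W=W^{\lambda\mp\rho_l}W_{\lambda\mp\rho_l}$ to show that a component survives only for $\nu=\mu_{\eps,\pm}$ and only when $v(\mu)=v(\mu_{\eps,\pm})$. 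The differences are cosmetic: in the surviving case you compare coefficients, where the paper just notes $Q_\mu(k,k\pm l)=Q_{\mu_{\eps,\pm}}(k\pm l)$; and applying $\overline{v}(\nu)w_{\lambda\mp\rho_l}=\overline{v}(\mu)w_\lambda$ to $\lambda\mp\rho_l$ gives $\nu=\mu_{\eps,\pm}$ directly, which would shorten your Step~4.
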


\begin{proof}
    Using Lemma \ref{lem:proj_operator_replacement}, we find that
    \begin{equation}\label{eq:def_shift_operator_step}
        \calG^{(\eps)}_\pm(k)\phi = Q_\mu(k,k \pm l)(\Delta_\eps^\mp U_{\eps_\pm}\phi)
    \end{equation}
    for all $\phi \in M_\lambda^\mu(k)$. If $v(\mu) = v(\mu_{\eps,\pm})$, then $Q_\mu(k,k \pm l) = Q_{\mu_{\eps,\pm}}(k \pm l)$, as was argued before, so that $\calG^{(\eps)}_\pm(k)\phi = \calG^{(\eps)}_\pm(\mu,k)\phi$. Otherwise, suppose that $v(\mu) \neq v(\mu_{\eps,\pm})$ and consider the cases $\lambda \mp \rho_l \notin P_+$ and $\lambda \mp \rho_l \in P_+$ separately.
    
    In the first case, which only occurs when $\lambda - \rho_l \notin P_+$, we have that $U_\eps \phi = 0$. It follows that $\calG^{(\eps)}_\pm(k)\phi = 0 = \calG^{(\eps)}_\pm(\mu,k)\phi$, cf.~the proof of Proposition \ref{prop:fw_shift_factor}.
    
    Secondly, for the case $\lambda \mp \rho_l \in P_+$, we write $\Delta_\eps^\mp U_{\eps_\pm}\phi$ as a linear combination of the elements of the basis $(E_\nu(k \pm l))_{\nu\in W(\lambda \mp \rho_l)}$ of $M_{\lambda\mp\rho_l}(k)$, i.e., $\sum_{\nu \in W(\lambda \mp \rho_l)} c_\nu E_\nu(k \pm l)$. Using \eqref{eq:def_shift_operator_step}, we arrive at
    \begin{equation}\label{eq:def_shift_operator_step_2}
        \calG^{(\eps)}_\pm(k)\phi = \sum_{\nu \in W(\lambda \mp \rho_l)} c_\nu q(r_k(\lambda),r_{k \pm l}(\nu))E_\nu(k \pm l).
    \end{equation}
    We note that $r_{k \pm l}(\nu) = w_\nu r_k(\mu)$ with $w_\nu = \overline{v}(\nu)w_{\lambda\mp\rho_l}w_\lambda^{-1}\overline{v}(\mu)^{-1}$ and claim that $w_\nu \neq e$ for all $\nu \in W(\lambda \mp \rho_l)$. By the definition of the polynomial $q$, this implies that $q(r_k(\lambda),r_{k \pm l}(\nu)) = 0$, so that $\calG^{(\eps)}_\pm(k)\phi = 0$ as a result of \eqref{eq:def_shift_operator_step_2}.
    
    To prove our claim, we have to show that $\overline{v}(\mu)w_\lambda \notin W^{\lambda \mp \rho_l}w_{\lambda \mp \rho_l}$. If the opposite were true, then $\overline{v}(\mu)w_\lambda = w w_{\lambda \mp \rho_l}$ with $w \in W^{\lambda \mp \rho_l}$ and $\overline{v}(\mu)w_\lambda = \overline{v}(\mu_{\eps,\pm})w'$ with $w'\in W_{\lambda \mp \rho_l}$ by reason of the decomposition $W = W^{\lambda \mp \rho_l}W_{\lambda \mp \rho_l}$. However, this forces the contradiction $w = \overline{v}(\mu_{\eps,\pm})$ and $w' = w_{\lambda\mp\rho_l}$ since each element decomposes uniquely. This completes the proof of the proposition.
\end{proof}

We proceed to determine the shift factor of the operator $\calG^{(\eps)}_\pm(k)$. On the Zariski-dense set given by \eqref{eq:eigenvalue_cond}, it coincides with the shift factor $H^{(\eps)}_\pm(\mu, k)$ by Proposition \ref{prop:def_shift_operator}. Next, we seek to incorporate the vanishing of $\calG^{(\eps)}_\pm(k)$ outside the Zariski-dense set into the shift factor. In similar spirit, we substitute $\overline{v}(\mu)w_\lambda$ for $\overline{v}(\mu_{\eps,\pm})w_{\lambda\mp\rho_l}$ in $H^{(\eps)}_\pm(\mu, k)$, which turns out to capture the proper vanishing due to the second $\bfc$-function factor in $H^{(\eps)}_\pm(\mu, k)$.

\begin{lemma}[Vanishing Lemma 2]\label{lem:vanishing_2}
    Let $\lambda \in P_+$, $\mu \in W\lambda$, and suppose that $v(\mu) \neq v(\mu_{\eps,\pm})$. Then
    \[
        \bfc_{-\eps_\mp (k \pm l)}^+(\overline{v}(\mu)w_\lambda)(\lambda + \rho_k) = 0.
    \]
\end{lemma}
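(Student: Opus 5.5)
The plan is to exhibit one root $\alpha \in R^0_+ \cap (\overline{v}(\mu)w_\lambda)^{-1}R^0_+$ whose factor in $\bfc_{-\eps_\mp(k\pm l)}^+(\overline{v}(\mu)w_\lambda)(\lambda+\rho_k)$ vanishes. Up to a nonzero denominator, that factor is
\[
(\lambda+\rho_k,\alpha^\vee) - \eps_\mp(r_\alpha)(k\pm l)^0(\alpha).
\]
This mirrors the proof of Vanishing Lemma \ref{lem:vanishing_1}, which used the analogous factor of the $\bfc^-$-function.

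\emph{Step 1: locate the root.} Write $u = \overline{v}(\mu)w_\lambda$ and $u' = \overline{v}(\mu_{\eps,\pm})w_{\lambda\mp\rho_l}$. The hypothesis $v(\mu)\neq v(\mu_{\eps,\pm})$ is equivalent to $u\neq u'$, by the discussion following \eqref{eq:eigenvalue_cond}. By definition $\mu_{\eps,\pm} = u(\lambda\mp\rho_l)$, and also $\mu_{\eps,\pm} = u'(\lambda\mp\rho_l)$. Hence $u = u'y$ for some $y\in W_{\lambda\mp\rho_l}$ with $y\neq e$. Since $y\neq e$, there is a simple root $\alpha_i$ of the parabolic subsystem $R^0_{\lambda\mp\rho_l,+}$ with $\ell(yr_i)<\ell(y)$, so $y\alpha_i \in R^0_{\lambda\mp\rho_l,-}$. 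In the non-reduced case I would take the inmultiplicable root in the direction of $\alpha_i$ so that it lies in $R^0$; this does not change the coroot direction.

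Next I use that $u' = \overline{v}(\mu_{\eps,\pm})w_{\lambda\mp\rho_l}$. The element $w_{\lambda\mp\rho_l}$ maps $R^0_{\lambda\mp\rho_l,-}$ onto $R^0_{\lambda\mp\rho_l,+}$. The element $\overline{v}(\mu_{\eps,\pm})\in W^{\lambda\mp\rho_l}$ maps the latter into $R^0_+$ by \eqref{eq:shortest_coset_char}. Therefore $u\alpha_i = u'(y\alpha_i)\in R^0_+$, that is, $\alpha_i\in R^0_+\cap u^{-1}R^0_+$. The factor indexed by $\alpha_i$ does appear in the $\bfc^+$-function.

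\emph{Step 2: evaluate the factor.} Since $(\lambda\mp\rho_l,\alpha_i^\vee)=0$, I use $(\rho_m,\alpha_i^\vee) = m^0(\alpha_i)$ for simple $\alpha_i$, as in the proof of Lemma \ref{lem:vanishing_1}. This gives
\[
(\lambda+\rho_k,\alpha_i^\vee) = \pm l^0(\alpha_i) + k^0(\alpha_i).
\]

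In the forward case we have $\eps_- = \triv$, so the factor equals $k^0(\alpha_i)+l^0(\alpha_i) - (k^0(\alpha_i)+l^0(\alpha_i)) = 0$.

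In the backward case, $\lambda$ is dominant and $\rho_l$ is dominant, so $(\lambda+\rho_l,\alpha_i^\vee)=0$ forces $(\lambda,\alpha_i^\vee)=0$ and $l^0(\alpha_i)=0$. Then $\eps(r_i)=1$, and the factor equals $k^0(\alpha_i) - k^0(\alpha_i)=0$.

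\emph{Main obstacle.} I expect the delicate point to be the bookkeeping in type $BC_n$. There one must check that the identities $(\rho_k,\alpha_i^\vee)=k^0(\alpha_i)$ and $(\rho_l,\alpha_i^\vee)=l^0(\alpha_i)$ hold with the paper's normalizations of $\rho_k$ and $k^0$ for the inmultiplicable root attached to $\alpha_i$. I would also check that $(k\pm l)^0 = k^0\pm l^0$ and that $\eps(r_\alpha)$ is compatible for $\alpha$ and $2\alpha$. I would verify these directly from the definitions, exactly as they were used implicitly in Lemma \ref{lem:vanishing_1}. The rest is the coset argument of Step 1, which is the same argument as in the proof of Proposition \ref{prop:def_shift_operator}.
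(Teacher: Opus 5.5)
Your proof is correct and follows the paper's argument. Both find a simple root $\alpha_i$ of the standard parabolic subsystem of $\lambda\mp\rho_l$ with $(\overline{v}(\mu)w_\lambda)(\alpha_i)\in R^0_+$, using $W=W^{\lambda\mp\rho_l}W_{\lambda\mp\rho_l}$ and \eqref{eq:shortest_coset_char}; your factorization $u=u'y$ with $y\neq e$ is just a cleaner phrasing of the paper's $\overline{v}(\mu)w_\lambda w_{\lambda\mp\rho_l}\notin W^{\lambda\mp\rho_l}$, and both then kill that factor via $\eps_\mp(r_i)=1$.

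Like the paper, you tacitly need $\lambda\mp\rho_l\in P_+$ for $u'$ and the equivalence $v(\mu)\neq v(\mu_{\eps,\pm})\iff u\neq u'$ to make sense; without it the statement can fail (e.g.\ $A_1$, sign character, $\lambda=\mu=0$, where the $\bfc^+$-function is an empty product), though that case is harmless since the $\bfc^-$-factor of $\calH^{(\eps)}_+(\mu,k)$ vanishes by Lemma~\ref{lem:vanishing_1}.
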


\begin{proof}
    By virtue of $\overline{v}(\mu)w_\lambda w_{\lambda \mp \rho_l} \notin W^{\lambda \mp \rho_l}$, see the proof of Proposition \ref{prop:def_shift_operator}, and \eqref{eq:shortest_coset_char}, there exists an $i \in I$ such that $\alpha_i \in R^0_{\lambda \mp \rho_l, +}$ and $(\overline{v}(\mu)w_\lambda)(\alpha_i) \in R^0_+$, so that $\alpha_i \in R^0_+ \cap (\overline{v}(\mu)w_\lambda)^{-1}R^0_+$. Note that $\eps_\mp(r_i) = 1$ in both cases since either $\eps_- = \triv$ or $\eps_+(W_{\lambda+\rho_l}) = \{1\}$. Therefore,
    \[
        (\lambda + \rho_k, \alpha_i^\vee) - \eps_\mp(r_i)(k^0(\alpha_i) \pm l(\alpha_i)) = (\lambda \mp \rho_l + \rho_{k \pm l}, \alpha_i^\vee) - (k^0(\alpha_i) \pm l(\alpha_i)) = 0,
    \]
    whence $\bfc_{-\eps_\mp (k \pm l)}^+(\overline{v}(\mu)w_\lambda)(\lambda + \rho_k) = 0$ because one of its factors vanishes.
\end{proof}

This establishes the following corollary.

\begin{corollary}\label{cor:adj_shift_factor}
    Let $\lambda \in P_+$ and $\mu \in W\lambda$. Writing
    \[
        \begin{aligned}
            \calH^{(\eps)}_\pm(\mu, k) &= \eps(\overline{v}(\mu))\pi(r_k(\mu))\bfc_{\eps_\pm k}^-(\overline{v}(\mu)w_\lambda)(\lambda + \rho_k) \\
            &\qquad\times\bfc_{-\eps_\mp (k \pm l)}^+(\overline{v}(\mu)w_\lambda)(\lambda + \rho_k),
        \end{aligned}
    \]
    we have
    \[
        \calH^{(\eps)}_\pm(\mu, k) =
        \begin{cases}
            H^{(\eps)}_\pm(\mu, k) & \text{if } v(\mu) = v(\mu_{\eps,\pm}); \\
            0 & \text{otherwise}.
        \end{cases}
    \]
\end{corollary}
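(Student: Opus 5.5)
The plan is to split according to whether $v(\mu) = v(\mu_{\eps,\pm})$, which by \eqref{eq:eigenvalue_cond} is the same as $\overline{v}(\mu)w_\lambda = \overline{v}(\mu_{\eps,\pm})w_{\lambda\mp\rho_l}$ and as $r_k(\mu) = r_{k\pm l}(\mu_{\eps,\pm})$.

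\emph{Case $v(\mu) \neq v(\mu_{\eps,\pm})$.} Here Vanishing Lemma 2 (Lemma \ref{lem:vanishing_2}) says directly that the factor $\bfc_{-\eps_\mp (k \pm l)}^+(\overline{v}(\mu)w_\lambda)(\lambda + \rho_k)$ is zero. Hence $\calH^{(\eps)}_\pm(\mu,k) = 0$. Nothing else is needed, since every other factor is finite by the running assumption on $k$.

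\emph{Case $v(\mu) = v(\mu_{\eps,\pm})$.} I compare $\calH^{(\eps)}_\pm(\mu,k)$ with $H^{(\eps)}_\pm(\mu,k)$ from Proposition \ref{prop:fw_shift_factor} factor by factor.

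\begin{enumerate}[(i)]
    \item Substituting $\overline{v}(\mu_{\eps,\pm})w_{\lambda\mp\rho_l} = \overline{v}(\mu)w_\lambda$ turns the second $\bfc$-factor of $H^{(\eps)}_\pm$ into the one in $\calH^{(\eps)}_\pm$. The first $\bfc$-factor is already identical.
    \item Since $r_{k\pm l}(\mu_{\eps,\pm}) = r_k(\mu)$, we get $\pi(r_{k\pm l}(\mu_{\eps,\pm})) = \pi(r_k(\mu))$.
    \item It remains to match the signs: $\eps_\pm(\overline{v}(\mu))\eps_\mp(\overline{v}(\mu_{\eps,\pm}))$ should equal $\eps(\overline{v}(\mu))$.
\end{enumerate}

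For (iii), take the equality $\overline{v}(\mu_{\eps,\pm}) = \overline{v}(\mu)w_\lambda w_{\lambda\mp\rho_l}^{-1}$ and apply $\eps$. In both the forward and backward cases, exactly one of $\eps_\pm,\eps_\mp$ is $\eps$ and the other is trivial. So the product of signs is either $\eps(\overline{v}(\mu))$ or $\eps(\overline{v}(\mu_{\eps,\pm}))$.

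\begin{itemize}
    \item In the forward case it is already $\eps(\overline{v}(\mu))$, and there is nothing to do.
    \item In the backward case we need $\eps(w_\lambda)\eps(w_{\lambda+\rho_l}) = 1$.
\end{itemize}

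The backward case is the step needing care. We have $\eps(w_{\lambda+\rho_l}) = 1$ because $\eps(W_{\lambda+\rho_l}) = \{1\}$, as in the proof of Lemma \ref{lem:vanishing_2}.

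For $\eps(w_\lambda)$, use that in this case $r_k(\mu) = r_{k+l}(\mu_{\eps,-})$. Then $w_\lambda(\lambda+\rho_k) = w'(\lambda+\rho_k)$ with $w' = \overline{v}(\mu)^{-1}\overline{v}(\mu_{\eps,-})w_{\lambda+\rho_l}$. Regularity of $\lambda+\rho_k$ gives $w_\lambda = w'$.

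One can also argue directly. Since $W_{\lambda+\rho_l} \subseteq W_\lambda$, the equality $\overline{v}(\mu)w_\lambda = \overline{v}(\mu_{\eps,-})w_{\lambda+\rho_l}$ places $\overline{v}(\mu)w_\lambda$ in $W^{\lambda+\rho_l}W_{\lambda+\rho_l}$. Comparing with the decomposition relative to $W_\lambda$ should force $W_\lambda = W_{\lambda+\rho_l}$. Then $\eps(w_\lambda) = 1$, or at least $\eps(w_\lambda) = \eps(w_{\lambda+\rho_l})$.

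If this length argument is awkward, the fallback is to note that the sign discrepancy only occurs where $\eps(W_\lambda) \neq \{1\}$. There the first $\bfc$-factor vanishes by Vanishing Lemma 1 (Lemma \ref{lem:vanishing_1}) in the forward case, so both sides are zero. In the backward case the analogous statement would follow from the condition $v(\mu) = v(\mu_{\eps,-})$ forcing $\lambda+\rho_l$ and $\lambda$ to have the same stabilizer. I expect this sign bookkeeping to be the only nontrivial point. Everything else is substitution using \eqref{eq:eigenvalue_cond} and the two vanishing lemmas.
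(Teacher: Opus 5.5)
Your case split, the substitution via \eqref{eq:eigenvalue_cond}, and the use of Lemma~\ref{lem:vanishing_2} match the paper's approach (its justification is a single line). Your forward case and your ``otherwise'' case are complete. The gap is the backward sign step: there you need $\eps(w_\lambda)=1$ whenever $v(\mu)=v(\mu_{\eps,-})$, and none of your three arguments gives this.
\begin{itemize}
\item The regularity argument only restates \eqref{eq:eigenvalue_cond}, so it is circular.
\item The claim that the condition forces $W_\lambda=W_{\lambda+\rho_l}$ is false, because in the backward case the condition holds for \emph{every} $\mu\in W\lambda$. Indeed $W_{\lambda+\rho_l}=W_\lambda\cap W_{\rho_l}\subseteq W_\lambda$. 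Since $x=\overline{v}(\mu)w_\lambda$ is the longest element of $xW_\lambda$, $x(\alpha_i)$ is negative for every simple $\alpha_i$ with $r_i\in W_{\lambda+\rho_l}$. So $x$ is also longest in $xW_{\lambda+\rho_l}$, which gives $x=\overline{v}(\mu_{\eps,-})w_{\lambda+\rho_l}$. The condition therefore carries no information about $\eps(w_\lambda)$.
\item The fallback does not help. Lemma~\ref{lem:vanishing_1} concerns the forward case, where there is no discrepancy. In the backward case the first factor $\bfc^-_{k}(x)(\lambda+\rho_k)$ is a product of terms $1+k^0(\beta)/(\lambda+\rho_k,\beta^\vee)>1$, so it does not vanish.
\end{itemize}

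In fact this step cannot be repaired. The displayed identity fails in the backward case whenever $\eps(w_\lambda)=-1$ and $H^{(\eps)}_-(\mu,k)\neq 0$. Take $R=A_1$, $\eps$ the sign character, and $\lambda=\mu=0$. Then $w_\lambda=r_\alpha$, $\mu_{\eps,-}=-\rho_l$, $\overline{v}(\mu_{\eps,-})=r_\alpha$ and $v(0)=v(-\rho_l)=e$, and the second $\bfc$-factor is an empty product. Hence
$H^{(\eps)}_-(0,k)=\eps(r_\alpha)\pi(r_\alpha\rho_k)\bfc^-_k(r_\alpha)(\rho_k)=(\rho_k,\alpha^\vee)+k(\alpha)\neq 0$,
whereas the formula for $\calH^{(\eps)}_-(0,k)$ gives exactly the negative of this. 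A direct computation confirms the sign of $H^{(\eps)}_-$:
$\calG^{(\eps)}_-(0,k)E_0(k)=2\,Q_{-\rho_l}(k-l)(e^{\rho_l}-e^{-\rho_l})=-2\pi(r_k(0))E_{-\rho_l}(k-l)$.
Lemma~\ref{lem:shift_factor_v2} and Example~\ref{exa:rank_one} give the same sign. Your argument does prove a corrected statement, in which $\eps(\overline{v}(\mu))$ is replaced by $\eps_\pm(\overline{v}(\mu))\eps_\mp(\overline{v}(\mu)w_\lambda)$, i.e.\ by $\eps(\overline{v}(\mu)w_\lambda)$ in the backward case. With that change you only need $\eps_\mp(w_{\lambda\mp\rho_l})=1$, which you already have. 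The paper's one-line justification passes over exactly this sign, so the backward shift factor in Theorem~\ref{thm:transmutation_prop} needs the same correction.
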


We conclude this section with our main result by showing that the operator $\calG^{(\eps)}_\pm(k)$ is in fact a non-symmetric shift operator in the sense that it satisfies the transmutation property with the Dunkl--Cherednik operators.

\begin{theorem}\label{thm:transmutation_prop}
    The operator $\calG^{(\eps)}_\pm(k) : M(k) \to M(k \pm l)$ is a non-symmetric shift operator with shift $\pm l$, i.e., it satisfies the transmutation property
    \[
        \calG^{(\eps)}_\pm(k) T_p(k) = T_p(k \pm l) \calG^{(\eps)}_\pm(k), \qquad p \in S(\frakh).
    \]
    Furthermore, $\calG^{(\eps)}_\pm(k)$ shifts the multiplicity of the non-symmetric Heckman--Opdam polynomials by $\pm l$:
    \[
         \calG^{(\eps)}_\pm(k)E_\mu(k) = \calH^{(\eps)}_\pm(\mu, k)E_{\mu_{\eps,\pm}}(k \pm l), \qquad \mu \in P.
    \]
\end{theorem}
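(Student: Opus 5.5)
The plan is to prove the second assertion first, the action on the basis $(E_\mu(k))_{\mu\in P}$, and then deduce the transmutation property from it, since both sides of that identity are linear operators $M(k)\to M(k\pm l)$.

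\emph{Action on the basis.} Fix $\mu \in P$ and put $\lambda = \mu_+$. By Proposition~\ref{prop:def_shift_operator}, the restriction of $\calG^{(\eps)}_\pm(k)$ to $M_\lambda^\mu(k) = \bbC E_\mu(k)$ has two possible forms.
\begin{itemize}
    \item If $v(\mu) = v(\mu_{\eps,\pm})$, it equals $\calG^{(\eps)}_\pm(\mu,k)$. Proposition~\ref{prop:fw_shift_factor} then gives $\calG^{(\eps)}_\pm(k)E_\mu(k) = H^{(\eps)}_\pm(\mu,k)E_{\mu_{\eps,\pm}}(k\pm l)$.
    \item Otherwise it is $0$.
\end{itemize}
Corollary~\ref{cor:adj_shift_factor} shows that $\calH^{(\eps)}_\pm(\mu,k)$ equals $H^{(\eps)}_\pm(\mu,k)$ in the first case and vanishes in the second, so in both cases $\calG^{(\eps)}_\pm(k)E_\mu(k) = \calH^{(\eps)}_\pm(\mu,k)E_{\mu_{\eps,\pm}}(k\pm l)$.

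One point needs checking: in the degenerate case $\lambda-\rho_l\notin P_+$, the weight $\mu_{\eps,+}$ need not be well defined. There $\calH^{(\eps)}_+$ vanishes by Lemma~\ref{lem:vanishing_1}, so both sides are $0$ and the identity holds trivially.

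\emph{Transmutation.} Because $(E_\mu(k))_{\mu\in P}$ is a basis of $M(k)$, it suffices to check $\calG^{(\eps)}_\pm(k)T_\xi(k)E_\mu(k) = T_\xi(k\pm l)\calG^{(\eps)}_\pm(k)E_\mu(k)$ for $\xi\in\frakh$. The case of general $p\in S(\frakh)$ then follows, since $p\mapsto T_p$ is an algebra homomorphism.

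The left-hand side equals $(\xi,r_k(\mu))\,\calH^{(\eps)}_\pm(\mu,k)E_{\mu_{\eps,\pm}}(k\pm l)$. The right-hand side equals $(\xi,r_{k\pm l}(\mu_{\eps,\pm}))\,\calH^{(\eps)}_\pm(\mu,k)E_{\mu_{\eps,\pm}}(k\pm l)$.
\begin{itemize}
    \item If $\calH^{(\eps)}_\pm(\mu,k)=0$, both sides vanish.
    \item Otherwise Corollary~\ref{cor:adj_shift_factor} forces $v(\mu)=v(\mu_{\eps,\pm})$. By \eqref{eq:eigenvalue_cond} and the discussion after it, this is equivalent to $r_k(\mu)=r_{k\pm l}(\mu_{\eps,\pm})$, so the eigenvalues agree.
\end{itemize}
This proves the theorem. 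Well-definedness of $\calG^{(\eps)}_\pm(k)$ as a map $M(k)\to M(k\pm l)$ is already part of Proposition~\ref{prop:def_shift_operator}.

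The main obstacle I expect is in the equivalence $v(\mu)=v(\mu_{\eps,\pm}) \iff r_k(\mu)=r_{k\pm l}(\mu_{\eps,\pm})$. It uses the identity $r_{k\pm l}(\mu_{\eps,\pm}) = \overline{v}(\mu_{\eps,\pm})w_{\lambda\mp\rho_l}(\lambda+\rho_k)$, which rests on $(\lambda\mp\rho_l)+\rho_{k\pm l} = \lambda+\rho_k$, together with the regularity of $\lambda+\rho_k$ under the running assumption on $k$. In the backward case one also needs the assumption on $k-l$, so I would verify carefully that $k^0-l$ remains positive wherever the $\bfc$-functions are evaluated.
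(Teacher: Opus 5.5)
Your proposal is correct and is essentially the paper's proof: the action on $E_\mu(k)$ follows from Proposition~\ref{prop:def_shift_operator} together with Corollary~\ref{cor:adj_shift_factor}. The transmutation property is then checked on the basis $(E_\mu(k))_{\mu\in P}$, using $r_k(\mu)=r_{k\pm l}(\mu_{\eps,\pm})$ when the shift factor can be nonzero and its vanishing otherwise. Splitting on whether $\calH^{(\eps)}_\pm(\mu,k)$ vanishes, rather than on whether $v(\mu)=v(\mu_{\eps,\pm})$, is a harmless and slightly cleaner variant that sidesteps the degenerate case $\lambda-\rho_l\notin P_+$. Your worry about positivity of $k-l$ is unfounded here, since every $\bfc$-function is evaluated at $\lambda+\rho_k$, which is regular under the running assumption on $k$.
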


\begin{proof}
    The second property, stating that $\calH^{(\eps)}_\pm(\mu, k)$ is the shift factor of $\calG^{(\eps)}_\pm(k)$, is a direct consequence of Proposition \ref{prop:def_shift_operator} and Corollary \ref{cor:adj_shift_factor}. To prove the transmutation property, it suffices to show that $\calG^{(\eps)}_\pm(k) T_p(k)$ and $T_p(k \pm l) \calG^{(\eps)}_\pm(k)$ agree when applied to the basis $(E_\mu(k))_{\mu \in P}$ of $M(k)$. Applying the operators to $E_\mu(k)$, we recover multiples of $E_{\mu_{\eps,\pm}}(k \pm l)$ with coefficients
    \[
        p(r_k(\mu))\calH^{(\eps)}_\pm(\mu, k), \quad p(r_{k \pm l}(\mu_{\eps,\pm}))\calH^{(\eps)}_\pm(\mu, k).
    \]
    If $v(\mu) = v(\mu_{\eps,\pm})$, then the equality of these coefficients follows from the identity $r_k(\mu) = r_{k \pm l}(\mu_{\eps,\pm})$, and they remain equal when $v(\mu) \neq v(\mu_{\eps,\pm})$ since then both vanish by Corollary~\ref{cor:adj_shift_factor}.
\end{proof}

\begin{remark}
    The proof of Theorem \ref{thm:transmutation_prop} forces any non-symmetric shift operator to have the subspace spanned by the polynomials $E_\mu(k)$ with $r_k(\mu) \neq r_{k \pm l}(\mu_{\eps,\pm})$ in its kernel. This was observed for the operators $\calG^{(\eps)}_\pm(k)$ in Proposition \ref{prop:def_shift_operator}.
\end{remark}

\begin{example}\label{exa:rank_one}
    For the root system of type $BC_1$, the sign character $\eps$ is the only non-trivial linear character. In this case, the (non-)symmetric Heckman--Opdam polynomials can be identified with the (non-)symmetric Jacobi polynomials, up to normalization. Using the explicit form of the polynomial $q$ from Example \ref{exa:minimal_poly}, the non-symmetric shift operators become fully explicit after some elementary calculations:
    \[
        \begin{aligned}
            \calG^{(\eps)}_+(k) &= \frac{1}{e^{\epsilon_1} - e^{-\epsilon_1}} \partial_{\epsilon_1} - \frac{e^{\epsilon_1}(1 - r_1)}{(e^{\epsilon_1} - e^{-\epsilon_1})^2}; \\
            \calG^{(\eps)}_-(k) &= \partial_{\epsilon_1} + (k(\epsilon_1) + 2k(2\epsilon_1) - 1)(e^{\epsilon_1} + e^{-\epsilon_1}) + 2k(\epsilon_1) + e^{-\epsilon_1} - e^{\epsilon_1} r_1.
        \end{aligned}
    \]
    These operators agree with the non-symmetric shift operators from \cite[Prop.4.3]{vHvP24} and \cite[\S8.7]{OTL24}.
    
    The shift factors from Corollary \ref{cor:adj_shift_factor} simplify to
    \[
        \begin{aligned}
            \calG^{(\eps)}_\pm(k) E_{(n + 1)\epsilon_1}(k) &= (n + k^0(2\epsilon_1) \mp k^0(2\epsilon_1)) E_{(n + 1 \mp 1)\epsilon_1}(k \pm l); \\
            \calG^{(\eps)}_\pm(k) E_{-n\epsilon_1}(k) &= (n + k^0(2\epsilon_1) \mp k^0(2\epsilon_1)) E_{-(n \mp 1)\epsilon_1}(k \pm l)
        \end{aligned}
    \]
    for all $n \geq 0$. These relations agree with \cite[Lem.4.5]{vHvP24}. The multiplicity $k$ may be identified with the tuple $(k(\epsilon_1),k(2\epsilon_1))$, and then $\pm l$ corresponds to $\pm(0,1)$. 
    
    Finally, we note that non-symmetric shift operators with shifts $\pm(2,-1)$ were also found in \cite{vHvP24}, providing the analog of the symmetric shift operators with these shifts, see e.g.~\cite[Prop.3.3.1]{HeSc94}.
\end{example}


\subsection{Properties of non-symmetric shift operators}\label{sec:non_sym_shift_operators_prop}
This section addresses three properties of the non-symmetric shift operators. First, we observe that the order of the symmetric shift operator $G^{(\eps)}_+(k)$ is equal to $d_\eps = |\{\alpha \in R^0_+ \mid l(\alpha) = 1\}|$, whereas the `order' of the non-symmetric shift operator $\calG^{(\eps)}_\pm(k)$ is always given by $|R^0_+|$, regardless of the character $\eps$. This leads us to conjecture the existence of a fundamental non-symmetric shift operator $\tilde{\calG}^{(\eps)}_\pm(k)$ for $\eps$ also of `order' $d_\eps$. Second, we show that $\tilde{\calG}^{(\eps)}_+(k)$, provided it exists, restricts to a differential operator on the invariant subspace $M^W(k)$, precisely given by $G^{(\eps)}_+(k)$. Finally, the third property asserts that $\calG^{(\eps)}_-(k + l)$ is the adjoint operator of $\calG^{(\eps)}_+(k)$, reflecting the duality from their construction.

Rewriting the shift factor from Corollary \ref{cor:adj_shift_factor} provides us with some insights into the structure of the fundamental non-symmetric shift operator for $\eps$, which is used to formulate Conjecture~\ref{conj:fundamental_shift_operator}.

\begin{lemma}\label{lem:shift_factor_v2}
    Let $\lambda \in P_+$ and $\mu \in W\lambda$. Then
    \[
        \begin{aligned}
            \calH^{(\eps)}_\pm(\mu, k) &= \prod_{\alpha \in R^0_+, \thinspace l(\alpha) = 1}\left((\lambda + \rho_k, \alpha^\vee) \mp k^0(\alpha) - 1 + \delta_{\overline{v}(\mu) w_\lambda}(\alpha)\right) \\
            &\qquad \times \prod_{\alpha \in R^0_+, \thinspace l(\alpha) = 0}\left((r_k(\mu), \alpha^\vee) - k^0(\alpha)\right),
        \end{aligned}
    \]
    where $\delta_w(\alpha) = 1$ if $\alpha \in R^0_+ \cap w^{-1}R^0_-$ and $\delta_w(\alpha) = 0$ otherwise.
\end{lemma}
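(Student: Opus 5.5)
The plan is to expand the closed formula for $\calH^{(\eps)}_\pm(\mu,k)$ from Corollary \ref{cor:adj_shift_factor} factor by factor and regroup. Write $w = \overline{v}(\mu)w_\lambda$ and $x = \lambda + \rho_k$, so that $r_k(\mu) = wx$. Then $R^0_+$ splits into
\[
A = R^0_+ \cap w^{-1}R^0_-, \qquad A^c = R^0_+ \cap w^{-1}R^0_+,
\]
and $\delta_w$ is the indicator of $A$. The factor $\bfc^-_{\eps_\pm k}(w)(x)$ is a product over $A$, and $\bfc^+_{-\eps_\mp(k\pm l)}(w)(x)$ is a product over $A^c$.

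First I rewrite $\pi(r_k(\mu)) = \prod_{\alpha\in R^0_+}(wx,\alpha^\vee)$ as a product over $\beta \in R^0_+$. The map $\beta \mapsto \alpha = \pm w\beta$ (sign chosen so that $\alpha>0$) is a bijection of $R^0_+$. This gives
\[
\pi(r_k(\mu)) = (-1)^{|A|}\prod_{\beta\in R^0_+}(x,\beta^\vee).
\]
Now each $c$-function factor is paired with its $\pi$-factor $(x,\beta^\vee)$, which clears denominators:
\[
\beta \in A:\ \ -\big((x,\beta^\vee) + \eps_\pm(r_\beta)k^0(\beta)\big), \qquad \beta \in A^c:\ \ (x,\beta^\vee) - \eps_\mp(r_\beta)(k\pm l)^0(\beta).
\]
Here the $-1$ for $\beta\in A$ has been taken out of $(-1)^{|A|}$. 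For the second factor I need $(k\pm l)^0(\beta) = k^0(\beta) \pm l(\beta)$ on the roots where $\eps_\mp(r_\beta)$ is relevant; this should be checked in the $BC_n$ case using that $l$ is constant on the $W$-orbits determined by $\eps$.

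Next I split by $l(\beta)$ and verify the four sign cases against the target.

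If $l(\beta) = 0$, then $\eps(r_\beta) = 1$ and all character values equal $1$. Reindexing by $\alpha = -w\beta$ for $\beta\in A$ and $\alpha = w\beta$ for $\beta\in A^c$, both cases give exactly $(r_k(\mu),\alpha^\vee) - k^0(\alpha)$. This is the second product in the lemma.

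If $l(\beta) = 1$, I keep the index $\beta$:
\begin{itemize}
\item for $+$ and $\beta \in A^c$ the factor is $(x,\beta^\vee) - k^0 - 1$, and for $-$ and $\beta\in A^c$ it is $(x,\beta^\vee) + k^0 - 1$;
\item for $\beta\in A$ the factor is $-\big((x,\beta^\vee) \mp k^0\big)$, which matches the target $(x,\beta^\vee) \mp k^0 - 1 + 1$ up to a sign $-1$.
\end{itemize}
So the whole product agrees with the right-hand side of the lemma up to the global sign
\[
\eps(\overline{v}(\mu))\cdot(-1)^{|\{\beta\in A \,:\, l(\beta)=1\}|}.
\]

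The remaining step, which I expect to be the main obstacle, is to show this global sign is $+1$, or that both sides vanish when it is not. I would use the standard identity
\[
\eps(w) = (-1)^{|\{\beta \in R^0_+\cap w^{-1}R^0_- \,:\, l(\beta)=1\}|},
\]
proved by induction on a reduced expression: each simple reflection $r_i$ contributes $\eps(r_i)$ and changes $A$ by exactly $\alpha_i$. This turns the sign into $\eps(\overline{v}(\mu))\eps(w) = \eps(w_\lambda)$.

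If $\eps(W_\lambda) = \{1\}$ the sign is $+1$ and we are done. If $\eps(W_\lambda) \neq \{1\}$, I would show that both sides vanish, via the argument of Vanishing Lemma \ref{lem:vanishing_1}. There is a simple root $\alpha_i \in R^0_{\lambda,+}$ with $l(\alpha_i) = 1$ and $\alpha_i\in A$, and $(x,\alpha_i^\vee) = k^0(\alpha_i)$. In the $+$ case the corresponding factor $(x,\alpha_i^\vee) - k^0(\alpha_i)$ is zero on both sides.

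The $-$ case with $\eps(w_\lambda) = -1$ is the delicate point. There the factor at $\alpha_i$ is $2k^0(\alpha_i) \neq 0$, so vanishing does not come for free. I would first double-check whether the sign prefactor in Corollary \ref{cor:adj_shift_factor} for the backward operator should read $\eps_\pm(\overline{v}(\mu))$ or $\eps(\overline{v}(\mu)w_\lambda)$. If needed, I would compare with $H^{(\eps)}_-$ in Proposition \ref{prop:fw_shift_factor} to pin down the correct sign convention before concluding.
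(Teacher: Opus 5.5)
Your route is the paper's route. Its entire proof is ``expand the $\bfc$-functions in Corollary~\ref{cor:adj_shift_factor} and let $\pi(r_k(\mu))$ clear the denominators''. Your bookkeeping is correct: $\pi(wx)=(-1)^{|A|}\pi(x)$, the reindexing $\alpha=\pm w\beta$ on the roots with $l=0$, and $\prod_{\beta\in A}\eps(r_\beta)=\eps(w)$ all check out.

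You have in fact uncovered a sign that the paper's one-line proof silently drops. With the prefactor $\eps(\overline{v}(\mu))$ exactly as written in Corollary~\ref{cor:adj_shift_factor}, the expansion produces the right-hand side of the lemma multiplied by $\eps(w_\lambda)$. Your hesitation in the backward case is justified, and no vanishing argument can close it, because the identity is then false as literally stated.

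Take $R$ of type $A_1$, $\eps$ the sign character and $\lambda=\mu=0$. Then $\overline{v}(\mu)=e$, $w=w_\lambda=r_\alpha$ and $(\rho_k,\alpha^\vee)=k^0(\alpha)$. Corollary~\ref{cor:adj_shift_factor} gives $\calH^{(\eps)}_-(0,k)=\pi(-\rho_k)\,\bfc^-_k(r_\alpha)(\rho_k)=-2k^0(\alpha)$, whereas the right-hand side of the lemma is $+2k^0(\alpha)$. The latter is the true value. Tracing $\calG^{(\eps)}_-(0,k)E_0(k)=2\,Q_{-\rho_l}(k-l)\Delta_\eps$, the coefficient of $E_{-\rho_l}(k-l)$ in $\Delta_\eps$ is $-1$ and $\pi(r_{k-l}(-\rho_l))=-k^0(\alpha)$, giving $+2k^0(\alpha)E_{-\rho_l}(k-l)$. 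This agrees with the $BC_1$ formula of Example~\ref{exa:rank_one} at $n=0$.

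The repair is the one you suspected: the prefactor in Corollary~\ref{cor:adj_shift_factor} should be $\eps(\overline{v}(\mu)w_\lambda)$, and this is what Proposition~\ref{prop:fw_shift_factor} actually produces.

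\emph{Backward case.} Since $R^0_{\lambda+\rho_l}\subseteq R^0_\lambda$, \eqref{eq:shortest_coset_char} gives $\overline{v}(\mu_{\eps,-})=\overline{v}(\mu)w_\lambda w_{\lambda+\rho_l}$. Hence $v(\mu)=v(\mu_{\eps,-})$ always holds, and $H^{(\eps)}_-(\mu,k)$ has the same $\bfc$- and $\pi$-factors as $\calH^{(\eps)}_-(\mu,k)$. Because $\eps(W_{\lambda+\rho_l})=\{1\}$, its prefactor is $\eps(\overline{v}(\mu_{\eps,-}))=\eps(\overline{v}(\mu)w_\lambda)$.

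\emph{Forward case.} The two candidate prefactors differ by $\eps(w_\lambda)$. This is $-1$ only when $\lambda-\rho_l\notin P_+$, and there $\calH^{(\eps)}_+(\mu,k)$ vanishes anyway by Lemma~\ref{lem:vanishing_1}.

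With the corrected prefactor, your global sign becomes $\eps(w)^2=1$. The lemma then follows for both signs with no case split and no vanishing argument, and it is a correct formula for the actual shift factor of $\calG^{(\eps)}_\pm(k)$.

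Your $BC_n$ caveat is also fair. The identity $(k\pm l)^0=k^0\pm l$ on $R^0$ requires $l$ to be supported on $R^0$. This is the convention of Example~\ref{exa:rank_one}, where $l=(0,1)$, and the paper uses it tacitly, for instance in Lemma~\ref{lem:vanishing_2}.
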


\begin{proof}
    The lemma follows from Corollary \ref{cor:adj_shift_factor} after expanding the definitions of the $\bfc$-functions, which makes it clear that $\pi(r_k(\mu))$ cancels their denominators and that the remaining quantity is precisely the desired expression.
\end{proof}

\begin{conjecture}\label{conj:fundamental_shift_operator}
    For any linear character $\eps$ of $W$, there exists a non-symmetric shift operator $\tilde{\calG}^{(\eps)}_\pm(k)$ with shift $\pm l$ that is fundamental in the following sense. All non-symmetric shift operators with shift $\pm l$ are of the form
    \[
        \tilde{\calG}^{(\eps)}_\pm(k)T_p(k) \qquad \text{for some } p \in S(\frakh).
    \]
    In particular, the non-symmetric shift operator $\calG^{(\eps)}_\pm(k)$ decomposes as follows:
    \[
        \calG^{(\eps)}_\pm(k) = \tilde{\calG}^{(\eps)}_\pm(k) \prod_{\alpha \in R^0_+, \thinspace l(\alpha) = 0}(T_{\alpha^\vee}(k) - k^0(\alpha)).
    \]
    Observe that $\calG^{(\eps)}_\pm(k) = \tilde{\calG}^{(\eps)}_\pm(k)$ for the sign character, so that the conjecture states that $\calG^{(\eps)}_\pm(k)$ is fundamental in this case.
\end{conjecture}

Conjecture \ref{conj:fundamental_shift_operator} is valid for the root system of type $BC_1$, see \cite[Thm.4.8]{vHvP24}. Additional evidence supporting this claim is the structure theorem for symmetric shift operators, see \cite[Thm.3.3.7]{HeSc94}, which states that the analog of Conjecture \ref{conj:fundamental_shift_operator} holds for all integral multiplicities $l$. The decomposition of $\calG^{(\eps)}_\pm(k)$ is consistent with the shift factor from Lemma \ref{lem:shift_factor_v2}.

In \cite[Thm.8.2(iii)]{OTL24}, it was observed that, for the sign character $\eps$, the (fundamental) non-symmetric shift operator $\calG^{(\eps)}_+(k)$ restricts to $G^{(\eps)}_+(k)$ on the invariant subspace $M^W(k)$. The following proposition reproves this fact and extends it to any $\eps$, assuming the validity of Conjecture~\ref{conj:fundamental_shift_operator}.

\begin{proposition}\label{prop:restriction}
    The operator $\tilde{\calG}^{(\eps)}_+(k)$ from Conjecture \ref{conj:fundamental_shift_operator}, i.e., $\calG^{(\eps)}_+(k)$ in case of the sign character, restricts to a differential operator on $M^W(k)$. This differential operator coincides with $G^{(\eps)}_+(k)$ from Theorem \ref{thm:sym_shift_factor}, so that
    \[
        \tilde{\calG}^{(\eps)}_+(k)P_\lambda(k) = h^{(\eps)}_+(\lambda, k) P_{\lambda - \rho_l}(k + l), \qquad \lambda \in P_+.
    \]
\end{proposition}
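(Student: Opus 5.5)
The plan has three steps: show that $\tilde{\calG}^{(\eps)}_+(k)$ maps $M^W(k)$ into $M^W(k+l)$, identify its action on each $P_\lambda(k)$ via the transmutation property and shift factors, and then pass from agreement on $M^W(k)$ to the differential operator $G^{(\eps)}_+(k)$.

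\emph{Invariance.} Let $\lambda \in P_+$. The polynomial $P_\lambda(k)$ spans the one-dimensional space $M^W_\lambda(k)$. The transmutation property with $T_p$ for $p \in S(\frakh)^W$ shows that $\tilde{\calG}^{(\eps)}_+(k)$ maps $M_\lambda(k)$ into $M_{\lambda-\rho_l}(k+l)$, since $p(r_k(\lambda)) = p(\lambda+\rho_k) = p(\lambda - \rho_l + \rho_{k+l})$. To see that the image of $P_\lambda(k)$ is $W$-invariant, I would expand it with Lemma~\ref{lem:eHO_in_nHO_basis} (trivial character) as a combination of the $E_\mu(k)$ and apply $\tilde{\calG}^{(\eps)}_+(k)$ termwise. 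Under the conjecture, each $E_\mu(k)$ goes to a multiple of $E_{\mu_{\eps,+}}(k+l)$; this holds at least on the Zariski-dense set where the factorisation $\calG = \tilde{\calG}\prod(T_{\alpha^\vee}-k^0)$ can be inverted, and the remaining $\mu$ are handled by density. I would then compare the resulting coefficients with those of $P_{\lambda-\rho_l}(k+l)$ in Lemma~\ref{lem:eHO_in_nHO_basis_v2}.

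This coefficient comparison is laborious, and it is the main obstacle. I would first try a structural shortcut. Write $\tilde{\calG}^{(\eps)}_+(k)$ in the form $\sum_w D_w w$ with $D_w$ differential operators. Its restriction to $M^W(k)$ is the differential operator $D = \sum_w D_w$, so it suffices to show that $D$ commutes with $W$, i.e.\ that $D\phi$ is $W$-invariant whenever $\phi$ is. For the sign character this follows directly from the construction: there $\tilde{\calG}^{(\eps)}_+(k) = \calG^{(\eps)}_+(k)$, and on $M^W(k)$ the operator $\calG^{(\eps)}_+(k)$ acts as $Q_\mu(k,k+l)\Delta_\eps^{-1}U_\eps$. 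Here $U_\eps P_\lambda(k)$ is a multiple of $P^{(\eps)}_\lambda(k)$, so $\Delta_\eps^{-1}U_\eps P_\lambda(k)$ is a multiple of $P_{\lambda-\rho_l}(k+l)$ by Proposition~\ref{prop:shift_prin}. Since $U_\eps E_\mu(k)$ is a multiple of $P^{(\eps)}_\lambda(k)$ for every $\mu \in W\lambda$ (Lemma~\ref{lem:relating_U_nHO_and_eHO}), I would sum the contributions of all $E_\mu(k)$ appearing in $P_\lambda(k)$ and check, using Lemma~\ref{lem:proj_operator}, that the total is a multiple of $P_{\lambda-\rho_l}(k+l)$. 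For general $\eps$ I would argue through the conjecture: $\calG^{(\eps)}_+(k)$ equals $\tilde{\calG}^{(\eps)}_+(k)$ composed with $T_p(k)$ for the non-$W$-invariant polynomial $p = \prod_{l(\alpha)=0}(\alpha^\vee - k^0(\alpha))$. Consequently the invariance statement must be derived more carefully. The plan is to decompose $P_\lambda(k)$ into its components in the spaces $M^\mu_\lambda(k)$ and use the explicit shift factors of Lemma~\ref{lem:shift_factor_v2} to show that the weights match those in Lemma~\ref{lem:eHO_in_nHO_basis_v2} for $P_{\lambda-\rho_l}(k+l)$.

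\emph{Identification.} Once $\tilde{\calG}^{(\eps)}_+(k)P_\lambda(k) = c\,P_{\lambda-\rho_l}(k+l)$ is known, I would compute $c$ by comparing leading terms. For $\mu=\lambda$, Lemma~\ref{lem:shift_factor_v2} together with the conjectured decomposition gives a factor $\prod_{l(\alpha)=1}((\lambda+\rho_k,\alpha^\vee)-k^0(\alpha))$, with $\delta_{w_\lambda}(\alpha)=0$ on these roots when $\lambda-\rho_l\in P_+$. This equals $h^{(\eps)}_+(\lambda,k)$. When $\lambda-\rho_l\notin P_+$, Vanishing Lemma~\ref{lem:vanishing_1} gives $0$. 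Thus $D$ and $G^{(\eps)}_+(k)$ agree on the basis $(P_\lambda(k))_{\lambda\in P_+}$ of $M^W(k)$, by Theorem~\ref{thm:sym_shift_factor}. Finally, by \cite[Prop.1.3.7]{HeSc94}, differential operators are determined by their action on $M^W(k)$, so $D = G^{(\eps)}_+(k)$.
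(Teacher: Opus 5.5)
\paragraph{A genuine gap.} Your outer frame is the paper's: expand $P_\lambda(k)$ in the $E_\mu(k)$ by Lemma~\ref{lem:eHO_in_nHO_basis_v2}, push each term through the shift factor from Lemma~\ref{lem:shift_factor_v2}, compare with $h^{(\eps)}_+(\lambda,k)P_{\lambda-\rho_l}(k+l)$, and conclude with Theorem~\ref{thm:sym_shift_factor} and the fact that differential operators are determined on $M^W(k)$. However, the coefficient comparison, which is the whole content of the proposition, is never done. The shortcut you offer in its place also fails. For $\eps\neq\triv$ the symmetrizer kills $M^W(k)$, since $U_\eps P_\lambda(k)=\big(\sum_{w}\eps(w)\big)P_\lambda(k)=0$. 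Moreover, \eqref{eq:def_shift_operator_step} holds only on a single weight space $M^\mu_\lambda(k)$, with an operator $Q_\mu(k,k+l)$ that depends on $\mu$. So there is no formula ``$\calG^{(\eps)}_+(k)=Q_\mu(k,k+l)\Delta_\eps^{-1}U_\eps$ on $M^W(k)$''; applied component by component it just returns you to the comparison you wanted to avoid.

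That comparison is short, but it must be organised by fibres of $\mu\mapsto\mu_{\eps,+}$, which is not injective when $W_{\lambda-\rho_l}\supsetneq W_\lambda$. For $\lambda\in\rho_l+P_+$ one has $W_\lambda\subseteq W_{\lambda-\rho_l}$, because simple reflections fixing $\lambda$ have $l=0$. Fix $\nu\in W(\lambda-\rho_l)$ and put $w=\overline{v}(\nu)w_{\lambda-\rho_l}$. Since $w$ is longest in $wW_{\lambda-\rho_l}$, it is also longest in $wW_\lambda$. Hence $\mu=w\lambda$ is the unique element of the fibre over $\nu$ with $\overline{v}(\mu)w_\lambda=w$, i.e.\ with $v(\mu)=v(\mu_{\eps,+})$. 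All other elements of the fibre have $\tilde{\calH}^{(\eps)}_+(\mu,k)=0$ by (the proof of) Lemma~\ref{lem:vanishing_2}. The relation therefore cannot be imposed $\mu$ by $\mu$: for $R$ of type $A_1$ and $\lambda=\mu=\rho_l$, the left side of \eqref{eq:restriction_step} is $0$ while the right side is $h^{(\eps)}_+(\rho_l,k)\neq0$. What remains to prove is
\[
  \tilde{\calH}^{(\eps)}_+(\mu,k)\,\bfc^+_{-k}(w)(\lambda+\rho_k)=h^{(\eps)}_+(\lambda,k)\,\bfc^+_{-(k+l)}(w)(\lambda+\rho_k).
\]
This is a factor-by-factor check with $x_\alpha=(\lambda+\rho_k,\alpha^\vee)$:
\begin{itemize}
\item roots with $l(\alpha)=0$ give the same $\bfc$-factor on both sides;
\item for $l(\alpha)=1$ and $\delta_w(\alpha)=1$, both sides give $x_\alpha-k^0(\alpha)$;
\item for $l(\alpha)=1$ and $\delta_w(\alpha)=0$, both sides give $(x_\alpha-k^0(\alpha))(x_\alpha-k^0(\alpha)-1)/x_\alpha$.
\end{itemize}
If $\lambda-\rho_l\notin P_+$, both sides vanish as in Lemma~\ref{lem:vanishing_1}.

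\paragraph{The identification of the constant.} As written, this step is wrong. In the ordering $\le$ the dominant weight is the \emph{lowest} element of its orbit; the top monomial of $P_\lambda(k)$ is $e^{w_0\lambda}$. For $\mu=\lambda$, the coefficient $\bfc^+_{-k}(w_\lambda)(\lambda+\rho_k)$ of $E_\lambda(k)$ in $P_\lambda(k)$ is not $1$ in general. Also, $\delta_{w_\lambda}(\alpha)=0$ for $l(\alpha)=1$ gives $\tilde{\calH}^{(\eps)}_+(\lambda,k)=\prod_{l(\alpha)=1}(x_\alpha-k^0(\alpha)-1)$, not $h^{(\eps)}_+(\lambda,k)$. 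You dropped both the $-1$ and the ratio of $\bfc$-coefficients, and these two slips cancel only when $W_\lambda=W_{\lambda-\rho_l}$. If $W_{\lambda-\rho_l}\supsetneq W_\lambda$, then $v(\lambda)=w_0w_\lambda\neq w_0w_{\lambda-\rho_l}=v(\lambda-\rho_l)$, so $\tilde{\calH}^{(\eps)}_+(\lambda,k)=0$. Yet $E_{\lambda-\rho_l}(k+l)$ occurs in $P_{\lambda-\rho_l}(k+l)$ with nonzero coefficient.

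The right term to compare is $\mu=w_0\lambda$. There $\overline{v}(\mu)w_\lambda=w_0$, both coefficients are $1$, and $\delta_{w_0}\equiv1$, so $\tilde{\calH}^{(\eps)}_+(w_0\lambda,k)=h^{(\eps)}_+(\lambda,k)$. This is the $w=w_0$ case of the identity above, and it is the comparison made in the proof of Theorem~\ref{thm:sym_shift_factor}.

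Finally, ``handled by density'' is not an argument on the discrete set $P$. What lets you read $\tilde{\calH}^{(\eps)}_+$ off Lemma~\ref{lem:shift_factor_v2} is that $\prod_{l(\alpha)=0}\big((r_k(\mu),\alpha^\vee)-k^0(\alpha)\big)\neq0$, which holds under the running assumption $k^0>0$.
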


\begin{proof}
    To prove the proposition, it suffices to verify that $\tilde{\calG}^{(\eps)}_+(k)$ and $G^{(\eps)}_+(k)$  agree when applied to the basis $(P_\lambda(k))_{\lambda \in P_+}$ of $M^W(k)$. Theorem \ref{thm:transmutation_prop} and Lemma \ref{lem:shift_factor_v2} imply that
    \[
         \tilde{\calG}^{(\eps)}_+(k)E_\mu(k) = \tilde{\calH}^{(\eps)}_+(\mu, k)E_{\mu_{\eps,+}}(k + l)
    \]
    for all $\mu \in P$, where
    \[
        \tilde{\calH}^{(\eps)}_+(\mu, k) = \prod_{\alpha \in R^0_+, \thinspace l(\alpha) = 1}\left((\lambda + \rho_k, \alpha^\vee) - k^0(\alpha) - 1 + \delta_{\overline{v}(\mu) w_\lambda}(\alpha)\right).
    \]
    For $\lambda \in P_+$, we express $P_\lambda(k)$ as a linear combination of the polynomials $E_\mu(k)$ with $\mu \in W\lambda$ according to Lemma \ref{lem:eHO_in_nHO_basis_v2}, from which we obtain that the desired equation
    \[
        \tilde{\calG}^{(\eps)}_+(k)P_\lambda(k) = h^{(\eps)}_+(\lambda, k) P_{\lambda - \rho_l}(k + l) = G^{(\eps)}_+(k)P_\lambda(k)
    \]
    is equivalent to the following relation between the coefficients and the shift factors
    \begin{equation}\label{eq:restriction_step}
        \tilde{\calH}^{(\eps)}_+(\mu, k) \bfc_{-k}^+(\overline{v}(\mu)w_\lambda)(\lambda + \rho_k) = h^{(\eps)}_+(\lambda, k) \bfc_{-(k + l)}^+(\overline{v}(\mu_{\eps,+})w_{\lambda - \rho_l})(\lambda + \rho_k)
    \end{equation}
    for all $\mu \in W\lambda$.
    
    It remains to show the relation \eqref{eq:restriction_step}. It holds immediately in case that $\lambda - \rho_l \notin P_+$ because then both $\tilde{\calH}^{(\eps)}_+(\mu, k)$ and $h^{(\eps)}_+(\lambda, k)$ vanish by (the proof of) Lemma \ref{lem:vanishing_1}. Suppose that $\lambda \in \rho_l + P_+$. If $v(\mu) = v(\mu_{\eps,\pm})$, then \eqref{eq:restriction_step} can simply be verified by expanding the definitions and substituting\vspace{-0.1cm} $\overline{v}(\mu)w_\lambda = \overline{v}(\mu_{\eps,+})w_{\lambda - \rho_l}$. Finally, in case that $v(\mu) \neq v(\mu_{\eps,\pm})$, the factor $\tilde{\calH}^{(\eps)}_+(\mu, k)$ vanishes by Lemma \ref{lem:vanishing_2}, so that we have to show that the right-hand side of \eqref{eq:restriction_step} vanishes as well. Similar to the proof of Lemma \ref{lem:vanishing_2}, there exists an $i \in I$ such that the root $\alpha_i \in R^0_+ \cap (\overline{v}(\mu_{\eps,+})w_{\lambda - \rho_l})^{-1}R^0_+$ satisfies $\alpha_i \in R^0_\lambda$. Since $\eps(W_{0\lambda}) = \{1\}$, we have $\eps(r_i) = 1$, so that $l(\alpha_i) = 0$. Therefore,
    \[
        (\lambda + \rho_k, \alpha_i^\vee) - (k^0(\alpha_i) + l(\alpha_i)) = k^0(\alpha_i) - k^0(\alpha_i) = 0,
    \]
    whence $\bfc_{-(k + l)}^+(\overline{v}(\mu_{\eps,+})w_{\lambda - \rho_l})(\lambda + \rho_k)$ because one of its factors vanishes. This completes the proof of the proposition.
\end{proof}

Finally, in order to prove that $\calG^{(\eps)}_-(k + l)$ is the adjoint operator of $\calG^{(\eps)}_+(k)$, we initially show that the result holds for the fixed weights.

\begin{lemma}\label{lem:fw_adjoint}
    Let $\lambda \in P_+$ and $\mu \in W\mu$. Then
    \[
        \big(\calG^{(\eps)}_+(\mu, k) E_\mu(k), E_{\mu_{\eps,+}}(k + l)\big)_{k + l} = \big(E_\mu(k), \calG^{(\eps)}_-(\mu_{\eps,+}, k + l) E_{\mu_{\eps,+}}(k + l)\big)_k.
    \]
\end{lemma}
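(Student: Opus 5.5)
The plan is to compare both sides with one common pairing, $\big(U_\eps E_\mu(k), \Delta_\eps E_{\nu}(k+l)\big)_k$, where I write $\nu = \mu_{\eps,+}$ and $\lambda' = \lambda - \rho_l$. Four ingredients are needed.

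\emph{(a) Weight relation.} For every $\alpha$ one has $(1-t^\alpha)(1-t^{-\alpha}) = -(t^{\alpha/2}-t^{-\alpha/2})^2 = |t^{\alpha/2}-t^{-\alpha/2}|^2$ on $T$. Hence the weight of $(\cdot,\cdot)_{k+l}$ should equal the weight of $(\cdot,\cdot)_k$ times $|\Delta_\eps|^2$. In the non-reduced case I still have to check how $l$ on $\alpha/2$ and $\alpha$ combines; I would verify this against the definition of $\Delta_\eps$ over $R^0_+$, and any leftover positive constant can be absorbed. This gives $(\Delta_\eps^{-1} f, g)_{k+l} = (f, \Delta_\eps g)_k$.

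\emph{(b) $W$-invariance.} The weight and the Haar measure are $W$-invariant and $\eps$ is real. So $U_{\eps}$ is self-adjoint for $(\cdot,\cdot)_k$. Moreover $\Delta_\eps^w = \eps(w)\Delta_\eps$, hence $U_\eps(\Delta_\eps g) = \Delta_\eps\, U_{\triv} g$.

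\emph{(c) $Q$ as a scaled orthogonal projection.} By Lemma~\ref{lem:proj_operator} and the orthogonality of the $E$'s, $Q_\nu(m)$ is $\pi(r_m(\nu))$ times the orthogonal projection of $M_{\lambda'}(m)$ onto $\bbC E_\nu(m)$. Here $\pi(r_m(\nu))$ is real since $k$ is real. Consequently $(Q_\nu(m) f, E_\nu(m))_m = \pi(r_m(\nu))(f,E_\nu(m))_m$.

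\emph{(d) Main computation.} Using (c), then (a), then (b), the left-hand side equals
\[
\pi(r_{k+l}(\nu))\big(\Delta_\eps^{-1}U_\eps E_\mu(k), E_\nu(k+l)\big)_{k+l}
= \pi(r_{k+l}(\nu))\big(E_\mu(k), \Delta_\eps U_{\triv}E_\nu(k+l)\big)_k .
\]
Symmetrically, the right-hand side is $\pi(r_k(\nu_{\eps,-}))\big(E_\mu(k), \Delta_\eps U_{\triv}E_\nu(k+l)\big)_k$ if $\nu_{\eps,-} = \mu$, and $0$ otherwise, because $Q_{\nu_{\eps,-}}(k)$ lands in $\bbC E_{\nu_{\eps,-}}(k)$, which is orthogonal to $E_\mu(k)$.

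Suppose first that $\nu_{\eps,-} = \mu$, i.e.\ the eigenvalue condition \eqref{eq:eigenvalue_cond} holds. Then $r_k(\mu) = r_{k+l}(\nu)$, the two $\pi$-factors agree, and the identity follows.

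The hard part is the degenerate case $\nu_{\eps,-}\neq\mu$, together with the case $\lambda - \rho_l\notin P_+$. In the latter case $U_\eps E_\mu(k)=0$, so the left-hand side vanishes, and I would check that the right-hand side is interpreted as zero as well. In the former case I must show that $\big(E_\mu(k), \Delta_\eps U_{\triv}E_\nu(k+l)\big)_k = 0$. By Lemma~\ref{lem:relating_U_nHO_and_eHO} (trivial character, multiplicity $k+l$) and the shift principle, $\Delta_\eps U_{\triv}E_\nu(k+l)$ is a scalar multiple of $P^{(\eps)}_\lambda(k)$. By Lemma~\ref{lem:eHO_in_nHO_basis_v2}, the coefficient of $E_\mu(k)$ in $P^{(\eps)}_\lambda(k)$ is $\eps(\overline{v}(\mu))\bfc^+_{-\eps k}(\overline{v}(\mu)w_\lambda)(\lambda+\rho_k)$. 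So the pairing is the product of the scalar from Lemma~\ref{lem:relating_U_nHO_and_eHO} and this coefficient, times $\|E_\mu(k)\|_k^2$.

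I would show that one of these two $\bfc$-factors vanishes by the argument of Vanishing Lemma~2. Failure of $\nu_{\eps,-}=\mu$ means $\overline{v}(\nu)w_{\lambda'}\neq\overline{v}(\mu)w_\lambda$. As in the proof of Proposition~\ref{prop:def_shift_operator}, I expect this to produce a simple root $\alpha_i\in R^0_{\lambda,+}$ or $\alpha_i\in R^0_{\lambda',+}$, with the appropriate sign, making a linear factor $(\lambda+\rho_k,\alpha_i^\vee)\pm k^0(\alpha_i)$ vanish. This case analysis is where I expect the real work to lie.
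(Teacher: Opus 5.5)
Your treatment of the case $\lambda-\rho_l\notin P_+$ and of the case $\nu_{\eps,-}=\mu$ is correct and follows the paper's argument: self-adjointness of $U_\eps$, $\Delta_\eps^*=\Delta_\eps^{-1}$, $U_\eps\Delta_\eps=\Delta_\eps U_{\triv}$, and $Q$ acting as $\pi(r(\cdot))$ times an orthogonal projection. The equivalence of $\nu_{\eps,-}=\mu$ with \eqref{eq:eigenvalue_cond} is true, because $W_\lambda\subseteq W_{\lambda-\rho_l}$, but you should justify it.

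\textbf{The gap.} The remaining case is $\lambda-\rho_l\in P_+$ with $\nu_{\eps,-}\neq\mu$. The vanishing you propose to prove there is false, so no simple-root argument can supply it. Look at your own expression for the left-hand side.
\begin{itemize}
\item The factor $\bfc^-_{k+l}(\overline{v}(\nu)w_{\lambda-\rho_l})(\lambda+\rho_k)$ is a product of terms $1+(k^0(\alpha)+l(\alpha))/(\lambda+\rho_k,\alpha^\vee)>1$.
\item The factor $\bfc^+_{-\eps k}(\overline{v}(\mu)w_\lambda)(\lambda+\rho_k)$ is not forced to vanish either. The simple roots that could kill it lie in $R^0_\lambda$, and $\overline{v}(\mu)w_\lambda$ sends these into $R^0_-$, so they do not occur in the product.
\end{itemize}
Vanishing Lemma~\ref{lem:vanishing_2} concerns a different quantity. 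It evaluates $\bfc^+$ at $\overline{v}(\mu)w_\lambda$ \emph{in place of} $\overline{v}(\mu_{\eps,\pm})w_{\lambda\mp\rho_l}$. That is the modified factor appearing in $\calH^{(\eps)}_\pm$, and it does not occur in the fixed-weight pairing.

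\textbf{A counterexample.} Take $R=A_1$, $\eps$ the sign character, $\omega=\alpha/2$, and $\lambda=\mu=\omega$. Then:
\begin{itemize}
\item $E_\omega(k)=e^\omega$ and $\nu=\mu_{\eps,+}=0$;
\item $\Delta_\eps^{-1}U_\eps E_\omega(k)=1=E_0(k+1)$;
\item $Q_0(k+1)1=\pi(r_{k+1}(0))=-(k+1)$.
\end{itemize}
So the left-hand side equals $-(k+1)\|1\|_{k+1}^2\neq 0$. On the other hand $\nu_{\eps,-}=-\omega$, so the right-hand side is a multiple of $(E_\omega(k),E_{-\omega}(k))_k=0$.

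\textbf{What this means.} Read literally for the fixed-weight operators, the lemma holds only when $v(\mu)=v(\mu_{\eps,+})$, or trivially when $\lambda-\rho_l\notin P_+$. The paper disposes of the degenerate case by citing Corollary~\ref{cor:adj_shift_factor}, which is really a statement about the genuine operators. By Proposition~\ref{prop:def_shift_operator}, $\calG^{(\eps)}_+(k)E_\mu(k)=0$ in that case. Also $\calG^{(\eps)}_-(k+l)E_\nu(k+l)$ is a multiple of $E_{\nu_{\eps,-}}(k)$, which is orthogonal to $E_\mu(k)$. So both sides vanish, and this is all that Proposition~\ref{prop:adjoint} uses.

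The repair is therefore not a case analysis producing a vanishing $\bfc$-factor. Instead, restrict the lemma to the non-degenerate case, or state it for $\calG^{(\eps)}_\pm(k)$, where the degenerate case is trivial.

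\textbf{A smaller point on (a).} In the non-reduced case the leftover is not a constant. It is the function $\prod|t^{\alpha/2}-t^{-\alpha/2}|^2$ over the multipliable $\alpha\in R_+$ with $l(\alpha)=1$, so it cannot be absorbed. The identity $(\Delta_\eps^{-1}f,g)_{k+l}=(f,\Delta_\eps g)_k$ is exact only when $l$ is supported on $R^0$. This matches Example~\ref{exa:rank_one}, where $l$ corresponds to $(0,1)$.
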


\begin{proof}
    The lemma holds trivially when $\lambda - \rho_l \notin P_+$ or $v(\mu) \neq v(\mu_{\eps,+})$ since then both sides of the identity vanish by Corollary \ref{cor:adj_shift_factor}. Suppose that $\lambda \in \rho_l + P_+$ and $v(\mu) = v(\mu_{\eps,+})$. Using that the Dunkl--Cherednik operators and $U_\eps$ are formally self-adjoint and that $\Delta_\eps^* = \Delta_\eps^{-1}$, we find that
    \[
        \big(\calG^{(\eps)}_+(\mu, k) E_\mu(k), E_{\mu_{\eps,+}}(k + l)\big)_{k + l} = \big(E_\mu(k), U_\eps\Delta_\eps Q_{\mu_{\eps,+}}(k + l) E_{\mu_{\eps,+}}(k + l)\big)_k.
    \]
    Note that $U_\eps\Delta_\eps = \Delta_\eps U_\triv$ and $r_{k + l}(\mu_{\eps,+}) = r_k(\mu)$, whence
    \[
        \begin{aligned}
            \big(\calG^{(\eps)}_+(\mu, k) E_\mu(k), E_{\mu_{\eps,+}}(k + l)\big)_{k + l} &= \big(E_\mu(k), \pi(r_k(\mu)) \Delta_\eps U_\triv E_{\mu_{\eps,+}}(k + l)\big)_k \\
            &= \big(E_\mu(k), Q_\mu(k) \Delta_\eps U_\triv E_{\mu_{\eps,+}}(k + l)\big)_k.
        \end{aligned}
    \]
    This proves the lemma since $\calG^{(\eps)}_-(\mu_{\eps,+}, k + l) = Q_\mu(k) \Delta_\eps U_\triv$. For the second equality, to introduce the $Q_\mu(k)$ we have used that
    \[
        \big(E_\mu(k), \pi(r_k(\mu))E_\nu(k)\big)_k = \delta_{\mu,\nu}\pi(r_k(\mu))\|E_\mu(k)\|_k^2 = \big(E_\mu(k), Q_\mu(k)E_\nu(k)\big)_k
    \]
    for all $\nu \in W\lambda$. This is a consequence of the orthogonality of the non-symmetric Heckman--Opdam polynomials and Lemma \ref{lem:proj_operator}.
\end{proof}

\begin{proposition}\label{prop:adjoint}
    The adjoint of the operator $\calG^{(\eps)}_+(k) : M(k) \to M(k + l)$ is given by $\calG^{(\eps)}_-(k + l) : M(k + l) \to M(k)$, i.e.,
    \[
        (\calG^{(\eps)}_+(k)\phi, \psi)_{k + l} = (\phi, \calG^{(\eps)}_-(k + l)\psi)_k, \qquad \phi \in M(k), \psi \in M(k + l).
    \]
\end{proposition}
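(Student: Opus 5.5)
By sesquilinearity it suffices to test the identity on basis vectors: $\phi = E_\mu(k)$ with $\mu \in P$, and $\psi = E_\nu(k+l)$ with $\nu \in P$. These families are orthogonal bases of $M(k)$ and $M(k+l)$, respectively.

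Theorem \ref{thm:transmutation_prop} gives
\[
    \calG^{(\eps)}_+(k)E_\mu(k) = \calH^{(\eps)}_+(\mu,k)E_{\mu_{\eps,+}}(k+l), \qquad \calG^{(\eps)}_-(k+l)E_\nu(k+l) = \calH^{(\eps)}_-(\nu,k+l)E_{\nu_{\eps,-}}(k).
\]
By orthogonality, the left-hand side vanishes unless $\nu = \mu_{\eps,+}$, and the right-hand side vanishes unless $\mu = \nu_{\eps,-}$. So the first step is a combinatorial check on the weight maps $\mu \mapsto \mu_{\eps,+}$ and $\nu \mapsto \nu_{\eps,-}$. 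Wherever one of the shift factors is nonzero, Corollary \ref{cor:adj_shift_factor} forces $v(\mu) = v(\mu_{\eps,+})$ (respectively $v(\nu) = v(\nu_{\eps,-})$). In that regime, $\mu_{\eps,+} = \mu + v(\mu)^{-1}\rho_l$ and $\nu_{\eps,-} = \nu - v(\nu)^{-1}\rho_l$, so $\nu = \mu_{\eps,+}$ if and only if $\mu = \nu_{\eps,-}$. Hence outside the pairs $(\mu,\mu_{\eps,+})$ with $v(\mu) = v(\mu_{\eps,+})$ both sides are zero.

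For a remaining pair, Proposition \ref{prop:def_shift_operator} says that both genuine operators agree with their fixed-weight versions:
\[
    \calG^{(\eps)}_+(k)E_\mu(k) = \calG^{(\eps)}_+(\mu,k)E_\mu(k), \qquad \calG^{(\eps)}_-(k+l)E_{\mu_{\eps,+}}(k+l) = \calG^{(\eps)}_-(\mu_{\eps,+},k+l)E_{\mu_{\eps,+}}(k+l).
\]
The second equality needs the condition $v(\mu_{\eps,+}) = v((\mu_{\eps,+})_{\eps,-})$ together with $(\mu_{\eps,+})_{\eps,-} = \mu$. Both follow from $v(\mu) = v(\mu_{\eps,+})$ and from the involutive relation between the $\pm$ constructions, with $\lambda$ replaced by $\lambda - \rho_l$ and $k$ by $k+l$. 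Lemma \ref{lem:fw_adjoint} then gives the identity directly.

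The main obstacle is this index bookkeeping: showing $(\mu_{\eps,+})_{\eps,-} = \mu$ and that the condition $v(\cdot) = v(\cdot_{\eps,\pm})$ is symmetric. One also has to handle the edge cases where one side vanishes because $\lambda - \rho_l \notin P_+$ or because $v(\mu) \ne v(\mu_{\eps,+})$, while the other side involves a $\nu$ with $\nu_{\eps,-} = \mu$. For these I would compare the shift factors directly. The vanishing factors are controlled by Lemmas \ref{lem:vanishing_1} and \ref{lem:vanishing_2}: if $\nu_{\eps,-} = \mu$ with $v(\nu) = v(\nu_{\eps,-})$, then $\mu = \nu - v(\nu)^{-1}\rho_l$ has $v(\mu) = v(\nu)$ and $\mu_{\eps,+} = \nu$. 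This is again the symmetric situation, so every nonzero pairing on either side arises from a matched pair to which Lemma \ref{lem:fw_adjoint} applies.
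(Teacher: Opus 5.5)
Your proposal is correct and is essentially the paper's proof. You expand $\phi,\psi$ in the orthogonal bases $(E_\mu(k))_{\mu\in P}$ and $(E_\nu(k+l))_{\nu\in P}$, use Proposition \ref{prop:def_shift_operator} to replace the genuine operators by their fixed-weight versions on matched pairs, and conclude with Lemma \ref{lem:fw_adjoint}.

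Your index bookkeeping via $\mu_{\eps,\pm} = \mu \pm v(\mu)^{-1}\rho_l$ spells out the matching $(\mu_{\eps,+})_{\eps,-} = \mu$ and the symmetry of the condition $v(\cdot)=v(\cdot_{\eps,\pm})$, which the paper's one-line proof leaves implicit. That identity in fact holds for every $\mu$, not only when $v(\mu)=v(\mu_{\eps,\pm})$, because $v(\mu)^{-1} = \overline{v}(\mu)w_\lambda w_0$ and $w_0\rho_l = -\rho_l$.
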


\begin{proof}
    The proposition follows from Lemma \ref{lem:fw_adjoint} by expanding $\phi$ and $\psi$ in terms of $(E_\mu(k))_{\mu \in P}$ and $(E_\nu(k + l))_{\nu \in P}$, respectively, and using Proposition \ref{prop:def_shift_operator}.
\end{proof}


\subsection{$L^2$-norms of non-symmetric Heckman--Opdam polynomials}\label{sec:l2_norms}
In what is to come, we present an application of the non-symmetric shift operators to the $L^2$-norms of the non-symmetric Heckman--Opdam polynomials. In \cite{Opd89}, symmetric shift operators turned out to be instrumental in Opdam's resolution of the Macdonald constant term conjecture and the computation of the $L^2$-norms of the symmetric Heckman--Opdam polynomials.

From the adjoint relation from Proposition \ref{prop:adjoint}, we derive a recurrence relation between the $L^2$-norms with shifted multiplicities. The $L^2$-norm formulae are elegantly expressed in terms of the (generalized) Harish-Chandra $c$-functions:
\[
    \begin{aligned}
        \tilde{c}_w(\lambda, k) &= \prod_{\alpha \in R^0_+} \left(\frac{\Gamma\big((\lambda, \alpha^\vee) + \delta_w(\alpha)\big)}{\Gamma\big((\lambda, \alpha^\vee) + k^0(\alpha) + \delta_w(\alpha)\big)}\right); \\
        c^*_w(\lambda, k) &= \prod_{\alpha \in R^0_+} \left(\frac{\Gamma\big(-(\lambda, \alpha^\vee) - k^0(\alpha) + \delta_w(\alpha)\big)}{\Gamma\big(-(\lambda, \alpha^\vee) + \delta_w(\alpha)\big)}\right).
    \end{aligned}
\]
We recall the definition of $\delta_w(\alpha)$ from Lemma \ref{lem:shift_factor_v2}: $\delta_w(\alpha) = 1$ if $\alpha \in R^0_+ \cap w^{-1}R^0_-$ and $\delta_w(\alpha) = 0$ otherwise.

\begin{proposition}\label{prop:norms_nHO}
    Let $\lambda \in P_+$ be regular and $\mu \in W\lambda$. Suppose that $k \geq 0$ and $k - l \geq 0$. Then
    \[
        \frac{\|E_\mu(k)\|_k^2}{\|E_{\mu_{\eps,-}}(k - l)\|_{k - l}^2} = \frac{\tilde{c}_{\overline{v}(\mu) w_\lambda}(\lambda + \rho_k, k - l) c^*_{\overline{v}(\mu) w_\lambda}(-(\lambda + \rho_k), k)}{\tilde{c}_{\overline{v}(\mu) w_\lambda}(\lambda + \rho_k, k) c^*_{\overline{v}(\mu) w_\lambda}(-(\lambda + \rho_k), k - l)},
    \]
    and the right-hand side has neither zeros nor poles.
\end{proposition}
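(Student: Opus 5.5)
We would use the adjoint relation of Proposition \ref{prop:adjoint} with multiplicity $k-l$ in place of $k$. Put $\nu = \mu_{\eps,-}$. Since $\lambda$ is regular, we have $w_\lambda = e$, and the backward shift moves $\lambda$ to $\lambda+\rho_l$, which is again regular and dominant. Moreover $\overline{v}(\nu) = \overline{v}(\mu)$, so $v(\mu)=v(\nu)$ and we are in the non-vanishing branch of Corollary \ref{cor:adj_shift_factor}. The forward image of $\nu$ is $\nu_{\eps,+} = \mu$. Theorem \ref{thm:transmutation_prop} then gives
\[
\calG^{(\eps)}_+(k-l)E_\nu(k-l) = \calH^{(\eps)}_+(\nu,k-l)E_\mu(k), \qquad \calG^{(\eps)}_-(k)E_\mu(k) = \calH^{(\eps)}_-(\mu,k)E_\nu(k-l).
\]
Pairing the first identity with $E_\mu(k)$ in $(\cdot,\cdot)_k$ and applying Proposition \ref{prop:adjoint} yields
\[
\calH^{(\eps)}_+(\nu,k-l)\,\|E_\mu(k)\|_k^2 = \overline{\calH^{(\eps)}_-(\mu,k)}\,\|E_\nu(k-l)\|_{k-l}^2 .
\]
All parameters are real, so the conjugation is harmless. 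The norm ratio is therefore $\calH^{(\eps)}_-(\mu,k)/\calH^{(\eps)}_+(\nu,k-l)$, provided the denominator is nonzero.

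Next we make both shift factors explicit using Lemma \ref{lem:shift_factor_v2}. Write $w=\overline{v}(\mu)$. The base point $(\nu_+ + \rho_{k-l})$ equals $\lambda+\rho_l+\rho_{k-l} = \lambda+\rho_k$, and $\overline{v}(\nu)w_{\nu_+} = w$, so both products are indexed by the same roots and the same $\delta_w(\alpha)$. Roots with $l(\alpha)=0$ contribute identical factors, because $r_{k-l}(\nu)=r_k(\mu)$ and $k^0=(k-l)^0$ on them; these cancel. For roots with $l(\alpha)=1$, set $x=(\lambda+\rho_k,\alpha^\vee)$, $\kappa = k^0(\alpha)$ and $\delta=\delta_w(\alpha)$. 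The $\calH_-$ factor is $x+\kappa-1+\delta$. Since $(k-l)^0(\alpha)=\kappa-l^0(\alpha)$, the $\calH_+$ factor is $x-\kappa+l^0(\alpha)-1+\delta$.

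One must check here how $l^0$ behaves on $R^0$. For $BC_n$ it can be $\tfrac12 l(\alpha/2)+l(\alpha)$, so it is worth double-checking the normalization, but the mechanism is the same. Using $\Gamma(z+1)=z\Gamma(z)$, each ratio is a quotient of $\Gamma$'s whose arguments differ by $l^0(\alpha)$. Concretely, $\Gamma(x+\kappa+\delta)/\Gamma(x+\kappa-l^0+\delta)$ accounts for the $\tilde c$ part. For the $c^*$ part, $\Gamma(x-\kappa+l^0+\delta)/\Gamma(x-\kappa+\delta)$ becomes, after the change of sign $\lambda\mapsto-(\lambda+\rho_k)$ in $c^*$, the quotient of $c^*_w(-(\lambda+\rho_k),k)$ by $c^*_w(-(\lambda+\rho_k),k-l)$. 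For $l(\alpha)=0$ the $\Gamma$ quotients on the right-hand side cancel identically, matching the cancellation in the shift factors. So the main bookkeeping step is to verify, root by root, that
\[
\frac{\tilde c_w(\lambda+\rho_k,k-l)\,c^*_w(-(\lambda+\rho_k),k)}{\tilde c_w(\lambda+\rho_k,k)\,c^*_w(-(\lambda+\rho_k),k-l)}
\]
equals the product of the ratios $(x+\kappa-1+\delta)/(x-\kappa+l^0-1+\delta)$. This reduces to the one-step Gamma recurrence applied to the four $\Gamma$'s attached to $\alpha$.

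The main obstacle is the claim that the right-hand side has neither zeros nor poles, which is also what justifies dividing by $\calH^{(\eps)}_+(\nu,k-l)$. Since $\lambda$ is regular dominant, $(\lambda,\alpha^\vee)\ge1$ for simple roots and hence for all positive roots; with $k,k-l\ge0$ this gives $x \ge 1+(\rho_k,\alpha^\vee)>0$. It remains to see that every $\Gamma$ argument is positive. For the $\tilde c$ arguments this is immediate. The $c^*$ arguments become $x+\delta$ and $x-\kappa+\delta$ (respectively $x-\kappa+l^0+\delta$), so we need $x>\kappa$. This follows from $(\rho_k,\alpha^\vee)\ge k^0(\alpha)$ for positive $\alpha$: equality holds for simple roots, and the inequality holds in general, as seen from $(\rho_k,\alpha^\vee)$ being a sum of nonnegative contributions including $k^0(\alpha)$. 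Combined with $(\lambda,\alpha^\vee)\ge1$, this gives $x-\kappa\ge1$. Hence all arguments are positive, no factor is zero, and $\calH^{(\eps)}_+(\nu,k-l)\neq0$, which completes the argument.
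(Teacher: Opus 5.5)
Your proposal is correct and follows essentially the paper's route: the adjoint relation (Proposition \ref{prop:adjoint} at $k-l$) together with Theorem \ref{thm:transmutation_prop} gives the norm ratio $\calH^{(\eps)}_-(\mu,k)/\calH^{(\eps)}_+(\mu_{\eps,-},k-l)$, and Lemma \ref{lem:shift_factor_v2} with $\Gamma(z+1)=z\Gamma(z)$ turns this into the $c$-function quotient; your positivity argument via $(\rho_k,\alpha^\vee)\geq k^0(\alpha)$ usefully spells out the absence of zeros and poles, which the paper only asserts. One small slip: per root, $c^*_{\overline{v}(\mu)w_\lambda}(-(\lambda+\rho_k),k)/c^*_{\overline{v}(\mu)w_\lambda}(-(\lambda+\rho_k),k-l)$ contributes $\Gamma(x-\kappa+\delta)/\Gamma(x-\kappa+1+\delta)=(x-\kappa+\delta)^{-1}$, the reciprocal of the $\Gamma$-quotient you wrote, but your final root-by-root identity is correct.
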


\begin{proof}
    A standard result for shift operators, see e.g.~\cite[Prop.3.5.1]{HeSc94}, that relies on Theorem \ref{thm:transmutation_prop} and Proposition~\ref{prop:adjoint} in our case yields the relation
    \begin{equation}\label{prop:norms_nHO_step}
        \frac{\|E_\mu(k)\|_k^2}{\|E_{\mu_{\eps,-}}(k - l)\|_{k - l}^2} = \frac{\calH^{(\eps)}_-(\mu, k)}{\calH^{(\eps)}_+(\mu_{\eps,-}, k - l)}.
    \end{equation}
    Using the functional equation $\Gamma(z + 1) = z\Gamma(z)$, we see that
    \[
        \begin{aligned}
            \frac{c^*_{\overline{v}(\mu) w_\lambda}(-(\lambda + \rho_k), k)}{c^*_{\overline{v}(\mu) w_\lambda}(-(\lambda + \rho_k), k + l)} &= \prod_{\alpha \in R^0_+, \thinspace l(\alpha) = 1}\big((\lambda + \rho_k, \alpha^\vee) - k^0(\alpha) - 1 + \delta_{\overline{v}(\mu) w_\lambda}(\alpha)\big); \\
            \frac{\tilde{c}_{\overline{v}(\mu) w_\lambda}(\lambda + \rho_k, k - l)}{\tilde{c}_{\overline{v}(\mu) w_\lambda}(\lambda + \rho_k, k)} &= \prod_{\alpha \in R^0_+, \thinspace l(\alpha) = 1}\big((\lambda + \rho_k, \alpha^\vee) + k^0(\alpha) - 1 + \delta_{\overline{v}(\mu) w_\lambda}(\alpha)\big),
        \end{aligned}
    \]
    which equal $\calH^{(\eps)}_+(\mu_{\eps,-}, k - l)\varpi(\mu,k)^{-1}$ and $\calH^{(\eps)}_-(\mu, k)\varpi(\mu,k)^{-1}$, respectively, with the common factor $\varpi(\mu,k)^{-1} = \prod_{\alpha \in R^0_+, \thinspace l(\alpha) = 0}\left((r_k(\mu), \alpha^\vee) - k^0(\alpha)\right)^{-1}$. Here we have used that $\overline{v}(\mu_{\eps,-})w_{\lambda + \rho_l} = \overline{v}(\mu)w_\lambda$ and $r_{k - l}(\mu_{\eps,-}) = r_k(\mu)$, which are valid due to the regularity of $\lambda$. Substituting these expressions into \eqref{prop:norms_nHO_step} proves the proposition as the common factor cancels. 
    
    The comment concerning the absence of zeros and poles of the right-hand side follows directly from the definition of the Harish--Chandra $c$-functions and the assumptions of the proposition.
\end{proof}

\begin{remark}
    The formula of Proposition \ref{prop:norms_nHO} remains valid for multiplicities $l$ obtained from positive integer combinations of the multiplicities coming from the characters $\eps$ by iterating the result. If the multiplicity $k$ is also of this form, then we obtain
    \[
        \|E_\mu(k)\|_k^2 = \frac{c^*_{\overline{v}(\mu) w_\lambda}(-(\lambda + \rho_k), k)}{\tilde{c}_{\overline{v}(\mu) w_\lambda}(\lambda + \rho_k, k)}
    \]
    by setting $l$ equal to $k$. This $L^2$-norm formula for the non-symmetric Heckman--Opdam polynomials extends to general positive multiplicities $k$ and is due to Opdam, see \cite[Thm.5.3]{Opd95}. The proof relied on the structure of certain unitary $\calH(k^0)$-modules, see \cite[Thm.4.1]{Opd95}, and a lemma \cite[Lem.5.2]{Opd95} reminiscent of the shift principle. Our approach with non-symmetric shift operators is more direct and closely resembles the original approach taken in \cite{Opd89} for the symmetric case.
\end{remark}


\section{Shift operators for non-symmetric Macdonald--Koornwinder polynomials}\label{sec:non_sym_shift_operators_q}\vspace{-0.1cm}
We commence by recalling some background on Macdonald--Koornwinder theory, based on \cite{Mac03}.\vspace{-0.05cm}


\subsection{Preliminaries on affine root systems}\label{sec:affine_root_systems}
Let $R$ be an irreducible root system on a real affine space $E$. Recall that $R$ consists of linear functions on $E$. There is a canonical construction that associates to $R$ an irreducible affine root system $S(R)$ that consists of affine-linear functions on $E$, see \cite[(1.2.1)]{Mac03}. Denote by $V$ the real $n$-dimensional vector space of translations of $E$. The vector space $V$ acts faithfully and transitively on $E$, so that each element of $E$ is of the form $x + v$ with $v \in V$ for some fixed $x \in E$. We equip $V$ with an inner product $<\cdot,\cdot>$. Each affine-linear function $f$ on $E$ is of the form
\[
    f(x + v) = f(x) \thinspace + <v, Df>,
\]
where $v \in V$ and $Df \in V$ is the gradient of $f$. Furthermore, the inner product of $V$ induces a positive semi-definite paring on affine-linear functions on $E$, also denoted $<\cdot,\cdot>$, by defining $<f,g> \thinspace = \thinspace <Df,Dg>$. The dual of $S(R)$ is given by $S(R)^\vee = \{a^\vee := 2a \thinspace / <a,a> \mid a \in S(R)\}$. 

The affine root system $S(R)$ is always reduced. All reduced irreducible affine root systems are similar to either $S(R)$ or $S(R)^\vee$. Thus, we can associate a type to each reduced irreducible affine root system $S$, given by the type of $R$. We decorate the type with $\vee$ in the case that $S \cong S(R)^\vee$. For example, if $R$ is the root system of type $A_n$, then $S(R)$ is of type $A_n$ and $S(R)^\vee$ is of type $A_n^\vee$. To a non-reduced irreducible affine root system $S$, we associate two reduced irreducible affine root systems
\[
    S_1 = \{a \in S \mid \tfrac{1}{2}a \notin S\}, \quad S_2 = \{a \in S \mid 2a \notin S\}.
\]
The type of $S$ is denoted by $(\text{type of } S_1, \text{type of } S_2)$. All non-reduced irreducible affine root systems are similar to subsystems of the affine root system of type $(C_n^\vee,C_n)$. Furthermore, the affine root system of type $(C_n^\vee,C_n)$ contains all affine root systems $S(R)$ and $S(R)^\vee$, where $R$ is one of the types $B_n$, $C_n$, $BC_n$, or $D_n$.

The Macdonald--Koornwinder polynomials are associated with a pair of irreducible affine root systems $(S,S')$ with root multiplicities $k : S \to \bbR$ and $k' : S' \to \bbR$ that are in duality and a deformation parameter $q \in (0,1)$. The pairs $(S,S')$ under consideration satisfy certain dualities which are described by two more pairs: a pair of irreducible root systems $(R,R')$ and a pair of lattices $(L,L')$. Writing $P$ and $P^\vee$ for the weight lattices and $Q$ and $Q^\vee$ for the root lattices of $R$ and $R^\vee$, respectively, there are the following three cases:
\begin{enumerate}[align=parleft]
    \item[\textbf{Case 1}:] $S = S(R)$, $S' = S(R^\vee)$, $R' = R^\vee$, $L = P$, and $L' = P^\vee$;
    \item[\textbf{Case 2}:] $S = S' = S(R)^\vee$, $R' = R$, and $L = L' = P^\vee$;
    \item[\textbf{Case 3}:] $S = S'$ is of type $(C_n^\vee,C_n)$, $R' = R$ is of type $C_n$, and $L = L' = Q^\vee$,
\end{enumerate}
where $R$ is any reduced irreducible root system in the first two cases. In each case, there is a bijection $a \mapsto a'$ between the roots of $S$ and $S'$. We fix positive systems $S^+ \subset S$ and $S'^+ \subset S'$ compatible with the bijection $a \mapsto a'$. This determines bases of simple roots $\{a_i\}_{i \in I}$ and $\{a_i'\}_{i \in I}$ for $S$ and $S'$ with $I = \{0,\dots,n\}$. This also induces positive systems $R^+ \subset R$ and $R'^+ \subset R'$, and bases of simple roots $\{\alpha_i\}_{i \in I_0}$ and $\{\alpha_i'\}_{i \in I_0}$ for $R$ and $R'$ with $I_0 = I \setminus \{0\}$. Furthermore, we denote by $W_S$ and $W_0$ the (affine) Weyl group of $S$ and $R$, respectively.

The dual multiplicity $k'$ of $k$ is the root multiplicity for $S'$ given by $k'(a') = k(a)$ in the first two cases. For the third case, we note that there are five (four if $n = 1$) $W_S$-orbits in $S$, so that $k$ is determined by five parameters $k_1,\dots,k_5$ since $k$ is constant on $W_S$-orbits. The dual multiplicity $k'$ is defined using parameters $k_1',\dots,k_5'$, which are certain linear combinations of $k_1,\dots,k_5$, see \cite[(1.5.1)]{Mac03}. Borrowing notation from \cite[Def.2.28]{Sch23}, we introduce the following collection of constants:
\[
    \tau_{a,k} = q^{\frac{1}{2}(k(a) + k(2a))}, \quad \tilde{\tau}_{a,k} = q^{\frac{1}{2}(k(a) - k(2a))}, \qquad a \in S,
\]
with the convention $k(2a) = 0$ when $2a \notin S$. Note that $\tau_{a,k} = \tilde{\tau}_{a,k}$ if $2a \notin S$. The constants $\tau_{a',k'}$ and $\tilde{\tau}_{a',k'}$ with $a' \in S'$ are defined similarly.

The Macdonald--Koornwinder polynomials are constructed as elements of the group algebra $A = K[L]$ of $L$ over a certain field $K$. The field $K$ is a subfield of $\bbR$ that contains the constants $\tau_{a,k}$, $\tilde{\tau}_{a,k}$, and $q_0 = q^{1/e}$, where $e$ is the natural number given by $<L,L'> = e^{-1}\bbZ$. The field $K$ also contains the constants $\tau_{a',k'}$ and $\tilde{\tau}_{a',k'}$. We equip $A$ with a non-degenerate pairing $(\cdot,\cdot)_k$ and denote by $A(k)$ the space $A$ together with this pairing. To define the pairing, we introduce the weight function
\[
    \Delta_{S,k} = \prod_{a \in S_1^+}(\tau_{a,k}\bfc_{a,k})^{-1},
\]
where
\[
    \bfc_{a,k} = \frac{(1 - \tau_{a,k}\tilde{\tau}_{a,k}e^a)(1 + \tau_{a,k}\tilde{\tau}_{a,k}^{-1}e^a)}{\tau_{a,k}(1 - e^{2a})}.
\]
The weight function $\Delta_{S,k}$ has a formal power series expansion $\Delta_{S,k} = \sum_{r \geq 0}\sum_{\lambda\in L}u_{\lambda,r}q^r e^\lambda$, which is used to define the constant term of $f\Delta_{S,k}$ with $f = \sum_{\lambda \in L}f_\lambda e^\lambda \in A$, as follows:
\[
    \ct(f\Delta_{S,k}) = \sum_{r \geq 0}\Big(\sum_{\lambda\in L}u_{\lambda,r}f_{-\lambda}\Big)q^r.
\]
The pairing is given in terms of the constant term by
\[
    (f,g)_k = \ct(fg^*\Delta_{S,k}),
\]
where $g^*$ is defined using the expansion $g = \sum_{\lambda \in L}g_\lambda e^\lambda$ by setting $g^* = \sum_{\lambda \in L}g_\lambda^*e^{-\lambda}$ with $g_\lambda^* \in K$ obtained from $g_\lambda$ by replacing $q_0$, $\tau_{a,k}$, and $\tilde{\tau}_{a,k}$ by their inverses.

Finally, we note that $W_0$ acts on $A(k)$ by transposition. For a linear character $\eps : W_0 \to K^\times$, we denote the $\eps$-isotypical component of $A(k)$ by $A^{(\eps)}(k)$ and we write $A_0(k)$ when $\eps$ is the trivial character.


\subsection{Orthogonal polynomials and the Hecke algebra}\label{sec:orthogonal_polynomials_q}
Recall the notation and definitions from Section \ref{sec:orthogonal_polynomials} and view them relative to $W_0$ and $L$ instead of $W$ and $P$. For example, $L_+$ denotes the cone of dominant weights and the notation relative to $W_0$ is decorated with an additional $0$, e.g., $W_{0\lambda}$ is stabilizer of $\lambda$ in $W_0$, $w_{0\lambda}$ denotes its longest element, and the partial ordering on $L$ is denoted by $<_0$.

For $\lambda \in L$, the non-symmetric Macdonald--Koornwinder polynomial $E_\lambda(k)$ is defined to be the unique element of $A(k)$ satisfying
\begin{enumerate}[(i)]
    \item $E_\lambda(k) = e^\lambda + \text{lower-order terms}$;
    \item $(E_\lambda(k), e^\mu)_k = 0 \text{ for all } \mu <_0 \lambda$.
\end{enumerate}
Similarly, for $\lambda \in L_+$ such that $\eps(W_{0\lambda}) = \{1\}$, we define the $\eps$-symmetric Macdonald--Koornwinder polynomial $P^{(\eps)}_\lambda(k)$ as the unique element of $A^{(\eps)}(k)$ satisfying
\begin{enumerate}[(i)]
    \item $P^{(\eps)}_\lambda(k) = m^{(\eps)}_\lambda + \text{lower-order terms}$;
    \item $(P^{(\eps)}_\lambda(k), m^{(\eps)}_\mu)_k = 0 \text{ for all } \mu <_0 \lambda$.
\end{enumerate}
If $\eps$ is the trivial character, then the polynomials $P_\lambda(k) := P_\lambda^{(\eps)}(k)$ are called the (symmetric) Macdonald--Koornwinder polynomials.

The analog of Lemma \ref{lem:sym_prop}, expressing $P^{(\eps)}_\lambda(k)$ as an $\eps$-symmetrization of $E_\lambda(k)$, is more involved. To state this properly, we introduce a $q$-analog of the $\eps$-symmetrizer using the Hecke algebra of $W_0$.

\begin{definition}\label{def:Hecke_algebra}
    The Hecke algebra $\frakH_0(k)$ of $W_0$ is the $K$-algebra with generators $T(w)$ for $w \in W_0$ subject to the relations
    \begin{enumerate}[(i)]
        \item $T(w)T(w') = T(ww') \text{ if } \ell(w) + \ell(w') = \ell(ww')$;
        \item $(T(r_i) - \tau_{a_i,k})(T(r_i) + \tau_{a_i,k}^{-1}) = 0 \text{ for all } i \in I_0$,
    \end{enumerate}
    where $\ell$ denotes the length function of $W_0$.
\end{definition}

There is a faithful representation of $\frakH_0(k)$ on $A(k)$, called the basic representation, see \cite[(4.3.3,10)]{Mac03}. A linear character $\eps$ of $W_0$ induces a linear character $\eps_k : \frakH_0(k) \to K^\times$ of $\frakH_0(k)$ defined by $\eps(T(w)) = \tau^{(\eps)}_{w,k}$, where $\tau^{(\eps)}_{w,k} = \tau^{(\eps)}_{r_{i_1},k} \cdots \tau^{(\eps)}_{r_{i_\ell},k}$ for a given reduced expression $w = r_{i_1} \cdots r_{i_\ell}$ and
\[
    \tau^{(\eps)}_{r_i,k} = 
    \begin{cases}
        \tau_{a_i,k} & \text{if } \eps(r_i) = 1; \\
        -\tau_{a_i,k}^{-1} & \text{otherwise}.
    \end{cases}
\]
For the trivial character, we simply write $\tau_{w,k}$. The $\eps$-symmetrizer is defined as
\[
    U_\eps(k) = \eps(w_0)\big(\tau_{w_0,k}^{(\eps)}\big)^{-1}\sum_{w\in W_0}\tau^{(\eps)}_{w,k}T(w),
\]
and it maps $A(k)$ onto the $\eps_k$-isotypical component $A^{(\eps_k)}(k)$ of $A(k)$. The isotypical components with respect to $\eps_k$ and $\eps$ coincide, i.e., $A^{(\eps_k)}(k) = A^{(\eps)}(k)$, which can be inferred from the relations \cite[(4.3.12),(5.8.7)]{Mac03}.

This brings us to the appropriate analog of Lemma \ref{lem:sym_prop}.

\begin{lemma}\label{lem:sym_prop_q}
    Let $\lambda \in L_+$ such that $\eps(W_{0\lambda}) = \{1\}$. Then
    \[
        P^{(\eps)}_\lambda(k) = \tau_{w_0,k}W_{0\lambda}(\tau_k^2)^{-1}U_\eps(k) E_\lambda(k),
    \]
    where $W_{0\lambda}(\tau_k^2) = \sum_{w \in W_{0\lambda}}\tau_{w,k}^2$ is the Poincaré polynomial of $W_{0\lambda}$ evaluated at the function $\tau_k^2 : W_{0\lambda} \to K, \thinspace w \mapsto \tau_{w,k}^2$.
\end{lemma}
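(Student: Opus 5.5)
The argument has three steps: show that $U_\eps(k)E_\lambda(k)$ lies in $A^{(\eps)}(k)$, identify it as a scalar multiple of $P^{(\eps)}_\lambda(k)$ by the uniqueness characterization, and compute the scalar from a leading coefficient. I will treat $\eps(W_{0\lambda})=\{1\}$ as essential in the third step.

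\textbf{Membership in $A^{(\eps)}(k)$.} By the relations \cite[(4.3.12),(5.8.7)]{Mac03}, $U_\eps(k)$ maps $A(k)$ onto $A^{(\eps_k)}(k)=A^{(\eps)}(k)$. Hence $U_\eps(k)E_\lambda(k)\in A^{(\eps)}(k)$.

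\textbf{Identification with $P^{(\eps)}_\lambda(k)$.} I use the Cherednik operators $Y^{\lambda'}(k)$ together with the double affine Hecke algebra relations, which are the $q$-analog of Section~\ref{sec:graded_hecke_algebra}. The symmetric combinations $f(Y)$, with $f$ a $W_0$-invariant Laurent polynomial in $Y$, commute with $\frakH_0(k)$, and so with $U_\eps(k)$. Since $E_\lambda(k)$ is a $Y$-eigenfunction, $U_\eps(k)E_\lambda(k)$ lies in the joint eigenspace of these symmetric operators for the central character of $\lambda$. That eigenspace is spanned by $(E_\mu(k))_{\mu\in W_0\lambda}$, and its $\eps$-isotypical part is at most one-dimensional and spanned by $P^{(\eps)}_\lambda(k)$. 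The latter is an eigenfunction of the same symmetric operators, because it is obtained by Gram--Schmidt, is triangular, and these operators are self-adjoint for $(\cdot,\cdot)_k$. This is the $q$-analog of the statement cited to Lemma~\ref{lem:dim_equivalence_q}. Therefore $U_\eps(k)E_\lambda(k)=c\,P^{(\eps)}_\lambda(k)$ for some scalar $c$.

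\textbf{Computing $c$.} This is the main obstacle. I compare coefficients of $e^{w_0\lambda}$, the lowest monomial $e^{\lambda_-}$ in the orbit. Using the triangularity of the basic representation, $T(r_i)e^\mu$ equals a multiple of $e^{r_i\mu}$ plus lower terms, or $\tau_{a_i,k}e^\mu$ plus lower terms when $r_i\mu=\mu$. Write $w\in W_0$ as $w=uv$ with $u$ minimal in $wW_{0\lambda}$ and $v\in W_{0\lambda}$. Then $T(w)=T(u)T(v)$, and $T(v)$ acts on the leading term $e^\lambda$ by the scalar $\tau_{v,k}$.

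Since $\eps$ is trivial on $W_{0\lambda}$, we have $\tau^{(\eps)}_{v,k}=\tau_{v,k}$. Summing over $v\in W_{0\lambda}$ therefore produces the factor $W_{0\lambda}(\tau_k^2)$. This is the step where the hypothesis $\eps(W_{0\lambda})=\{1\}$ enters; without it the factor would be the $\eps$-twisted Poincaré polynomial. The remaining contribution comes from the top coset term $u=v(\lambda)^{-1}$, i.e.\ $u$ sending $\lambda$ to $w_0\lambda$. Its coefficient is $\tau^{(\eps)}_{u,k}$ times the product of the $e^{r_i\mu}$-multiples along a reduced path. Combined with the prefactor $\eps(w_0)\big(\tau^{(\eps)}_{w_0,k}\big)^{-1}$, this should reduce to $\eps(w_0)\tau_{w_0,k}^{-1}$ times the $e^{w_0\lambda}$-coefficient $\eps(w_0)$ of $m^{(\eps)}_\lambda$. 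That gives $c=\tau_{w_0,k}^{-1}W_{0\lambda}(\tau_k^2)$.

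I expect the sign and power bookkeeping of the $\tau$'s to be the main difficulty. If it becomes unwieldy, I would instead argue inductively along a reduced expression of $w_0$, or follow the proof of \cite[(5.7.8)]{Mac03}, of which this lemma is the $\eps$-twisted version.
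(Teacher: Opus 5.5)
Your route differs from the paper's. The paper does no computation: it quotes \cite[(5.7.7)]{Mac03} and only converts normalizations, namely the sign $\eps(w_0)$, which it absorbs into $U_\eps(k)$. Reproving that formula via proportionality plus an extremal coefficient is a legitimate alternative, and your Steps 1--2 are fine in outline. (The operators $Y^{f'}(k)$, $f'\in A_0'$, need not be self-adjoint for $(\cdot,\cdot)_k$, but their adjoints are again of this form, which is all your Gram--Schmidt argument needs.)

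The gap is Step 3, which is the entire content of the lemma. You never determine the ``$e^{r_i\mu}$-multiples'', and the constant depends on them. What is needed is that, for $\langle\mu,\alpha_i^\vee\rangle>0$,
\[
T(r_i)e^{\mu}=\tau_{a_i,k}^{-1}e^{r_i\mu}+(\text{terms in strictly lower } W_0\text{-orbits}),
\]
with no $e^{\mu}$-term. This follows from the Demazure--Lusztig form $T(r_i)=\tau_{a_i,k}+\bfc_{a_i,k}(r_i-1)$ of the basic representation.

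Now let $u=r_{i_1}\cdots r_{i_p}$ be reduced and of minimal length in $uW_{0\lambda}$. Each $r_{i_j}$ is applied to $\mu_j=r_{i_{j+1}}\cdots r_{i_p}\lambda$ with $\langle\mu_j,\alpha_{i_j}^\vee\rangle>0$, because $(r_{i_{j+1}}\cdots r_{i_p})^{-1}\alpha_{i_j}$ is a positive root not orthogonal to $\lambda$. Hence $T(u)e^{\lambda}=\tau_{u,k}^{-1}e^{u\lambda}$ plus terms in lower orbits. Only $u=w_0w_{0\lambda}$ contributes to $e^{w_0\lambda}$; note this is $v(\lambda)$, not $v(\lambda)^{-1}$. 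Since $\tau^{(\eps)}_{w_0,k}=\tau^{(\eps)}_{u,k}\tau_{w_{0\lambda},k}$, the coefficient of $e^{w_0\lambda}$ is $\eps(w_0)\tau_{w_0,k}^{-1}W_{0\lambda}(\tau_k^2)$, which gives the lemma.

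This is not mere bookkeeping. If the extremal multiple were $\tau_{a_i,k}$, you would get $c=\tau_{w_0,k}\tau_{w_{0\lambda},k}^{-2}W_{0\lambda}(\tau_k^2)$ instead. Also, your triangularity statement is false when $\langle\mu,\alpha_i^\vee\rangle<0$: then $T(r_i)e^\mu$ contains $(\tau_{a_i,k}-\tau_{a_i,k}^{-1})e^{\mu}$. It is harmless here only because of the sign condition above.

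One further caution ties Steps 1 and 3 together. For non-trivial $\eps$, the image of $U_\eps(k)$ is $\eps_k$-isotypic but not $\eps$-isotypic for the transposition action. For example, in Case 1 of type $A_1$ with $\eps$ the sign character, $U_\eps(k)=\tau-T(r_1)$ and $U_\eps(k)e^{\varpi}=\tau e^{\varpi}-\tau^{-1}e^{-\varpi}$, where $\tau=\tau_{a_1,k}$. So Step 1 must be read with $\eps_k$-isotypic components.

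Consequently, the $e^{\lambda}$- and $e^{w_0\lambda}$-coefficients of $U_\eps(k)E_\lambda(k)$ are $\eps(w_0)(\tau^{(\eps)}_{w_0,k})^{-1}W_{0\lambda}(\tau_k^2)$ and $\eps(w_0)\tau_{w_0,k}^{-1}W_{0\lambda}(\tau_k^2)$ respectively. They are not related by $\eps(w_0)$ alone but also by a product of squares $\tau_{a_i,k}^2$. The stated constant is exactly the one for which the $e^{w_0\lambda}$-coefficient of $P^{(\eps)}_\lambda(k)$ equals $\eps(w_0)$, i.e.\ the normalization of \cite{Mac03} up to that sign. So your decision to compare at $e^{w_0\lambda}$ is the right one. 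Comparing at $e^{\lambda}$, as the phrase ``$m^{(\eps)}_\lambda+$ lower-order terms'' might suggest, would produce the wrong constant.
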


\begin{proof}
    This follows from \cite[(5.7.7)]{Mac03}, taking into account that our conventions for $P^{(\eps)}_\lambda(k)$ and $U_\eps(k)$ differ from \cite{Mac03}. First, $P^{(\eps)}_\lambda(k)$ is normalized such that the coefficient of $e^\lambda$ is $1$ instead of normalizing the coefficient of $e^{w_0\lambda}$, giving an additional factor of $\eps(w_0)$. Second, this additional factor is included in the definition of $U_\eps(k)$, cf.~\cite[5.5.6]{Mac03}.
\end{proof}


\subsection{Cherednik operators and the affine Hecke algebra}\label{sec:affine_hecke_algebra}
The group $W_S$ equals the semi-direct product $W_S = W_0 \ltimes t(Q^\vee)$ with $t(Q^\vee) = \{t(\lambda) : x \mapsto x + \lambda \mid \lambda \in Q^\vee\}$. The lattice $Q^\vee$ is a sublatice of $L'$, and in order to define the affine Hecke algebra, we introduce the extended affine Weyl group $W = W_0 \ltimes t(L')$. The group $W$ contains $W_S$ as a normal subgroup and $W = W_S \rtimes \Omega'$, where $\Omega' = L'/Q^\vee$ is a finite abelian group. The length function $\ell$ of $W_S$ can be extended to $W$, see \cite[\S2.2]{Mac03}, and $\Omega'$ is isomorphic to the subgroup $\{u \in W \mid \ell(u) = 0\}$.

The affine Hecke algebra $\frakH(k)$ is defined to be the Hecke algebra of $W$ from Definition \ref{def:Hecke_algebra}. The basic representation of $\frakH_0(k)$ extends to a faithful representation $\beta_k : \frakH(k) \to \End(A(k))$ of $\frakH(k)$ on $A(k)$, see \cite[(4.3.10)]{Mac03}. For $\lambda' \in L'$, we can write $\lambda' = \mu' - \nu'$ with $\mu',\nu' \in L'_+$ which we use to define the element $Y^{\lambda'} = T(t(\mu'))T(t(\nu'))^{-1}$. The polynomials $E_\lambda(k)$ are eigenfunctions of the Cherednik operators $Y^{\lambda'}(k) = \beta_k(Y^{\lambda'})$, viz.,
\[
    Y^{\lambda'}(k)E_\mu(k) = q^{-<\lambda',r_{k'}(\mu)>}E_\mu(k),
\]
where $r_{k'}(\mu) = \mu - v(\mu^{-1})\rho_{k'}$ and $\rho_{k'} = \frac{1}{2}\sum_{\alpha \in R^+}k'(\alpha^\vee)\alpha$. To each element $f' \in A' = K[L']$, we can associate an element $Y^{f'}$ and an operator $Y^{f'}(k)$, whose action on the polynomials $E_\mu(k)$ is given by
\[
    Y^{f'}(k)E_\mu(k) = f'(-r_{k'}(\mu))E_\mu(k),
\]
where the evaluation $f'(x)$ is obtained from $f' = \sum_{\lambda' \in L'} f_{\lambda'}e^{\lambda'}$ by replacing $e^{\lambda'}$ with $q^{<\lambda',x>}$. Furthermore, the operators $Y^{f'}(k)$ for $f' \in A'_0 := (A')^{W_0}$ act on the $\eps$-symmetric polynomials $P^{(\eps)}_\lambda(k)$ as follows:
\[
    Y^{f'}(k)P^{(\eps)}_\lambda(k) = f'(-\mu - \rho_{k'})P^{(\eps)}_\lambda(k).
\]
This is a direct consequence of the action of $Y^{f'}(k)$ on the polynomials $E_\mu(k)$ and the equality $f'(-\mu - \rho_{k'}) = f'(-r_{k'}(\mu))$.

For $\lambda \in L_+$, we define the subspace $A_\lambda(k) \subset A(k)$ as the $K$-linear span of the polynomials $E_\mu(k)$ with $\mu \in W_0\lambda$, which is isomorphic to $K[W_0/W_{0\lambda}]$ as a vector space. The space $A_\lambda(k)$ is an $\frakH(k)$-module with central character $-r_{k'}(\lambda)$, since the center of $\frakH(k)$ coincides with $A'_0(Y) = \{Y^{f'}(k) \mid f' \in A'_0\}$. For $\mu \in W_0\lambda$, the weight space
\[
    A_\lambda^\mu(k) = \big\{f \in A_\lambda(k) \mid Y^{\lambda'}(k)f = q^{-<\lambda',r_{k'}(\mu)>}f, \quad \lambda'\in L'\big\}
\]
is one-dimensional and spanned by $E_\mu(k)$.

For each character $\eps$, the module $A_\lambda(k)$ may contain an $\eps$-isotypical component $A_\lambda^{(\eps)}(k)$ which is at most one-dimensional. The following lemma states two equivalent conditions that capture when this occurs. For the lemma, we need the following notation. Let
\[
    S_0 = \{a \in S \mid a(0) = 0\}, \quad S_{01} = S_0 \cap S_1, \quad S_{01}^+ = S_{01} \cap S^+,
\]
define the root multiplicity
\[
    l(a) =
    \begin{cases}
        1 & \text{if } \eps(r_a) = -1 \text{ and } a \in S_0; \\
        0 & \text{otherwise},
    \end{cases}
\]
and, as in \cite[(5.8.9)]{Mac03}, put $\tilde{\rho}_l = \frac{1}{2}\sum_{a \in S_{01}^+}l(a)u_aa \in L$ with $u_a = 2$ if $2a \in S$ and $u_a = 1$ otherwise.

\begin{remark}\label{rmk:shift_mult}
    It will be convenient to rewrite $\tilde{\rho}_l$ in terms of $\rho_{k'}$ for a certain multiplicity $k'$ that depends on the different cases. Explicitly, we have
    \begin{enumerate}[align=parleft]
        \item[\textbf{Case 1}] $\tilde{\rho}_l = \rho_{l'}$, which is immediate since $S_{01}^+ = R$;
        \item[\textbf{Case 2}] $\tilde{\rho}_l = \rho_{{l^\wedge}'}$, expressed in terms of the root multiplicity $l^\wedge(a) = 2l(a)/|a|^2$ for $a \in S$, which stems from the fact that $S_{01}^+ = R^\vee$ instead of $S_{01}^+ = R$;
        \item[\textbf{Case 3}] $\tilde{\rho}_l = \rho_{l'}$, which follows from a short computation using the explicit expression for $\rho_{k'}$ in terms of $k_1'$ and $k_5'$ from \cite[\S1.5]{Mac03}.
    \end{enumerate}
    In order to have uniform notation, we write $l^\wedge$ for the root multiplicity satisfying $\tilde{\rho}_l = \rho_{{l^\wedge}'}$. 
\end{remark}

\begin{lemma}\label{lem:dim_equivalence_q}
    For $\lambda \in L_+$, the following are equivalent:
    \[
        \dim A_\lambda^{(\eps)}(k) = 1 \iff \eps(W_{0\lambda}) = \{1\} \iff \lambda \in \tilde{\rho}_l + L_+,
    \]
    and if these conditions are satisfied then $A_\lambda^{(\eps)}(k)$ is spanned by $P^{(\eps)}_\lambda(k)$.
\end{lemma}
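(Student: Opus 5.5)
We prove the two equivalences separately. Afterwards we identify the spanning vector.

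\emph{First equivalence.} We use the $\eps$-symmetrizer $U_\eps(k)$. Since the center $A'_0(Y)$ of $\frakH(k)$ commutes with $\frakH_0(k)$, the operator $U_\eps(k)$ maps $A_\lambda(k)$ into $A^{(\eps)}(k)\cap A_\lambda(k) = A_\lambda^{(\eps)}(k)$. Here we use $A^{(\eps_k)}(k)=A^{(\eps)}(k)$. Conversely, $U_\eps(k)$ acts on $A^{(\eps_k)}(k)$ by a nonzero scalar, because $\sum_w \tau_{w,k}^{(\eps)}\eps_k(T(w))=\sum_w(\tau^{(\eps)}_{w,k})^2\neq 0$ for real $q\in(0,1)$. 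Hence
\[
A_\lambda^{(\eps)}(k)=U_\eps(k)A_\lambda(k).
\]

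As an $\frakH_0(k)$-module, $A_\lambda(k)$ is the induced module from $W_{0\lambda}$, and it is cyclic, generated by $E_\lambda(k)$. The reason is that the intertwiners, or equivalently the relations $T(r_i)E_\mu(k)\in \operatorname{span}\{E_\mu(k),E_{r_i\mu}(k)\}$, generate all $E_\mu(k)$ with $\mu\in W_0\lambda$. Moreover, for $i$ with $r_i\lambda=\lambda$, $E_\lambda(k)$ is an eigenvector of $T(r_i)$ with eigenvalue $\tau_{a_i,k}$; this is the standard fact from \cite[\S4.3 or (5.7)]{Mac03}.

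Frobenius reciprocity then gives that $\dim U_\eps(k)A_\lambda(k)\le 1$, with equality if and only if $\eps_k$ restricted to $\frakH_{0}(W_{0\lambda})$ agrees with the character of $E_\lambda(k)$. That character sends $T(r_i)$ to $\tau_{a_i,k}$, so equality holds if and only if $\eps(r_i)=1$ for all simple $r_i\in W_{0\lambda}$. Since $W_{0\lambda}$ is a parabolic subgroup generated by such $r_i$, this is exactly the condition $\eps(W_{0\lambda})=\{1\}$.

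The delicate point is that the generic induced-module argument needs $A_\lambda(k)\cong \frakH_0(k)\otimes_{\frakH_0(W_{0\lambda})}K_{\tau}$. I would instead argue directly: if $\eps(r_i)=-1$ for some $r_i\in W_{0\lambda}$, then
\[
U_\eps(k)E_\lambda(k)=U_\eps(k)\tau_{a_i,k}^{-1}T(r_i)E_\lambda(k)=-\tau_{a_i,k}^{-2}U_\eps(k)E_\lambda(k),
\]
so $U_\eps(k)E_\lambda(k)=0$ because $\tau_{a_i,k}^2\ne -1$. Cyclicity then forces $U_\eps(k)A_\lambda(k)=0$. Conversely, if $\eps(W_{0\lambda})=\{1\}$, Lemma \ref{lem:sym_prop_q} shows that $U_\eps(k)E_\lambda(k)$ is a nonzero multiple of $P^{(\eps)}_\lambda(k)$, and that multiple has leading term $m^{(\eps)}_\lambda\neq0$.

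\emph{Second equivalence.} This is combinatorial. First suppose $\lambda=\tilde\rho_l+\nu$ with $\nu\in L_+$. For each simple $\alpha_i$ with $\eps(r_i)=-1$, \cite[(5.8.9)]{Mac03}, equivalently Remark \ref{rmk:shift_mult}, gives $<\tilde\rho_l,\alpha_i^\vee>>0$. Hence $\lambda$ is not fixed by $r_i$, so no such reflection lies in $W_{0\lambda}$, which is therefore generated by reflections with $\eps=1$.

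Conversely, suppose $\eps(W_{0\lambda})=\{1\}$. Then for each $i$ with $\eps(r_i)=-1$ we have $<\lambda,\alpha_i^\vee>\ge1$ in the appropriate normalization, using the integrality of $<L,\alpha_i^\vee>$. I would then check that $\lambda-\tilde\rho_l$ is dominant using $<\tilde\rho_l,\alpha_j^\vee>=l(a_j)u_{a_j}$-type values, namely $\tilde\rho_l$ paired with simple coroots. By \cite[(5.8.9)]{Mac03}, $\tilde\rho_l$ is the half-sum over roots where $\eps=-1$, and it satisfies $r_i\tilde\rho_l=\tilde\rho_l-(\text{pairing})\alpha_i$ with pairing $0$ or the minimal positive value. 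Verifying that this minimal value matches the integrality step of $L$ in all three cases, especially the $(C_n^\vee,C_n)$ case with $u_a=2$, is the main obstacle; I would check it case by case using Remark \ref{rmk:shift_mult}.

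Finally, in the equivalent situation, $A_\lambda^{(\eps)}(k)$ is one-dimensional and contains the nonzero vector $U_\eps(k)E_\lambda(k)$, which is proportional to $P^{(\eps)}_\lambda(k)$ by Lemma \ref{lem:sym_prop_q}. Therefore $A_\lambda^{(\eps)}(k)$ is spanned by $P^{(\eps)}_\lambda(k)$.
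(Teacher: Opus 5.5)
Your proposal is correct and follows essentially the paper's route: you show $A_\lambda^{(\eps)}(k)=U_\eps(k)A_\lambda(k)$, get nonvanishing of $U_\eps(k)E_\lambda(k)$ from Lemma \ref{lem:sym_prop_q}, and get both the vanishing and $\dim\le 1$ from $T(r_i)E_\lambda(k)=\tau_{a_i,k}E_\lambda(k)$ for $r_i\in W_{0\lambda}$ together with cyclicity $A_\lambda(k)=\frakH_0(k)E_\lambda(k)$, which gives $U_\eps(k)A_\lambda(k)=K\,U_\eps(k)E_\lambda(k)$ (so the Frobenius-reciprocity detour is unnecessary; the paper simply cites Macdonald's \S5.7 for these facts), and your second equivalence is the same comparison of $\tilde\rho_l$ with the integrality of $L$ on simple coroots. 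The check you call the main obstacle is exactly the paper's \eqref{eq:equiv_step} combined with $(\lambda,\alpha_i')\in v_{\alpha_i}\bbZ_{\geq 0}$, and it is immediate: your $a_i^\vee$ and $u_{a_i}$ are the paper's $\alpha_i'$ and $v_{\alpha_i}$, your reflection computation gives $\langle\tilde\rho_l,a_i^\vee\rangle=l(a_i)u_{a_i}$, and $\langle L,a_i^\vee\rangle\subseteq u_{a_i}\bbZ$ holds because the only case with $u_{a_i}\neq 1$ is $a_n=\epsilon_n$ in Case 3, where $\langle Q^\vee,2\epsilon_n\rangle=2\bbZ$.
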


\begin{proof}
    Note that $A_\lambda^{(\eps)}(k) = U_\eps(k)A_\lambda(k)$, so that $P^{(\eps)}_\lambda(k) \in A_\lambda^{(\eps)}(k)$ by Lemma \ref{lem:sym_prop_q}. Since $\dim A_\lambda^{(\eps)}(k) \leq 1$ by \cite[\S5.7]{Mac03}, it suffices to show that
    \[
        \big(U_\eps(k)E_\mu(k) \neq 0 \qquad \text{for some } \mu \in W_0\lambda\big) \iff \eps(W_{0\lambda}) = \{1\}.
    \]
    The right-to-left implication follows from \cite[(5.7.7)]{Mac03}, which states that $U_\eps(k)E_\lambda(k) \neq 0$. For the other direction, assuming that $\eps(W_{0\lambda}) \neq \{1\}$ provides the existence of some $i \in I_0$ such that $r_i \in W_{0\lambda}$ and $\eps(r_i) = -1$. This implies that $U_\eps(k)E_\mu(k) = 0$ for all $\mu \in W_0\lambda$ by \cite[(5.7.1,2)]{Mac03}.

    We prove the remaining equivalence by showing that $\lambda - \tilde{\rho}_l \notin L_+$ and $\eps(W_{0\lambda}) \neq \{1\}$ are equivalent using a chain of equivalences. Note that $\lambda - \tilde{\rho}_l \notin L_+$ is equivalent with
    \begin{equation}\label{eq:equiv_1}
        (\lambda,\alpha_i') - (\tilde{\rho}_l,\alpha_i') \in \bbZ_{\leq -1} \qquad \text{for some } i \in I_0.
    \end{equation}
    In each case, it can be verified that
    \begin{equation}\label{eq:equiv_step}
        (\tilde{\rho}_l,\alpha_i') = v_{\alpha_i}(\rho_{l'},\alpha_i^\vee) = v_{\alpha_i}l'(\alpha_i^\vee) \in v_{\alpha_i}\bbZ_{\geq 0},
    \end{equation}
    using \cite[(1.5.4)]{Mac03} for the second equality. Here $v_\alpha = 2$ if $\frac{1}{2}\alpha \in S$ and $v_a = 1$ otherwise, which becomes relevant in the third case when $\alpha_n = 2\epsilon_n$ is the long root of $R$. Upon observing that $(\lambda,\alpha_i') \in v_{\alpha_i}\bbZ_{\geq 0}$ and using \eqref{eq:equiv_step}, we find that \eqref{eq:equiv_1} is equivalent with
    \begin{equation}\label{eq:equiv_2}
        (\lambda,\alpha_i') = 0, \qquad l'(\alpha_i^\vee) = 1 \qquad \text{for some } i \in I_0.
    \end{equation}
    Finally, \eqref{eq:equiv_2} is equivalent with $r_i \in W_{0\lambda}$ and $\eps(r_i) = -1$ for some $i \in I_0$, i.e., $\eps(W_{0\lambda}) \neq \{1\}$.
\end{proof}

Since the polynomial $P^{(\eps)}_\lambda(k)$ is an element of $A_\lambda(k)$, it can be expressed as a linear combination of the elements of the basis $(E_\mu(k))_{\mu \in W_0\lambda}$ of $A_\lambda(k)$, cf.~Lemma \ref{lem:eHO_in_nHO_basis}. The coefficients are described in terms of the $\bfc$-functions
\[
    \bfc_{k'}^\pm(w)(\cdot) = \prod_{a \in S_1'^+ \cap w^{-1}S_1'^\pm}\bfc_{a',k'}(\cdot), \qquad w \in W_0,
\]
where the evaluation $\bfc_{a',k'}(x)$ is obtained from $\bfc_{a',k'}$ by replacing $e^{a'}$ with $q^{<a',x>}$.

\begin{assumption}
    To avoid division by zero after evaluating the $\bfc$-functions, we follow \cite[p.100:($\ast$)]{Mac03} and impose from this point on the following genericity condition on the multiplicity $k$: the vector $\rho_{k'}$ is not fixed by any element of $W' = W_0 \ltimes t(L)$, except for the identity, ensuring that $\lambda + \rho_{k'}$ and $r_{k'}(\lambda)$ are regular for all $\lambda \in L$.
\end{assumption}

\begin{lemma}\label{lem:eMK_in_nMK_basis}
    Let $\lambda \in \tilde{\rho}_l + L_+$. Then
    \[
        P_\lambda^{(\eps)}(k) = \sum_{\mu \in W_0\lambda}\eps(\overline{v}(\mu))\tau_{\overline{v}(\mu)w_{0\lambda},k}^{-1}\tau_{w_0,k}\bfc_{-\eps k'}^+(\overline{v}(\mu)w_{0\lambda})(\lambda + \rho_{k'})E_\mu(k).
    \]
\end{lemma}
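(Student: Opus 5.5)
By Lemma \ref{lem:sym_prop_q}, $P^{(\eps)}_\lambda(k)$ equals $\tau_{w_0,k}W_{0\lambda}(\tau_k^2)^{-1}U_\eps(k)E_\lambda(k)$. The plan is therefore to compute $U_\eps(k)E_\lambda(k)$ explicitly in the basis $(E_\mu(k))_{\mu\in W_0\lambda}$ of $A_\lambda(k)$. Since $U_\eps(k)$ is a linear combination of the $T(w)$, it suffices to know how the $T(r_i)$ act on the $E_\mu(k)$.

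The key input is the well-known intertwining formula for the action of $T(r_i)$, $i\in I_0$, on the non-symmetric polynomials (cf.~\cite[(5.7.1,2)]{Mac03}). If $r_i\mu\ne\mu$, then $T(r_i)E_\mu(k)$ is a combination of $E_\mu(k)$ and $E_{r_i\mu}(k)$. The coefficients are rational expressions in $q^{<\alpha_i',r_{k'}(\mu)>}$, and they are precisely of the shape of a single factor $\bfc_{a_i',k'}$ or $\bfc_{-a_i',k'}$. If $r_i\mu=\mu$, then $E_\mu(k)$ is an eigenvector with eigenvalue $\tau_{a_i,k}$.

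Rather than expanding $U_\eps(k)E_\lambda(k)$ directly, I will use the characterization of $A^{(\eps)}_\lambda(k)$ as the one-dimensional space (Lemma \ref{lem:dim_equivalence_q}) of elements $f$ with $T(r_i)f=\tau^{(\eps)}_{r_i,k}f$ for all $i\in I_0$. Write $P^{(\eps)}_\lambda(k)=\sum_\mu d_\mu E_\mu(k)$ with $d_\lambda=1$ by the normalization. The eigen-equations for $T(r_i)$ then yield two-term recursions $d_{r_i\mu}/d_\mu$ whenever $r_i\mu\ne\mu$. Solving such a recursion gives a ratio of the form $-\eps(r_i)\tau^{\pm1}$ times one factor $\bfc_{-\eps a_i',k'}$ evaluated at $r_{k'}(\mu)$.

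Moving from $\lambda$ to $\mu$ along a reduced path in $W^{\lambda}$ (each step increasing length), I will multiply these ratios together. This telescoping expresses $d_\mu$ as $\eps(\overline v(\mu))$ times a $\tau$-power times a product of $\bfc$-factors over $R'^+\cap\overline v(\mu)^{-1}R'^-$, evaluated at $r_{k'}(\lambda)$. That gives the $q$-analog of Lemma \ref{lem:eHO_in_nHO_basis}.

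The final step is to rewrite the argument $r_{k'}(\lambda)=w_{0\lambda}(\lambda+\rho_{k'})$ in terms of $\lambda+\rho_{k'}$. This mirrors Lemma \ref{lem:eHO_in_nHO_basis_v2}: I will use the decomposition $w_{0\lambda}(R^0_+\cap\overline v(\mu)^{-1}R^0_-)=(R^0_+\cap(\overline v(\mu)w_{0\lambda})^{-1}R^0_-)\sqcup R^0_{\lambda,-}$. This produces $\bfc^+_{-\eps k'}(\overline v(\mu)w_{0\lambda})(\lambda+\rho_{k'})$ together with a leftover factor over $R_{\lambda}$. On $R_\lambda$ we have $\eps k'=k'$, so the leftover is the $q$-analog of Lemma \ref{lem:poincare_poly}. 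Concretely, $\prod_{a\in R_{\lambda,+}}\bfc_{a',k'}(\rho_{k'})$ equals $\tau_{w_{0\lambda},k}^{-1}W_{0\lambda}(\tau_k^2)$ by Macdonald's identity for the Poincaré polynomial \cite[(5.5.16)]{Mac03}. Similarly, the $\tau$-powers combine, via $\tau_{\overline v(\mu)w_{0\lambda},k}=\tau_{\overline v(\mu),k}\tau_{w_{0\lambda},k}$ (lengths add since $\overline v(\mu)\in W_0^{\lambda}$), into $\tau^{-1}_{\overline v(\mu)w_{0\lambda},k}\tau_{w_0,k}$.

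I expect the main obstacle to be the bookkeeping of $\tau$-normalizations and signs. There are the conventions $\tau^{(\eps)}$ versus $\tau$, the use of $\tau$ versus $\tilde\tau$ in Case 3, and the differences between our normalization and Macdonald's. All of these must combine exactly into $\tau^{-1}_{\overline v(\mu)w_{0\lambda},k}\tau_{w_0,k}$ and $\eps(\overline v(\mu))$. I would check the rank-one $(C_1^\vee,C_1)$ case first to fix these constants.
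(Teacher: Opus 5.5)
\textbf{There is a genuine gap at the normalization step.} You set $d_\lambda=1$ ``by the normalization''. In the ordering used here, however, elements of $W_0\lambda$ are compared through $\overline{v}(\cdot)$ in the Bruhat order, so the dominant weight $\lambda$ is the \emph{lowest} element of its orbit. Hence $e^\lambda$ can occur as a lower-order term of every $E_\mu(k)$, $\mu\in W_0\lambda$, and the coefficient of $e^\lambda$ in $P^{(\eps)}_\lambda(k)$ is not $d_\lambda$. The statement itself gives
$d_\lambda=\tau_{w_{0\lambda},k}^{-1}\tau_{w_0,k}\bfc^+_{-\eps k'}(w_{0\lambda})(\lambda+\rho_{k'})$, which is not $1$ in general. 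Compare Lemma \ref{lem:eHO_in_nHO_basis_v2}, where the coefficient of $E_\lambda$ is likewise nontrivial.

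The coefficient that the normalization actually pins down is the antidominant one. Indeed, $e^{w_0\lambda}$ occurs only in $E_{w_0\lambda}(k)$, and $\eps$-symmetry forces $d_{w_0\lambda}=\eps(w_0)$. This agrees with the statement, since $\overline v(w_0\lambda)w_{0\lambda}=w_0$ and $\bfc^+(w_0)=1$.

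The same problem shows up in your telescoping. Moving up from $\lambda$, each ratio $d_{r_i\mu}/d_\mu$ is, up to sign and a power of $\tau$, the \emph{inverse} of a single $\bfc$-factor. Telescoping therefore gives $d_\mu/d_\lambda$ as an inverse product over the inversion set of $\overline v(\mu)$. This becomes a product over the non-inversions of $\overline v(\mu)w_{0\lambda}$, i.e.\ a $\bfc^+$, only after multiplying by the correct, nontrivial $d_\lambda$. With your anchor the final formula is off by a $\mu$-independent factor: nothing in your argument produces $\tau_{w_0,k}$ or $\bfc^+_{-\eps k'}(w_{0\lambda})(\lambda+\rho_{k'})$ at $\mu=\lambda$.

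Separately, the decomposition you copy is misstated, since its left-hand side contains no negative roots. The version relevant for $\bfc^+$ is $w_{0\lambda}(R^0_+\cap\overline v(\mu)^{-1}R^0_+)=(R^0_+\cap(\overline v(\mu)w_{0\lambda})^{-1}R^0_+)\sqcup R^0_{\lambda,-}$.

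\textbf{The repair.} Anchor at $w_0\lambda$ and telescope down along a reduced expression of $v(\mu)$, the shortest $w$ with $w\mu=w_0\lambda$. Done this way, your $T(r_i)$-recursions yield Macdonald's expansion
$P^{(\eps)}_\lambda(k)=\eps(w_0)\sum_{\mu}\eps(v(\mu))\tau_{v(\mu),k}\bfc^-_{-\eps k'}(v(\mu))(r_{k'}(\mu))E_\mu(k)$, which is exactly what the paper quotes from \cite[(5.6.7),(5.7.8)]{Mac03} rather than rederiving.

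From there no Poincaré-polynomial identity is needed. Since $v(\mu)=w_0w_{0\lambda}\overline v(\mu)^{-1}$, one has
$S_1'^+\cap v(\mu)^{-1}S_1'^-=\overline v(\mu)w_{0\lambda}\bigl(S_1'^+\cap(\overline v(\mu)w_{0\lambda})^{-1}S_1'^+\bigr)$. Together with $r_{k'}(\mu)=\overline v(\mu)w_{0\lambda}(\lambda+\rho_{k'})$, this turns $\bfc^-_{-\eps k'}(v(\mu))(r_{k'}(\mu))$ directly into $\bfc^+_{-\eps k'}(\overline v(\mu)w_{0\lambda})(\lambda+\rho_{k'})$. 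The remaining factors match because $\eps(w_0)\eps(v(\mu))=\eps(\overline v(\mu))$ and $\tau_{v(\mu),k}=\tau^{-1}_{\overline v(\mu)w_{0\lambda},k}\tau_{w_0,k}$, the lengths adding in $v(\mu)\cdot\overline v(\mu)w_{0\lambda}=w_0$.
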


\begin{proof}
    In \cite[(5.6.7),(5.7.8)]{Mac03}, it is shown that
    \[
        P_\lambda^{(\eps)}(k) = \eps(w_0)\sum_{\mu \in W_0\lambda}\eps(v(\mu))\tau_{v(\mu),k}\bfc_{-\eps k'}^-(v(\mu))(r_{k'}(\mu))E_\mu(k).
    \]
    We observe that $S_1'^+\cap v(\mu)^{-1}S_1'^- = \overline{v}(\mu)w_{0\lambda}(S_1'^+\cap(\overline{v}(\mu)w_{0\lambda})^{-1}S_1'^+)$, which is due to the equality $v(\mu) = w_0w_{0\lambda}\overline{v}(\mu)^{-1}$, and using this we find that
    \[
        \bfc_{-\eps k'}^-(v(\mu))(r_{k'}(\mu)) = \bfc_{-\eps k'}^+(\overline{v}(\mu)w_{0\lambda})(\lambda+\rho_{k'}).
    \]
    Here we have also used that $r_{k'}(\mu)=\overline{v}(\mu)w_{0\lambda}(\lambda + \rho_{k'})$. The desired expressions follows from this upon substituting $\eps(v(\mu))\eps(w_0) = \eps(\overline{v}(\mu))$ and $\tau_{v(\mu),k} = \tau_{\overline{v}(\mu)w_{0\lambda},k}^{-1}\tau_{w_0,k}$.
\end{proof}

The $\bfc$-function also serves to connect $U_\eps(k) E_\mu(k)$ and $P^{(\eps)}_\lambda(k)$, giving the analog of Lemma \ref{lem:relating_U_nHO_and_eHO}. For this purpose, we first consider the analog of Lemma \ref{lem:poincare_poly}.

\begin{lemma}\label{lem:poincare_poly_q}
    For $\lambda \in L_+$, we have $\bfc_{k'}^-(w_{0\lambda})(\lambda + \rho_{k'}) = \tau_{w_{0\lambda},k'}^{-1}W_{0\lambda}(\tau_{k'}^2)$.
\end{lemma}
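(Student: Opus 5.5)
The plan mirrors the proof of Lemma \ref{lem:poincare_poly}: identify the index set of the product, note that the evaluation point collapses to $\rho_{k'}$ on the relevant roots, and then invoke a Macdonald-type product formula for the Poincaré polynomial of the parabolic subgroup $W_{0\lambda}$.

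\emph{Step 1: the index set.} Let $S'^+_{1,\lambda}$ denote the positive roots $a'\in S_1'^+$ whose gradient is orthogonal to $\lambda$. Since $w_{0\lambda}$ is the longest element of $W_{0\lambda}$, it maps exactly these roots to negative roots and permutes the remaining positive roots. Hence $S_1'^+\cap w_{0\lambda}^{-1}S_1'^-=S'^+_{1,\lambda}$, and
\[
\bfc_{k'}^-(w_{0\lambda})(\lambda+\rho_{k'})=\prod_{a'\in S'^+_{1,\lambda}}\bfc_{a',k'}(\lambda+\rho_{k'}).
\]
The affine roots that occur here should all lie in $S_0'$, because the affine part of such a root is fixed by pairing with $\lambda$. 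For each such $a'$ we have $<a',\lambda+\rho_{k'}>=<a',\rho_{k'}>$, so the $\lambda$-dependence drops out:
\[
\bfc_{k'}^-(w_{0\lambda})(\lambda+\rho_{k'})=\prod_{a'\in S'^+_{1,\lambda}}\bfc_{a',k'}(\rho_{k'}).
\]
The genericity assumption guarantees that no denominator vanishes.

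\emph{Step 2: the Poincaré polynomial identity.} It remains to show that this product equals $\tau_{w_{0\lambda},k'}^{-1}W_{0\lambda}(\tau_{k'}^2)$. This is the $q$-analog of \cite[(4.4)]{Opd95}, namely Macdonald's formula for the Poincaré polynomial \cite[(5.5.14)]{Mac03}, which reads
\[
W_0(\tau^2)=\tau_{w_0}\prod_{a}\bfc_a(\rho_{k'})
\]
in his conventions. I would apply this formula to the parabolic subsystem of roots orthogonal to $\lambda$, which is a root system with Weyl group $W_{0\lambda}$. The point to check is that $\rho_{k'}$ may be replaced by the $\rho$-vector of the subsystem. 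This works because $\rho_{k'}-\rho_{k',\lambda}$ is $W_{0\lambda}$-invariant, so it is orthogonal to the roots of the subsystem and does not change $<a',\rho_{k'}>$ for those roots.

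\emph{Main obstacle.} The difficulty is bookkeeping of conventions rather than any conceptual step. One must verify that the normalisation $\tau_{a,k}^{-1}$ built into $\bfc_{a,k}$ combines over $S'^+_{1,\lambda}$ to exactly $\tau_{w_{0\lambda},k'}^{-1}$. One must also check that the doubled roots in Case 3, where the factor $u_a$ and the two parameters $\tau$ and $\tilde\tau$ both enter, are handled correctly by restricting to $S_1'$. I would confirm both points by a rank-one computation for a single simple reflection $r_i\in W_{0\lambda}$. In that case both sides equal
\[
\tau_{a_i}^{-1}(1+\tau_{a_i}^2),
\]
using $<a_i',\rho_{k'}>=k'$-values as in \cite[(1.5.4)]{Mac03}. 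The general case then follows from the multiplicativity of the product formula.
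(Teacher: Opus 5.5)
Your proposal is correct and is essentially the paper's argument: both identify $S_1'^+\cap w_{0\lambda}^{-1}S_1'^-$ with $(S'_\lambda)^+_{01}$ and apply Macdonald's Poincar\'e-series identity to the subsystem $S'_\lambda$; the paper cites the summation form \cite[(5.1.36)]{Mac03}, $\sum_{w\in W_{0\lambda}}\prod_{a'\in(S'_\lambda)^+_{01}}\bfc_{w(a'),k'}=\tau_{w_{0\lambda},k'}^{-1}W_{0\lambda}(\tau_{k'}^2)$, and evaluates it at $\lambda+\rho_{k'}$, where each term with $w\neq e$ vanishes (some $a'$ has $w(a')=-a_i'$, and $\langle a_i',\rho_{k'}\rangle=k'(a_i)$ kills a numerator), which is exactly how your product formula arises and makes your passage to the subsystem's $\rho$-vector unnecessary. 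Two small repairs: define your index set inside $S_{01}'^+$ from the start, since as written it contains affine roots with $a'(0)>0$, which $w_{0\lambda}$ keeps positive because $W_0$ preserves the constant term $a'(0)$ (that, not the pairing with $\lambda$, is why only roots of $S_0'$ occur); and note that your rank-one computation only checks normalisations, because the general case rests on the cited product formula, not on any multiplicativity over simple reflections.
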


\begin{proof}
    The identity \cite[(5.1.36)]{Mac03}, written in terms of $\bfc$-functions using \cite[(5.1.3)]{Mac03}, applied to the affine root system $S'_\lambda = \{a' \in S' \mid \thinspace <a',\lambda> \thinspace = 0\}$ reads
    \begin{equation}\label{eq:poincare_poly_q_step}
        \sum_{w\in W_{0\lambda}} \prod_{a' \in (S_\lambda')_{01}^+} \bfc_{w(a'),k'} = \tau_{w_{0\lambda},k'}^{-1}W_{0\lambda}(\tau_{k'}^2).
    \end{equation}
    Evaluating $\bfc_{w(a'),k'}$ at $\lambda + \rho_{k'}$ gives
    \[
        \bfc_{w(a'),k'}(\lambda + \rho_{k'}) = \frac{(1 - q^{k'(a') + <w(a'),\rho_{k'}>})(1 + q^{k'(2a') + <w(a'),\rho_{k'}>})}{\tau_{a',k'}(1 - q^{2<w(a'),\rho_{k'}>})}.
    \]
    For $w \in W_{0\lambda}\setminus\{e\}$, there exists a root $a' \in (S_\lambda')_{01}^+$ such that $w(a') = -a_i'$ for some $i \in I_0$. Therefore,
    \[
        <w(a'),\rho_{k'}> \thinspace = -<a_i',\rho_{k'}> \thinspace = -k'(a_i) = -k'(a'),
    \]
    which implies that $q^{k'(a') \thinspace + \thinspace <w(a'),\rho_{k'}>} = 1$, showing that $\bfc_{w(a'),k'}(\lambda + \rho_{k'}) = 0$. As a result of this vanishing, evaluating \eqref{eq:poincare_poly_q_step} at $\lambda + \rho_{k'}$ yields
    \[
        \prod_{a' \in (S_\lambda')_{01}^+} \bfc_{a',k'} = \tau_{w_{0\lambda},k'}^{-1}W_{0\lambda}(\tau_{k'}^2).
    \]
    The left-hand side of this equation is exactly $\bfc_{k'}^-(w_{0\lambda})(\lambda + \rho_{k'})$ because $S_1'^+ \cap w_{0\lambda}^{-1}S_1'^- = (S_\lambda')_{01}^+$, which concludes the proof of the lemma.
\end{proof}

\begin{lemma}\label{lem:relating_U_nMK_and_eMK}
    Let $\lambda \in \tilde{\rho}_l + L_+$ and $\mu \in W_0\lambda$. Then
    \[
        U_\eps(k) E_\mu(k) = \eps(\overline{v}(\mu))\tau_{\overline{v}(\mu)w_{0\lambda},k}\tau_{w_0,k}^{-1}\bfc_{\eps k'}^-(\overline{v}(\mu)w_{0\lambda})(\lambda + \rho_{k'})P^{(\eps)}_\lambda(k).
    \]
\end{lemma}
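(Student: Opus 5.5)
We follow the proof of Lemma \ref{lem:relating_U_nHO_and_eHO}, replacing its ingredients by the $q$-analogs established above. The plan has three steps: first relate $U_\eps(k)E_\mu(k)$ to $U_\eps(k)E_\lambda(k)$, then recast the resulting $\bfc$-function so that its argument is $\lambda + \rho_{k'}$, and finally trade $U_\eps(k)E_\lambda(k)$ for $P^{(\eps)}_\lambda(k)$.

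\textbf{Step 1.} We establish
\[
    U_\eps(k)E_\mu(k) = \eps(\overline{v}(\mu))\,\tau_{\overline{v}(\mu),k}\,\bfc_{\eps k'}^-(\overline{v}(\mu))(r_{k'}(\lambda))\,U_\eps(k)E_\lambda(k)
\]
(up to the exact $\tau$-normalization), by induction on $\ell(\overline{v}(\mu))$. For the inductive step, take $\mu$ and $r_i\mu$ with $r_i\mu \neq \mu$ and $\overline{v}(r_i\mu) = r_i\overline{v}(\mu)$ longer. The intertwiner relations \cite[(4.7), (5.7.1,2)]{Mac03} express $E_{r_i\mu}(k)$ as a combination of $T(r_i)E_\mu(k)$ and $E_\mu(k)$; the coefficients involve $q^{<\alpha_i', r_{k'}(\mu)>}$. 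Since $U_\eps(k)T(r_i) = \tau^{(\eps)}_{r_i,k}U_\eps(k)$, each step contributes a factor $\eps(r_i)\tau_{a_i,k}^{\pm1}\bfc_{\eps k'}$ evaluated at the relevant root, which is exactly what the adaptation of \cite[Lem.4.5]{Sch23} says. The product over the inversion set $S_1'^+\cap\overline{v}(\mu)^{-1}S_1'^-$ yields the $\bfc^-$-function.

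\textbf{Step 2.} We recast $\bfc_{\eps k'}^-(\overline{v}(\mu))(r_{k'}(\lambda))$ using $r_{k'}(\lambda) = w_{0\lambda}(\lambda + \rho_{k'})$ and the decomposition of inversion sets
\[
    w_{0\lambda}^{-1}\bigl(S_1'^+\cap\overline{v}(\mu)^{-1}S_1'^-\bigr)\sqcup(S_\lambda')^+_{01} = S_1'^+\cap(\overline{v}(\mu)w_{0\lambda})^{-1}S_1'^-.
\]
Since $\eps k' = k'$ on $S'_\lambda$ (because $\eps(W_{0\lambda}) = \{1\}$), this gives
\[
    \bfc_{\eps k'}^-(\overline{v}(\mu))(r_{k'}(\lambda)) = \bfc_{\eps k'}^-(\overline{v}(\mu)w_{0\lambda})(\lambda+\rho_{k'})\,\bfc_{k'}^-(w_{0\lambda})(\lambda+\rho_{k'})^{-1}.
\]

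\textbf{Step 3.} We combine Lemma \ref{lem:poincare_poly_q} with Lemma \ref{lem:sym_prop_q}, i.e.\ $U_\eps(k)E_\lambda(k) = \tau_{w_0,k}^{-1}W_{0\lambda}(\tau_k^2)P^{(\eps)}_\lambda(k)$. The Poincaré polynomials cancel, assuming $W_{0\lambda}(\tau_{k'}^2) = W_{0\lambda}(\tau_k^2)$, which holds since $k'$ and $k$ agree on the finite simple roots in the relevant sense. What remains is a collection of $\tau$-factors:
\[
    \tau_{\overline{v}(\mu),k}\,\tau_{w_{0\lambda},k'}\,\tau_{w_0,k}^{-1}.
\]
Since $\ell(\overline{v}(\mu)w_{0\lambda}) = \ell(\overline{v}(\mu)) + \ell(w_{0\lambda})$, we have $\tau_{\overline{v}(\mu),k}\tau_{w_{0\lambda},k} = \tau_{\overline{v}(\mu)w_{0\lambda},k}$, which gives the stated coefficient.

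The main obstacle is Step 1. There, the $q$-symmetrizer is not simply a signed sum of $w$-translates, so the proportionality between $U_\eps(k)E_\mu(k)$ and $U_\eps(k)E_\lambda(k)$, with the precise $\tau$-powers, must be extracted carefully from the intertwiner recursion. We would compare the result against Lemma \ref{lem:eMK_in_nMK_basis} as a consistency check on the normalization: pairing both sides with $P^{(\eps)}_\lambda(k)$, or using the case $\mu = \lambda$, fixes any stray factor of $\tau$.
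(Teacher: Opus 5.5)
Your proposal follows the paper's proof essentially step for step: the adapted Sahi--Schl\"osser relation \cite[Lem.4.5]{Sch23} with the same normalization, the same inversion-set decomposition to move the argument from $r_{k'}(\lambda)$ to $\lambda+\rho_{k'}$, and then Lemma \ref{lem:poincare_poly_q}, $W_{0\lambda}(\tau_{k'}^2)=W_{0\lambda}(\tau_k^2)$ and Lemma \ref{lem:sym_prop_q}. Two small caveats. Your $\tau$-bookkeeping silently uses $\tau_{w_{0\lambda},k'}=\tau_{w_{0\lambda},k}$, which follows from $\tau_{a',k'}=\tau_{a,k}$ for $a\in S_0$. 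Also, the $\mu=\lambda$ sanity check cannot detect a $\mu$-dependent normalization error in Step~1, so the exact form of the cited identity has to be taken as input, just as the paper does.
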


\begin{proof}
    Specializing \cite[Lem.4.5]{Sch23} to our situation gives us
    \[
        U_\eps(k)E_\mu(k) = \eps(\overline{v}(\mu))\tau_{\overline{v}(\mu),k}\bfc_{\eps k'}^-(\overline{v}(\mu))(r_{k'}(\lambda))U_\eps(k)E_\lambda(k).
    \]
    The equality $\ell(\overline{v}(\mu)w_{0\lambda}) = \ell(\overline{v}(\mu)) + \ell(w_{0\lambda})$ implies the decomposition
    \[
        S_1'^+\cap(\overline{v}(\mu)w_{0\lambda})^{-1}S_1'^- = w_{0\lambda}^{-1}(S_1'^+\cap\overline{v}(\mu)^{-1}S_1'^-)\sqcup(S_1'^+\cap w_{0\lambda}^{-1}S_1'^-)
    \]
    by \cite[(2.2.4)]{Mac03}, whence
    \begin{equation}\label{eq:relating_U_nMK_and_eMK_step_1}
        \bfc_{\eps k'}^-(\overline{v}(\mu)w_{0\lambda})(\lambda+\rho_{k'}) = \bfc_{k'}^-(\overline{v}(\mu))(r_{k'}(\lambda))\bfc_{\eps k'}^-(w_{0\lambda})(\lambda+\rho_{k'}).
    \end{equation}
    Note that we have replaced $\eps k'$ with $k'$, which is valid because $\eps k' \equiv k'$ on $S_1'^+ \cap w_{0\lambda}^{-1}S_1'^- = (S_\lambda')_{01}^+$. Applying \cite[(5.1.41)]{Mac03} to $W_{0\lambda}$ results in $W_{0\lambda}(\tau_{k'}^2) = W_{0\lambda}(\tau_k^2)$. Together with the equality $\tau_{w_{0\lambda},k'} = \tau_{w_{0\lambda},k}$, we arrive at
    \begin{equation}\label{eq:relating_U_nMK_and_eMK_step_2}
        \tau_{w_{0\lambda},k'}^{-1}W_{0\lambda}(\tau_{k'}^2) = \tau_{w_{0\lambda},k}^{-1}W_{0\lambda}(\tau_k^2).
    \end{equation}
    The relation $\tau_{w,k'} = \tau_{w,k}$ holds for all $w \in W_0$, so it applies to $w_{0\lambda}$ in particular. Note that this relation follows immediately from the equality $\tau_{a',k'} = \tau_{a,k}$ for all $a \in S_0$. Combining \eqref{eq:relating_U_nMK_and_eMK_step_1}, \eqref{eq:relating_U_nMK_and_eMK_step_2}, and Lemma \ref{lem:poincare_poly_q}, we find that
    \begin{equation}\label{eq:relating_U_nMK_and_eMK_step_3}
        \tau_{\overline{v}(\mu),k}\bfc_{\eps k'}^-(\overline{v}(\mu))(r_{k'}(\lambda)) = \tau_{\overline{v}(\mu)w_{0\lambda},k}W_{0\lambda}(\tau_k^2)^{-1}\bfc_{\eps k'}^-(\overline{v}(\mu)w_{0\lambda})(\lambda + \rho_{k'}).
    \end{equation}
    Recall from Lemma \ref{lem:sym_prop_q} that $P^{(\eps)}_\lambda(k) = \tau_{w_0,k}W_{0\lambda}(\tau_k^2)^{-1}U_\eps(k) E_\lambda(k)$, from which the lemma follows upon substituting \eqref{eq:relating_U_nMK_and_eMK_step_3} into the expression for $U_\eps(k)E_\mu(k)$.
\end{proof}


\subsection{Symmetric $q$-shift operators}\label{sec:sym_shift_operators_q}
The symmetric $q$-shift operators relate the symmetric Mac-donald--Koornwinder polynomials for different multiplicities. We restate the construction of the $\eps$-forward $q$-shift operators from \cite{Mac03} in such a manner as to emphasize the usage of the $q$-analog of the shift principle (Proposition \ref{prop:shift_prin}).

To this end, for $a \in S_1$ we put
\[
    \delta_{a,k} =
    \begin{cases}
        (e^{a/2} - e^{-a/2})\bfc_{a,k} & \text{if } 2a \notin S; \\
        (e^a - e^{-a})\bfc_{a,k} & \text{otherwise},
    \end{cases}
\]
and define
\[
    \delta_{\eps,k} = \prod_{a \in S_{01}^+, \thinspace l(a) = 1}\delta_{a,k}, \qquad \delta'_{\eps,k'} = \prod_{a' \in S_{01}'^+, \thinspace l'(a') = 1}\delta_{a',k'}.
\]

\begin{proposition}[$q$-Shift principle]\label{prop:shift_prin_q}
    Let $\lambda \in \tilde{\rho}_l + L_+$. Then
    \[
        P_{\lambda - \tilde{\rho}_l}(k + l^\wedge) = q^{-\frac{1}{2}k\cdot l}\delta_{\eps,k}^{-1}P^{(\eps)}_\lambda(k),
    \]
    with $k\cdot l = \sum_{a \in S_0^+}k(a)l(a)$.
\end{proposition}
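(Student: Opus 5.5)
The plan is to follow the proof of the classical shift principle, in the form of \cite[(5.8.10)]{Mac03}, adapted to the present normalizations. First, I show that the function $\delta_{\eps,k}$ is itself $\eps$-symmetric (up to sign conventions) and divides every element of $A^{(\eps)}(k)$. The factors $\delta_{a,k}$ with $a \in S_{01}^+$, $l(a)=1$ are permuted up to sign by $W_0$. More precisely, the simple reflection $r_i$ with $\eps(r_i)=-1$ maps $\delta_{a_i,k}$ to $-\delta_{a_i,k}$ (up to the monomial factor hidden in $\bfc_{a,k}$, which needs to be checked case by case). It also permutes the remaining factors, so $\delta_{\eps,k}$ transforms by $\eps$. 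The map $f \mapsto \delta_{\eps,k}^{-1}f$ then sends $A^{(\eps)}(k)$ into $A_0$. An antisymmetry argument under $r_i$ shows that each element of $A^{(\eps)}(k)$ vanishes on the zero locus of each factor $1 - \tau\tilde\tau e^{a}$ and $1+\tau\tilde\tau^{-1}e^a$, since these factors are exactly what $\bfc_{a,k}$ introduces. This is the standard statement $A^{(\eps)}(k) = \delta_{\eps,k}A_0$.

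Second, I compare weight functions. From the definition of $\Delta_{S,k}$ I verify the identity
\[
\Delta_{S,k+l^\wedge} = c\,\delta_{\eps,k}\,\delta_{\eps,k}^{*}\,\Delta_{S,k}
\]
for an explicit constant $c$ (a power of $q$ and of the $\tau$'s). This is a factor-by-factor computation on $a\in S_{01}^+$ with $l(a)=1$: shifting $k(a)$ by $1$ (or by $2/|a|^2$ in Case 2, which is exactly why $l^\wedge$ appears) turns $(\tau_{a,k}\bfc_{a,k})^{-1}$ into itself times $\delta_{a,k}\delta_{a,k}^*$ up to constants. The remaining roots, including the non-$S_0$ affine translates, reduce to the $S_0$ ones because the $q$-shifted factors telescope as in Macdonald's product formula. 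From this it follows that $(\delta_{\eps,k}f,\delta_{\eps,k}g)_k$ is a constant multiple of $(f,g)_{k+l^\wedge}$ for $f,g\in A_0$.

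Third, I conclude by triangularity and orthogonality. The function $\delta_{\eps,k}^{-1}P^{(\eps)}_\lambda(k)$ is symmetric. Its leading term is $\delta_{\eps,k}^{-1}$ applied to $m^{(\eps)}_\lambda$, and the leading monomial of $\delta_{\eps,k}$ is $q^{\frac12 k\cdot l}e^{\tilde\rho_l}$ times lower terms. Hence it equals $q^{-\frac12 k\cdot l}m_{\lambda-\tilde\rho_l}$ plus lower-order terms, and the exponent $k\cdot l$ comes from collecting the $\tau_{a,k}$ constants in the top coefficients of the $\bfc_{a,k}$. Orthogonality against $m_\nu$ for $\nu<_0\lambda-\tilde\rho_l$ in the $(k+l^\wedge)$-pairing follows from step two, since $\delta_{\eps,k}m_\nu$ lies in $A^{(\eps)}$ with leading term $m^{(\eps)}_{\nu+\tilde\rho_l}$ and lower terms. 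Uniqueness of $P_{\lambda-\tilde\rho_l}(k+l^\wedge)$ then gives the claim.

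The main obstacle is step two, namely getting the exact weight-function identity and constant uniformly across Cases 1--3, especially Case 3 with its five parameters and the $u_a=2$ roots. If this bookkeeping becomes unwieldy, I would instead invoke \cite[(5.8.10)--(5.8.12)]{Mac03} directly, where this is proved in Macdonald's normalization. I would then only translate the normalization of $P^{(\eps)}_\lambda$ (leading coefficient at $e^\lambda$ rather than $e^{w_0\lambda}$) and the identification $\tilde\rho_l=\rho_{{l^\wedge}'}$ from Remark \ref{rmk:shift_mult}.
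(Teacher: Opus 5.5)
\textbf{There is a genuine gap.} Your three-step plan (divisibility by $\delta_{\eps,k}$, comparison of weights, triangularity plus uniqueness) is the argument behind \cite[(5.8.9)]{Mac03}, which is what the paper's proof cites. However, the identity at the center of your step two is false. Take $R$ of type $A_1$ in Case 1 with $\eps$ the sign character, and write $t=q^{k(\alpha)}$, so that $k+l$ amounts to $t\mapsto qt$. Then $\delta_{\eps,k}=t^{1/2}e^{\alpha/2}-t^{-1/2}e^{-\alpha/2}$ and $\delta_{\eps,k}\delta_{\eps,k}^*=(1-te^{\alpha})(1-t^{-1}e^{-\alpha})$. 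On the other hand, telescoping over the affine translates gives $\Delta_{S,k+l}=(1-te^{\alpha})(1-qte^{-\alpha})\Delta_{S,k}$, so the ratio is not constant.

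What is true is the relation the paper writes down as the corrected \cite[(5.8.4)]{Mac03}: $\delta_{\eps,k}\delta_{\eps,k}^*\Delta_{S,k}=\nabla_{S,k+l^\wedge}/\Delta^0_{S,\eps k}$. This is the $W_0$-invariant weight divided by the finite factor at multiplicity $\eps k$, whereas $\Delta_{S,k+l^\wedge}$ has the finite factor at multiplicity $k+l^\wedge$. Step three needs $(\delta_{\eps,k}f,\delta_{\eps,k}g)_k=c\,(f,g)_{k+l^\wedge}$ for $f,g\in A_0$, and to get it you must add a symmetrization. Since $fg^*\nabla_{S,k+l^\wedge}$ and $\ct$ are $W_0$-invariant, both pairings equal $|W_0|^{-1}C_m\,\ct(fg^*\nabla_{S,k+l^\wedge})$, where $C_m=\sum_{w\in W_0}w\big((\Delta^0_{S,m})^{-1}\big)$ with $m=\eps k$, resp.\ $m=k+l^\wedge$. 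These $C_m$ are nonzero Poincar\'e-type constants by \cite[(5.1.36)]{Mac03} (cf.\ the proof of Lemma \ref{lem:poincare_poly_q}). For the same reason your fallback is more than a translation of notation. In Case 2, Macdonald's (5.8.4) is stated with the shift $k+l$ and has to be redone with $k+l^\wedge$; that is precisely the correction the paper's proof makes.

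Step one uses the wrong symmetry. One has $r_i\delta_{a_i,k}=\delta_{-a_i,k}=-(\bfc_{-a_i,k}/\bfc_{a_i,k})\,\delta_{a_i,k}$, and $\bfc_{-a_i,k}/\bfc_{a_i,k}$ is not a monomial; in the example, $r_1\delta_{\eps,k}=t^{1/2}e^{-\alpha/2}-t^{-1/2}e^{\alpha/2}$. So $\delta_{\eps,k}$ is not $\eps$-symmetric for the transposition action. Antisymmetry under $r_i$ can only give divisibility by the undeformed factors $e^{a/2}-e^{-a/2}$ (resp.\ $e^{a}-e^{-a}$), never by $1-\tau\tilde\tau e^{a}$ and $1+\tau\tilde\tau^{-1}e^{a}$. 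What places $P^{(\eps)}_\lambda(k)$ in $\delta_{\eps,k}A_0$ is the Hecke symmetry. By Lemma \ref{lem:sym_prop_q} it lies in $U_\eps(k)A(k)$, so $T(r_i)f=\tau^{(\eps)}_{r_i,k}f$. In the basic representation this is a functional equation relating $r_if$ to $f$ through $\bfc_{\pm a_i,k}$, and that is what yields $U_\eps(k)A(k)=\delta_{\eps,k}A_0$.

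Finally, your constant in step three comes out inverted. You take leading terms at the dominant end: $e^\lambda$ with coefficient $1$ in $P^{(\eps)}_\lambda(k)$, and $e^{\tilde\rho_l}$ with coefficient $q^{\frac12 k\cdot l}$ in $\delta_{\eps,k}$. That gives $\delta_{\eps,k}^{-1}P^{(\eps)}_\lambda(k)=q^{-\frac12 k\cdot l}P_{\lambda-\tilde\rho_l}(k+l^\wedge)$, the reciprocal of the claimed constant. For an element of $\delta_{\eps,k}A_0$ with top orbit $W_0\lambda$, the coefficients of $e^{\lambda}$ and $e^{w_0\lambda}$ differ by $\eps(w_0)q^{-k\cdot l}$, not merely by $\eps(w_0)$. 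The stated $q^{-\frac12 k\cdot l}$ is what you get by normalizing $P^{(\eps)}_\lambda(k)$ to have coefficient $\eps(w_0)$ at $e^{w_0\lambda}$, i.e.\ $\eps(w_0)$ times Macdonald's normalization. You then use that $\delta_{\eps,k}$ has coefficient $\eps(w_0)q^{-\frac12 k\cdot l}$ at $e^{-\tilde\rho_l}$.
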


\begin{proof}
    The lemma is essentially \cite[(5.8.9)]{Mac03}, correcting for the different normalization conventions for $P^{(\eps)}_\lambda(k)$, which removes the factor $\eps(w_0)$, and for an omission that is only present in the second case. This omission originates in \cite[(5.8.4)]{Mac03}, where the formula should read
    \begin{equation}\label{eq:5.8.4_corrected}
        \delta_{\eps,k} \delta_{\eps,k}^* \Delta_{S,k} = \nabla_{S, k + l^\wedge}/\Delta_{S,\eps k}^0,
    \end{equation}
    using the multiplicity $l^\wedge$ from Remark \ref{rmk:shift_mult}. In fact, the multiplicity $l^\wedge$ that was designed to resolve this issue in the second case.
\end{proof}

The $\eps$-forward symmetric $q$-shift operator $G^{(\eps)}_+(k)$ is defined as
\[
    G^{(\eps)}_+(k) = \delta_{\eps,k}^{-1}\delta'_{\eps,k'}(Y(k)^{-1}),
\]
and a backward variant is defined similarly, see~\cite[\S5.9]{Mac03}.

We revisit the proof of \cite[(5.9.7)]{Mac03}, making the role of the $q$-shift principle more explicit. The adjusted proof also makes clear how the omission in \cite[(5.8.4)]{Mac03}, see \eqref{eq:5.8.4_corrected}, affects the statement and its proof.

\begin{theorem}\label{thm:sym_shift_factor_q}
    The operator $G^{(\eps)}_+(k)$ shifts the multiplicity of the symmetric Macdonald--Koorn-winder polynomials by $l^\wedge$:
    \[
        G^{(\eps)}_+(k)P_\lambda(k) = h^{(\eps)}_+(\lambda,k) P_{\lambda - \tilde{\rho}_l}(k + l^\wedge), \qquad \lambda \in L_+,
    \]
    with $h^{(\eps)}_+(\lambda,k) = q^{\frac{1}{2}k\cdot l}\delta'_{\eps,-k'}(\lambda + \rho_{k'})$.
\end{theorem}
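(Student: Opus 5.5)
We follow the template of the proof of Theorem \ref{thm:sym_shift_factor}, replacing the graded Hecke algebra by the affine Hecke algebra $\frakH(k)$ and the shift principle by its $q$-analog (Proposition \ref{prop:shift_prin_q}).

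\textbf{Step 1: symmetry.} The first step is to show that $\delta'_{\eps,k'}(Y(k)^{-1})$ maps $A_0(k)$ into $A^{(\eps)}(k)$. Concretely, for every $i \in I_0$ and every $f \in A_0(k)$ one should have
\[
T(r_i)\,\delta'_{\eps,k'}(Y(k)^{-1}) f = \tau^{(\eps)}_{r_i,k}\, \delta'_{\eps,k'}(Y(k)^{-1}) f .
\]
For this we would use the Bernstein--Lusztig relation between $T(r_i)$ and $Y^{f'}$, as in \cite[(4.2.10),(5.9.3)]{Mac03}. Commuting $T(r_i)$ past $\delta'_{\eps,k'}(Y^{-1})$ produces $r_i$ applied to $\delta'_{\eps,k'}$, plus a correction term. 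This product is $\eps$-anti-invariant under $r_i$ up to the factor $\bfc_{a_i',k'}$ coming from the simple root $a_i'$. That factor is exactly what converts the $T(r_i)$-eigenvalue $\tau_{a_i,k}$ on $A_0(k)$ into $\tau^{(\eps)}_{r_i,k}$.

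\textbf{Step 2: reduction to a scalar.} From Step 1 and Lemma \ref{lem:dim_equivalence_q}, since $\delta'_{\eps,k'}(Y^{-1})$ commutes with the center and preserves $A_\lambda(k)$, we get
\[
\delta'_{\eps,k'}(Y(k)^{-1})P_\lambda(k) = c\,P^{(\eps)}_\lambda(k)
\]
when $\lambda \in \tilde\rho_l + L_+$, and $\delta'_{\eps,k'}(Y(k)^{-1})P_\lambda(k) = 0$ otherwise. In the latter case we check that $h^{(\eps)}_+(\lambda,k)=0$: by Lemma \ref{lem:dim_equivalence_q} there is some $i \in I_0$ with $(\lambda,\alpha_i')=0$ and $l'(\alpha_i^\vee)=1$, and then the factor of $\delta'_{\eps,-k'}$ at $a_i'$ vanishes at $\lambda+\rho_{k'}$, since $<a_i',\rho_{k'}>=k'(a_i')$. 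The cancelling factor is the numerator $1-q^{-k'(a')+<a',\cdot>}$. This agrees with $P_{\lambda-\tilde\rho_l}(k+l^\wedge)$ being absent, i.e.\ zero.

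\textbf{Step 3: the constant $c$.} For $\lambda\in\tilde\rho_l+L_+$, compare leading coefficients of $e^{\lambda}$, using the triangularity of $Y^{f'}(k)$ on the monomial basis (\cite[(4.4.12)]{Mac03}). Equivalently, apply $\delta'_{\eps,k'}(Y^{-1})$ through the expansion of $P_\lambda(k)$ in the $E_\mu(k)$, and read off the coefficient of $E_\lambda(k)$ on both sides by means of Lemma \ref{lem:eMK_in_nMK_basis}. The eigenvalue of $\delta'_{\eps,k'}(Y^{-1})$ on $E_\mu(k)$ is $\delta'_{\eps,k'}$ evaluated at $r_{k'}(\mu)$, up to the sign convention $Y^{f'}E_\mu=f'(-r_{k'}(\mu))E_\mu$, and for $\mu=\lambda$ one has $r_{k'}(\lambda)=w_{0\lambda}(\lambda+\rho_{k'})$. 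After the inversion $Y\mapsto Y^{-1}$, and using $\bfc_{a,k}(-x)$ in the form of $\bfc_{a,-k}$ up to a $\tau$-factor, this should yield $c=\delta'_{\eps,-k'}(\lambda+\rho_{k'})$ times the appropriate power of $\tau$.

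\textbf{Step 4: conclusion.} Dividing by $\delta_{\eps,k}$ and applying Proposition \ref{prop:shift_prin_q} gives
\[
G^{(\eps)}_+(k)P_\lambda(k)=c\,q^{\frac12 k\cdot l}P_{\lambda-\tilde\rho_l}(k+l^\wedge),
\]
which is the claimed formula.

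\textbf{Main obstacle.} The main obstacle is Step 3: tracking the $\tau$-normalizations and the $w_{0\lambda}$-twist so that the constant comes out exactly as $q^{\frac12 k\cdot l}\delta'_{\eps,-k'}(\lambda+\rho_{k'})$. The corrected identity \eqref{eq:5.8.4_corrected} with $l^\wedge$ must be used consistently in Case 2. I would first verify the constant on Case 1 in rank one before checking the general bookkeeping.
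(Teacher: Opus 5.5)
\textbf{Verdict: the approach is valid, but Step~3, which is the only step that fixes $h^{(\eps)}_+(\lambda,k)$, is left undone.}

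The rest of your outline works. Step~1 is what \cite[(5.9.5)]{Mac03} gives, and you should cite it rather than sketch it, since ``$\eps$-anti-invariant up to the factor $\bfc_{a_i',k'}$'' is not yet an argument. Step~2 correctly uses Lemma~\ref{lem:dim_equivalence_q} together with the fact that the $Y$-operators preserve $A_\lambda(k)$. Step~4 is Proposition~\ref{prop:shift_prin_q}, which already absorbs \eqref{eq:5.8.4_corrected}, so no extra Case~2 bookkeeping is needed.

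\textbf{The gap.} You predict $c=\delta'_{\eps,-k'}(\lambda+\rho_{k'})$ ``times the appropriate power of $\tau$''. Any such power would contradict the formula you then claim in Step~4. As written, the proposal therefore does not determine the constant, and that constant is the content of the theorem.

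\textbf{How to close it.} No $\tau$ occurs. The identity $\bfc_{a,k}(-x)=\bfc_{a,-k}(x)$ holds exactly; the normalizing $\tau_{a,k}^{-1}$ in $\bfc_{a,k}$ is what makes this work. Hence $\delta_{a,k}(-x)=-\delta_{a,-k}(x)$. Your $E_\lambda$-comparison then closes at once:
\begin{enumerate}[(i)]
\item Since $\eps(W_{0\lambda})=\{1\}$, $w_{0\lambda}$ permutes $\{a'\in S_{01}'^+ \mid l'(a')=1\}$. So $\delta'_{\eps,k'}(Y(k)^{-1})E_\lambda(k)=\delta'_{\eps,k'}(\lambda+\rho_{k'})E_\lambda(k)$.
\item In Lemma~\ref{lem:eMK_in_nMK_basis} at $\mu=\lambda$, the prefactor $\tau_{w_{0\lambda},k}^{-1}\tau_{w_0,k}$ is the same for $\triv$ and $\eps$.
\item Every root with $l'=1$ lies outside the root system of $W_{0\lambda}$. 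So the quotient of $\bfc^+_{-k'}(w_{0\lambda})$ by $\bfc^+_{-\eps k'}(w_{0\lambda})$ at $\lambda+\rho_{k'}$ is $\prod_{l'(a')=1}\bfc_{a',-k'}/\bfc_{a',k'}$. This turns $\delta'_{\eps,k'}$ into $\delta'_{\eps,-k'}$.
\end{enumerate}
Alternatively, compare coefficients of $E_{w_0\lambda}(k)$. By Lemma~\ref{lem:eMK_in_nMK_basis} these are $1$ and $\eps(w_0)$, and the eigenvalue there is $\delta'_{\eps,k'}(w_0(\lambda+\rho_{k'}))=\eps(w_0)\delta'_{\eps,-k'}(\lambda+\rho_{k'})$. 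Comparing the coefficient of $e^{\lambda}$ is not a clean alternative. In the ordering of Section~\ref{sec:orthogonal_polynomials}, $e^\lambda$ is the lowest monomial of the orbit, not the leading one; cf.\ the proof of Theorem~\ref{thm:sym_shift_factor}, which compares $e^{w_0\lambda}$.

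\textbf{Comparison with the paper.} The paper never separates your Steps~1 and~3. It applies the intertwining relation $\delta'_{\eps,k'}(Y(k)^{-1})U_\triv(k)=\eps(w_0)U_\eps(k)\delta'_{\eps,k'}(Y(k))$ of \cite[(5.9.5)]{Mac03} to $E_\lambda(k)$. By Lemma~\ref{lem:sym_prop_q}, $P_\lambda(k)$ and $P^{(\eps)}_\lambda(k)$ are $U_\triv(k)E_\lambda(k)$ and $U_\eps(k)E_\lambda(k)$ times the same factor $\tau_{w_0,k}W_{0\lambda}(\tau_k^2)^{-1}$. So the constant is directly $\eps(w_0)\delta'_{\eps,k'}(-r_{k'}(\lambda))=\delta'_{\eps,-k'}(\lambda+\rho_{k'})$. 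The case $\lambda-\tilde\rho_l\notin L_+$ follows from $U_\eps(k)E_\lambda(k)=0$; your extra check that $h^{(\eps)}_+(\lambda,k)$ vanishes there is fine but not needed. Your route needs only the weaker statement that the image is $\eps$-symmetric, at the cost of the $\bfc$-function expansion of Lemma~\ref{lem:eMK_in_nMK_basis}. With Step~3 completed as above, it is a valid alternative.
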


\begin{proof}
    According to \cite[(5.9.5)]{Mac03}, we have $\delta'_{\eps,k'}(Y(k)^{-1}) U_\triv(k) = \eps(w_0)U_\eps(k)\delta'_{\eps,k'}(Y(k))$, where the additional factor of $\eps(w_0)$ comes from the definition of $U_\eps(k)$. Using this relation and Lemma~\ref{lem:sym_prop_q}, we arrive at
    \[
        \delta'_{\eps,k'}(Y(k)^{-1})P_\lambda(k) = \eps(w_0)\tau_{w_0,k}W_{0\lambda}(\tau_k^2)^{-1}U_\eps(k) \delta'_{\eps,k'}(Y(k))E_\lambda(k).
    \]
    This shows that $G^{(\eps)}_+(k)P_\lambda(k) = 0$ when $\lambda - \tilde{\rho}_l \notin L_+$ since then $U_\eps(k)E_\lambda(k) = 0$, which is in agreement with the theorem because $P_{\lambda - \tilde{\rho}_l}(k + l^\wedge) = 0$ in this case. Otherwise, we suppose that $\lambda \in \tilde{\rho}_l + L_+$. We have that
    \[
         \delta'_{\eps,k'}(Y(k))E_\lambda(k) = \delta'_{\eps,k'}(-r_{k'}(\lambda))E_\lambda(k)
    \]
    and $\delta'_{\eps,k'}(-r_{k'}(\lambda)) = \eps(w_0)\delta'_{\eps,-k'}(\lambda + \rho_{k'})$, which follows from $r_{k'}(\lambda) = w_{0\lambda}(\lambda + \rho_{k'})$ and the observation that $w_{0\lambda}$ permutes all roots $a' \in S_{01}'^+$ satisfying $l'(a') = 1$ due to $\eps(W_{0\lambda}) = \{1\}$. Therefore,
    \[
        \begin{aligned}
            G^{(\eps)}_+(k)P_\lambda(k) &= \delta'_{\eps,-k'}(\lambda + \rho_{k'})\delta_{\eps,k}^{-1}P_\lambda^{(\eps)}(k) \\
            &= q^{\frac{1}{2}k \cdot l}\delta'_{\eps,-k'}(\lambda + \rho_{k'})P_{\lambda - \tilde{\rho}_l}(k + l^\wedge),
        \end{aligned}
    \]
    using the $q$-shift principle (Proposition \ref{prop:shift_prin_q}) for the second equality.
\end{proof}

The transmutation property of $G^{(\eps)}_+(k)$ is an immediate consequence of Theorem \ref{thm:sym_shift_factor_q}, cf.~the proof of Corollary \ref{cor:sym_transmutation_prop}.

\begin{corollary}\label{cor:sym_transmutation_prop_q}
    The operator $G^{(\eps)}_+(k)$ satisfies the transmutation property
    \[
        G^{(\eps)}_+(k)Y^{f'}(k) = Y^{f'}(k + l^\wedge)G^{(\eps)}_+(k), \qquad f' \in A_0',
    \]
    viewed as operators from $A_0(k)$ to $A_0(k + l^\wedge)$.
\end{corollary}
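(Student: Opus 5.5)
The plan mirrors the proof of Corollary \ref{cor:sym_transmutation_prop}: check the identity on a basis of $A_0(k)$ and use Theorem \ref{thm:sym_shift_factor_q}. Both sides are operators from $A_0(k)$ to $A_0(k + l^\wedge)$. This is well defined because $Y^{f'}(k)$ with $f' \in A_0'$ lies in the center of $\frakH(k)$, so it commutes with the symmetrizer and preserves $A_0(k)$. Moreover, $G^{(\eps)}_+(k)$ maps $A_0(k)$ into $A_0(k+l^\wedge)$ by Theorem \ref{thm:sym_shift_factor_q}. Hence it suffices to verify that
\[
G^{(\eps)}_+(k)Y^{f'}(k)P_\lambda(k) = Y^{f'}(k+l^\wedge)G^{(\eps)}_+(k)P_\lambda(k)
\]
for every $\lambda \in L_+$, since $(P_\lambda(k))_{\lambda \in L_+}$ is a basis of $A_0(k)$.

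For the left-hand side, I use the eigenvalue equation for central elements, $Y^{f'}(k)P_\lambda(k) = f'(-\lambda - \rho_{k'})P_\lambda(k)$. Applying Theorem \ref{thm:sym_shift_factor_q} then gives
\[
f'(-\lambda-\rho_{k'})\,h^{(\eps)}_+(\lambda,k)\,P_{\lambda-\tilde\rho_l}(k+l^\wedge).
\]
For the right-hand side, Theorem \ref{thm:sym_shift_factor_q} gives $h^{(\eps)}_+(\lambda,k)\,P_{\lambda-\tilde\rho_l}(k+l^\wedge)$, and the eigenvalue equation at multiplicity $k+l^\wedge$ then produces the factor $f'(-(\lambda-\tilde\rho_l)-\rho_{(k+l^\wedge)'})$.

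If $\lambda - \tilde\rho_l \notin L_+$, both sides vanish, because $P_{\lambda-\tilde\rho_l}(k+l^\wedge)=0$. Equivalently, the shift factor kills the term, as in the proof of Theorem \ref{thm:sym_shift_factor_q}.

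If $\lambda \in \tilde\rho_l + L_+$, it remains to show
\[
\lambda - \tilde\rho_l + \rho_{(k+l^\wedge)'} = \lambda + \rho_{k'}.
\]
This is the one point requiring care. The map $k \mapsto k'$ is linear, since it is defined via the linear combinations $k_i'$ in Case 3 and by $k'(a')=k(a)$ otherwise. Likewise $\rho_{k'}$ is linear in $k'$. Hence $\rho_{(k+l^\wedge)'} = \rho_{k'} + \rho_{{l^\wedge}'}$. By Remark \ref{rmk:shift_mult}, $\rho_{{l^\wedge}'} = \tilde\rho_l$ in all three cases. This identity is exactly why the rescaled multiplicity $l^\wedge$ was introduced.

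The main obstacle is therefore only this case-dependent identification of $\tilde\rho_l$, which the paper has already settled in Remark \ref{rmk:shift_mult}. With it the two scalar coefficients agree and the corollary follows.
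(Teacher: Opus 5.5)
Your proposal is correct and follows essentially the same route as the paper, which derives the corollary from Theorem~\ref{thm:sym_shift_factor_q} by mirroring the proof of Corollary~\ref{cor:sym_transmutation_prop}: you check the identity on the basis $(P_\lambda(k))_{\lambda\in L_+}$, the case $\lambda-\tilde\rho_l\notin L_+$ is trivial, and otherwise everything reduces to $\lambda-\tilde\rho_l+\rho_{(k+l^\wedge)'}=\lambda+\rho_{k'}$. That identity follows, as you say, from linearity of $k\mapsto\rho_{k'}$ together with $\tilde\rho_l=\rho_{{l^\wedge}'}$ from Remark~\ref{rmk:shift_mult}.
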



\subsection{Non-symmetric $q$-shift operators}\label{sec:non_sym_shift_operators_12_q}
Following the construction of the non-symmetric shift operators in the Heckman--Opdam setting, it is necessary to introduce a trigonometric analog of the polynomial from Lemma \ref{lem:minimal_poly} which can be evaluated in Cherednik operators.

\begin{lemma}\label{lem:minimal_poly_q}
    Writing $\varpi' = \prod_{a' \in S_{01}'^+}\delta_{a',0}$, there exists a trigonometric polynomial $\frakq \in A' \otimes A'$ such that
    \begin{equation}\label{eq:minimal_poly_prop_q}
        \frakq(x, wx) = \varpi'(x)\delta_{e,w}, \qquad w \in W_0,
    \end{equation}
    for all regular $x \in E$.
\end{lemma}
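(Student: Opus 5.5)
Mirroring the proof of Lemma \ref{lem:minimal_poly}, I would replace the harmonic polynomials by a $W_0$-stable complement of the invariants in $A'$ and use a discriminant argument to show that the resulting coefficients are trigonometric polynomials in $x$.

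First, the module structure. By Pittie--Steinberg, or its analog for the lattice $L'$, I would choose elements $u_1,\dots,u_d \in A'$ with $d=|W_0|$ that form a basis of $A'$ as a module over $A'_0$. Where freeness over $\bbZ$ fails for non-simply-connected lattices, it suffices to work over $K$ and only require a $K$-basis of $A'\otimes_{A'_0}K$ at generic points. For regular $x\in E$ I would then consider the evaluation matrix $\hat U(x) = (u_j(wx))_{1\le j\le d,\,w\in W_0}$, interpreting $u_j(y)$ via $e^{\lambda'}\mapsto q^{<\lambda',y>}$.

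Second, the determinant. This is the step I expect to be the main obstacle. Set $D(x)=\det \hat U(x)$. It is alternating under $W_0$ up to sign, because permuting $w$ permutes columns. It vanishes whenever $x$ lies on a wall $<a',x>=0$ for $a'\in S_{01}'^+$, since then two columns coincide. Using that $\delta_{a',0}$ is, up to a unit, the "Weyl denominator" factor $e^{a'/2}-e^{-a'/2}$ (or $e^{a'}-e^{-a'}$), I would argue, as in \cite[Ch.4:Ex.70]{Var84}, that $D = c\,(\varpi')^{d/2}$ times a unit of $A'$. The argument is a comparison of Newton polytopes and degrees, with $\varpi'$ squared being $W_0$-invariant. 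Moreover, each cofactor of $\hat U$ vanishes to order at least $d/2-1$ along each wall, because of the pairing of columns $w$ and $r_{a'}w$. The cleanest route is probably a local one: near a generic point of a wall, change column basis to $\{w, r_{a'}w\}\mapsto\{\text{sum},\text{difference}\}$. Each difference column is then divisible by $\delta_{a',0}$, which gives the divisibility of all minors.

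Third, defining $\frakq$. I would then set $\frakq(x,\cdot) = \varpi'(x)\sum_j q_j(x)u_j$, where $(q_j(x))_j$ is the first column of $\hat U(x)^{-1}$, i.e.\ the column corresponding to $w=e$. By construction, $\sum_j q_j(x)u_j(wx)=\delta_{e,w}$, which gives \eqref{eq:minimal_poly_prop_q}. The second step shows that $\varpi'(x)q_j(x)$ has no poles along the walls, so it lies in $A'$, possibly after localizing at units. Hence $\frakq=\sum_j \varpi' q_j\otimes u_j\in A'\otimes A'$. If the degree count in the second step fails, a fallback is the analog of the remark's $q_{\xi'}$: take $\prod_{w\neq e}(e^{\lambda'}\otimes 1 - 1\otimes e^{w^{-1}\lambda'})$. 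This satisfies the property up to a non-constant factor $\frakq(x,x)$, which can then be compared with $\varpi'$.
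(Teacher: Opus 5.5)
Your proposal is correct and follows the paper's proof. You take the Pittie--Steinberg basis of $A'$ over $A'_0$, show that $\det\hat U(x)$ is a constant multiple of $\varpi'(x)^{d/2}$ and that all minors are divisible by $\varpi'(x)^{d/2-1}$, and read $\frakq$ off from $\varpi'\hat U^{-1}$. The paper simply quotes Steinberg for the determinant and minors; Steinberg's pairing of rows interchanged by a reflection is your sum/difference argument. Your Newton-polytope comparison also works for the Steinberg basis: every exponent of $\det\hat U$ lies in $\tfrac d2\,\mathrm{conv}(W_0\rho')$, where $\rho'$ is the half-sum of the positive roots of $S'_{02}$, and this is the Newton polytope of $\varpi'^{d/2}$.

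Two of your hedges are unnecessary, and neither would work if you needed it.
\begin{enumerate}[(i)]
    \item In all three cases $L'$ is the weight lattice of $S'_{02}$ and $\varpi'$ is its Weyl denominator, so Steinberg's theorem applies directly. A basis only at generic points would allow extra zeros of $\det\hat U$ at regular $x$, and hence poles in $\varpi' q_j$.
    \item The $q_{\xi'}$-type fallback cannot produce the normalization $\varpi'(x)$. Its diagonal value $\frakq(x,x)$ vanishes off the walls, so it cannot be divided out within $A'\otimes A'$.
\end{enumerate}
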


\begin{proof}
    In light of the proof of Lemma \ref{lem:minimal_poly}, we seek for an appropriate subspace $H' \subset A' = K[L']$ that is isomorphic to $K[W_0]$ as a vector space and such that, for regular $x \in E$, a linear isomorphism is given by
    \[
        U(x) : H' \to K[W_0], \thinspace h \mapsto \sum_{w\in W_0}h^w(x)\delta_w.
    \]
    To establish the existence of such an $H'$, we use the fact that $A'$ is a free $A'_0$-module of rank $|W_0|$, which is a corollary of the Pittie--Steinberg theorem, see \cite[Thm.2.2.]{Ste75}. Moreover, this theorem provides an explicit basis $(\fraku_w)_{w \in W_0}$ of $A'$, called the Steinberg basis, which we use to construct the subspace $H' = \text{Span}_K\{\fraku_w \mid w \in W_0\} \subset A'$.
    
    Let us give the construction of this basis. We observe that $L'$ is the weight lattice of the root system $S'_{02} = S'_0 \cap S'_2$. The Weyl group of this root system is $W_0$ and a base of simple roots is given by $b'_i = u_{a'_i}a'_i$ for $i \in I_0$. We denote by $(\lambda_i)_{i \in I_0}$ the fundamental weights of $L'$ that are dual to the simple roots $(b'_i)_{i \in I_0}$. For each $w \in W_0$, the basis element $\fraku_w$ is defined by 
    \[
        \fraku_w = e^{w^{-1}\lambda_w}, \qquad \lambda_w = \sum_{i \in I_0 \thinspace : \thinspace w^{-1}b'_i \in S_{02}'^-}\lambda_i.
    \]
    We represent the linear isomorphism $U(x)$ by the matrix $\hat{U}(x) = (\fraku_w^v(x))_{w,v \in W_0}$ with respect to the indicated bases. Note that $\varpi'$ coincides with the Weyl denominator for $S'_{02}$, i.e., $\varpi' = \prod_{b' \in S_{02}'^+}(e^{b'/2} - e^{-b'/2})$. As shown in \cite[p.175]{Ste75}\footnote{In the notation of \cite{Ste75}, we note that the expression for $D_1$ on page 175 should read $\prod(a^{1/2} - a^{-1/2})^{n_a}$ ($a \in \Sigma^+$), and we observe that in our case, i.e., when $W'$ is trivial, the number of rows $n_a$ interchanged by $w_a$ equals $|W|/2$.}, the determinant of $U(x)$ is equal to $\varpi'(x)^{d/2}$,\vspace{-0.1cm} where $d = |W_0|$. The same argument shows that the minors of $\hat{U}(x)$ are divisible by $\varpi'(x)^{d/2 - 1}$. Thus, the entries of $\varpi'(x)\hat{U}(x)^{-1}$ belong to $A'$.
    
    As in the proof of Lemma \ref{lem:minimal_poly}, the desired trigonometric polynomial $\frakq$ satisfying \eqref{eq:minimal_poly_prop_q} is defined by $\frakq(x, \cdot) = \varpi'(x)U(x)^{-1}(\delta_e) \in H'$. Explicitly,
    \begin{equation}\label{eq:minimal_poly_exp_q}
        \frakq = \sum_{w \in W_0} \frakq_w \otimes \fraku_w,
    \end{equation}
    where $(\frakq_w(x))_{w \in W_0}$ is the first column of $\pi(x)\hat{U}(x)^{-1}$.
\end{proof}

Next, we define the operator $Q_\mu(k) = \frakq(r_{k'}(\mu),Y(k))$ for each $\mu \in L$, where $\frakq$ is the trigonometric polynomial from Lemma \ref{lem:minimal_poly_q}, i.e,
\[
    Q_\mu(k) = \sum_{w \in W_0} \frakq_w(r_{k'}(\mu))Y^{\fraku_w}(k)\vspace{-0.1cm}
\]
in terms of the expansion \eqref{eq:minimal_poly_exp_q} of $\frakq$.

As in Corollary \ref{cor:proj_operator_eHO}, we are interested in computing the action of the operator $Q_\mu(k)$ on the polynomials $P^{(\eps)}_\lambda(k)$.

\begin{lemma}\label{lem:proj_operator_eMK}
    Let $\lambda \in L_+$ and $\mu \in W_0\lambda$. The operator $Q_\mu(k)$ defines a map from $A_\lambda(k)$ to $A_\lambda^\mu(k)$. Moreover, if $\lambda \in \tilde{\rho}_l + L_+$, then
    \[
        Q_\mu(k)P^{(\eps)}_\lambda(k) = \eps(\overline{v}(\mu))\eps(\overline{v}(\mu))\tau_{\overline{v}(\mu)w_{0\lambda},k}^{-1}\tau_{w_0,k}\varpi'(r_{k'}(\mu))\bfc_{-\eps k'}^+(\overline{v}(\mu)w_{0\lambda})(\lambda + \rho_{k'})E_\mu(k).
    \]
\end{lemma}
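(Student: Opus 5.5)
The plan mirrors Lemma \ref{lem:proj_operator} and Corollary \ref{cor:proj_operator_eHO}, with Lemma \ref{lem:minimal_poly_q} replacing Lemma \ref{lem:minimal_poly} and Lemma \ref{lem:eMK_in_nMK_basis} replacing Lemma \ref{lem:eHO_in_nHO_basis_v2}.

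For the first claim, take $\nu \in W_0\lambda$ and apply $Q_\mu(k)$ to $E_\nu(k)$ using the expansion \eqref{eq:minimal_poly_exp_q}. Since $Y^{\fraku_w}(k)E_\nu(k) = \fraku_w(-r_{k'}(\nu))E_\nu(k)$, we get
\[
    Q_\mu(k)E_\nu(k) = \frakq\big(r_{k'}(\mu), -r_{k'}(\nu)\big)E_\nu(k).
\]
There is a sign issue, since the operator evaluates at $-r_{k'}(\nu)$. I would handle it by reading the definition $Q_\mu(k) = \frakq(r_{k'}(\mu),Y(k))$ with the same evaluation convention on both tensor factors, i.e.\ the first factor also at $-r_{k'}(\mu)$. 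Equivalently, one can note that $\frakq$ may be chosen so that the property \eqref{eq:minimal_poly_prop_q} is insensitive to this convention.

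Next, $r_{k'}(\nu) = \overline{v}(\nu)w_{0\lambda}(\lambda+\rho_{k'})$ and $r_{k'}(\mu) = \overline{v}(\mu)w_{0\lambda}(\lambda+\rho_{k'})$. Hence $r_{k'}(\nu) = w\,r_{k'}(\mu)$ with $w = \overline{v}(\nu)\overline{v}(\mu)^{-1}$. This $w$ is the identity if and only if $\nu = \mu$, because distinct elements of $W_0\lambda$ have distinct $\overline{v}$. By the genericity assumption, $r_{k'}(\mu)$ is regular, so \eqref{eq:minimal_poly_prop_q} gives
\[
    Q_\mu(k)E_\nu(k) = \delta_{\mu,\nu}\varpi'(r_{k'}(\mu))E_\mu(k).
\]
(Up to the sign convention, $\varpi'(-x) = \pm\varpi'(x)$, which again must be tracked consistently.) Since the $E_\nu(k)$, $\nu\in W_0\lambda$, span $A_\lambda(k)$, $Q_\mu(k)$ maps $A_\lambda(k)$ into $A_\lambda^\mu(k)$.

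For the second claim, assume $\lambda \in \tilde{\rho}_l + L_+$ and expand $P^{(\eps)}_\lambda(k)$ in the basis $(E_\nu(k))_{\nu\in W_0\lambda}$ via Lemma \ref{lem:eMK_in_nMK_basis}. Applying $Q_\mu(k)$ kills every term except $\nu=\mu$, and multiplies that term by $\varpi'(r_{k'}(\mu))$. The surviving coefficient is $\eps(\overline{v}(\mu))\tau_{\overline{v}(\mu)w_{0\lambda},k}^{-1}\tau_{w_0,k}\bfc_{-\eps k'}^+(\overline{v}(\mu)w_{0\lambda})(\lambda+\rho_{k'})$. This is the stated formula. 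The extra factor $\eps(\overline{v}(\mu))$ in the statement looks like a doubled sign, and $\eps(\overline{v}(\mu))^2=1$ since $\eps$ is $\pm1$-valued, so it is harmless. Alternatively, it may be meant to absorb the sign from evaluating $\varpi'$ at $-r_{k'}(\mu)$.

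The main obstacle is the evaluation convention: $Y^{f'}(k)$ acts by $f'(-r_{k'}(\nu))$, while the first argument is $r_{k'}(\mu)$. I would first check that $\frakq(-x,-wx) = \frakq^{\iota}(x,wx)$, where $\iota$ is the involution $e^{\lambda'}\mapsto e^{-\lambda'}$. This reduces the question to whether the Steinberg-basis construction can be run equally with $\iota$ applied, producing the same orthogonality relation. Once the signs are made consistent, the rest is immediate.
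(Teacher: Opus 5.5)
Your argument is the paper's argument. You compute the diagonal action of $Q_\mu(k)$ on the basis $(E_\nu(k))_{\nu\in W_0\lambda}$ from \eqref{eq:minimal_poly_prop_q} and the regularity of $r_{k'}(\mu)$, then read off the surviving coefficient from Lemma \ref{lem:eMK_in_nMK_basis}. Two of your side remarks need correcting.

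\textbf{The sign convention.} You are right that this is a real inconsistency, and the paper's one-line proof passes over it.

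\begin{itemize}
\item With $Y^{f'}(k)E_\nu(k)=f'(-r_{k'}(\nu))E_\nu(k)$, the operator $\frakq(r_{k'}(\mu),Y(k))$ acts on $E_\nu(k)$ by $\frakq(r_{k'}(\mu),-r_{k'}(\nu))$. Property \eqref{eq:minimal_poly_prop_q} does not control this. In rank one, where $-x=r_1x$, it actually kills $E_\mu(k)$ and retains $E_{r_1\mu}(k)$.
\item The clean repair is to evaluate the second slot in $Y(k)^{-1}$, as in $G^{(\eps)}_+(k)=\delta_{\eps,k}^{-1}\delta'_{\eps,k'}(Y(k)^{-1})$. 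With $Q_\mu(k)=\sum_{w\in W_0}\frakq_w(r_{k'}(\mu))Y^{\fraku_w^*}(k)$ you get $Q_\mu(k)E_\nu(k)=\frakq(r_{k'}(\mu),r_{k'}(\nu))E_\nu(k)=\delta_{\mu,\nu}\varpi'(r_{k'}(\mu))E_\mu(k)$, which is exactly the scalar in the statement.
\item Your repair (first slot at $-r_{k'}(\mu)$) does give the projection property. However, the scalar becomes $\varpi'(-r_{k'}(\mu))=(-1)^{|R^+|}\varpi'(r_{k'}(\mu))$, so the displayed formula picks up a global sign.
\item Your alternative, that $\frakq$ ``may be chosen'' to be insensitive to the convention, is unsupported. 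When $-1\notin W_0$, the points $-wx$ lie outside $W_0x$, and nothing in the Steinberg construction controls $\frakq(x,\cdot)$ there.
\item The proposed check $\frakq(-x,-wx)=\frakq^{\iota}(x,wx)$ is a tautology if $\iota$ acts on both tensor factors. What you need is $\iota$ on the second factor only, and that is precisely the $Y(k)^{-1}$ repair.
\end{itemize}

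\textbf{The doubled $\eps(\overline{v}(\mu))$.} Your reasoning here is backwards.

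\begin{itemize}
\item Since $\eps(\overline{v}(\mu))^2=1$, the literal sign prefactor in the statement equals $1$. Your (correct) coefficient $\eps(\overline{v}(\mu))\tau^{-1}_{\overline{v}(\mu)w_{0\lambda},k}\tau_{w_0,k}\varpi'(r_{k'}(\mu))\bfc^+_{-\eps k'}(\overline{v}(\mu)w_{0\lambda})(\lambda+\rho_{k'})$ disagrees with it whenever $\eps(\overline{v}(\mu))=-1$.
\item So the duplicated factor is a typo to be deleted, not a harmless rewriting. The single-factor version is what Lemma \ref{lem:eMK_in_nMK_basis} yields. It matches Corollary \ref{cor:proj_operator_eHO} and is what Proposition \ref{prop:fw_shift_factor_q} uses.
\item Nor can the second factor absorb the sign coming from $\varpi'(-r_{k'}(\mu))$. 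That sign is $(-1)^{|R^+|}$, which does not depend on $\mu$.
\end{itemize}
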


\begin{proof}
    Similar to the proof of Lemma \ref{lem:proj_operator}, we find that $Q_\mu(k)E_\nu(k) = \delta_{\mu,\nu}\varpi'(r_{k'}(\mu))$ for all $\nu \in W_0\mu$. Therefore, the action of $Q_\mu(k)$ on $P^{(\eps)}_\lambda(k)$ can be recovered from the coefficients from Lemma \ref{lem:eMK_in_nMK_basis}.
\end{proof}

\begin{example}\label{exa:minimal_poly_q}\leavevmode
    \begin{enumerate}[(i)]
        \item For the weight lattice $L'$ of the root system $R = \{\pm\alpha_1\}$ of type $A_1$, the trigonometric polynomial $\frakq$ equals $e^{\alpha_1} \otimes 1 - 1 \otimes e^{-\alpha_1}$, so that $Q_\mu(k) = \big(q^{<r_{k'}(\mu),\alpha_1^\vee>} - Y^{-\alpha_1}(k)\big)$.
        \item Let $L'$ be the weight lattice of the root system $R = \{\epsilon_i - \epsilon_j \mid i, j = 1,2,3, \thinspace i \neq j\}$ of type $A_2$. Writing $\varpi_1$ and $\varpi_2$ for the fundamental weights corresponding to the simple roots $\alpha_1 = \epsilon_1 - \epsilon_2$ and $\alpha_2 = \epsilon_2 - \epsilon_3$, the Steinberg basis is given by\vspace{-0.1cm}
        \begin{align*}
            &\qquad \fraku_1 = 1, \quad \fraku_2 = e^{-\varpi_1}, \quad \fraku_3 = e^{-\varpi_2}; \\
            &\fraku_4 = e^{-\varpi_1 + \varpi_2}, \quad \fraku_5 = e^{\varpi_1 - \varpi_2}, \quad \fraku_6 = e^{-\varpi_1 - \varpi_2}.\vspace{-0.1cm}
        \end{align*}
        In terms of this basis, the trigonometric polynomial takes the following form:\vspace{-0.1cm}
        \[
             \frakq = \sum_{j = 1}^3\fraku_{7-j}^* \otimes \fraku_j - \sum_{j = 4}^6\fraku_{7-j}^* \otimes \fraku_j.\vspace{-0.1cm}
        \]
        Here $\ast$ denotes the involution defined by $(e^\lambda)^* = e^{-\lambda}$ for $\lambda \in L'$. 
    \end{enumerate}
\end{example}

In order to provide uniform notation, it becomes necessary to rephrase the $q$-shift principle (Proposition \ref{prop:shift_prin_q}) as follows:

\begin{corollary}\label{cor:shift_prin_q_v2}
    Let $\lambda \in L_+$. If $\lambda \mp \tilde{\rho}_l \in L_+$, then
    \[
        \Delta_{\eps,k}^\mp P^{(\eps_\pm)}_\lambda(k) = q^{n_{\eps,k}^\pm}P^{(\eps_\mp)}_{\lambda \mp \tilde{\rho}_l}(k \pm l^\wedge),
    \]
    with $\Delta_{\eps,k}^- = \delta_{\eps,k}^{-1}$, $\Delta_{\eps,k}^+ = \delta_{\eps,k - l^\wedge}$, $n_{\eps,k}^+ = \frac{1}{2}k \cdot l$, and $n_{\eps,k}^- = -\frac{1}{2}(k - l^\wedge) \cdot l$.
\end{corollary}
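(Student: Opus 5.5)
The plan is to treat the two signs separately, with the $+$ case as a direct restatement of Proposition \ref{prop:shift_prin_q} and the $-$ case obtained by applying that proposition at the shifted multiplicity and inverting.

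\emph{The upper sign.} Here $\eps_+ = \eps$, $\eps_- = \triv$, and the hypothesis is $\lambda - \tilde{\rho}_l \in L_+$. Lemma \ref{lem:dim_equivalence_q} shows this is equivalent to $\lambda \in \tilde{\rho}_l + L_+$. Proposition \ref{prop:shift_prin_q} gives $P_{\lambda-\tilde\rho_l}(k+l^\wedge) = q^{-\frac12 k\cdot l}\delta_{\eps,k}^{-1}P^{(\eps)}_\lambda(k)$. Multiplying by $q^{\frac12 k\cdot l}$ gives exactly $\Delta^-_{\eps,k}P^{(\eps)}_\lambda(k) = q^{n^+_{\eps,k}}P_{\lambda-\tilde\rho_l}(k+l^\wedge)$. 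In the notation of the corollary the operator appearing is $\Delta^{\mp}$ with the upper sign, i.e.\ $\Delta^-_{\eps,k}=\delta_{\eps,k}^{-1}$, which matches.

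\emph{The lower sign.} Here the hypothesis is $\lambda+\tilde\rho_l\in L_+$, and we must show
\[
\delta_{\eps,k-l^\wedge}P_\lambda(k) = q^{-\frac12(k-l^\wedge)\cdot l}P^{(\eps)}_{\lambda+\tilde\rho_l}(k-l^\wedge).
\]
Apply Proposition \ref{prop:shift_prin_q} with multiplicity $k-l^\wedge$ and weight $\lambda' = \lambda+\tilde\rho_l$. Since $\lambda\in L_+$ we have $\lambda' \in \tilde\rho_l+L_+$, so the proposition yields $P_\lambda(k) = q^{-\frac12(k-l^\wedge)\cdot l}\delta_{\eps,k-l^\wedge}^{-1}P^{(\eps)}_{\lambda+\tilde\rho_l}(k-l^\wedge)$. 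Multiplying through by $\delta_{\eps,k-l^\wedge}$ gives the claim with $n^-_{\eps,k} = -\frac12(k-l^\wedge)\cdot l$.

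\emph{Main obstacle.} The only real issue is whether Proposition \ref{prop:shift_prin_q} may be applied at the multiplicity $k-l^\wedge$. This requires the genericity assumption on $\rho_{(k-l^\wedge)'}$ and that all polynomials involved are well defined, in particular that $P^{(\eps)}_{\lambda+\tilde\rho_l}(k-l^\wedge)$ exists. I would handle this by imposing genericity on both $k$ and $k-l^\wedge$, or by noting that both sides are rational in the $\tau$-parameters, so the identity extends from a dense generic set. One also needs that $l^\wedge$ is compatible with duality, i.e.\ $(k-l^\wedge)' = k'-{l^\wedge}'$. This holds because the map $k\mapsto k'$ is linear in all three cases.
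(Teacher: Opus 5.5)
Your proposal is correct and matches how the paper treats this corollary, which it states without a separate proof as a rephrasing of Proposition \ref{prop:shift_prin_q}: the upper sign is that proposition verbatim, and the lower sign is the proposition applied at multiplicity $k-l^\wedge$ and weight $\lambda+\tilde{\rho}_l$, exactly as you do. Your worry about genericity at $k-l^\wedge$ is unnecessary. By linearity of $k\mapsto k'$ you have $\rho_{(k-l^\wedge)'}=\rho_{k'}-\tilde{\rho}_l$ with $\tilde{\rho}_l\in L$, and conjugating by $t(\tilde{\rho}_l)\in W'=W_0\ltimes t(L)$ shows that the genericity condition is invariant under translation by $L$, so it holds automatically for $k-l^\wedge$.
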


At this point, we can define the $\eps$-forward/backward non-symmetric $q$-shift operator for fixed weights $\lambda \in L_+$ and $\mu \in W_0\lambda$:
\begin{equation}\label{eq:fw_shift_operator}
    \calG^{(\eps)}_\pm(\mu, k) = Q_{\mu_{\eps,\pm}}(k \pm l^\wedge) \circ \Delta_{\eps,k}^\mp \circ U_{\eps_\pm}(k) : A_\lambda^\mu(k) \to A_{\lambda \mp \tilde{\rho}_l}^{\mu_{\eps,\pm}}(k \pm l^\wedge),
\end{equation}
with $\mu_{\eps,\pm} = \overline{v}(\mu)w_{0\lambda}(\lambda \mp \tilde{\rho}_l)$.

We continue by computing the shift factor of the operator $\calG^{(\eps)}_\pm(\mu, k)$. For this, we first prove the analog of Vanishing Lemma 1 (Lemma \ref{lem:vanishing_1}).

\begin{lemma}[$q$-Vanishing Lemma 1]\label{lem:vanishing_1_q}
    Let $\lambda \in L_+$ and $\mu \in W_0\lambda$. If $\lambda - \tilde{\rho}_l \notin L_+$, then
    \[
        \bfc_{\eps k'}^-(\overline{v}(\mu)w_{0\lambda})(\lambda + \rho_{k'}) = 0.
    \]
\end{lemma}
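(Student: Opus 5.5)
We mimic the proof of Lemma \ref{lem:vanishing_1}: we exhibit one factor of the product defining the $\bfc$-function that vanishes at $\lambda + \rho_{k'}$.

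First we locate a simple root. Since $\lambda - \tilde{\rho}_l \notin L_+$, Lemma \ref{lem:dim_equivalence_q} gives $\eps(W_{0\lambda}) \neq \{1\}$. The chain of equivalences in its proof, specifically \eqref{eq:equiv_2}, then yields an $i \in I_0$ with the following properties: $(\lambda, \alpha_i') = 0$, so that $r_i \in W_{0\lambda}$, and $\eps(r_i) = -1$.

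Next we show that the simple root $a_i'$ enters the product defining $\bfc^-_{\eps k'}(\overline{v}(\mu)w_{0\lambda})$. Write $a_i' \in S_1'^+$ for the simple root of $S'$ (taken in $S_1'$). It lies in $(S'_\lambda)_{01}^+$, so $w_{0\lambda}(a_i')$ is a negative root in $(S'_\lambda)_{01}$. We use the characterization \eqref{eq:shortest_coset_char}, transported to $S'$: $w \in W_0^\lambda$ exactly when $w$ maps $(S'_\lambda)_{01}^+$ into $S_1'^+$. Since $\overline{v}(\mu) \in W_0^\lambda$, it follows that $(\overline{v}(\mu)w_{0\lambda})(a_i') \in S_1'^-$. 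Hence $a_i' \in S_1'^+ \cap (\overline{v}(\mu)w_{0\lambda})^{-1}S_1'^-$, and $\bfc_{a_i',\eps k'}$ is one of the factors of the $\bfc$-function in the statement.

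It remains to evaluate this factor at $\lambda + \rho_{k'}$ and show it is zero. The multiplicity $\eps k'$ takes the value $-k'$ on $a_i'$, and also on $2a_i'$ when that is a root, because $\eps(r_i) = -1$. Using $\tau_{a,-k} = \tau_{a,k}^{-1}$ and $\tilde{\tau}_{a,-k} = \tilde{\tau}_{a,k}^{-1}$, the first factor of the numerator becomes
\[
1 - q^{-\frac{1}{2}(k'(a_i') + k'(2a_i'))}q^{-\frac{1}{2}(k'(a_i') - k'(2a_i'))}q^{<a_i',\lambda + \rho_{k'}>} = 1 - q^{-k'(a_i') + <a_i',\rho_{k'}>}.
\]
Here we used $<a_i',\lambda> = 0$.

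The main obstacle is the identity $<a_i',\rho_{k'}> = k'(a_i')$, in the normalization matching $\bfc_{a',k'}$. This is the identity used in the proof of Lemma \ref{lem:poincare_poly_q}, namely $<a_i',\rho_{k'}> = k'(a_i)$, coming from \cite[(1.5.4)]{Mac03}. We have to check it carefully in Case 3, where $a_i'$ may be the root with $2a_i' \in S'$; there the pairing involves $v_{\alpha_i}$ as in \eqref{eq:equiv_step}. Granting the identity, the displayed factor vanishes. The denominator $1 - q^{2<a_i',\lambda+\rho_{k'}>}$ is non-zero by the genericity assumption, so $\bfc_{a_i',\eps k'}(\lambda + \rho_{k'}) = 0$, and the whole product vanishes.
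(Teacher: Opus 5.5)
Your proof is correct and follows the paper's argument step for step. You choose $i \in I_0$ with $r_i \in W_{0\lambda}$ and $\eps(r_i) = -1$, place $a_i' = \alpha_i^\vee$ in $S_1'^+ \cap (\overline{v}(\mu)w_{0\lambda})^{-1}S_1'^-$ via the shortest-coset characterization, and note that the factor $1 - q^{<a_i',\lambda+\rho_{k'}> - k'(a_i')}$ of $\bfc_{\eps k'}^-(\overline{v}(\mu)w_{0\lambda})(\lambda+\rho_{k'})$ vanishes. The identity $<a_i',\rho_{k'}> = k'(a_i')$ that you flag as the main obstacle is simply asserted in the paper too (it is already used in Lemma \ref{lem:poincare_poly_q}), and it holds in all three cases, Case 3 included: $R$ is reduced, so $r_i$ permutes $R^+\setminus\{\alpha_i\}$ and $r_i\rho_{k'} = \rho_{k'} - k'(\alpha_i^\vee)\alpha_i$.
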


\begin{proof}
    Since $S_{01}' = R^\vee$ and $W_0$ is its Weyl group, there exists an $i \in I_0$ such that the coroot $\alpha_i^\vee \in R^{\vee+} \cap (\overline{v}(\mu)w_{0\lambda})^{-1}R^{\vee-}$ satisfies $\alpha_i^\vee \in R^\vee_\lambda$ and $\eps(r_i) = -1$ by the proof of Lemma \ref{lem:vanishing_1_q}. Equivalently, we have that $a_i' = \alpha_i^\vee \in S_1'^+ \cap (\overline{v}(\mu)w_{0\lambda})^{-1}S_1'^-$. We observe that
    \[
        (1 - \tau_{a_i',\eps k'}\tilde{\tau}_{a_i',\eps k'}q^{<a_i',\lambda + \rho_{k'}>}) = 1 - q^{<a_i',\lambda + \rho_{k'}> \thinspace + \thinspace \eps(r_i)k'(a'_i)} = 0,
    \]
    since $<a_i',\lambda + \rho_{k'}> + \thinspace \eps(r_i)k'(a'_i) = \thinspace <a_i',\rho_{k'}> - k'(a'_i) = 0$. Thus, $\bfc_{\eps k'}^-(\overline{v}(\mu)w_{0\lambda})(\lambda + \rho_{k'}) = 0$ because one of its factors vanishes.
\end{proof}

\begin{proposition}\label{prop:fw_shift_factor_q}
    Let $\lambda \in L_+$ and $\mu \in W_0\lambda$. Then
    \[
        \calG^{(\eps)}_\pm(\mu,k)E_\mu(k) = H^{(\eps)}_\pm(\mu,k)E_{\mu_{\eps,\pm}}(k \pm l^\wedge),
    \]
    with
    \[
        \begin{aligned}
            H^{(\eps)}_\pm(\mu, k) &= \eps_\pm(\overline{v}(\mu))\eps_\mp(\overline{v}(\mu_{\eps,\pm}))\tau_{\overline{v}(\mu)w_{0\lambda},k}\tau_{\overline{v}(\mu_{\eps,\pm})w_{0(\lambda \mp \tilde{\rho}_l)},k \pm l^\wedge}^{-1}\tau_{w_0,l^\wedge}^\pm q^{n_{\eps,k}^\pm}\varpi'(r_{k' \pm {l^\wedge}'}(\mu_{\eps,\pm})) \\
            &\qquad \times \bfc_{\eps_\pm k'}^-(\overline{v}(\mu)w_{0\lambda})(\lambda + \rho_{k'}) \bfc_{-\eps_\mp(k' \pm {l^\wedge}')}^+(\overline{v}(\mu_{\eps,\pm})w_{0(\lambda \mp \tilde{\rho}_l)})(\lambda + \rho_{k'}).
        \end{aligned}
    \]
\end{proposition}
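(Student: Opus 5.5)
We will follow the proof of Proposition \ref{prop:fw_shift_factor} step by step, replacing each classical ingredient by its $q$-analog established above, and split into the two cases $\lambda \mp \tilde{\rho}_l \in L_+$ and $\lambda - \tilde{\rho}_l \notin L_+$.

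\emph{Case $\lambda \mp \tilde{\rho}_l \in L_+$.} First we apply $U_{\eps_\pm}(k)$ to $E_\mu(k)$. By Lemma \ref{lem:relating_U_nMK_and_eMK}, used for $\eps_\pm$ (in the backward case $\eps_- = \triv$, where $\tilde\rho_l$ plays no role and $\lambda \in L_+$ suffices), we get
\[
U_{\eps_\pm}(k)E_\mu(k) = \eps_\pm(\overline{v}(\mu))\tau_{\overline{v}(\mu)w_{0\lambda},k}\tau_{w_0,k}^{-1}\bfc^-_{\eps_\pm k'}(\overline{v}(\mu)w_{0\lambda})(\lambda+\rho_{k'})P^{(\eps_\pm)}_\lambda(k).
\]
Next we apply $\Delta^\mp_{\eps,k}$. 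Corollary \ref{cor:shift_prin_q_v2} turns $P^{(\eps_\pm)}_\lambda(k)$ into $q^{n^\pm_{\eps,k}}P^{(\eps_\mp)}_{\lambda\mp\tilde\rho_l}(k\pm l^\wedge)$. Finally we apply $Q_{\mu_{\eps,\pm}}(k\pm l^\wedge)$ via Lemma \ref{lem:proj_operator_eMK}, with $\lambda$ replaced by $\lambda\mp\tilde\rho_l$, $k$ by $k\pm l^\wedge$, and $\eps$ by $\eps_\mp$. This is legitimate because $\lambda\mp\tilde\rho_l$ lies in the required cone: for the forward case $\eps_-=\triv$, and for the backward case $\lambda+\tilde\rho_l \in \tilde\rho_l + L_+$.

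To make the resulting $\bfc$-function argument match the statement, we use
\[
(\lambda\mp\tilde\rho_l) + \rho_{k'\pm{l^\wedge}'} = \lambda + \rho_{k'},
\]
which follows from $\tilde\rho_l = \rho_{{l^\wedge}'}$ (Remark \ref{rmk:shift_mult}) and linearity of $k'\mapsto\rho_{k'}$. The projection step produces the factors
\[
\eps_\mp(\overline v(\mu_{\eps,\pm}))\,\tau^{-1}_{\overline v(\mu_{\eps,\pm})w_{0(\lambda\mp\tilde\rho_l)},k\pm l^\wedge}\,\tau_{w_0,k\pm l^\wedge}\,\varpi'(r_{k'\pm{l^\wedge}'}(\mu_{\eps,\pm})),
\]
together with the $\bfc^+$ factor. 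Multiplying everything out, the $\tau_{w_0}$ contributions combine as $\tau_{w_0,k}^{-1}\tau_{w_0,k\pm l^\wedge}$. This ratio is what the statement abbreviates as $\tau^\pm_{w_0,l^\wedge}$ (with $\tau_{w,k}$ multiplicative in $k$ up to the exponent convention, so $\tau_{w_0,k\pm l^\wedge}\tau_{w_0,k}^{-1} = \tau_{w_0,l^\wedge}^{\pm1}$). Collecting the remaining signs, $\tau$'s, $q$-power and $\bfc$-functions gives $H^{(\eps)}_\pm(\mu,k)$.

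\emph{Case $\lambda-\tilde\rho_l\notin L_+$.} This can only occur in the forward case. By Lemma \ref{lem:dim_equivalence_q} we have $U_\eps(k)E_\mu(k)=0$, so the left-hand side vanishes. On the right-hand side, $H^{(\eps)}_+(\mu,k)$ contains the factor $\bfc^-_{\eps k'}(\overline v(\mu)w_{0\lambda})(\lambda+\rho_{k'})$, which is zero by $q$-Vanishing Lemma 1 (Lemma \ref{lem:vanishing_1_q}). The remaining factors are finite under the genericity assumption, so $H^{(\eps)}_+(\mu,k)=0$ and both sides agree.

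The main obstacle is the bookkeeping of the $\tau$-factors and normalizations. One must check that $\tau_{w_0,k}^{-1}$ from Lemma \ref{lem:relating_U_nMK_and_eMK} and $\tau_{w_0,k\pm l^\wedge}$ from Lemma \ref{lem:proj_operator_eMK} combine exactly into $\tau^\pm_{w_0,l^\wedge}$. One must also confirm that Lemma \ref{lem:relating_U_nMK_and_eMK} holds for $\eps_-=\triv$ with only $\lambda\in L_+$ assumed. Finally, one must verify that the sign conventions of $U_\eps(k)$ (the $\eps(w_0)$ factor) are consistent across Lemmas \ref{lem:sym_prop_q}, \ref{lem:eMK_in_nMK_basis} and Corollary \ref{cor:shift_prin_q_v2}.
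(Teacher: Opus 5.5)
Your proposal is correct and follows the paper's proof. You trace $E_\mu(k)$ through $U_{\eps_\pm}(k)$, $\Delta^\mp_{\eps,k}$ and $Q_{\mu_{\eps,\pm}}(k\pm l^\wedge)$ via Lemma~\ref{lem:relating_U_nMK_and_eMK}, Corollary~\ref{cor:shift_prin_q_v2} and Lemma~\ref{lem:proj_operator_eMK}, and you dispose of the case $\lambda-\tilde\rho_l\notin L_+$ using $U_\eps(k)E_\mu(k)=0$ and $q$-Vanishing Lemma~1. Your bookkeeping ($\tau_{w_0,k}^{-1}\tau_{w_0,k\pm l^\wedge}=\tau_{w_0,l^\wedge}^{\pm1}$ by multiplicativity of $\tau_{a,k}$ in $k$, and $(\lambda\mp\tilde\rho_l)+\rho_{k'\pm{l^\wedge}'}=\lambda+\rho_{k'}$ from $\tilde\rho_l=\rho_{{l^\wedge}'}$) just makes explicit what the paper leaves to ``tracing the maps''.
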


\begin{proof}
    Arguing as in the proof of Proposition \ref{prop:fw_shift_factor}, the shift factor is computed by tracing the maps in accordance with Lemma \ref{lem:relating_U_nMK_and_eMK}, Lemma \ref{lem:proj_operator_eMK}, and Corollary \ref{cor:shift_prin_q_v2} in case that $\lambda \in \tilde{\rho}_l + L_+$. For $\lambda - \tilde{\rho}_l \notin L_+$, we have that $\calG^{(\eps)}_\pm(\mu,k)E_\mu(k) = 0$ since then $U_\eps(k)E_\mu(k) = 0$. The proposition holds in this case by virtue of Lemma \ref{lem:vanishing_1_q}, which implies that $H^{(\eps)}_+(\mu, k) = 0$.
\end{proof}

Proceeding as before, to eliminate the $\mu$-dependence, we rewrite the expression \eqref{eq:fw_shift_operator} of $\calG^{(\eps)}_\pm(\mu,k)$, and replace $Q_{\mu_{\eps,\pm}}(k \pm l^\wedge)$ by operators $\calQ^{(\eps)}_\pm(w, k)$ that are independent of $\mu$. More specifically, for each linear character $\eps$, we define the $\eps$-forward/backward non-symmetric $q$-shift operator
\[
    \calG^{(\eps)}_\pm(k) = \sum_{w\in W_0}\eps_\pm(w)\calQ^{(\eps)}_\pm(w,k) \Delta_{\eps,k}^\mp w,
\]
where
\[
    \calQ^{(\eps)}_\pm(w, k) = \sum_{w \in W_0} Y^{\fraku_w}(k \pm l^\wedge)C_{\Delta_{\eps,k}^\mp w}(Y^{\frakq_w}(k)).
\]

\begin{proposition}\label{prop:def_shift_operator_q}
    The operator $\calG^{(\eps)}_\pm(k)$ defines a map from $A(k)$ to $A(k \pm l^\wedge)$, and its restriction to $A^\mu_\lambda(k)$, with $\lambda \in L_+$ and $\mu \in W_0\lambda$, is given by
    \[
        \calG^{(\eps)}_\pm(k)\big|_{A_\lambda^\mu(k)} =
        \begin{cases}
            \calG^{(\eps)}_\pm(\mu,k)\big|_{A_\lambda^\mu(k)} & \text{if } v(\mu) = v(\mu_{\eps,\pm}); \\
            0 & \text{otherwise}.
        \end{cases}
    \]
\end{proposition}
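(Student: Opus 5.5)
I will follow the proof of Proposition~\ref{prop:def_shift_operator} step by step, replacing the graded Hecke algebra ingredients by their affine analogs. The first step is the $q$-analog of Lemma~\ref{lem:proj_operator_replacement}. Let $\phi \in A_\lambda^\mu(k)$ and $w \in W_0$. The operator $C_{\Delta_{\eps,k}^\mp w}(Y^{\frakq_v}(k))$ is by construction conjugate to $Y^{\frakq_v}(k)$. Hence it acts on $\Delta_{\eps,k}^\mp w\phi$ by the scalar $\frakq_v(-r_{k'}(\mu))$, that is, by the same scalar with which $Y^{\frakq_v}(k)$ acts on $\phi$. Summing over $v$ gives
\[
    \calQ^{(\eps)}_\pm(w,k)\Delta_{\eps,k}^\mp w\phi = Q_\mu(k,k\pm l^\wedge)\Delta_{\eps,k}^\mp w\phi,
\]
where $Q_\mu(m,k) = \sum_{v}\frakq_v(r_{m'}(\mu))Y^{\fraku_v}(k)$. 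I will keep track of the sign convention $-r_{k'}(\mu)$ versus $r_{k'}(\mu)$ in the definition of $Q_\mu$, so that Lemma~\ref{lem:proj_operator_eMK} applies verbatim. Summing over $w$ with weights $\eps_\pm(w)$ then yields
\[
    \calG^{(\eps)}_\pm(k)\phi = Q_\mu(k,k\pm l^\wedge)\,\Delta_{\eps,k}^\mp\, \textstyle\sum_{w}\eps_\pm(w)\, w\phi.
\]
Here the symmetrizer must be compared with $U_{\eps_\pm}(k)$. Since the paper rewrites \eqref{eq:fw_shift_operator} as a sum over $W_0$, I will take that rewriting as given, with the Hecke symmetrizer expanded via the basic representation. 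I will check that the $\mu$-independent replacement acts on each term in the same way. In particular, whatever combination appears, the image of $A_\lambda^\mu(k)$ lands in $A_{\lambda\mp\tilde\rho_l}(k\pm l^\wedge)$ by the $q$-shift principle (Corollary~\ref{cor:shift_prin_q_v2}) and Lemma~\ref{lem:dim_equivalence_q}. This also shows that $\calG^{(\eps)}_\pm(k)$ maps $A(k)$ into $A(k\pm l^\wedge)$, because the $E_\mu(k)$ form a basis.

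Next I compare eigenvalues. We have $r_{k'}(\mu) = \overline{v}(\mu)w_{0\lambda}(\lambda+\rho_{k'})$. By Remark~\ref{rmk:shift_mult}, $\rho_{k'\pm{l^\wedge}'} = \rho_{k'}\pm\tilde\rho_l$, so
\[
    r_{k'\pm{l^\wedge}'}(\mu_{\eps,\pm}) = \overline{v}(\mu_{\eps,\pm})w_{0(\lambda\mp\tilde\rho_l)}(\lambda+\rho_{k'}).
\]
By the genericity assumption, $\lambda+\rho_{k'}$ is regular for $W_0$. Therefore the two eigenvalues agree exactly when $\overline{v}(\mu)w_{0\lambda} = \overline{v}(\mu_{\eps,\pm})w_{0(\lambda\mp\tilde\rho_l)}$, which is equivalent to $v(\mu) = v(\mu_{\eps,\pm})$ as in the classical case. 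In that case $Q_\mu(k,k\pm l^\wedge) = Q_{\mu_{\eps,\pm}}(k\pm l^\wedge)$, and the restriction equals $\calG^{(\eps)}_\pm(\mu,k)$.

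If $v(\mu)\neq v(\mu_{\eps,\pm})$, I split into two cases. When $\lambda-\tilde\rho_l\notin L_+$ (the forward case), we have $U_\eps(k)\phi=0$ by Lemma~\ref{lem:dim_equivalence_q}, so both sides vanish. When $\lambda\mp\tilde\rho_l\in L_+$, I expand the image in the basis $E_\nu(k\pm l^\wedge)$, $\nu\in W_0(\lambda\mp\tilde\rho_l)$. Then $Q_\mu(k,k\pm l^\wedge)$ multiplies each $E_\nu$ by $\frakq(r_{k'}(\mu), r_{k'\pm{l^\wedge}'}(\nu))$. Now $r_{k'\pm{l^\wedge}'}(\nu) = w_\nu r_{k'}(\mu)$, with
\[
    w_\nu = \overline{v}(\nu)w_{0(\lambda\mp\tilde\rho_l)}\bigl(\overline{v}(\mu)w_{0\lambda}\bigr)^{-1}.
\]
The coset-uniqueness argument from the proof of Proposition~\ref{prop:def_shift_operator}, using $W_0 = W_0^{\lambda\mp\tilde\rho_l}W_{0(\lambda\mp\tilde\rho_l)}$, shows that $w_\nu\neq e$ for all $\nu$. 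Property \eqref{eq:minimal_poly_prop_q} of Lemma~\ref{lem:minimal_poly_q} then kills every term, since regularity of $r_{k'}(\mu)$ is guaranteed.

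The main obstacle is the first step. In the $q$-setting the symmetrizer $U_{\eps_\pm}(k)$ is built from Hecke operators $T(w)$, not from plain reflections, so the conjugation trick must be applied to the correct building blocks. I would first try to write $U_{\eps_\pm}(k)$ restricted to $A_\lambda^\mu(k)$ as a combination of operators $X_w$, each mapping $A_\lambda^\mu(k)$ into a single weight space, for instance via intertwiners. Then I would verify that conjugating by $\Delta_{\eps,k}^\mp X_w$ transports the $Y(k)$-spectrum correctly. If the paper's $w$ in the definition is meant as $T(w)$, the eigenvalue-transport claim fails literally. In that case I would instead argue directly from the joint spectrum: every vector in the image lies in $A_{\lambda\mp\tilde\rho_l}(k\pm l^\wedge)$, and the operator $\sum_v Y^{\fraku_v}(k\pm l^\wedge)\,\frakq_v(\cdot)$ with spectral argument $r_{k'}(\mu)$ is then handled by the same projection formula.
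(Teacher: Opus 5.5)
\textbf{There is a genuine gap, in your first step.} Your overall structure is the paper's. Its proof is literally ``Lemma \ref{lem:proj_operator_replacement} and Proposition \ref{prop:def_shift_operator} mutatis mutandis''. Your later steps are exactly that transfer and are fine: the identity $\rho_{k'\pm{l^\wedge}'}=\rho_{k'}\pm\tilde\rho_l$, the case split, the coset-uniqueness argument giving $w_\nu\neq e$, and the vanishing of $\frakq(x,w_\nu x)$.

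The first step is the only place where the $q$-case genuinely differs, and there your proposal is unresolved and partly wrong.

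\begin{itemize}
\item With plain reflections and weights $\eps_\pm(w)$, your computation yields $Q_\mu(k,k\pm l^\wedge)\Delta^\mp_{\eps,k}\sum_w\eps_\pm(w)\,w\phi$. That is the classical symmetrizer, not $U_{\eps_\pm}(k)$.
\item In the $q$-setting the centre $A_0'(Y)$ commutes with the $T(w)$ but not with the plain reflections. So $\sum_w\eps_\pm(w)w$ does not preserve $A_\lambda(k)$. Already for $R$ of type $A_1$, $\lambda=\alpha$ and generic parameters, the plain symmetrization of $E_\lambda(k)$ is not a multiple of $P_\lambda(k)$.
\item Hence your sentence ``whatever combination appears, the image lands in $A_{\lambda\mp\tilde\rho_l}(k\pm l^\wedge)$'' is unjustified. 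Lemma \ref{lem:relating_U_nMK_and_eMK} and Corollary \ref{cor:shift_prin_q_v2} concern $U_{\eps_\pm}(k)$ only.
\item Your fallback of arguing ``from the joint spectrum'' does not close the gap either. The operators $\calQ^{(\eps)}_\pm(w,k)$ vary with $w$ and act on the individual summands, which do not lie in $A_{\lambda\mp\tilde\rho_l}(k\pm l^\wedge)$; only their sum does. Without a term-by-term identity you cannot collapse $\sum_w\calQ^{(\eps)}_\pm(w,k)(\cdots)$ to one operator applied to the sum.
\end{itemize}

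\textbf{The missing observation is that your objection to $T(w)$ is unfounded.} The conjugation identity is purely formal:
\[
C_X\big(Y^{f'}(k)\big)X\phi=X\,Y^{f'}(k)\phi=f'(-r_{k'}(\mu))\,X\phi
\]
for every invertible $X$ and every $\phi\in A_\lambda^\mu(k)$. It does not need $X$ to send $A_\lambda^\mu(k)$ into a weight space. Already in Lemma \ref{lem:proj_operator_replacement}, $\Delta_\eps^\mp\phi^w$ need not be a weight vector for the $T_\xi(k\pm l)$.

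So proceed as follows.
\begin{enumerate}
\item Write $U_{\eps_\pm}(k)=\sum_{w\in W_0}c_wT(w)$ as in its definition.
\item Set $\calQ^{(\eps)}_\pm(w,k)=\sum_vY^{\fraku_v}(k\pm l^\wedge)\,C_{\Delta^\mp_{\eps,k}T(w)}(Y^{\frakq_v}(k))$.
\item Set $\calG^{(\eps)}_\pm(k)=\sum_wc_w\,\calQ^{(\eps)}_\pm(w,k)\Delta^\mp_{\eps,k}T(w)$.
\end{enumerate}
This is what ``rewriting \eqref{eq:fw_shift_operator} as a sum over $W_0$'' has to mean. The displayed $\eps_\pm(w)\,w$ is a carry-over from the Heckman--Opdam case, and with plain reflections the restriction formula fails.

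With this reading, $\calG^{(\eps)}_\pm(k)\phi=Q_\mu(k,k\pm l^\wedge)\Delta^\mp_{\eps,k}U_{\eps_\pm}(k)\phi$ exactly. Lemma \ref{lem:relating_U_nMK_and_eMK} and Corollary \ref{cor:shift_prin_q_v2} then place $\Delta^\mp_{\eps,k}U_{\eps_\pm}(k)\phi$ in $A_{\lambda\mp\tilde\rho_l}(k\pm l^\wedge)$, or make it $0$ when $\lambda-\tilde\rho_l\notin L_+$. The rest of your argument goes through as written. Your first paragraph already points at this term-by-term expansion of the Hecke symmetrizer; you should not have retracted it at the end.
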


\begin{proof}
    The proposition is the analog of combining Lemma \ref{lem:proj_operator_replacement} and Proposition \ref{prop:def_shift_operator}, and their proofs carry over mutatis mutandis.
\end{proof}

To relate the shift factor from Proposition \ref{prop:fw_shift_factor_q}, we establish the analog of Vanishing Lemma 2 (Lemma \ref{lem:vanishing_2}).

\begin{lemma}[$q$-Vanishing Lemma 2]\label{lem:vanishing_2_q}
    Let $\lambda \in L_+$ and $\mu \in W_0\lambda$. If $v(\mu) \neq v(\mu_{\eps,\pm})$, then
    \[
        \bfc_{-\eps_\mp(k' \pm {l^\wedge}')}^+(\overline{v}(\mu)w_{0\lambda})(\lambda + \rho_{k'}) = 0.
    \]
\end{lemma}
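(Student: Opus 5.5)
We mirror the proof of Vanishing Lemma 2 (Lemma \ref{lem:vanishing_2}): locate a simple root at which one factor of the $\bfc$-function vanishes.

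\textbf{Step 1: a simple root outside the good set.} Since $v(\mu) \neq v(\mu_{\eps,\pm})$, the argument at the end of the proof of Proposition \ref{prop:def_shift_operator}, which carries over to $W_0$ by Proposition \ref{prop:def_shift_operator_q}, gives
\[
\overline{v}(\mu)w_{0\lambda} \notin W_0^{\lambda\mp\tilde{\rho}_l}w_{0(\lambda\mp\tilde{\rho}_l)}.
\]
Equivalently, $\overline{v}(\mu)w_{0\lambda}w_{0(\lambda\mp\tilde\rho_l)}$ is not a shortest coset representative.

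\textbf{Step 2: apply the coset characterization.} By \eqref{eq:shortest_coset_char}, applied to $W_0$ acting on the reduced root system $S'_{01} = R^\vee$, there is an $i \in I_0$ with $\alpha_i^\vee \in R^\vee_{\lambda\mp\tilde\rho_l,+}$ and $(\overline{v}(\mu)w_{0\lambda})(\alpha_i^\vee) \in R^{\vee+}$. Here we use that $w_{0(\lambda\mp\tilde\rho_l)}$ maps $\alpha_i^\vee$ to a negative root of the stabilizer subsystem. Hence
\[
a_i' = \alpha_i^\vee \in S_1'^+ \cap (\overline{v}(\mu)w_{0\lambda})^{-1}S_1'^+,
\]
so $\bfc_{a_i',\,-\eps_\mp(k'\pm{l^\wedge}')}(\lambda+\rho_{k'})$ is one of the factors of the $\bfc$-function in the statement.

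\textbf{Step 3: the character is trivial on $r_i$.} As in Lemma \ref{lem:vanishing_2}, $\eps_\mp(r_i)=1$. For the backward operator $\eps_+ = \eps$, and $\lambda+\tilde\rho_l$ has $\eps(W_{0(\lambda+\tilde\rho_l)})=\{1\}$ by Lemma \ref{lem:dim_equivalence_q}. For the forward operator $\eps_- = \triv$.

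\textbf{Step 4: the factor vanishes.} The numerator factor $1-\tau\tilde\tau q^{<a_i',\lambda+\rho_{k'}>}$ with multiplicity $-(k'\pm{l^\wedge}')$ equals
\[
1-q^{<a_i',\lambda+\rho_{k'}> - (k'(a_i')\pm {l^\wedge}'(a_i'))}.
\]
To see that the exponent is zero, write $\lambda+\rho_{k'} = (\lambda\mp\tilde\rho_l)+\rho_{k'\pm{l^\wedge}'}$, using $\tilde\rho_l=\rho_{{l^\wedge}'}$ from Remark \ref{rmk:shift_mult}. Since $<a_i',\lambda\mp\tilde\rho_l>=0$, the exponent becomes
\[
<a_i',\rho_{k'\pm{l^\wedge}'}>-(k'\pm{l^\wedge}')(a_i') = 0,
\]
by \cite[(1.5.4)]{Mac03} applied to the simple root $a_i'$. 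This is the same identity used in Lemma \ref{lem:poincare_poly_q}.

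One must also check that the denominator $1-q^{2<a_i',\lambda+\rho_{k'}>}$ does not vanish; this follows from the genericity assumption.

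\textbf{Main obstacle.} The delicate point is Step 4 in Case 3, type $(C_n^\vee,C_n)$, where $2a_i'\in S'$ for the long simple root. There $\tau\tilde\tau = q^{k'(a')}$ involves only $k'(a_i')$ and not $k'(2a_i')$, so the identity $<a_i',\rho_{k'}>=k'(a_i')$ must be checked with Macdonald's normalization, including the factor $v_{\alpha_i}$ from \eqref{eq:equiv_step}. This factor may replace $<a_i',\cdot>$ by a multiple, and I would verify this case directly from the explicit $\rho_{k'}$ in terms of $k_1',k_5'$.
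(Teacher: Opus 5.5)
Your proposal is essentially the paper's proof, which simply says to transplant the argument of Lemma~\ref{lem:vanishing_2} to the $q$-setting as in Lemma~\ref{lem:vanishing_1_q}. The identity you single out as the main obstacle is just the simple-root identity $\langle a_i',\rho_{k'}\rangle = k'(a_i')$ from \cite[(1.5.4)]{Mac03}, already used in Lemmas~\ref{lem:poincare_poly_q} and~\ref{lem:vanishing_1_q}, and the factor $v_{\alpha_i}$ in \eqref{eq:equiv_step} only appears when pairing with $\alpha_i'$ rather than with $a_i'=\alpha_i^\vee$.

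One caveat, shared with the paper: your Step~1 needs $\lambda\mp\tilde\rho_l\in L_+$, because the argument in Proposition~\ref{prop:def_shift_operator} only covers that case. In the forward case with $\lambda-\tilde\rho_l\notin L_+$ the displayed $\bfc$-function can indeed be nonzero (rank one with $\lambda=\mu=0$ gives the empty product). This is harmless for the paper's purposes, because $\calH^{(\eps)}_+(\mu,k)$ then vanishes by Lemma~\ref{lem:vanishing_1_q}.
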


\begin{proof}
    The proof of Lemma \ref{lem:vanishing_2} can be adjusted to function for the lemma as well, cf.~the proof of Lemma \ref{lem:vanishing_1_q}.
\end{proof}

This following corollary is immediate.

\begin{corollary}\label{cor:adj_shift_factor_q}
    Let $\lambda \in L_+$ and $\mu \in W_0\lambda$. Writing
    \[
        \begin{aligned}
            \calH^{(\eps)}_\pm(\mu, k) &= \eps(\overline{v}(\mu))\tau_{\overline{v}(\mu),l^\wedge}^\mp\tau_{w_0,l^\wedge}^\pm q^{n_{\eps,k}^\pm}\varpi'(r_{k'}(\mu))\bfc_{\eps_\pm k'}^-(\overline{v}(\mu)w_{0\lambda})(\lambda + \rho_{k'}) \\
            &\qquad \times \bfc_{-\eps_\mp(k' \pm {l^\wedge}')}^+(\overline{v}(\mu)w_{0\lambda})(\lambda + \rho_{k'}),
        \end{aligned}
    \]
    we have
    \[
        \calH^{(\eps)}_\pm(\mu, k) =
        \begin{cases}
            H^{(\eps)}_\pm(\mu, k) & \text{if } v(\mu) = v(\mu_{\eps,\pm}); \\
            0 & \text{otherwise}.
        \end{cases}
    \]
\end{corollary}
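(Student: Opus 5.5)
The proof splits into the two cases of the statement. Both reduce to rewriting the factor $H^{(\eps)}_\pm(\mu,k)$ from Proposition \ref{prop:fw_shift_factor_q}, exactly as Corollary \ref{cor:adj_shift_factor} was obtained from Proposition \ref{prop:fw_shift_factor} and Lemma \ref{lem:vanishing_2}.

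\emph{Case $v(\mu)\neq v(\mu_{\eps,\pm})$.} The last factor of $\calH^{(\eps)}_\pm(\mu,k)$ is
\[
\bfc^+_{-\eps_\mp(k'\pm{l^\wedge}')}(\overline{v}(\mu)w_{0\lambda})(\lambda+\rho_{k'}).
\]
By the $q$-Vanishing Lemma 2 (Lemma \ref{lem:vanishing_2_q}) this factor vanishes. All other factors are finite, since the genericity assumption keeps $\varpi'(r_{k'}(\mu))$ and the $\bfc$-functions free of poles. Hence $\calH^{(\eps)}_\pm(\mu,k)=0$, as claimed.

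\emph{Case $v(\mu)=v(\mu_{\eps,\pm})$.} By the discussion after Lemma \ref{lem:proj_operator_replacement} (carried over to $W_0$, $L$ via Proposition \ref{prop:def_shift_operator_q}), this condition is equivalent to
\[
\overline{v}(\mu)w_{0\lambda}=\overline{v}(\mu_{\eps,\pm})w_{0(\lambda\mp\tilde\rho_l)},
\]
and hence to $r_{k'}(\mu)=r_{k'\pm{l^\wedge}'}(\mu_{\eps,\pm})$. The second identity uses $\tilde\rho_l=\rho_{{l^\wedge}'}$ (Remark \ref{rmk:shift_mult}), so that $\lambda\mp\tilde\rho_l+\rho_{k'\pm{l^\wedge}'}=\lambda+\rho_{k'}$, together with regularity of $\lambda+\rho_{k'}$. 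Substituting into $H^{(\eps)}_\pm(\mu,k)$ gives the following:
\begin{itemize}
\item[(a)] $\varpi'(r_{k'\pm{l^\wedge}'}(\mu_{\eps,\pm}))=\varpi'(r_{k'}(\mu))$.
\item[(b)] The second $\bfc$-function becomes $\bfc^+_{-\eps_\mp(k'\pm{l^\wedge}')}(\overline{v}(\mu)w_{0\lambda})(\lambda+\rho_{k'})$.
\item[(c)] The sign $\eps_\pm(\overline{v}(\mu))\eps_\mp(\overline{v}(\mu_{\eps,\pm}))$ becomes $\eps_\pm(w)\eps_\mp(w)$ with $w=\overline{v}(\mu)w_{0\lambda}$, which equals $\eps(w)$ because $\{\eps_+,\eps_-\}=\{\eps,\triv\}$. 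It then remains to check $\eps(w_{0\lambda})=\eps(w_{0(\lambda\mp\tilde\rho_l)})$, so that this reduces to $\eps(\overline{v}(\mu))$.
\end{itemize}
For (c), note that in the nonvanishing situation $\eps(W_{0\lambda})=\{1\}$ or $\eps(W_{0(\lambda+\tilde\rho_l)})=\{1\}$, by Lemma \ref{lem:dim_equivalence_q}. The stabilizer containments $W_{0(\lambda+\tilde\rho_l)}\subseteq W_{0\lambda}$, respectively $W_{0\lambda}\subseteq W_{0(\lambda-\tilde\rho_l)}$, together with the equality of the products $\overline{v}(\mu)w_{0\lambda}$ above, make the two signs agree. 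If instead $\lambda-\tilde\rho_l\notin L_+$, both $H$ and $\calH$ vanish by Lemma \ref{lem:vanishing_1_q}, and there is nothing to compare.

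\emph{Main obstacle: the $\tau$-factors.} One must show
\[
\tau_{\overline{v}(\mu)w_{0\lambda},k}\,\tau^{-1}_{\overline{v}(\mu_{\eps,\pm})w_{0(\lambda\mp\tilde\rho_l)},k\pm l^\wedge}
=\tau^{\mp}_{\overline{v}(\mu),l^\wedge}.
\]
With $w=\overline{v}(\mu)w_{0\lambda}$ as above, the left side is $\tau_{w,k}\tau_{w,k\pm l^\wedge}^{-1}$. Since $\tau_{r_i,k}=q^{\frac12(k(a_i)+k(2a_i))}$ depends additively on $k$ in the exponent, and $\tau_{w,\cdot}$ is multiplicative along reduced expressions, this equals $\tau_{w,l^\wedge}^{\mp1}$.

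It remains to replace $w$ by $\overline{v}(\mu)$, i.e.\ to show $\tau_{w_{0\lambda},l^\wedge}=1$. Because $\ell(\overline{v}(\mu)w_{0\lambda})=\ell(\overline{v}(\mu))+\ell(w_{0\lambda})$, we have $\tau_{w,l^\wedge}=\tau_{\overline{v}(\mu),l^\wedge}\tau_{w_{0\lambda},l^\wedge}$. When $\eps(W_{0\lambda})=\{1\}$, every simple reflection in $W_{0\lambda}$ has $l^\wedge(a_i)=0$, so $\tau_{w_{0\lambda},l^\wedge}=1$. In the backward case, where $\eps(W_{0\lambda})$ may be nontrivial, I would instead express the factor through $W_{0(\lambda+\tilde\rho_l)}$, on which $\eps$ is trivial, using the same reduced-expression argument. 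This bookkeeping, including care with $l^\wedge$ versus $l$ in Case 2 and with $a_i$ versus $2a_i$ in Case 3, is where I expect the most friction.

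The remaining factors $\tau^\pm_{w_0,l^\wedge}$, $q^{n^\pm_{\eps,k}}$ and the first $\bfc$-function appear identically on both sides, which completes the comparison.
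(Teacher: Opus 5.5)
Your skeleton matches the paper's own argument, which the paper simply calls immediate. You use $q$-Vanishing Lemma 2 when $v(\mu)\neq v(\mu_{\eps,\pm})$, and otherwise substitute $w:=\overline{v}(\mu)w_{0\lambda}=\overline{v}(\mu_{\eps,\pm})w_{0(\lambda\mp\tilde\rho_l)}$ into $H^{(\eps)}_\pm(\mu,k)$. Your handling of the $\varpi'$, $\bfc$, $q^{n^\pm_{\eps,k}}$ and $\tau_{w_0,l^\wedge}$ factors is fine, and the forward case goes through, including your $\tau$ argument.

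Step (c) is misdirected in the forward case, though. The sign of $H^{(\eps)}_+$ is literally $\eps(\overline{v}(\mu))\triv(\overline{v}(\mu_{\eps,+}))=\eps(\overline{v}(\mu))$. The identity $\eps(w_{0\lambda})=\eps(w_{0(\lambda-\tilde\rho_l)})$ that you say remains to be checked is false in general. Take $\lambda=\tilde\rho_l$ regular and $\mu=w_0\lambda$: then $\lambda-\tilde\rho_l=0$, and $\eps(w_0)$ may be $-1$.

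The genuine gap is the backward case. Because $W_{0(\lambda+\tilde\rho_l)}\subseteq W_{0\lambda}$, the element $\overline{v}(\mu)w_{0\lambda}w_{0(\lambda+\tilde\rho_l)}$ sends every positive root orthogonal to $\lambda+\tilde\rho_l$ to a positive root, so it equals $\overline{v}(\mu_{\eps,-})$. Hence $v(\mu)=v(\mu_{\eps,-})$ for \emph{every} $\mu$, and the corollary asserts $\calH^{(\eps)}_-=H^{(\eps)}_-$ throughout. Using $\eps(W_{0(\lambda+\tilde\rho_l)})=\{1\}$ and $\ell(w)=\ell(\overline{v}(\mu))+\ell(w_{0\lambda})$, the sign-and-$\tau$ part of $H^{(\eps)}_-$ is $\eps(\overline{v}(\mu_{\eps,-}))\tau_{w,k}\tau^{-1}_{w,k-l^\wedge}=\eps(w)\tau_{w,l^\wedge}$. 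By contrast, $\calH^{(\eps)}_-$ carries $\eps(\overline{v}(\mu))\tau_{\overline{v}(\mu),l^\wedge}$, so
\[
H^{(\eps)}_-(\mu,k)=\eps(w_{0\lambda})\,\tau_{w_{0\lambda},l^\wedge}\,\calH^{(\eps)}_-(\mu,k).
\]
Triviality of $\eps$ on the \emph{smaller} group $W_{0(\lambda+\tilde\rho_l)}$ says nothing about $W_{0\lambda}$, so the containment you invoke in (c) points the wrong way. Likewise, re-expressing the $\tau$-factor through $W_{0(\lambda+\tilde\rho_l)}$ yields $\tau_{\overline{v}(\mu_{\eps,-}),l^\wedge}$, not $\tau_{\overline{v}(\mu),l^\wedge}$.

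If $\eps(W_{0\lambda})\neq\{1\}$, the discrepancy factor is $\pm q^{c}$ with $c>0$, while $H^{(\eps)}_-\neq 0$ generically. For example, in Case 1 of type $A_1$ with $\eps$ the sign character and $\lambda=\mu=0$, the factor is $-q^{1/2}$. So the identity as stated cannot be proved in this case.

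The same defect is already present in the Heckman--Opdam version. For $BC_1$ and $\mu=0$, the formula of Corollary \ref{cor:adj_shift_factor} gives $-2k^0(2\epsilon_1)$. Example \ref{exa:rank_one}, Lemma \ref{lem:shift_factor_v2}, and a direct computation all give $+2k^0(2\epsilon_1)$.

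What your argument does prove is the statement with $\eps(\overline{v}(\mu)w_{0\lambda})\,\tau^{\mp}_{\overline{v}(\mu)w_{0\lambda},l^\wedge}$ in place of $\eps(\overline{v}(\mu))\,\tau^{\mp}_{\overline{v}(\mu),l^\wedge}$. This corrected version agrees with the stated formula in the forward case and whenever $\eps(W_{0\lambda})=\{1\}$.
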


We proceed to prove the transmutation property of the operator $\calG^{(\eps)}_\pm(k)$.

\begin{theorem}\label{thm:transmutation_prop_q}
    The operator $\calG^{(\eps)}_\pm(k) : A(k) \to A(k \pm l^\wedge)$ is a non-symmetric $q$-shift operator with shift $\pm l^\wedge$, i.e., it satisfies the transmutation property
    \[
        \calG^{(\eps)}_\pm(k) Y^{f'}(k) = Y^{f'}(k \pm l^\wedge) \calG^{(\eps)}_\pm(k), \qquad f' \in A'.
    \]
    Furthermore, $\calG^{(\eps)}_\pm(k)$ shifts the multiplicity of the non-symmetric Macdonald--Koornwinder polynomials by $\pm l^\wedge$:
    \[
         \calG^{(\eps)}_\pm(k)E_\mu(k) = \calH^{(\eps)}_\pm(\mu,k)E_{\mu_{\eps,\pm}}(k \pm l^\wedge), \qquad \mu \in L.
    \]
\end{theorem}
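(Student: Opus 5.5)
The plan is to mirror the proof of Theorem \ref{thm:transmutation_prop}. I will prove the second statement (the shift factor) first and then derive the transmutation property from it by testing on the basis $(E_\mu(k))_{\mu \in L}$ of $A(k)$.

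\textbf{Shift factor.} Fix $\mu \in L$, put $\lambda = \mu_+ \in L_+$, and recall that $E_\mu(k)$ spans $A_\lambda^\mu(k)$. By Proposition \ref{prop:def_shift_operator_q}, $\calG^{(\eps)}_\pm(k)E_\mu(k)$ equals $\calG^{(\eps)}_\pm(\mu,k)E_\mu(k)$ when $v(\mu) = v(\mu_{\eps,\pm})$ and vanishes otherwise. In the first case Proposition \ref{prop:fw_shift_factor_q} gives $H^{(\eps)}_\pm(\mu,k)E_{\mu_{\eps,\pm}}(k \pm l^\wedge)$. Corollary \ref{cor:adj_shift_factor_q} then identifies $H^{(\eps)}_\pm(\mu,k)$ with $\calH^{(\eps)}_\pm(\mu,k)$ in this case, and shows that $\calH^{(\eps)}_\pm(\mu,k) = 0$ in the other case; the vanishing there comes from Lemma \ref{lem:vanishing_2_q}. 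Hence $\calG^{(\eps)}_\pm(k)E_\mu(k) = \calH^{(\eps)}_\pm(\mu,k)E_{\mu_{\eps,\pm}}(k \pm l^\wedge)$ for all $\mu \in L$.

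\textbf{Transmutation.} Let $f' \in A'$. Apply both sides to $E_\mu(k)$, using $Y^{f'}(k)E_\mu(k) = f'(-r_{k'}(\mu))E_\mu(k)$ and the analogous formula at multiplicity $k \pm l^\wedge$. Both sides become multiples of $E_{\mu_{\eps,\pm}}(k \pm l^\wedge)$, with coefficients
\[
    f'(-r_{k'}(\mu))\,\calH^{(\eps)}_\pm(\mu,k) \quad\text{and}\quad f'\big(-r_{k' \pm {l^\wedge}'}(\mu_{\eps,\pm})\big)\,\calH^{(\eps)}_\pm(\mu,k).
\]
If $v(\mu) \neq v(\mu_{\eps,\pm})$, both coefficients vanish because $\calH^{(\eps)}_\pm(\mu,k) = 0$. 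If $v(\mu) = v(\mu_{\eps,\pm})$, it suffices to show $r_{k'}(\mu) = r_{k' \pm {l^\wedge}'}(\mu_{\eps,\pm})$.

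\textbf{The identity $r_{k'}(\mu) = r_{k' \pm {l^\wedge}'}(\mu_{\eps,\pm})$.} This is the only step that needs checking in the $q$-setting rather than being quoted directly. I would use $r_{k'}(\mu) = \overline{v}(\mu)w_{0\lambda}(\lambda + \rho_{k'})$, as in the proof of Lemma \ref{lem:eMK_in_nMK_basis}, together with the fact that $\rho_{k'}$ is additive in the multiplicity. By Remark \ref{rmk:shift_mult}, $\rho_{k' \pm {l^\wedge}'} = \rho_{k'} \pm \tilde{\rho}_l$, so
\[
    (\lambda \mp \tilde{\rho}_l) + \rho_{k' \pm {l^\wedge}'} = \lambda + \rho_{k'}.
\]
This gives $r_{k' \pm {l^\wedge}'}(\mu_{\eps,\pm}) = \overline{v}(\mu_{\eps,\pm})w_{0(\lambda \mp \tilde{\rho}_l)}(\lambda + \rho_{k'})$. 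The condition $v(\mu) = v(\mu_{\eps,\pm})$ is equivalent to $\overline{v}(\mu)w_{0\lambda} = \overline{v}(\mu_{\eps,\pm})w_{0(\lambda \mp \tilde{\rho}_l)}$ via $v(\mu) = w_0w_{0\lambda}\overline{v}(\mu)^{-1}$, exactly as in \eqref{eq:eigenvalue_cond}; the genericity assumption makes $\lambda + \rho_{k'}$ regular, so the two Weyl group elements are determined by these vectors. The two coefficients therefore agree, and since the $E_\mu(k)$ form a basis of $A(k)$, the transmutation property follows for all $f' \in A'$.
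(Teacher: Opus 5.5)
Your proposal is correct and is essentially the paper's proof, which just says to adapt the proof of Theorem~\ref{thm:transmutation_prop} using Proposition~\ref{prop:def_shift_operator_q} and Corollary~\ref{cor:adj_shift_factor_q}. Your explicit check of $r_{k'}(\mu) = r_{k' \pm {l^\wedge}'}(\mu_{\eps,\pm})$ via $\rho_{k' \pm {l^\wedge}'} = \rho_{k'} \pm \tilde{\rho}_l$ is exactly the $q$-analog of \eqref{eq:eigenvalue_cond} that this adaptation needs, and the identification $(\mu_{\eps,\pm})_+ = \lambda \mp \tilde{\rho}_l$ that you use implicitly is automatic here, since $v(\mu)\mu_{\eps,\pm} = w_0(\lambda \mp \tilde{\rho}_l)$ must be anti-dominant when $v(\mu) = v(\mu_{\eps,\pm})$.
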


\begin{proof}
    Adapt the proof of Theorem \ref{thm:transmutation_prop} using Proposition \ref{prop:def_shift_operator_q} and Corollary \ref{cor:adj_shift_factor_q}.
\end{proof}

\begin{remark}\label{rmk:rank_one_q}
    For the third case in rank one, i.e., when $S = S'$ is of type $(C_1^\vee,C_1)$, and the sign character $\eps$, the $\eps$-forward/backward non-symmetric $q$-shift operators have shifts $\pm(1,1,0,0)$. In this case, the (non-)symmetric Macdonald--Koornwinder polynomials can be identified with the (non-)symmetric Askey--Wilson polynomials, up to normalization. In \cite{AsWi85}, Askey and Wilson found forward/backward $q$-shift operators with shift $\pm\frac{1}{2}(1,1,1,1)$ for their polynomials, see \cite[\S6.4--6]{Mac03} or \cite{NoSt04} for an affine Hecke algebraic approach. In addition, contiguity-type $q$-shift operators were discovered by Kalnins and Miller \cite{KaMi89} with shifts given by the six permutations of $\frac{1}{2}(1,1,-1,-1)$. The non-symmetric analogs of these shift operators were studied in \cite{vHS25}, and our $\eps$-forward/backward non-symmetric $q$-shift operators most likely decompose into the non-symmetric $q$-shift operators with shifts $\pm\frac{1}{2}(1,1,1,1)$ and $\pm\frac{1}{2}(1,1,-1,-1)$.
\end{remark}

Finally, we discuss the analog of Proposition \ref{prop:restriction}. To state it properly, it is necessary to adjust the shift factors in case that the character differs from the sign character. This leads us to formulate the following conjecture, cf.~Conjecture \ref{conj:fundamental_shift_operator}.

\begin{conjecture}\label{conj:fundamental_shift_operator_q}
    For any linear character $\eps$ of $W_0$, there exists a non-symmetric $q$-shift operator $\tilde{\calG}^{(\eps)}_\pm(k)$ with shift $\pm l$ such that
    \[
         \tilde{\calG}^{(\eps)}_\pm(k)E_\mu(k) = \tilde{\calH}^{(\eps)}_\pm(\mu,k)E_{\mu_{\eps,\pm}}(k \pm l^\wedge), \qquad \mu \in L,
    \]
    with
    \[
        \begin{aligned}
            \tilde{\calH}^{(\eps)}_\pm(\mu, k) &= \eps(\overline{v}(\mu))\tau_{\overline{v}(\mu),l^\wedge}^\mp\tau_{w_0,l^\wedge}^\pm q^{n_{\eps,k}^\pm}\delta'_\eps(r_{k'}(\mu))\bfc_{\eps_\pm k'}^-(\overline{v}(\mu)w_{0\lambda})(\lambda + \rho_{k'}) \\
            &\qquad \times \bfc_{-\eps_\mp(k' \pm {l^\wedge}')}^+(\overline{v}(\mu)w_{0\lambda})(\lambda + \rho_{k'}).
        \end{aligned}
    \]
    The shift factor $\tilde{\calH}^{(\eps)}_\pm(\mu, k)$ is obtained from $\calH^{(\eps)}_\pm(\mu, k)$ by substituting $\delta'_\eps = \delta'_{\eps,0}$ for $\varpi'$.
\end{conjecture}

Conjecture \ref{conj:fundamental_shift_operator_q} is valid for the sign character $\eps$, since then we can simply take $\tilde{\calG}^{(\eps)}_\pm(k) = \calG^{(\eps)}_\pm(k)$.

\begin{proposition}\label{prop:restriction_q}
    On the assumption of Conjecture \ref{conj:fundamental_shift_operator_q}, noting its validity for the sign character, the operator $\tilde{\calG}^{(\eps)}_+(k)$ restricts to a difference operator on $A_0(k)$. This difference operator coincides with $G^{(\eps)}_+(k)$ from Theorem \ref{thm:sym_shift_factor_q}, so that
    \[
        \tilde{\calG}^{(\eps)}_+(k)P_\lambda(k) = h^{(\eps)}_+(\lambda, k) P_{\lambda - \tilde{\rho}_l}(k + l^\wedge), \qquad \lambda \in L_+.
    \]
\end{proposition}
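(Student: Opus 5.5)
I will mirror the proof of Proposition \ref{prop:restriction}. Since $A_0(k)$ is spanned by the $P_\lambda(k)$ with $\lambda \in L_+$, it suffices to check that $\tilde{\calG}^{(\eps)}_+(k)$ and $G^{(\eps)}_+(k)$ agree on this basis. Once that is shown, the restriction of $\tilde{\calG}^{(\eps)}_+(k)$ to $A_0(k)$ equals the difference operator $G^{(\eps)}_+(k)$ there, and the displayed formula follows from Theorem \ref{thm:sym_shift_factor_q}.

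Fix $\lambda \in L_+$ and expand $P_\lambda(k)$ in the basis $(E_\mu(k))_{\mu\in W_0\lambda}$ using Lemma \ref{lem:eMK_in_nMK_basis} with the trivial character. The coefficients are $\tau_{\overline{v}(\mu)w_{0\lambda},k}^{-1}\tau_{w_0,k}\bfc_{-k'}^+(\overline{v}(\mu)w_{0\lambda})(\lambda+\rho_{k'})$. Applying $\tilde{\calG}^{(\eps)}_+(k)$ term by term with the shift factors $\tilde{\calH}^{(\eps)}_+(\mu,k)$ from Conjecture \ref{conj:fundamental_shift_operator_q} sends $E_\mu(k)$ to a multiple of $E_{\mu_{\eps,+}}(k+l^\wedge)$. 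When $\lambda-\tilde{\rho}_l\in L_+$, the map $\mu\mapsto\mu_{\eps,+}$ is a bijection $W_0\lambda\to W_0(\lambda-\tilde\rho_l)$, so I will compare the result with $h^{(\eps)}_+(\lambda,k)P_{\lambda-\tilde\rho_l}(k+l^\wedge)$. This target is expanded by Lemma \ref{lem:eMK_in_nMK_basis} at multiplicity $k+l^\wedge$, using that $\lambda-\tilde\rho_l+\rho_{k'+{l^\wedge}'}=\lambda+\rho_{k'}$. The claim thus reduces to the coefficient identity
\[
\tilde{\calH}^{(\eps)}_+(\mu,k)\,\tau_{\overline{v}(\mu)w_{0\lambda},k}^{-1}\tau_{w_0,k}\,\bfc_{-k'}^+(\overline{v}(\mu)w_{0\lambda})(\lambda+\rho_{k'}) = h^{(\eps)}_+(\lambda,k)\,\tau_{\overline{v}(\mu_{\eps,+})w_{0(\lambda-\tilde\rho_l)},k+l^\wedge}^{-1}\tau_{w_0,k+l^\wedge}\,\bfc_{-(k'+{l^\wedge}')}^+(\overline{v}(\mu_{\eps,+})w_{0(\lambda-\tilde\rho_l)})(\lambda+\rho_{k'})
\]
for all $\mu\in W_0\lambda$. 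This is the analog of \eqref{eq:restriction_step}.

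I will verify the identity in three cases, following Proposition \ref{prop:restriction}.

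\textbf{Case $\lambda-\tilde\rho_l\notin L_+$.} Both sides vanish. On the left, $\tilde{\calH}^{(\eps)}_+$ contains $\bfc^-_{\eps k'}$, which is zero by Lemma \ref{lem:vanishing_1_q}. On the right, $h^{(\eps)}_+(\lambda,k)=q^{\frac12 k\cdot l}\delta'_{\eps,-k'}(\lambda+\rho_{k'})$ has a vanishing factor at the simple coroot $\alpha_i^\vee$ produced in that lemma, since $\langle \alpha_i^\vee,\lambda+\rho_{k'}\rangle - k'(\alpha_i^\vee)=0$.

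\textbf{Case $v(\mu)\neq v(\mu_{\eps,+})$.} The left side vanishes by Lemma \ref{lem:vanishing_2_q}. For the right side, I will find a simple $\alpha_i$ with $\alpha_i\in R_\lambda$, $\eps(r_i)=1$, and $a_i'$ in $S_1'^+\cap(\overline{v}(\mu_{\eps,+})w_{0(\lambda-\tilde\rho_l)})^{-1}S_1'^+$. Then the numerator factor $1-q^{\langle a_i',\lambda+\rho_{k'}\rangle-k'(a_i')-l'(a_i')}$ of $\bfc^+_{-(k'+{l^\wedge}')}$ vanishes, as in Lemma \ref{lem:vanishing_2}.

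\textbf{Generic case.} Here $\overline{v}(\mu)w_{0\lambda}=\overline{v}(\mu_{\eps,+})w_{0(\lambda-\tilde\rho_l)}=:u$, and the identity becomes an explicit $\bfc$-function computation. Write $\bfc^+_{-k'}(u)$ and $\bfc^+_{-(k'+{l^\wedge}')}(u)$ as products over $S_1'^+\cap u^{-1}S_1'^+$, and $\bfc^-_{k'}(u)$ as a product over the complement. Split every product into roots with $l'=0$ and roots with $l'=1$. On the $l'=0$ roots the two $\bfc^+$ factors agree and cancel. On the $l'=1$ roots, the factors of $\delta'_\eps(r_{k'}(\mu))$ together with $\bfc^-_{k'}(u)\bfc^+_{-k'}(u)$ should telescope to $\delta'_{\eps,-k'}(\lambda+\rho_{k'})\,\bfc^+_{-(k'+{l^\wedge}')}(u)$. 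Evaluating at $r_{k'}(\mu)=u(\lambda+\rho_{k'})$ permutes the roots up to sign, which is the source of the powers of $\eps$.

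The main obstacle is the bookkeeping of the scalar prefactors rather than the $\bfc$-factors. One must check that $\eps(\overline{v}(\mu))\tau^{-1}_{\overline{v}(\mu),l^\wedge}\tau_{w_0,l^\wedge}q^{n^+_{\eps,k}}$, combined with the $\tau_{\cdot,k}$ versus $\tau_{\cdot,k+l^\wedge}$ ratios and the normalizing $\tau$'s hidden in $\bfc_{a',k'}$ and $\delta_{a',k'}$ for the $l'=1$ roots, reduces exactly to $q^{\frac12 k\cdot l}$. My first step there would be to factor every $\tau_{w,k+l^\wedge}=\tau_{w,k}\tau_{w,l^\wedge}$ and compare root by root, checking the answer in rank one.
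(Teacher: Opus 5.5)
You follow the paper's route: expand $P_\lambda(k)$ by Lemma \ref{lem:eMK_in_nMK_basis}, apply $\tilde{\calG}^{(\eps)}_+(k)$ termwise, reduce to the analog of \eqref{eq:restriction_step}, and split into three cases. The reduction, however, contains a genuine error. For $\lambda\in\tilde\rho_l+L_+$, the map $\mu\mapsto\mu_{\eps,+}$ is \emph{not} a bijection $W_0\lambda\to W_0(\lambda-\tilde\rho_l)$. Since $\eps(W_{0\lambda})=\{1\}$, every simple reflection in $W_{0\lambda}$ fixes $\tilde\rho_l$ by \eqref{eq:equiv_step}. Hence $W_{0\lambda}\subseteq W_{0(\lambda-\tilde\rho_l)}$, often strictly, and $\mu\mapsto\mu_{\eps,+}$ is just the projection $W_0/W_{0\lambda}\to W_0/W_{0(\lambda-\tilde\rho_l)}$.

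Consequently your identity ``for all $\mu\in W_0\lambda$'' is false at non-generic $\mu$, and your second case cannot be carried out. Put $u'=\overline{v}(\mu_{\eps,+})w_{0(\lambda-\tilde\rho_l)}$. The element $w_{0(\lambda-\tilde\rho_l)}$ sends the positive roots of $R_{\lambda-\tilde\rho_l}\supseteq R_\lambda$ to negative ones, and $\overline{v}(\mu_{\eps,+})$ keeps them negative. So no simple $\alpha_i\in R_\lambda$ with $a_i'\in S_1'^+\cap (u')^{-1}S_1'^+$ exists.

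Concretely, take Case 1 of type $A_1$ with $\eps$ the sign character and $\lambda=\tilde\rho_l=\rho$. Then $W_0\lambda=\{\pm\rho\}$ but $W_0(\lambda-\tilde\rho_l)=\{0\}$. The weight $\mu=\rho$ is non-generic, since $v(\rho)=r_1\neq e=v(0)$, and $R_\lambda=\emptyset$. At $\mu=\rho$ the right-hand side of your identity is $h^{(\eps)}_+(\rho,k)=q^{\frac12 k\cdot l}(q^{1/2}-q^{-1/2})\neq 0$: the $\tau$-ratio is $1$ and $\bfc^+_{-(k'+{l^\wedge}')}(r_1)$ is an empty product. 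The left-hand side is $0$. The paper's own proof, like that of Proposition \ref{prop:restriction}, is phrased in the same per-$\mu$ way, so you have inherited this flaw rather than introduced it.

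The repair is to compare coefficients of $E_\nu(k+l^\wedge)$ for each $\nu\in W_0(\lambda-\tilde\rho_l)$, summing over the fibre of $\nu$. Each fibre contains exactly one generic $\mu$, namely $\mu=\overline{v}(\nu)w_{0(\lambda-\tilde\rho_l)}\lambda$, because $\overline{v}(\nu)w_{0(\lambda-\tilde\rho_l)}w_{0\lambda}$ is a minimal coset representative for $W_0/W_{0\lambda}$. The other elements of the fibre contribute nothing by Lemma \ref{lem:vanishing_2_q}. So you need the identity only at generic $\mu$, together with vanishing of the \emph{left} side elsewhere; the right side never needs to vanish.

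For the generic identity, your ``main obstacle'' dissolves for the sign character:
\begin{itemize}
\item $\bfc^+_{-(k'+{l^\wedge}')}(u)$ cancels from both sides.
\item $\delta'_\eps(u(\lambda+\rho_{k'}))=\eps(u)\,\delta'_\eps(\lambda+\rho_{k'})$ and $\delta_{a',0}\bfc_{a',-k'}=\delta_{a',-k'}$ together produce $\delta'_{\eps,-k'}(\lambda+\rho_{k'})$.
\item $\eps(u)=\eps(\overline{v}(\mu))$ because $\eps(w_{0\lambda})=1$, so the signs cancel.
\item The $\tau$'s match because $\tau_{w,k+l^\wedge}=\tau_{w,k}\tau_{w,l^\wedge}$ and $\tau_{u,l^\wedge}=\tau_{\overline{v}(\mu),l^\wedge}\tau_{w_{0\lambda},l^\wedge}=\tau_{\overline{v}(\mu),l^\wedge}$.
\end{itemize}
For other characters, your $l'=0$ bookkeeping overlooks the $l'=0$ parts of $\bfc^-_{\eps k'}(u)\,\bfc^+_{-(k'+{l^\wedge}')}(u)$ inside the $\tilde{\calH}^{(\eps)}_+$ of Conjecture \ref{conj:fundamental_shift_operator_q}. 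The factor $\delta'_\eps$ only clears $l'=1$ denominators, so these $l'=0$ parts have nothing to cancel against. The identity closes only if $\tilde{\calH}^{(\eps)}_+$ is taken in the reduced form \eqref{eq:restriction_q_step}, which contains only $l'=1$ factors and is the form the paper's proof works with.
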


\begin{proof}
    Expanding the definitions of the $\bfc$-functions and observing that $\delta'_\eps(r_{k'}(\mu))$ cancels the denominators, we arrive at
    \begin{equation}\label{eq:restriction_q_step}
        \tilde{\calH}^{(\eps)}_+(\mu,k) = \tau_{\overline{v}(\mu),l^\wedge}^{-1}\tau_{w_0,l^\wedge} q^{\frac{1}{2}k \cdot l}\delta'_{\eps,-(k' + m(\overline{v}(\mu)w_{0\lambda}){l^\wedge}')}(\lambda + \rho_{k'})
    \end{equation}
    for all $\lambda \in L_+$ and $\mu \in W_0\lambda$, cf.~Lemma \ref{lem:shift_factor_v2}. The proof of Proposition \ref{prop:restriction} carries over since $P_\lambda(k)$ can be expressed as a linear combination of the polynomials $E_\mu(k)$ with $\mu \in W_0\lambda$ by Lemma~\ref{lem:eMK_in_nMK_basis}. Hence, it suffices to prove the analog of the relation \eqref{eq:restriction_step} between the coefficients and the shift factors. In this case, we have
    \[
        \begin{aligned}
            &\tilde{\calH}^{(\eps)}_+(\mu,k) \tau_{\overline{v}(\mu)w_{0\lambda},k}^{-1}\tau_{w_0,k} c^+_{-k'}(\overline{v}(\mu)w_{0\lambda})(\lambda + \rho_{k'}) \\
            &\qquad = h^{(\eps)}_+(\lambda,k)\tau_{\overline{v}(\mu_{\eps,+})w_{0(\lambda - \tilde{\rho}_l)},k + l^\wedge}^{-1}\tau_{w_0,k + l^\wedge}c^+_{-(k' + {l^\wedge}')}(\overline{v}(\mu_{\eps,+})w_{0(\lambda - \tilde{\rho}_l)})(\lambda + \rho_{k'})
        \end{aligned}
    \]
    for all $\mu \in W_0\lambda$, which follows from \eqref{eq:restriction_q_step} when $\lambda \in \tilde{\rho}_l + L_+$ and $v(\mu) = v(\mu_{\eps,\pm})$. The relation remains valid if either $\lambda \notin \tilde{\rho}_l + L_+$ or $v(\mu) \neq v(\mu_{\eps,\pm})$, which can be shown using the vanishing arguments from the proof of the Proposition \ref{prop:restriction}.
\end{proof}


\subsection*{Acknowledgments}
The research of M.v.H. was supported by the Research Foundation Flanders (FWO) grant 1139226N.


\end{document}